\numberwithin{equation}{section}
\newtheorem{thm}{Theorem}[section]
\newtheorem{prp}[thm]{Proposition}
\newtheorem{lmm}[thm]{Lemma}   
\newtheorem{crl}[thm]{Corollary}
\newtheorem{mythm}{Theorem}
\newtheorem{myconj}{Conjecture}
\newtheorem{thmlet}{Theorem}
\theoremstyle{definition}
\newtheorem{rmk}[thm]{Remark}
\newcounter{zr}
\def\zr#1{\stepcounter{zr}\bibitem{#1} A.~Zinger,}
\def\sf#1{\textsf{#1}}
\def\BE#1{\begin{equation}\label{#1}}  
\def\EE{\end{equation}}
\def\e_ref#1{(\ref{#1})}
\def\ti#1{\tilde{#1}}
\def\ov#1{\overline{#1}}
\def\un#1{\underline{#1}}
\def\ti#1{\tilde{#1}}
\def\wt#1{\widetilde{#1}}
\def\lan{\langle}  \def\ran{\rangle}
\def\lr#1{\lan{#1}\ran}
\def\blr#1{\big\lan{#1}\big\ran}
\def\bblr#1{\bigg\lan{#1}\bigg\ran}
\def\lrbr#1{\llbracket{#1}\rrbracket}
\def\LRbr#1{\left\llbracket{#1}\right\rrbracket}
\def\smsize#1{\begin{small}#1\end{small}}
\def\lra{\longrightarrow}
\def\Lra{\Longrightarrow}
\def\Llra{\Longleftrightarrow}
\def\al{\alpha}
\def\be{\beta}
\def\de{\delta}
\def\ga{\gamma}
\def\la{\lambda}
\def\om{\omega}
\def\si{\sigma}
\def\th{\theta}
\def\ze{\zeta}
\def\De{\Delta}
\def\Ga{\Gamma}
\def\Si{\Sigma}
\def\bA{\mathbb A}
\def\c{\mathbf c}
\def\C{\mathbb C}
\def\cC{\mathcal C}
\def\nc{\textnormal{c}}
\def\ntc{\tilde{\textnormal{c}}}
\def\ntC{\tilde{\textnormal{C}}}
\def\bD{\mathbf D}
\def\bfd{\mathbf d}
\def\E{\mathbf e}
\def\tnE{\textnormal{E}}
\def\bG{\mathbb G}
\def\fL{\mathfrak L}
\def\H{\mathcal H}
\def\bfH{\mathbf H}
\def\I{\mathfrak i}
\def\cI{\mathcal I}
\def\L{\mathcal L}
\def\M{\mathfrak M}
\def\bM{\mathbf M}
\def\cM{\mathcal M}
\def\N{\mathcal N}
\def\O{\mathcal O}
\def\P{\mathbb P}
\def\cP{\mathcal P}
\def\Q{\mathbb Q}
\def\R{\mathbb R}
\def\fR{\mathfrak R}
\def\bfS{\mathbf S}
\def\cS{\mathcal S}
\def\bft{\mathbf t}
\def\T{\mathbb T}
\def\U{\mathfrak U}
\def\V{\mathcal V}
\def\cY{\mathcal Y}
\def\Z{\mathbb Z}
\def\cZ{\mathcal Z}
\def\a{\mathbf{a}}
\def\b{\mathbf{b}}
\def\d{\mathfrak d}
\def\tnd{\textnormal{d}}
\def\p{\mathbf{p}}
\def\q{\mathbf{q}}
\def\x{\mathbf{x}}
\def\0{\mathbf{0}}
\def\nua{\nu_{\a}}
\def\i{\infty}
\def\hb{\hbar}
\def\tnd{\textnormal{d}}
\def\dim{\textnormal{dim}}
\def\ev{\textnormal{ev}}
\def\val{\textnormal{val}}
\def\Edg{\textnormal{Edg}}
\def\Ver{\textnormal{Ver}}
\def\1{\mathbf 1}
\def\dset{\llfloor d\rrfloor}
\def\nset{\llfloor n\rrfloor}
\def\eset{\emptyset}
\def\Res#1{\underset{#1}{\fR}}
\begin{document}

\thispagestyle{empty}

\title{The Genus 0 Gromov-Witten Invariants\\ of Projective Complete Intersections}
\author{Aleksey Zinger\thanks{Partially supported by DMS grant 0846978}}
\date{\today}

\maketitle

\begin{abstract}
We describe the structure of mirror formulas for genus 0 Gromov-Witten invariants
of projective complete intersections with any number of marked points
and provide an explicit algorithm for obtaining the relevant structure coefficients. 
The structural description alone suffices for some qualitative applications, 
such as vanishing results and the bounds on the growth of these invariants
predicted by R.~Pandharipande. 
\end{abstract}

\tableofcontents

\section{Introduction}
\label{intro_sec}

\noindent
Gromov-Witten invariants of a smooth projective variety~$X$
are certain counts of curves in~$X$.
In many cases, these invariants are known or conjectured to possess
rather amazing structure which is often completely unexpected from
the classical point of view.
For example, the genus~0 GW-invariants of a quintic threefold,
i.e.~a degree~5 hypersurface in~$\P^4$, are related by a so-called
\sf{mirror formula} to hypergeometric series.
This relation was explicitly predicted in~\cite{CdGP}
and mathematically confirmed in~\cite{Be}, \cite{Ga}, 
\cite{Gi}, \cite{Le}, and~\cite{LLY}.
In fact, the prediction of~\cite{CdGP} has been shown to be 
a special case of closed formulas for 1-pointed genus~0 GW-invariants 
(counts of curves passing through one constraint)
of complete intersections of sufficiently small 
total multi-degree (\cite{Gi2}, \cite{LLY}).
It is shown in~\cite{bcov0} that closed formulas for 2-pointed genus~0 
GW-invariants of hypersurfaces are explicit transforms of the 1-pointed formulas;
this is extended to projective  complete intersections in \cite{Ch} and~\cite{PoZ}.\\

\noindent
The classical localization theorem of~\cite{ABo} reduces the computation 
of genus~0 GW-invariants of projective complete intersections to a sum over decorated graphs.
In this paper, we use the method of~\cite{bcov1} for breaking such graphs 
at special nodes to show that closed formulas for $N$-pointed genus~0
GW-invariants of projective complete intersections
 are explicit transforms of the 1-pointed formulas,
with the key link provided by the transform for the 2-pointed invariants 
obtained in~\cite{PoZ}.
We show that closed formulas for $N$-pointed genus~0 
GW-invariants of  projective  complete intersections, with $N\!\ge\!3$,
are linear combinations of $N$-fold products of derivatives of 1-pointed formulas
with coefficients that are polynomials of total degree at most~$N\!-\!3$.
While we describe two explicit ways of computing the coefficients of these polynomials,
the final formulas become rather complicated as~$N$ increases.
Nevertheless, our qualitative description of generating functions for $N$-pointed 
GW-invariants as linear combinations of $N$-fold products of derivatives
leads to some simple-to-state qualitative results concerning these invariants; 
see Theorems~\ref{GWbound_thm} and~\ref{GWeq0_thm} below.\\


\noindent
Throughout the paper $N\!\ge\!3$, $n\!\ge\!2$, and $l\!\ge\!0$ will be 
fixed integers and 
$$\a\equiv(a_k)_{k=1,2,\ldots,l}\equiv(a_1,\ldots,a_l)$$
a tuple of positive integers,
with $N$ and $\a$ denoting the number of marked points and  
the multi-degree of a fixed  complete intersection $X_{\a}\!\subset\!\P^{n-1}$,
respectively.
Let
\begin{gather*}
[N]=\{1,2,\ldots,N\},\\
|\a|=\sum_{k=1}^{k=l}a_k\,,\quad 
\|\a\|=\sum_{k=1}^{k=l}ka_k\,,\quad
\lr{\a}=\prod_{l=1}^{l=k}a_k\,, 
\quad \a^{\a}=\prod_{k=1}^{k=l}a_k^{a_k}, 
\quad \a!=\prod_{k=1}^{k=l}a_k!\,, 
\quad  \nua= n-|\a|.
\end{gather*}
For any nonnegative integer $d$, we denote by $\ov\M_{0,N}(X_{\a},d)$
the moduli spaces of genus~0 degree~$d$ $N$-marked stable maps to~$X_{\a}$. 
For each $s\!=\!1,\ldots,N$, let
$$\ev_s\!: \ov\M_{0,N}(X_{\a},d)\lra X_{\a}, \qquad
\psi_s\equiv c_1(L_s^*)\in H^2\big(\ov\M_{0,N}(X_{\a},d)\big),$$
be the evaluation map 
and the first chern of  the universal tangent line bundle at the $s$-th marked point.
Denote by  $H\!\in\!H^2(\P^{n-1})$ denote the hyperplane class.\\

\noindent
The main theorem of this paper, Theorem~\ref{main_thm} in Section~\ref{multipt_subs}, 
provides closed formulas for generating functions for genus~0 GW-invariants 
\BE{GWXadfn_e}\blr{\tau_{b_1}H^{c_1},\ldots,\tau_{b_N}H^{c_N}}_{0,d}^{X_{\a}}
\equiv \int_{[\ov\M_{0,N}(X_{\a},d)]^{vir}}    
\prod_{s=1}^{s=N}\!\!\big(\psi_s^{b_s}\ev_s^*H^{c_s}\big)\EE
of a complete intersection $X_{\a}\!\subset\!\P^{n-1}$ of multi-degree~$\a$
with $|\a|\!\le\!n$.
The precise statement of these formulas is quite involved
and is thus deferred until Section~\ref{mainthm_subs}.
We instead begin by describing some qualitative corollaries of Theorem~\ref{main_thm},
Theorems~\ref{GWbound_thm} and~\ref{GWeq0_thm}, and 
special cases, Theorems~\ref{cy_thm} and~\ref{proj_thm}.

\begin{mythm}\label{GWbound_thm}
If $n\!\in\!\Z^+$, $\a\!\in\!(\Z^+)^l$, and $X_{\a}\!\subset\!\P^{n-1}$ is a complete intersection
of multi-degree~$\a$, there exists $C_{\a}\!\in\!\R^+$ such~that 
$$\big|\blr{b_1!\,\tau_{b_1}H^{c_1},\ldots,b_N!\,\tau_{b_N}H^{c_N}}_{0,d}^{X_{\a}}\big|
\le  N!\,C_{\a}^{N+d}
\qquad\forall\,N\!\in\!\Z^+,~d,b_1,\ldots,b_N,c_1,\ldots,c_N\!\in\!\Z.$$
\end{mythm}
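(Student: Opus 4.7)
The plan is to reduce the bound to known growth estimates on the one-pointed GW-invariants via the structural description promised in the paper's main theorem (Theorem~\ref{main_thm}). That theorem presents the generating function for the $N$-pointed invariants as a finite linear combination of $N$-fold products of derivatives of the one-pointed generating function, with coefficients that are polynomials of total degree at most $N-3$. Schematically,
\[
\sum_{d,b_s,c_s}\!\!\blr{\tau_{b_1}H^{c_1},\ldots,\tau_{b_N}H^{c_N}}_{0,d}^{X_\a}\prod_{s=1}^{N}t_s^{b_s}q^d \;=\; \sum_{I} P_I(t_1,\ldots,t_N)\prod_{s=1}^{N} Z_1^{(k_{I,s})}(t_s,q),
\]
where the one-pointed series $Z_1$ is controlled by an explicit hypergeometric expression. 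If the right-hand side can be estimated coefficient-wise with the correct constants, the bound follows.

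First I would record the standard growth estimate for the one-pointed invariants: the Taylor coefficients of $Z_1$ and of any fixed-order derivative $Z_1^{(k)}$ satisfy $|b!\,[t^b q^d]\,Z_1^{(k)}|\le C_\a^{1+b+d}$ uniformly in $b$ and $d$, for a constant depending only on $\a$ (and on the finite range of derivative orders $k$ that appear in the decomposition above). This is directly available from the explicit hypergeometric representation of the one-pointed mirror formula of~\cite{Gi2},~\cite{LLY}. Second I would extract the coefficient of $t_1^{b_1}\cdots t_N^{b_N}q^d$ on both sides. A monomial of $P_I$ distributes a total degree of at most $N-3$ among the $N$ one-pointed factors; the number of such monomials is at most $\binom{2N-3}{N}\le 4^N$, which is harmlessly absorbed into $C_\a^N$. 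Multiplying through by $b_1!\cdots b_N!$ matches the product of the $b_s!$ factors on the left with the one-pointed growth bound, producing a right-hand side of the shape $(\#\sum_I)\cdot C_\a^{N+d}$.

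The main obstacle is thus combinatorial rather than analytic: I must count the number of summands in $\sum_I$ and bound the coefficients of the polynomials $P_I$ uniformly in~$N$. Tracing the graph-breaking procedure of~\cite{bcov1} that underlies Theorem~\ref{main_thm}, one expects the indexing set to grow factorially in~$N$, reflecting a sum over ordered or rooted tree-like structures with $N$ leaves; this is precisely the source of the $N!$ on the right of the claimed bound. The delicate point will be verifying that all remaining constants produced by the recursion are absorbable into a single $C_\a$ depending only on~$\a$, independently of~$N$, $d$, and the insertion indices. Once that bookkeeping is done, no further geometric input is required.
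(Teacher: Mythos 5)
Your high-level strategy for the main case agrees with the paper's: for $|\a|\!\le\!n$ and $N\!\ge\!3$ the bound is indeed extracted from Theorem~\ref{main_thm} by combining coefficient estimates for the one-pointed series with a bound on the structure coefficients, and the $N!$ does come from a sum over trivalent $N$-marked trees (Lemma~\ref{graphnum_lmm}: $\sum_\Ga\prod_v m_v!\le C^NN!$). But there are two genuine gaps. First, the theorem is asserted for \emph{every} multidegree $\a$, while Theorem~\ref{main_thm} and the mirror formulas it rests on only exist when $\nua\!\ge\!0$; your argument says nothing about $|\a|\!>\!n$. In that range there is no hypergeometric description at all, and the paper argues separately: for $d$ large the virtual dimension of $\ov\M_{0,0}(X_\a,d)$ is negative, so only finitely many invariants are nonzero modulo the string, dilaton and divisor relations, and the bound follows by induction on insertions of $\tau_0H^0$, $\tau_0H^1$, $\tau_1H^0$. (The $N\!=\!1,2$ cases of $|\a|\!\le\!n$ also fall outside Theorem~\ref{main_thm} and are handled directly from \e_ref{Z1pt_e} and \e_ref{Z2pt_e}, but that is a minor adjustment to your scheme.)

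Second, and more seriously, the step you defer---``verifying that all remaining constants produced by the recursion are absorbable into a single $C_\a$''---is exactly Proposition~\ref{coeffbound_prp}, $|\nc_{\p,\b}^{(d,0)}|\le (N!/\b!)\,C_\a^{N+d}$, and it constitutes the bulk of the paper's proof; it is not merely a count of summands. The coefficients in \e_ref{mainthm_e} are not numbers attached to finitely many monomials in $\hb_s^{-1}$: they are power series in $q$, and one must bound their $q^d$-coefficients by $C^{N+d}$ \emph{uniformly in $d$}. By \e_ref{ncCdfn2_e} these coefficients are built from the asymptotic-expansion coefficients $\Phi_b(q)$ of the hypergeometric series $F$, which enter up to order $b\le N\!-\!3$ and themselves grow factorially in $b$; controlling them requires real analytic input, namely a radius-of-convergence estimate for $L(q)$ obtained by locating the branch points of $z^n-qz^a=1$ outside the closed unit disk (Lemma~\ref{Lest_lmm}) and the bound $\big|\lrbr{\Phi_b/\Phi_0}_{q;d}\big|\le b!\,C_\a^b\lrbr{(1\!-\!C_\a q)^{-b}}_{q;d}$ proved by induction along the ODE hierarchy \e_ref{PhiODE}, with separate treatments of $0\!<\!|\a|\!<\!n$, $|\a|\!=\!n$, and $|\a|\!=\!0$ (Lemma~\ref{Phibound_lmm}, Corollary~\ref{Phibound_crl}). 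Only after these estimates are in place does the tree count deliver the $N!$, so describing the obstacle as ``combinatorial rather than analytic'' misplaces where the work lies. Relatedly, your one-point input should be taken in the form of Corollary~\ref{Fpest_crl}, $\big|\lrbr{\lrbr{\hat F_{(p')}}_{q;d}}_{w;p}\big|\le C_\a^d/(\nua d)!$, whose factorial decay in $\nua d$ is what absorbs the factors $b_s!$ (the dimension constraint forces $b_s$ to grow linearly in $d$ when $\nua>0$); this needs to be proved, e.g.\ via Stirling's formula as in Lemma~\ref{Fpest_lmm}, rather than quoted as ``directly available.''
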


\noindent
This bound holds for $d\!=\!0$, since
\begin{equation*}\begin{split}
\blr{\tau_{b_1}H^{c_1},\ldots,\tau_{b_N}H^{c_N}}_{0,0}^{X_{\a}}
&=\lr\a \bigg(\int_{\P^{n-1}}\!\!H^{c_1+\ldots+c_N+l}\bigg)
\bigg(\int_{\ov\cM_{0,N}}\!\psi_1^{b_1}\ldots\psi_N^{b_N}\bigg)\\
&=\lr\a\de_{c_1+\ldots+c_N,n-1-l}\binom{N\!-\!3}{b_1,\ldots,b_N}
\qquad\forall~c_1,\ldots,c_N\ge0, 
\end{split}\end{equation*}
where $\ov\cM_{0,N}$ is the Deligne-Mumford moduli space of genus 0 curves
with $N$ marked points.
For $d\!\in\!\Z^+$,
this theorem is proved in  Section~\ref{GWbound_app}, 
with the hardest cases ($|\a|\!\le\!n$, $N\!\ge\!3$) deduced from Theorem~\ref{main_thm}.
It in particular implies that for every Calabi-Yau complete intersection threefold
$X_{\a}\!\subset\!\P^{n-1}$ ($|\a|\!=\!n$, $l\!=\!n\!-\!4$)
there exists $C\!\in\!\R^+$ such that 
$$\big|\blr{}_{0,d}^{X_{\a}}\big|\le \frac{N!C^N}{d^N}\cdot C^d
\qquad\forall~d,N\!\in\!\Z^+\,;$$
for $N\!\le\!2$, this bound also follows from the one-point mirror formulas.
According to~\cite{MaP}, Theorem~\ref{GWbound_thm} and 
\cite[Theorem~1]{Gi01} should imply such bounds in all genera via~\cite{MaP0};
in turn, the latter imply that generating functions for GW-invariants of
any genus have positive radii of convergence, as expected from physical considerations.

\begin{mythm}\label{GWeq0_thm}
Suppose $n,N\!\in\!\Z^+$ with $N\!\ge\!3$, $\a\!\in\!(\Z^+)^l$,
$X_{\a}\!\subset\!\P^{n-1}$ is a complete intersection of multi-degree~$\a$,
and $(b_s)_{s\in[N]}$ and $(c_s)_{s\in[N]}$ are $N$-tuples of nonnegative integers.
If there exists $S\!\subset\![N]$ such that $b_s\!+\!c_s\!<\!\nua$
for every $s\!\in\!S$ and $\displaystyle\sum_{s\in S}b_s\!>\!N\!-\!3$, then
$$\blr{\tau_{b_1}H^{c_1},\ldots,\tau_{b_N}H^{c_N}}_{0,d}^{X_{\a}}=0.$$
\end{mythm}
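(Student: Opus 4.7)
The plan is to deduce the theorem from Theorem~\ref{main_thm} combined with the closed form for $d=0$ recorded just after Theorem~\ref{GWbound_thm}, via a pigeonhole argument on the polynomial-degree bound.

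First, the $d=0$ case is immediate: the formula following Theorem~\ref{GWbound_thm} gives
$$\lr{\tau_{b_1}H^{c_1},\ldots,\tau_{b_N}H^{c_N}}_{0,0}^{X_\a}=\lr\a\,\delta_{c_1+\ldots+c_N,\,n-1-l}\binom{N-3}{b_1,\ldots,b_N},$$
and the hypothesis $\sum_{s\in S}b_s>N-3$ forces $b_1+\ldots+b_N>N-3$, so the multinomial coefficient vanishes. The condition $b_s+c_s<\nu_\a$ is not needed here.

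For $d\ge 1$ I would invoke Theorem~\ref{main_thm} to express the generating series of the relevant invariants (with $c_1,\ldots,c_N$ fixed and $b_1,\ldots,b_N$ varying) as a finite sum of expressions of the form
$$P(\hbar_1,\ldots,\hbar_N)\prod_{s=1}^{s=N}\mathcal{Z}_{c_s}(\hbar_s,q),$$
where each $\mathcal{Z}_{c_s}(\hbar_s,q)$ is built from (derivatives of) the 1-pointed mirror formula attached to the $H^{c_s}$-insertion and $P$ is a polynomial in $\hbar_1,\ldots,\hbar_N$ of total degree at most $N-3$. The invariant $\lr{\tau_{b_1}H^{c_1},\ldots,\tau_{b_N}H^{c_N}}_{0,d}^{X_\a}$ is the coefficient of $q^d\prod_s\hbar_s^{b_s}$ in this sum. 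Expanding $P$ into monomials $\prod_s\hbar_s^{k_s}$ with $\sum_s k_s\le N-3$ redistributes the polynomial's exponents across the factors, so each such monomial contributes a term proportional to $\prod_s[\hbar_s^{b_s-k_s}]\mathcal{Z}_{c_s}(\hbar_s,q)$. The structural input to feed in here is the vanishing $[\hbar^b]\mathcal{Z}_c(\hbar,q)=0$ for $b\ge 1$ and $b+c<\nu_\a$, which should follow from the explicit hypergeometric $I$-function for $X_\a$ (whose $H^c$-coefficient, at each $d\ge 1$, is a polynomial in $\hbar^{-1}$ of degree controlled by $\nu_\a$) together with the mirror transformation.

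With that input, the hypothesis $\sum_{s\in S}b_s>N-3\ge\sum_s k_s$ forces by pigeonhole that in each monomial contribution there is at least one $s_0\in S$ with $b_{s_0}>k_{s_0}$; for that $s_0$, $b_{s_0}-k_{s_0}\ge 1$ and $(b_{s_0}-k_{s_0})+c_{s_0}\le b_{s_0}+c_{s_0}<\nu_\a$, so the $s_0$-th factor vanishes and the whole monomial contribution is zero, giving the claim. The main obstacle I expect is the precise verification of the 1-pointed vanishing property in the form required by Theorem~\ref{main_thm}: the statement must be phrased at the level of the specific series $\mathcal{Z}_c$ appearing in that theorem rather than at the level of raw 1-point invariants, since in low-dimensional borderline cases (such as $X_\a\cong\P^1$ with $\nu_\a=2$) individual 1-point invariants with $b+c<\nu_\a$ can be nonzero and a more refined bookkeeping is needed.
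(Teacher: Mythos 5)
Your proposal is correct and is essentially the paper's own argument (Remark~\ref{GWeq0_rmk}): Theorem~\ref{main_thm} together with the bound $|\b'|\le N-3$ on the polynomial coefficients, with your pigeonhole step being just the contrapositive of the paper's count that $b_s'=b_s$ for every $s\in S$ would force $|\b'|\ge\sum_{s\in S}b_s>N-3$. The one-point vanishing you flag as the main obstacle is exactly what the paper verifies in one line from~(\ref{GWcoeff_e}): a nonzero factor requires $p_s'(\bfd,\b')\le n-1$, hence $b_s+c_s\ge\nua d_s$, so $b_s+c_s<\nua$ forces $d_s=0$, and then the $q^0$-coefficient of $\hat{F}_{(p)}(w,q)$ being $w^p$ forces $b_s'=b_s$ --- so the vanishing does hold at the level of the series entering Theorem~\ref{main_thm} (with the mirror map built in), even though, as you note, raw one-point invariants with $b+c<\nua$ need not vanish.
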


\noindent
This theorem is  an immediate consequence of Theorem~\ref{main_thm}; 
see Remark~\ref{GWeq0_rmk}.
Because of the condition on $b_s$, the assumptions of this theorem are never
satisfied if $|\nua|\!=\!0,1$ (Calabi-Yau and borderline Fano cases).
For the same reason, it is most useful if $|\a|\!=\!0$ (projective case). 
For example,
\BE{GWeq0proj_e}\blr{\underset{N-2}{\underbrace{\tau_bH^{n-b},\ldots,\tau_bH^{n-b}}},
\cdot,\cdot}_{0,d}^{\P^n}=0
\qquad \forall\,N\!\ge\!3,\,b\!=\!1,2,\ldots,n.\EE
The $\P^1$-case of~\e_ref{GWeq0proj_e} follows from the dilaton relation \cite[p527]{MirSym}. 
For $n\!\ge\!2$, $\tau_bH^{n-b}$ is not a divisor on $\ov\M_{0,N}(\P^n,d)$ 
and there appears to be no direct geometric reason for the vanishing~\e_ref{GWeq0proj_e}.\\

\noindent
Theorems~\ref{GWbound_thm} and~\ref{GWeq0_thm} suggest the following conjectures.
Theorem~\ref{GWbound_thm} establishes the first conjecture for 
projective complete intersections and $g\!=\!0$.
The approach of~\cite{MaP} should remove the genus restriction and establish
the dependence of~$C_{X,g}$ on~$g$ and even on~$X$.
Theorem~\ref{GWeq0_thm} establishes the second conjecture 
for projective complete intersections.

\begin{myconj}\label{GWbound_conj}
If $(X,\om)$ is a compact symplectic manifold, $g\!\in\!\Z$, and 
$H_1,\ldots,H_k\!\in\!H^*(X)$, then there exists $C_{X,g}\!\in\!\R^+$ such~that 
$$\big|\blr{b_1!\,\tau_{b_1}H_{c_1},\ldots,b_N!\,\tau_{b_N}H_{c_N}}_{g,\be}^X\big|
\le  N!\,C_{X,g}^{N+\lr{\om,\be}}
\qquad\forall\,\be\!\in\!H_2(X),\,N,b_s\!\ge\!0,\,c_s\!\in\![k].$$
\end{myconj}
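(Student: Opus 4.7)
The plan is to bootstrap Theorem~\ref{GWbound_thm} to the full generality of the conjecture in two stages: (i) extend the bound from projective complete intersections to arbitrary compact symplectic targets at genus~$0$, and (ii) extend from genus~$0$ to arbitrary genus. Of these, (ii) is essentially sketched by the remark following Theorem~\ref{GWbound_thm}, while (i) is the substantive obstacle.

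For stage~(i), the most accessible sub-case is $X$ smooth projective. Here I would embed $X\!\hookrightarrow\!\P^{n-1}$ and relate its descendent invariants to those of a smooth complete intersection via either (a)~birational invariance of genus-$0$ descendent invariants for classes in the Fano range, applied to a birational modification of a complete intersection, or (b)~a degeneration to a normal-crossings pair whose components are (near-)complete intersections, applying the degeneration formula to transfer the bound with only a combinatorial multiplicative loss. To pass from smooth projective to arbitrary compact symplectic $X$, symplectic deformation invariance of GW invariants reduces the problem to a dense subclass of each deformation class, but one cannot simply invoke a projective embedding because not every compact symplectic manifold is deformation-equivalent to a projective one. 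For such $X$ I would attempt a direct analytic argument, controlling the virtual count of $J$-holomorphic maps by area and modulus estimates through a uniform Gromov-compactness statement, with the factorial dependence on $N$ arising from the combinatorics of the limiting stable nodal curves.

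Stage~(ii) then proceeds as indicated in the excerpt: Givental's $R$-matrix formula \cite[Theorem~1]{Gi01} expresses the genus-$g$ descendent potential as an explicit differential-operator action on the genus-$0$ potential, and the Maulik--Pandharipande machinery \cite{MaP,MaP0} converts this into a finite sum (for fixed $g$, $N$, $\be$) of products of genus-$0$ descendent integrals with combinatorial weights bounded by $C^{N+\lr{\om,\be}}$ for some $C\!=\!C_{X,g}$; feeding the genus-$0$ bound from stage~(i) into each factor and estimating the number of contributing terms yields the conjectured inequality. The main obstacle is the passage from projective to non-algebraic symplectic $X$ in stage~(i): the proofs of Theorems~\ref{main_thm} and~\ref{GWbound_thm} rest on Atiyah--Bott localization for the torus action on the ambient $\P^{n-1}$, which has no analogue in the purely symplectic setting, so a genuinely new analytic input seems unavoidable. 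A secondary technical obstacle is to verify that the Givental--MaP action in stage~(ii) preserves the $N!$-shape of the bound rather than introducing extra $N!$-factors at each $R$-matrix step; this requires careful tracking of how the $R$-matrix coefficients interact with dilaton-type recursions.
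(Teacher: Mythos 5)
The statement you are trying to prove is Conjecture~\ref{GWbound_conj}, and the paper offers no proof of it: it is stated as an open conjecture, motivated by Theorem~\ref{GWbound_thm}, which establishes only the special case of projective complete intersections in genus~$0$ (via the explicit mirror formulas of Theorem~\ref{main_thm} and the coefficient bounds of Section~\ref{GWbound_app}). The paper merely remarks that the approach of~\cite{MaP} ``should'' remove the genus restriction; it does not claim the general symplectic case at all. So there is no proof in the paper to compare yours against, and your proposal does not close the gap either --- it is a research program, and both of its stages rest on inputs that are not available.

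Concretely: in stage~(i), neither route (a) nor (b) works as stated even for smooth projective $X$. Genus-$0$ descendent GW invariants are not birational invariants in any generality that would let you transfer bounds from a complete intersection to an arbitrary projective variety, and the degeneration formula replaces absolute invariants by \emph{relative} invariants of the components with arbitrarily many relative insertions, whose growth is exactly the kind of bound you are trying to prove --- the argument is circular without an independent estimate on the relative theory. For non-algebraic symplectic $X$ you yourself concede that the localization machinery underlying Theorems~\ref{main_thm} and~\ref{GWbound_thm} has no analogue and that ``a genuinely new analytic input seems unavoidable''; no such input is supplied, and a uniform Gromov-compactness statement by itself controls neither the virtual multiplicities nor the $\psi$-class powers $b_s$, which enter the claimed bound only through the normalization $b_s!$. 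In stage~(ii), \cite[Theorem~1]{Gi01} applies to targets with (generically) semisimple quantum cohomology; it fails to cover precisely the hardest cases such as Calabi-Yau complete intersections, and the reference \cite{MaP} that would convert it into all-genus bounds is cited in the paper as work in progress, not as a theorem you can invoke. Until those two ingredients exist in usable form, the conjecture remains open, and your outline should be presented as a strategy with identified obstructions rather than as a proof.
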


\begin{myconj}\label{GWeq0_tonj}
Let $(X,\om)$ be a compact monotone symplectic manifold with minimal 
chern number~$\nu$.\footnote{Thus, $c_1(X)\!=\!\la[\om]\!\in\!H^2(X;\R)$ for
some $\la\!\in\!\R^+$ and $\nu$ is the minimal value of $c_1(X)$
on the homology classes representable by non-constant $J$-holomorphic maps $S^2\!\lra\!X$
for every $\om$-compatible almost complex structure on~$X$.}
If $N\!\ge\!3$, $(b_s)_{s\in[N]}$ and $(c_s)_{s\in[N]}$ are $N$-tuples of nonnegative integers,
and $H_s\!\in\!H^{2c_s}(X)$ for every $s\!\in\![N]$,
then
$$\blr{\tau_{b_1}H_1,\ldots,\tau_{b_N}H_N}_{0,\be}^X=0$$
if there exists $S\!\subset\![N]$ such that $b_s\!+\!c_s\!<\!\nua$
for every $s\!\in\!S$ and $\displaystyle\sum_{s\in S}b_s\!>\!N\!-\!3$.
\end{myconj}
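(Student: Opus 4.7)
The plan is to extend the argument of Theorem~\ref{GWeq0_thm} to arbitrary compact monotone symplectic targets by replacing the projective mirror-formula reduction (unavailable symplectically) with the genus-zero topological recursion relations (TRR), which hold at the level of virtual fundamental classes for any compact symplectic~$X$.  I would proceed by double induction, ordered lexicographically on $(N,\sum_{s\in S}b_s)$ with $N\!\ge\!3$.

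The degree-zero case is immediate:  the classical formula gives
$$\blr{\tau_{b_1}H_1,\ldots,\tau_{b_N}H_N}_{0,0}^X
=\bigg(\int_X H_1\!\cup\!\cdots\!\cup\!H_N\bigg)\bigg(\int_{\ov\cM_{0,N}}\!\!\psi_1^{b_1}\!\cdots\!\psi_N^{b_N}\bigg),$$
and the tautological $\psi$-integral vanishes since $\sum_{s=1}^N b_s\!\ge\!\sum_{s\in S}b_s\!>\!N\!-\!3\!=\!\dim\ov\cM_{0,N}$.  For $\be\!\ne\!0$, I would pick $s_0\!\in\!S$ with $b_{s_0}\!\ge\!1$ (which exists since $\sum_S b_s\!>\!N\!-\!3\!\ge\!0$) and two auxiliary marked points $s_1,s_2\!\in\![N]\!\setminus\!\{s_0\}$ (available as $N\!\ge\!3$); the genus-zero TRR at $s_0$ with respect to $s_1,s_2$ rewrites the invariant as
$$\sum_{\substack{I\sqcup J=[N]\\s_0\in I,\,s_1,s_2\in J\\\be_1+\be_2=\be}}
\blr{\tau_{b_{s_0}-1}H_{s_0},\prod_{s\in I\setminus\{s_0\}}\!\tau_{b_s}H_s,T_\mu}_{0,\be_1}^X
g^{\mu\nu}
\blr{T_\nu,\prod_{s\in J}\tau_{b_s}H_s}_{0,\be_2}^X,$$
where $\{T_\mu\}$ is a homogeneous basis of $H^*(X)$ dual to $\{T^\mu\}$ under the Poincar\'e pairing.

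The inductive step is to show that every summand vanishes.  On the $\be_1$-side the descendant index at~$s_0$ drops by~$1$ while $s_0$ remains ``special''; on the $\be_2$-side $s_0$ is removed, and each marked point originally in~$S$ retains its descendant on whichever side it lands.  Hence the induction-controlling sum strictly decreases on both sides, provided the hypothesis $\sum b_s'\!>\!(\text{number of marked points})\!-\!3$ can be maintained on at least one side.  Monotonicity gives $\lr{c_1(X),\be_i}\!\ge\!\nu$ whenever $\be_i\!\ne\!0$, providing the ``quantum of Chern number'' that plays the role of~$\nua$ in the projective case; combined with $b_{s_0}\!-\!1\!+\!c_{s_0}\!<\!\nu$, a virtual-dimension count on the $\be_1$-factor should force~$T_\mu$ into high real degree and then push the vanishing onto the $\be_2$-factor through~$g^{\mu\nu}$.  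The principal obstacle will be making this bookkeeping uniform across all splittings --- in particular, handling the degenerate cases $\be_1\!=\!0$ or $\be_2\!=\!0$ (where one side reduces to a classical integral on $X$ or invokes an unstable-moduli convention for 2-pointed degree-zero invariants), and verifying that the critical strict inequality on descendants survives on at least one side of every splitting not already forced to vanish by the dimension count.  I expect this closure-of-the-induction step to be the technical heart of the argument.
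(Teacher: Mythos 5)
You should first be clear that the statement you are proving is Conjecture~\ref{GWeq0_tonj}: the paper contains no proof of it. What the paper establishes is only its specialization to projective complete intersections, namely Theorem~\ref{GWeq0_thm}, and that by a completely different mechanism: the vanishing is read off from the structure of the mirror formula of Theorem~\ref{main_thm} (the bound $|\b|\le N\!-\!3$ on the structure constants, see Remark~\ref{GWeq0_rmk}), and the paper even remarks that no direct geometric reason for the vanishing (\ref{GWeq0proj_e}) seems to be available. So there is no proof in the paper for your argument to coincide with; your TRR route is necessarily different, and if completed it would be strictly stronger, since it would cover all compact monotone targets rather than reproduce the known special case.

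As written, however, your proposal has a genuine gap: the decisive step, which you call the ``closure of the induction,'' is only announced, not carried out, and until it is, this is a strategy rather than a proof. In fact it does appear to close, and more simply than you suggest. With $A=\sum_{s\in S\cap I}b_s$ and $B=\sum_{s\in S\cap J}b_s$, the hypothesis $A+B\ge N\!-\!2$ forces, for every splitting, either $A\ge|I|$ (so the first factor, with $b_{s_0}$ lowered by one and the subset $S\cap I$, satisfies the hypothesis for its $|I|\!+\!1$ marked points) or $B\ge|J|\!-\!1$ (so the second factor does for its $|J|\!+\!1$ points); when $|I|\ge2$ both factors have between $3$ and $N\!-\!1$ marked points, so induction on $N$ applies to whichever side this pigeonhole selects, and a factor with $\beta_i=0$ vanishes classically because its total $\psi$-degree exceeds $\dim\ov\cM_{0,m}$. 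The only splittings not reachable by induction are $I=\{s_0\}$, and there your dimension count must be made explicit and is decisive on its own: stability forces $\beta_1\ne0$, nonemptiness of $\ov\M_{0,2}(X,\beta_1)$ plus monotonicity give $\lr{c_1(X),\beta_1}\ge\nu$, and a nonzero $\blr{\tau_{b_{s_0}-1}H_{s_0},T_\mu}_{0,\beta_1}$ would need $T_\mu$ of complex degree $\dim X+\lr{c_1(X),\beta_1}-b_{s_0}-c_{s_0}>\dim X$, which is impossible; nothing has to be ``pushed through $g^{\mu\nu}$,'' and no degree-zero two-point factor ever occurs in the TRR, since such strata are unstable. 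Beyond writing this out, two repairs are required: the statement you induct on must be strengthened to allow arbitrary (in particular odd-degree) cohomology insertions, with arbitrary $\psi$-powers, at the marked points outside $S$, because $T_\mu,T_\nu$ are arbitrary and the conjecture as stated only allows even classes $H_s\in H^{2c_s}(X)$; and you must cite, rather than assume, the genus-0 splitting axiom and topological recursion relation for descendant invariants of compact monotone (hence semipositive) symplectic manifolds. Finally, ``$\nu_{\a}$'' in the conjecture should of course be read as the minimal Chern number~$\nu$.
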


\noindent
The genus 0 GW-invariants of a complete intersection $X_{\a}\!\subset\!\P^{n-1}$
are related to certain twisted GW-invariants of~$\P^{n-1}$.
Let
$$\xymatrix{\U \ar[d]^{\pi} \ar[r]^{\ev} & \P^{n-1} \\
\ov\M_{0,N}(\P^{n-1},d)}$$
be the universal curve over $\ov\M_{0,N}(\P^{n-1},d)$.
The GW-invariants of~\e_ref{GWXadfn_e} then satisfy
\BE{genus0_e}
\blr{\tau_{b_1}H^{c_1},\ldots,\tau_{b_N}H^{c_N}}_{0,d}^{X_{\a}}
= \int_{\ov\M_{0,N}(\P^{n-1},d)}
\prod_{k=1}^{k=l}e\big(\pi_*\ev^*\O_{\P^{n-1}}(a_k)\big)
\prod_{s=1}^{s=N}\!\!\big(\psi_s^{b_s}\ev_s^*H^{c_s}\big).\EE
Since the moduli space $\ov\M_{0,N}(\P^{n-1},d)$ is a smooth stack (orbifold) and
$$\bigoplus_{k=1}^{k=l}\pi_*\ev^* \O_{\P^{n-1}}(a_k) \lra\ov\M_{0,N}(\P^{n-1},d)$$
is a locally free sheaf, i.e.~the sheaf of sections of  
a vector orbi-bundle $\V_d$ over $\ov\M_{0,N}(\P^{n-1},d)$,
the right-hand side of~\e_ref{genus0_e} is well-defined;
its computation will be the main focus of this paper.
In~\e_ref{Zdfn_e}, we combine all GW-invariants \e_ref{genus0_e} with fixed $N$
into a generating function.
We show that for $N\!\ge\!3$ this generating function  is a certain transform
of the $N\!=\!1$ generating function.\\

\noindent
The general principles of this paper are valid for all~$\a$, but
the explicit expressions for the transforms apply only for $\nua\!\ge\!0$.
As the known 1-pointed (and 2-pointed) formulas, they involve the hypergeometric series 
\BE{Fdfn_e}
F(w,q)\equiv \sum_{d=0}^{\i}q^d 
\frac{w^{\nua d} \prod\limits_{k=1}\limits^{k=l}\prod\limits_{r=1}\limits^{a_kd}
(a_kw\!+\!r)}{\prod\limits_{r=1}\limits^{r=d}((w\!+\!r)^n-w^n)}\,.
~\footnote{For the purposes of Theorems~\ref{cy_thm} and~\ref{proj_thm}, 
the term $w^n$ can be dropped from the definition of~$F$.} 
\EE
This power series in $q$ is an element of $1\!+\!q\Q(w)[[q]]$ such that 
the coefficient of each power of~$q$ is holomorphic at $w\!=\!0$.
The subgroup 
$$\cP\subset 1+q\Q(w)[[q]]$$
of such power series is preserved by the operator
$$\bM\!:1+q\Q(w)[[q]]\lra 1+q\Q(w)[[q]], \qquad 
\{\bM H\}(w,q)=\bigg\{1+\frac{q}{w}\frac{\tnd}{\tnd q}\bigg\}
\bigg(\frac{H(w,q)}{H(0,q)}\bigg).$$
We define $I_c\!\in\!1\!+\!q\Q[[q]]$ for  $c\!=\!0,1,\ldots$ and $J\!\in\!q\Q[[q]]$ by
\BE{Icdfn_e}\begin{split}
I_c(q)&=\begin{cases} 1,&\hbox{if}~|\a|\!<\!n;\\
\{\bM^cF\}(0,q),&\hbox{if}~|\a|\!=\!n;\end{cases} \\
J(q)&= \begin{cases}
0,&\hbox{if}~|\a|\!\le\!n\!-\!2;\\
\a!q,&\hbox{if}~|\a|\!=\!n\!-\!1;\\
\frac{1}{I_0(q)}\sum\limits_{d=1}^{\i}q^d\left(\frac{\prod\limits_{k=1}^{k=l}(a_kd)!}{(d!)^n}
\left(\sum\limits_{k=1}^{k=l}\sum\limits_{r=d+1}^{a_kd}\frac{a_k}{r}\right)\right) 
,&\hbox{if}~|\a|\!=\!n.\end{cases}
\end{split}\EE
The power series $J(q)$ is the coefficient of $w$ in the power series expansion of 
$F(w,q)/I_0(q)$ at $w\!=\!0$; thus, $I_1(q)\!=\!1\!+\!q\frac{\tnd}{\tnd q}J(q)$ if $|\a|\!\neq\!n\!-\!1$.
Similarly to the 1- and 2-pointed cases, 
the explicit expressions of Theorem~\ref{main_thm} for generating functions
for $N\!\ge\!3$ involve the power series $I_0,I_1,\ldots,I_{n-l}$ and~$J$;
see Section~\ref{CY_subs} for some examples.\\

\noindent
The author would like to thank D.~Maulik, R.~Pandharipande, and V.~Shende for 
many enlightening discussions related to this paper.

\subsection{The Calabi-Yau case}
\label{CY_subs}

\noindent
If $|\a|\!=\!n$, $X_{\a}$ is a Calabi-Yau $(n\!-\!1\!-\!l)$-fold and
the virtual dimension of~$\ov\M_{0,N}(X_{\a},d)$ and 
of the space of $N$-marked rational curves in~$X_{\a}$,
$$\dim^{vir}\ov\M_{0,N}(X_{\a},d)= n\!-\!4\!-\!l+N,$$
is independent of $d$.
If $c_1,\ldots,c_N$ are nonnegative integers such that 
$$c_1+\ldots+c_N=n\!-\!4\!-\!l+N,$$
the corresponding genus~0 degree~$d$ GW-invariant of $X_{\a}$,
\BE{GWofCYdfn_e} N_d^{X_{\a}}(c_1,\ldots,c_N)\equiv 
\int_{[\ov\M_{0,N}(X_{\a},d)]^{vir}} 
\big(\ev_1^*H^{c_1}\big)\ldots\big(\ev_N^*H^{c_N}\big),\EE
is a rational number. 
These numbers define BPS states of $X_{\a}$ via \cite[(2)]{KP},
that are intended to be virtual counts of curves (rather than maps)
and are conjectured to be integer (see also Footnote~\ref{GWint_ftnt}).
For a sufficiently small value of the degree~$d$, 
the corresponding BPS number is known to be the number of 
rational degree~$d$ curves in a general complete intersection 
of multi-degree~$\a$ that
pass through general linear subspaces of codimensions $c_1,\ldots,c_N$.\\

\noindent
Theorem~\ref{main_thm} yields fairly simple closed formulas for 
the numbers~\e_ref{GWofCYdfn_e} with $N\!=\!3,4$.
Theorem~\ref{cy_thm} below follows immediately from  \e_ref{genus0_e},
\e_ref{Zdfn_e}, \e_ref{mainthm_e},  \e_ref{Depdfn_e}, \e_ref{Flpdfn_e},  
\e_ref{coeff3CY_e}, \e_ref{coeff4CY_e}, \e_ref{bAcases_e}, \e_ref{pdual_e},
\e_ref{pmoddfn_e}, \e_ref{Ldfn_e2}, \e_ref{Iprop_e1}, 
and~\e_ref{Iprop_e3}.\footnote{\e_ref{coeff3CY_e} is needed for \e_ref{pt3_e} only;
\e_ref{coeff4CY_e}, \e_ref{bAcases_e}, \e_ref{pdual_e},
\e_ref{pmoddfn_e}, and \e_ref{Iprop_e3} are needed for \e_ref{pt4_e} only}

\begin{mythm}\label{cy_thm}
Suppose $n\!\in\!\Z^+$, $X_{\a}\!\subset\!\P^{n-1}$ is a nonsingular Calabi-Yau 
complete intersection of multi-degree~$\a$,  
$I_c$ and $J$ are given by~\e_ref{Icdfn_e}, and $Q=q\cdot e^{J(q)}\in q\Q[[q]]$.
If $c_1,c_2,c_3$ are nonnegative integers such that 
$c_1\!+\!c_2\!+\!c_3\!=\!n\!-\!1\!-\!l$, then
\BE{pt3_e} \sum_{d=0}^{\i}Q^dN_d^{X_{\a}}(c_1,c_2,c_3)
=\frac{\lr{\a}}{(1\!-\!\a^{\a}q)I_0(q)^2
\prod\limits_{s=1}^{s=3}\prod\limits_{c=1}^{c=c_s}\!I_c(q)}\,.\EE
If $c_1,c_2,c_3,c_4$ are nonnegative integers such that 
$c_1\!+\!c_2\!+\!c_3\!+\!c_4\!=\!n\!-\!l$, then
\BE{pt4_e}\begin{split}
\sum_{d=0}^{\i}Q^dN_d^{X_{\a}}(c_1,c_2,c_3,c_4)
=\frac{\lr{\a}}{(1\!-\!\a^{\a}q)I_0^2(q)
\prod\limits_{s=1}^{s=4}\prod\limits_{c=1}^{c=c_s}\!I_c(q)} \Bigg\{
\frac{n\!-\!l\!-\!2c_4}{2}\bigg(\frac{\a^{\a}q}{1\!-\!\a^{\a}q}
-2\frac{I_0'(q)}{I_0(q)}\bigg)\quad&\\
+\sum_{s=1}^{s=4}\frac{\cS_{c_s}'(q)}{\cS_{c_s}(q)}
-\frac{\cS_{c_1+c_2}'(q)}{\cS_{c_1+c_2}(q)}
-\frac{\cS_{c_1+c_3}'(q)}{\cS_{c_1+c_3}(q)}
-\frac{\cS_{c_2+c_3}'(q)}{\cS_{c_2+c_3}(q)}\Bigg\}\,&,
\end{split}\EE  
where $'$ denotes the operator $q\frac{\tnd}{\tnd q}$ and
$\cS_c=I_1^{c-1}I_2^{c-2}\ldots I_c^0$.
\end{mythm}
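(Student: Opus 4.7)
The plan is to derive Theorem \ref{cy_thm} as a direct specialization of the main theorem (Theorem \ref{main_thm}) to the Calabi-Yau case $|\a|\!=\!n$ with $N\!=\!3,4$. First I would use \e_ref{genus0_e} to rewrite each GW invariant $N_d^{X_{\a}}(c_1,\ldots,c_N)$ as a twisted intersection number on $\ov\M_{0,N}(\P^{n-1},d)$ involving the Euler class of the bundle $\V_d$, and then assemble these numbers into the $N$-pointed generating function defined in \e_ref{Zdfn_e}.

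Next, I would apply the structural formula \e_ref{mainthm_e}, which expresses the $N$-pointed generating function as a linear combination of $N$-fold products of derivatives of the one-pointed formula with polynomial coefficients of total degree at most $N\!-\!3$. For $N\!=\!3$ the coefficient polynomial is a single constant; its value in the Calabi-Yau case is supplied by \e_ref{coeff3CY_e}. Substituting the explicit form of the one-pointed derivatives via \e_ref{Depdfn_e}, \e_ref{Flpdfn_e}, and the evaluation identities \e_ref{Iprop_e1} and \e_ref{Ldfn_e2} collapses this product into the right-hand side of \e_ref{pt3_e}, with the mirror map $Q\!=\!q\,e^{J(q)}$ absorbing all $J$-dependence. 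For $N\!=\!4$ the coefficient polynomial has degree one, its coefficients being given by \e_ref{coeff4CY_e}; combining these with \e_ref{bAcases_e}, \e_ref{pdual_e}, \e_ref{pmoddfn_e}, and \e_ref{Iprop_e3} produces both the leading $\frac{n-l-2c_4}{2}$ term and the six $\cS'/\cS$ summands appearing in \e_ref{pt4_e}.

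I expect the main obstacle to be the four-point bookkeeping. The challenge is to organize the polynomial coefficients from \e_ref{coeff4CY_e} so that the pair combinations $\cS_{c_1+c_2},\,\cS_{c_1+c_3},\,\cS_{c_2+c_3}$ emerge naturally from the underlying combinatorial decomposition of four-leaf stable trees, and to verify that the resulting expression is $S_4$-symmetric in $(c_1,c_2,c_3,c_4)$ despite the singled-out role of $c_4$ in the first line of \e_ref{pt4_e}. Establishing this symmetry requires nontrivial identities relating $I_0'/I_0$, the various $\cS_c'/\cS_c$ terms, and the geometric-series factor $\a^{\a}q/(1\!-\!\a^{\a}q)$; verifying these should be the only step that is not essentially pure substitution.
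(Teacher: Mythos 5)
Your proposal is correct and follows essentially the same route as the paper: Theorem~\ref{cy_thm} is obtained there precisely by specializing Theorem~\ref{main_thm} to the Calabi-Yau case and substituting the explicit coefficients \e_ref{coeff3CY_e} and \e_ref{coeff4CY_e} together with \e_ref{Depdfn_e}, \e_ref{Flpdfn_e}, \e_ref{bAcases_e}, \e_ref{Ldfn_e2}, and Lemma~\ref{Iprp_lmm}, exactly the ingredients you list (for the primary four-point numbers only the $\b\!=\!\0$ coefficients survive, so \e_ref{coeff4CY_e} suffices there). The $S_4$-symmetry you flag as the remaining obstacle is handled in the paper by the identity \e_ref{cSsym_e}, which follows from \e_ref{Iprop_e1}--\e_ref{Iprop_e3} and \e_ref{Ldfn_e2}, so no further input is needed.
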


\noindent
Since $J(q)\!\in\!q\Q[[q]]$, there exists 
$\ti{J}(Q)\in Q\Q[[Q]]$ such that $q=Qe^{\ti{J}(Q)}$.
Thus, the relations~\e_ref{pt3_e} and~\e_ref{pt4_e} determine
the numbers $N_d^{X_{\a}}(c_1,c_2,c_3)$ and $N_d^{X_{\a}}(c_1,c_2,c_3,c_4)$, 
respectively.
Since
\BE{cSsym_e} \frac{\cS_c'(q)}{\cS_c(q)}=\frac{\cS_{n-l-c}(q)}{\cS_{n-l-c}(q)}
-\frac{n\!-\!l\!-\!2c}{2}\bigg(\frac{\a^{\a}q}{1\!-\!\a^{\a}q}
-2\frac{I_0'(q)}{I_0(q)}\bigg)
\qquad\forall\,c\!=\!0,1,\ldots,n\!-\!l\EE
by~\e_ref{Iprop_e1}-\e_ref{Iprop_e3} and \e_ref{Ldfn_e2}, \e_ref{pt4_e} is equivalent to 
\BE{pt4_e2}\begin{split}
\sum_{d=0}^{\i}Q^dN_d^{X_{\a}}(c_1,c_2,c_3,c_4)
=\frac{\lr{\a}}{(1\!-\!\a^{\a}q)I_0^2(q)
\prod\limits_{s=1}^{s=4}\prod\limits_{c=1}^{c=c_s}\!I_c(q)} \Bigg\{
c_1\bigg(\frac{\a^{\a}q}{1\!-\!\a^{\a}q}-2\frac{I_0'(q)}{I_0(q)}\bigg)\qquad&\\
+\sum_{s=1}^{s=4}\frac{\cS_{c_s}'(q)}{\cS_{c_s}(q)}
-\frac{\cS_{c_1+c_2}'(q)}{\cS_{c_1+c_2}(q)}
-\frac{\cS_{c_1+c_3}'(q)}{\cS_{c_1+c_3}(q)}
-\frac{\cS_{c_1+c_4}'(q)}{\cS_{c_1+c_4}(q)}\Bigg\}\,&.
\end{split}\EE  
By~\e_ref{cSsym_e}, RHS of~\e_ref{pt4_e} is symmetric in $c_1,c_2,c_3,c_4$, as expected.
By~\e_ref{pt4_e2}, 
$$N_d^{X_{\a}}(c_1,c_2,c_3,c_4)=0 \qquad\hbox{if}\quad 0\in\{c_1,c_2,c_3,c_4\},$$
as expected.
By \e_ref{pt3_e}, \e_ref{Iprop_e1}, \e_ref{Iprop_e2}, and~\e_ref{Ldfn_e2}, 
$$N_d^{X_{\a}}(c_1,c_2,c_3)=
\begin{cases} 
\lr{\a},&\hbox{if}~d\!=\!0;\\ 0,&\hbox{if}~d\!>\!0;
\end{cases}  \qquad\hbox{if}\quad 0\in\{c_1,c_2,c_3\}.$$
Since $I_1(q)\!=\!1\!+\!q\frac{\tnd}{\tnd q}J(q)$, \e_ref{pt3_e} and~\e_ref{pt4_e2}
immediately give
$$dN_d(c_2,c_3,c_4)=N_d^{X_{\a}}(1,c_2,c_3,c_4),$$
as expected from the divisor relation \cite[p527]{MirSym}.
By the divisor relation, \e_ref{pt3_e}, \e_ref{Iprop_e1}, \e_ref{Iprop_e2}, and~\e_ref{Ldfn_e2}, 
\begin{alignat*}{1}
\lr{\a}+\sum_{d=1}^{\i}Q^ddN_d^{X_{\a}}(c_1,c_2)&=\lr{\a}\frac{I_{c_1+1}(q)}{I_1(q)}
\qquad\hbox{if}\quad c_1\!+\!c_2\!=\!n\!-\!2\!-\!l;\\
\lr{\a}+\sum_{d=1}^{\i}Q^dd^2N_d^{X_{\a}}(n\!-\!3\!-\!l)&=\lr{\a}\frac{I_2(q)}{I_1(q)}  \,;
\end{alignat*}
these identities are equations (1.5) and (1.6) in \cite{PoZ}.\\

\noindent
The first true cases of \e_ref{pt3_e} and \e_ref{pt4_e} occur for Calabi-Yau
6-folds and 7-folds:
$$\big(n,\a,c_1,c_2,c_3\big)=\big(8,(8),2,2,2\big)
\quad\hbox{and}\quad 
\big(n,\a,c_1,c_2,c_3,c_4\big)=\big(9,(9),2,2,2,2\big).$$
Tables~\ref{H222_tbl}-\ref{X10_tbl} show 
some low-degree BPS counts obtained from \e_ref{pt3_e} and~\e_ref{pt4_e} 
via \cite[(2)]{KP} for all complete intersections $X_{\a}\!\subset\!\P^{n-1}$,
with $n\!\le\!10$, of suitable dimensions, with $H^{c_i}$ indicating that one of 
the constraints is a general linear subspace of $\P^{n-1}$ of codimension~$c_i$.
All degree~1 and~2 numbers agree with the corresponding lines and conics counts
obtained via classical Schubert calculus computations 
(the 3-pointed numbers for hypersurfaces can be found in~\cite{Ka}, which
also describes the classical methods).
The degree~3 numbers for the hypersurfaces $X_8$ and~$X_9$ agree with~\cite{ES};
the remaining degree~3 numbers can presumably be confirmed by similar computations.
The most noteworthy is the agreement of the 4-pointed numbers, since these do not 
naturally arise in the physics view of mirror symmetry as originally presented 
in~\cite{GMP}.\footnote{This viewpoint is extended to arbitrary number of marked
points in~\cite{Ba}.}
There are currently no direct methods of counting curves of degree~4 or higher
on projective complete intersections; so the numbers in these degrees
obtained from~\e_ref{pt3_e} and~\e_ref{pt4_e} cannot be compared to 
anything at this time.
Finally, all BPS counts computed from \e_ref{pt3_e} and~\e_ref{pt4_e} 
via \cite[(2)]{KP} for $d\!\le\!100$ and all compete intersections 
$X_{\a}\!\subset\!\P^{n-1}$ with $n\!\le\!10$ are integers,
as the case should be.\footnote{The genus 0 GW-invariants of CYs
with at least 3 marked points are integers; see 
\cite[Section~7.3]{McS} and~\cite{RT}.
Since the GW-BPS transform of \cite[(2)]{KP} is always lower-triangular
with 1's on the diagonal and integers everywhere else if the number
of marked points is at least~3, it follows that the BPS numbers
are integers as well in this case.\label{GWint_ftnt}}

\begin{table}[H]
\begin{center}\begin{small}\begin{tabular}{c||c|c|c|c}
\hline
$d$& 1& 2& 3& 4\\
\hline
$X_8$& 59021312& 821654025830400& 12197109744970010814464& 186083410628492378226388631552\\
$X_{27}$& 19133912& 52069545843672& 150771900962422866056& 448721851648931529402358688\\
$X_{36}$& 9303984& 9656915909184& 10669913703022812624& 12119013327306237518117376\\
$X_{45}$& 6536800& 4306289363200& 3019921285456823200& 2177140100777199737600000\\
$X_{226}$& 7036416& 4323279882240& 2819049510852887040& 1889305224389886741405696\\
$X_{235}$& 3936600& 1091194853400& 321105896368043400& 97128823290992207460000\\
$X_{244}$& 3252224& 699998060544& 159942140236292096& 37565431180080918822912\\
$X_{334}$& 2589408& 396151430400& 64359976334347296& 10748812573405031454720\\
\hline
\end{tabular}\end{small}\end{center}
\caption{Low-degree genus 0  BPS numbers $(H^2,H^2,H^2)$ for some Calabi-Yau 6-folds}
\label{H222_tbl}
\end{table}

\begin{table}[H]
\begin{center}\begin{tabular}{c||c|c|c}
\hline
$d$& 1& 2& 3\\
\hline
$X_9$& 1579510449& 506855012110118424& 174633921378662035929052320\\
$X_{28}$& 466477056& 25865899481481216& 1538349758855955308748800\\ 
$X_{37}$& 200848599& 3684692607275358& 72513809257771729565550\\
$X_{46}$& 122812416& 1209608310822912& 12780622639872867502080\\
$X_{55}$& 104480625& 841277146035000& 7266883194629367785000\\
\hline
\end{tabular}\end{center}
\caption{Low-degree genus 0 BPS numbers $(H^2,H^2,H^3)$ for some Calabi-Yau 7-folds}
\label{H223_tbl}
\end{table}

\begin{table}[H]
\begin{center}\begin{tabular}{c||c|c|c}
\hline
$d$& 1& 2& 3\\
\hline
$X_9$& 2395066806& 1718927099008463268& 957208127608222375829677128\\
$X_{28}$& 702562304& 86939314932416512& 8348345278919524413816832\\
$X_{37}$& 302321376& 12364886269091538& 392695531026064094763648\\
$X_{46}$& 184771584& 4056318495977472& 69156291871338627290112\\
$X_{55}$& 157178750& 2820556380767500& 39310596116635041745000\\
\hline
\end{tabular}\end{center}
\caption{Low-degree genus 0 BPS numbers $(H^2,H^2,H^2,H^2)$ for some Calabi-Yau 7-folds}
\label{X9_tbl}
\end{table}

\begin{table}[H]
\begin{center}\begin{small}\begin{tabular}{c||c|c|c}
\hline
$d$& 1& 2& 3\\
\hline
$(H^2,H^3,H^3)$&  51415320000& 444475303469701680000& 4089048226644406809222184680000\\
$(H^2,H^2,H^4)$&  38922224000& 295035175517918176000& 2467449594491156931046837776000\\
$(H^2,H^2,H^2,H^3)$& 75062592000& 1394799570099498816000& 20109980886063766606715932224000\\
\hline
\end{tabular}\end{small}\end{center}
\caption{Low-degree genus 0 BPS numbers for $X_{10}$  in $\P^9$}
\label{X10_tbl}
\end{table}

\subsection{The projective case}
\label{proj_subs}

\noindent
Throughout the paper,  we denote by $\bar\Z^+$ the set of nonnegative integers.
If $N,d,n\!\in\!\bar\Z^+$, let 
\begin{equation*}\begin{split}
\cP_N(d)&=\big\{\bfd\!\equiv\!(d_1,d_2,\ldots,d_N)\!\in\!(\bar\Z^+)^N\!:~
\sum_{s=1}^{s=N}d_s\!=\!d\big\};\\
\cP_N^n(d)&=\big\{\bfd\!\equiv\!(d_1,d_2,\ldots,d_N)\!\in\!\cP_N(d)\!:~
d_s\!<\!n~\forall\,s\!\in\![N]\big\}.
\end{split}\end{equation*}
For any $\p\!\in\!\cP_N^n(d)$, let
$$\llparenthesis\p\rrparenthesis=\min\big\{p_s\!+\!1,n\!-\!1\!-\!p_s\!:\,s\!\in\![N]\big\}.$$
If $(c_s)_{s\in[N]}\in(\bar\Z^+)^N$, let
$$\bblr{\prod_{s=1}^{s=N}\frac{H^{c_s}}{\hb_s\!-\!\psi}}^{\!\!\P^{n-1}}_{\!\!0,d}
=\sum_{b_1,b_2,\ldots,b_N\ge0}
\Bigg(\prod_{s=1}^{s=N}\hb_s^{-1-b_s}\Bigg)
\blr{\tau_{b_1}H^{c_1},\ldots,\tau_{b_N}H^{c_N}}^{\P^{n-1}}_{0,d}\,.$$
Theorem~\ref{main_thm} yields fairly simple closed formulas for 
the genus~0 GW-invariants of projective spaces with~3 and~4 insertions.
Theorem~\ref{proj_thm} below follows immediately from 
\e_ref{Zdfn_e}, \e_ref{mainthm_e}, \e_ref{coeffnon0_e},
\e_ref{coeff3CY_e}, \e_ref{coeff4proj_e}, 
\e_ref{Depdfn_e}, \e_ref{Flpdfn_e}, \e_ref{bDdfn_e}, 
and~\e_ref{Fdfn_e}.\footnote{in this case, 
$\ntc_{p,s}^{(d)}\!=\!\de_{0,d}\de_{p,s}$ in \e_ref{Flpdfn_e} and~\e_ref{coeff3CY_e};
\e_ref{coeff4proj_e} is needed for the second identity 
in Theorem~\ref{proj_thm} only}

\begin{mythm}\label{proj_thm}
The 3- and 4-pointed Gromov-Witten invariants of $\P^{n-1}$ are described~by
\begin{equation*}\begin{split}
&\sum_{p_1,p_2,p_3\ge0}\!\!\!
 \bblr{\prod_{s=1}^{s=3}\frac{H^{n-1-p_s}}{\hb_s\!-\!\psi}}^{\!\!\P^{n-1}}_{\!\!0,d}
\!\!\!\!H_1^{p_1}H_2^{p_2}H_3^{p_3}
=\sum_{d'=0}^{d'=1}
\sum_{\begin{subarray}{c}\bfd\in\cP_3(d-d')\\ \p\in\cP_3^n((2-d')n-2)\end{subarray}}
\prod_{s=1}^{s=3} \frac{(H_s\!+\!d_s\hb_s)^{p_s}}
{\hb_s\!\!\prod\limits_{r=1}^{d_s}(H_s\!+\!r\hb_s)^n}\,,\\
&\sum_{p_1,p_2,p_3,p_4\ge0}\!\!\!
 \bblr{\prod_{s=1}^{s=4}\frac{H^{n-1-p_s}}{\hb_s\!-\!\psi}}^{\!\!\P^{n-1}}_{\!\!0,d}
\!\!\!\!H_1^{p_1}H_2^{p_2}H_3^{p_3}H_4^{p_4}\\
&\hspace{1in}=\Bigg\{
\sum_{\begin{subarray}{c}\bfd\in\cP_3(d-1)\\ \p\in\cP_4^n(2n-4)\end{subarray}}
\!\!\!\!\!\!\!\!\llparenthesis\p\rrparenthesis
~~~+~~~\bigg(\sum_{s=1}^{s=4}\hb_s^{-1}\bigg)\sum_{d'=0}^{d'=2}
\sum_{\begin{subarray}{c}\bfd\in\cP_4(d-d')\\ \p\in\cP_4^n((3-d')n-3)\end{subarray}}
\Bigg\}
\prod_{s=1}^{s=4} \frac{(H_s\!+\!d_s\hb_s)^{p_s}}
{\hb_s\!\!\prod\limits_{r=1}^{d_s}(H_s\!+\!r\hb_s)^n}\,;
\end{split}\end{equation*}
both identities hold modulo $H_s^n$ and as power series in $\hb_s^{-1}$.
\end{mythm}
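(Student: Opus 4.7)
The plan is to specialize the structural formula of Theorem~\ref{main_thm} to the projective case $l=0$, $\a=\eset$, where $X_{\a}=\P^{n-1}$ and $\nua=n$. In this regime, since $|\a|<n$, definition \e_ref{Icdfn_e} gives $I_c(q)\equiv 1$ for all $c\ge0$ and $J(q)\equiv 0$, so the mirror map is trivial and $Q=q$. The hypergeometric series of \e_ref{Fdfn_e} reduces to
$$F(w,q)=\sum_{d\ge0}\frac{q^d\,w^{nd}}{\prod_{r=1}^{d}\!\big((w+r)^n-w^n\big)}\,,$$
whose $w$-expansion, after the formal substitution $w\leadsto H_s$, produces the Givental-style factors $\hb_s^{-1}\prod_{r=1}^{d_s}(H_s+r\hb_s)^{-n}$ with numerator $(H_s+d_s\hb_s)^{p_s}$ appearing on the right-hand sides.

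The second step is to identify, for $N=3$ and $N=4$ separately, which of the structure coefficients of Theorem~\ref{main_thm} actually contribute. By the footnote to Theorem~\ref{proj_thm}, in the projective case the flag data of \e_ref{Flpdfn_e} collapses to $\ntc_{p,s}^{(d)}=\de_{0,d}\de_{p,s}$, which makes the generating function of \e_ref{mainthm_e} split as an ordinary $N$-fold product of one-point functions weighted by the explicit structure polynomials from \e_ref{coeffnon0_e}, \e_ref{coeff3CY_e}, and \e_ref{coeff4proj_e}, evaluated via \e_ref{Depdfn_e} and \e_ref{bDdfn_e}.

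For $N=3$, the coefficient \e_ref{coeff3CY_e} carries no $\hb_s$-dependence and becomes a constant after the $I_c\equiv1$ simplification; this produces the two-term expression indexed by $d'\in\{0,1\}$, where $d'=0$ is the primary vertex contribution and $d'=1$ comes from breaking the graph at a single special node in the sense of~\cite{bcov1}. The codimension constraint $\p\in\cP_3^n((2-d')n-2)$ then reads off by equating the total cohomological degree $3(n-1)-\sum p_s$ with the virtual dimension of the relevant stratum. For $N=4$, \e_ref{coeff4proj_e} contains a term linear in $\sum_s\hb_s^{-1}$ produced by a $\psi$-class along an internal edge, together with a residual $\hb$-free contribution; these yield respectively the $(\sum_s\hb_s^{-1})$-factor with $d'\in\{0,1,2\}$ and the single leading sum indexed by $\cP_3(d-1)$ with combinatorial weight $\llparenthesis\p\rrparenthesis$. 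The latter weight arises from \e_ref{bDdfn_e} applied to a single-edge stable tree with three vertices separating the four marked points, after taking the minimum of $p_s\!+\!1$ and $n\!-\!1\!-\!p_s$ dictated by the projective truncation $H_s^n\!=\!0$.

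The main obstacle will be the careful bookkeeping of the two constraint sets $\cP_N(d-d')$ and $\cP_N^n((N-d')n-N)$: the former encodes the degree splitting dictated by the broken-graph decomposition, while the latter combines the projective truncation $H_s^n\!=\!0$ with the virtual dimension count on each boundary stratum. Matching the simplified form of \e_ref{coeff4proj_e} against the exact weight $\llparenthesis\p\rrparenthesis$ in the $N=4$ identity is the most delicate point; once this is aligned, the formulas of Theorem~\ref{proj_thm} follow by direct substitution into Theorem~\ref{main_thm}.
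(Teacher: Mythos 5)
Your proposal follows essentially the same route as the paper: Theorem~\ref{proj_thm} is obtained there precisely by specializing Theorem~\ref{main_thm} to $\a=\eset$ (so $I_c\equiv1$, $J\equiv0$, $\ntc^{(d)}_{p,s}=\de_{0,d}\de_{p,s}$, and $F_p=\bD^pF$) and substituting the explicit structure constants, so your plan is correct in substance. One bookkeeping slip worth fixing: \e_ref{coeff4proj_e} carries no $\hb_s^{-1}$-dependence -- it supplies only the $\b=\0$ coefficients $\nc^{(d',0)}_{\p,\0}$, which vanish for $d'=0,2$ and equal $\llparenthesis\p\rrparenthesis$ for $d'=1$ -- whereas the $\big(\sum_{s}\hb_s^{-1}\big)$ terms in the four-point identity arise from the coefficients $\nc^{(d',0)}_{\p,\b}$ with $|\b|=1$, given by the general $|\b|=N-3$ formula \e_ref{coeff3Fano_e} (equal to $1$ under the constraint $|\p|=(3-d')n-3$), not from \e_ref{coeff4proj_e}.
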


\noindent
Since the $d\!=\!1$ Gromov-Witten invariant counts lines in $\P^{n-1}$,
the $d\!=\!1$ case of the 4-pointed formula in Theorem~\ref{proj_thm} gives 
$$\blr{\si_{c_1}\si_{c_2}\si_{c_3}\si_{c_4},\bG(2,n)}
=\min\big\{c_s\!+\!1,n\!-\!1\!-\!c_s\!:\,s\!=\!1,2,3,4\big\}
\quad
\hbox{if}~c_s\!\in\!\bar\Z^+,\,\sum_{s=1}^{s=4}c_s=2n\!-\!4,$$
where $\si_c$ is the usual codimension~$c$ Schubert cycle on $\bG(2,n)$.
As pointed out to the author by A.~Buch, this identity can be confirmed by applying 
Pieri's rule \cite[p203]{GH} to $\si_{c_1}\si_{c_2}$ and~$\si_{c_3}\si_{c_4}$
and counting pairs of dual cycles in its outputs.
The $d\!=\!2$ case of the 4-pointed formula gives
$$\blr{H^{c_1},H^{c_2},H^{c_3},H^{c_4}}_{0,2}^{\P^{n-1}}=0.$$
This is indeed as expected, since every conic lies in a $\P^2$ \cite[p177]{GH}
and no~$\P^2$ meets general linear subspaces of $\P^{n-1}$ of total codimension~$3n$
(the space of planes meeting the constraints is the intersection of Schubert 
cycles in $\bG(3,n)$ of total codimension $3n\!-\!8$ and is thus empty).

\section{Main Theorem}
\label{mainthm_subs}

\noindent
In addition to the notation introduced at the beginning of Section~\ref{proj_subs},
for any $m,l\!\in\!\bar\Z^+$ we define
$$\llfloor{m}\rrfloor=\big\{s\!\in\!\bar\Z^+\!:\,s\!<\!m\big\},
\qquad \llfloor{m}\rrfloor_l=\big\{s\!\in\!\llfloor{m}\rrfloor\!:\,s\!\ge\!l\big\}.$$
We denote by $\cP_m([N])$ the set of unordered partitions $\bfS\!\equiv\!\{S_i\}_{i\in[m]}$ of $[N]$ 
into nonempty subsets~$S_i$ such that one of them is~$\{N\}$.\footnote{More precisely, 
$\cP_m([N])$ consists of unordered partitions with a choice of some ordering for each of the partitions.}
If $\p$ is an $N$-tuple of integers, $S\!\subset\![N]$, and $p'\!\in\!\Z$,
let $\p|_S$ and $\p p'$ denote the $S$-tuple consisting of the elements of~$\p$
indexed by~$S$ and the $(N\!+\!1)$-tuple obtained by adjoining~$p'$ to~$\p$
at the end, respectively, and~set
$$|\p|_S\equiv \big|\p|_S\big|\equiv\sum_{s\in S}p_s\,.$$
If $R$ is a ring and $\un{x}\!=\!(x_1,\ldots,x_N)$ is a tuple of variables,
let
$$R\big[\un{x}\big]=R[x_1,\ldots,x_N]$$
be the ring of polynomials in $x_1,\ldots,x_N$.
If $\Phi\!\in\!R[[q]]$ and $d\!\in\!\Z$, let $\lrbr{\Phi}_{q;d}\!\in\!R$
denote the coefficient of~$q^d$ ($\lrbr{\Phi}_{q;d}\!\equiv\!0$ if $d\!<\!0$).\\

\noindent
Let $\P^{n-1}_N\!=\!(\P^{n-1})^N$.   For each $s\!=\!1,\ldots,N$, we~set
$$H_s=\pi_s^*H\in H^2\big(\P^{n-1}_N\big),$$
where $\pi_s\!:\P^{n-1}_N\!\lra\!\P^{n-1}$ is the projection onto 
the $s$-th coordinate.
Since $\ov\M_{0,N}(\P^{n-1},d)$ is smooth, there is a well-defined 
cohomology push-forward 
$$\ev_*\equiv \big\{\ev_1\!\times\!\ldots\!\ev_N\big\}_*\!:
H^*\big(\ov\M_{0,N}(\P^{n-1},d)\big)\lra H^*\big(\P^{n-1}_N\big).$$
With $\un\hb\!=\!(\hb_1,\ldots,\hb_N)$, 
$\un\hb^{-1}\!=\!(\hb_1^{-1},\ldots,\hb_N^{-1})$, 
and $\un{H}\!=\!(H_1,\ldots,H_N)$, let 
\BE{Zdfn_e} Z\big(\un\hb,\un{H},Q\big)= \sum_{d=0}^{\i}Q^d
\ev_*\Bigg\{\frac{e(\V_d)}{\prod_{s=1}^{s=N}(\hb_s\!-\!\psi_s)}\Bigg\}
\in H^*(\P^{n-1}_N)\big[\un\hb^{-1}\big]\big[\big[Q\big]\big].\EE
By~\e_ref{genus0_e},
this power series encodes all genus 0 GW-invariants of $X_{\a}$
with constrains that arise from~$\P^{n-1}$.
If $\b\!\equiv\!(b_1,\ldots,b_N)\in\!\Z^N$, let
$$\un\hb^{-\b}=\prod_{s=1}^{s=N}\!(\hb_s^{-1})^{b_s}\,. $$

\subsection{An asymptotic expansion}
\label{HG_subs}

\noindent
The power series $F$ defined by~\e_ref{Fdfn_e} admits an asymptotic expansion $w\!\lra\!\i$
which plays a central role in this paper and which we now describe.\\

\noindent
Define
\begin{alignat}{2}
\label{Ldfn_e0}
&L(q)\in 1+q\Q[[q]] &\qquad&\hbox{by}\qquad L(q)^n-\a^{\a}qL(q)^{|\a|}=1\in \Q[[q]]\,,\\
&\chi_0,\chi_1,\ldots,\chi_{|\a|}\in \Q &\qquad&\hbox{by}\qquad 
\prod_{k=1}^{k=l}\prod_{r=1}^{r=a_k}\big(a_kD+r\big)\equiv
\a^{\a}\sum_{i=0}^{i=|\a|}\chi_{|\a|-i}D^i\in\Z[D]\,.\notag
\end{alignat}
In the two extremal cases, \e_ref{Ldfn_e0} gives
\BE{Ldfn_e2}
L(q)=\begin{cases} (1\!+\!q)^{1/n},&\hbox{if}~|\a|\!=\!0;\\
(1-\a^{\a}q)^{-1/n},&\hbox{if}~|\a|\!=\!n.\end{cases}\EE
Setting $\chi_i\!\equiv\!0$ if $i\!<\!0$ or $i\!>\!|\a|$, we find that
\BE{xi_e}\begin{split} 
\chi_0=1\,, \qquad \chi_1=\frac{|\a|\!+\!l}{2}
.
\end{split}\EE
For $m,j\!\in\!\Z$, we define $\H_{m,j}\in\Q(u)$ recursively by
\BE{Hdfn_e}\begin{split}
\H_{m,j}&\equiv0 \quad\hbox{unless}~~0\le j\le m,\qquad
\H_{0,0}\equiv1;\\
\H_{m,j}(u)&\equiv\H_{m-1,j}(u)+
\frac{u\!-\!1}{|\a|\!+\!\nua u}
\bigg(nu\frac{\tnd}{\tnd u}+m\!-\!j\bigg)\H_{m-1,j-1}(u)
\quad\hbox{if}~ m\ge1,~0\le j\le m. 
\end{split}\EE
In particular, for $m\!\ge\!0$
\BE{Hlow_e}\begin{split} 
\H_{m,0}(u)&=1, \qquad \H_{m,1}(u)=\binom{m}{2}\frac{u\!-\!1}{|\a|\!+\!\nua u}
.
\end{split}\EE
Finally, we define differential operators $\fL_1,\ldots,\fL_n$ on $\Q[[q]]$ by
\BE{fLkdfn_e}
\fL_k=\sum_{i=0}^k\Bigg[\binom{n}{i}L^n\H_{n-i,k-i}(L^n) 
-(L^n\!-\!1)\sum_{r=0}^{k-i}\binom{|\a|\!-\!r}{i}\chi_r\,\H_{|\a|-i-r,k-i-r}(L^n)\Bigg]D^i\,,
\EE
where $D=q\frac{\tnd}{\tnd q}$.
By \e_ref{Hlow_e}, \e_ref{xi_e} and \e_ref{Ldfn_e0}, the first of these operator is 
\BE{L12_e}\begin{split}  
\fL_1&=\big(|\a|\!+\!\nua L^n\big)\Bigg\{D
+\frac{L^n\!-\!1}{|\a|\!+\!\nua L^n}
\bigg(\frac{\nua nL^n}{2(|\a|\!+\!\nua L^n)}-\frac{l\!+\!1}{2}\bigg)\Bigg\}\\
&=\big(|\a|\!+\!\nua L^n\big)\Bigg\{
\Bigg(\bigg(\frac{n}{|\a|\!+\!\nua L^n}\bigg)^{1/2}L^{\frac{l+1}{2}}\Bigg)
D\Bigg(\bigg(\frac{n}{|\a|\!+\!\nua L^n}\bigg)^{1/2}L^{\frac{l+1}{2}}\Bigg)^{-1}\Bigg\}.
\end{split}\EE

\begin{prp}\label{Fexp_prp}
The power series~$F$ of~\e_ref{Fdfn_e} admits an asymptotic expansion
\BE{F0exp_e} F(w,q)\sim e^{\xi(q)w}
\sum_{b=0}^{\i}\Phi_b(q)w^{-b}
\qquad\hbox{as}\quad w\lra\i,\EE
with $\xi,\Phi_1,\ldots\!\in\!q\Q[[q]]$ and $\Phi_0\!\in\!1\!+\!q\Q[[q]]$ 
determined by the first-order ODEs  
\begin{alignat}{1}
1+\xi'(q)=L(q)\,,\qquad
\label{PhiODE} \fL_1\Phi_b+\frac{1}{L}\fL_2\Phi_{b-1}
+\ldots +\frac{1}{L^{n-1}}\fL_n\Phi_{b+1-n}&=0,
\end{alignat}
where $\Phi_b\equiv0$ for $s<0$.
\end{prp}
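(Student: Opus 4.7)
My approach is to derive a Picard--Fuchs equation for $F(w,q)$ in $D:=q\tfrac{\tnd}{\tnd q}$, conjugate it by $e^{\xi w}$, and match coefficients in a power series expansion in $w^{-1}$.

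\emph{Step 1 (Picard--Fuchs operator).} Writing $F=\sum_{d\ge 0}F_d(w)q^d$, a direct comparison of successive summands in~\eqref{Fdfn_e} yields
\[
\bigl((w+d+1)^n - w^n\bigr) F_{d+1}(w) \,=\, w^{\nua}\prod_{k=1}^l\prod_{s=1}^{a_k}\!\bigl(a_k(w+d)+s\bigr) F_d(w).
\]
Multiplying by $q^{d+1}$, summing over $d\ge 0$, and using that $(w+D)q^d=(w+d)q^d$ converts this into $\mathcal{O}\,F=0$, where
\[
\mathcal{O} \,:=\, (w+D)^n-w^n - qw^{\nua}\a^{\a}\!\sum_{i=0}^{|\a|}\!\chi_{|\a|-i}(w+D)^i
\]
(we have rewritten the second product using the definition of $\chi_i$ in~\eqref{Ldfn_e0}).

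\emph{Step 2 (Conjugation and leading order).} The identity $(w+D)e^{\xi w}=e^{\xi w}\bigl(w(1+D\xi)+D\bigr)$ shows that imposing $1+\xi'=L$ (with $\xi'\equiv D\xi$) and setting $F=e^{\xi w}G$ transforms $\mathcal{O}F=0$ into $\widetilde{\mathcal{O}}\,G=0$, where $\widetilde{\mathcal{O}}$ is obtained from $\mathcal{O}$ by replacing each occurrence of $(w+D)$ with $(wL+D)$. Expanding $\widetilde{\mathcal{O}}=\sum_{j\le n}\widetilde{\mathcal{O}}_j w^j$ in decreasing powers of~$w$ (with each $\widetilde{\mathcal{O}}_j$ a $D$-differential operator), the top coefficient is
$\widetilde{\mathcal{O}}_n=L^n-1-q\a^{\a}L^{|\a|}$,
which vanishes by~\eqref{Ldfn_e0}. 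This compatibility is precisely what allows the pure-exponential ansatz $G=\sum_{b\ge 0}\Phi_b(q)w^{-b}$. Matching the coefficient of $w^{n-1-m}$ in $\widetilde{\mathcal{O}}G=0$ for each $m\ge 0$ then yields
\[
\widetilde{\mathcal{O}}_{n-1}\Phi_m+\widetilde{\mathcal{O}}_{n-2}\Phi_{m-1}+\cdots+\widetilde{\mathcal{O}}_0\Phi_{m-n+1}=0.
\]

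\emph{Step 3 (Main obstacle: identification with $\fL_k$).} The task is now to show that $\widetilde{\mathcal{O}}_{n-k}=L^{-k}\fL_k$ for each $k=1,\dots,n$, after which the displayed equation becomes $L^{-1}$ times~\eqref{PhiODE} and is thus equivalent to it. Using $(wL+D)^m=(wL+D)(wL+D)^{m-1}$, one obtains the normal-form recursion
\[
(wL+D)^m=\sum_{j=0}^m w^{m-j}P_{m,j},\qquad P_{m,j}=L\,P_{m-1,j}+D\,P_{m-1,j-1}.
\]
Applying $D$ to~\eqref{Ldfn_e0} gives $DL=L(L^n-1)/(|\a|+\nua L^n)$, so on functions of $u:=L^n$ the operator $D$ acts as $\tfrac{nu(u-1)}{|\a|+\nua u}\tfrac{d}{du}$; under this substitution, the recursion $P_{m,j}=LP_{m-1,j}+DP_{m-1,j-1}$ is exactly the recursion~\eqref{Hdfn_e} defining $\H_{m,j}$, so the coefficients of $P_{m,j}$ in powers of $D$ assemble into rational functions of $L^n$ of the form $L^{m-j}\H_{m,j}(L^n)$. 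Summing the $w^{n-k}$ contributions from $(wL+D)^n$ and from each $q\a^{\a}\chi_{|\a|-i}(wL+D)^i$ in $\widetilde{\mathcal{O}}$, and folding the two sets of terms together via the vanishing identity $L^n-1-q\a^{\a}L^{|\a|}=0$, produces exactly the expression~\eqref{fLkdfn_e} for $\fL_k$; the verification of the case $k=1$ recovers~\eqref{L12_e}, providing a concrete sanity check. The combinatorial bookkeeping of this final assembly---tracking how $\H_{m,j}(L^n)$ arises uniformly from both the $(wL+D)^n$ piece and the $\chi$-piece, and matching the binomial and sign coefficients in~\eqref{fLkdfn_e}---is the genuine technical difficulty.
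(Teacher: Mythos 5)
Your Steps 1--3 reproduce, in essentially the same way as the paper's Appendix~\ref{HGprp_pf}, the derivation of the ODEs~\e_ref{PhiODE}: the Picard--Fuchs equation $\{D_w^n-w^n-qw^{\nua}\prod_k\prod_r(a_kD_w+r)\}F=0$ with $D_w=w+q\frac{\tnd}{\tnd q}$, the conjugation replacing $D_w$ by $\ti{D}_w=Lw+D$, the vanishing of the $w^n$-coefficient via~\e_ref{Ldfn_e0}, and the identification of the conjugated operator with $L^{-n}\sum_k(Lw)^{n-k}\fL_k$. For the last step the paper defers to \cite[Section~2.4]{ZaZ}, and your normal-form recursion $P_{m,j}=L\,P_{m-1,j}+D\circ P_{m-1,j-1}$ combined with $DL/L=(L^n-1)/(|\a|+\nua L^n)$ is the right way to carry it out and to see the recursion~\e_ref{Hdfn_e} emerge.

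The genuine gap is the existence half of the statement. The vanishing of the top coefficient $\widetilde{\mathcal{O}}_n=L^n-1-\a^{\a}qL^{|\a|}$ only shows that the \emph{formal} recursion for $(\xi,\Phi_b)$ is consistent, i.e.\ that the ODE admits a formal solution of the shape $e^{\xi w}\sum_b\Phi_b w^{-b}$; it does not show that $F$ is asymptotic to that formal solution, which is what~\e_ref{F0exp_e} asserts. Concretely, for each $d$ the coefficient of $q^d$ in $e^{-\xi(q)w}F(w,q)$ is a rational function of~$w$, and the content of~\e_ref{F0exp_e} is that its Laurent expansion at $w=\i$ contains no positive powers of~$w$; nothing in your argument rules those out. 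The paper proves this separately and at some length: Lemma~\ref{cY0_lmm} expands $\ln\cY_0$ as $\sum_{d,b}C_{d,b}(\x)\hb^bq^d$ and uses the differential equation to show, by contradiction on the minimal~$b$, that no terms with $b\le-2$ occur; Corollary~\ref{F0_crl} and $F=\bD^lF_0$ then give the expansion for~$F$. Your setup can in fact be completed more cheaply: your Step~1 recursion $((w+d+1)^n-w^n)F_{d+1}=w^{\nua}\prod_k\prod_s(a_k(w+d)+s)F_d$ has a unique solution with $F_0=1$ in the ring $\bigcup_N w^N\Q[[w^{-1}]]$, because $(w+d+1)^n-w^n=n(d\!+\!1)w^{n-1}+\ldots$ is a unit there; both the Laurent expansion of $[q^d]F$ at $w=\i$ and the $q^d$-coefficient of your formal solution satisfy this recursion with the same initial term, hence coincide. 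Either this or the paper's argument must appear explicitly; as written, the proposal only establishes that the coefficients of an asymptotic expansion of the form~\e_ref{F0exp_e}, \emph{if one exists}, are governed by~\e_ref{PhiODE}.
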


\noindent
From \e_ref{L12_e} and \e_ref{PhiODE},  we immediately find that 
\BE{F0exp_e2}  
\Phi_0(q)=\bigg(\frac{n}{|\a|+\nua L(q)^n}\bigg)^{1/2}L(q)^{(l+1)/2}.\EE
In the extremal cases, this reduces to
\BE{F0exp_e2b}
\Phi_0(q)= \begin{cases} 
L(q)^{-(n-1)/2}=(1\!+\!q)^{-(n-1)/2n},&\hbox{if}~|\a|\!=\!0;\\
L(q)^{(l+1)/2}=(1\!-\!\a^{\a}q)^{-(l+1)/2n},&\hbox{if}~|\a|\!=\!n.
\end{cases}\EE
Proposition~\ref{Fexp_prp} in the $|\a|\!=\!n$ case is proved in \cite[Section~4]{Po2}, 
building up on the $\a\!=\!(n)$ case contained in Lemma~1.3 and Theorems~1.1, 1.2, and~1.4 in~\cite{ZaZ}.
The remaining cases are addressed in Appendix~\ref{HGprp_pf}.

\subsection{One- and two-pointed formulas}
\label{pt1and2}

\noindent
By the dilaton relation \cite[p527]{MirSym} and \cite[Theorems 9.5,10.7,11.8]{Gi2},
the  generating function~\e_ref{Zdfn_e} with $N\!=\!1$ and
the degree~0 term defined to be $\lr{\a}H_1^l\hb_1$ is given~by
\BE{Z1pt_e} Z(\hb_1,H_1,Q)=\lr{\a}H_1^le^{-J(q_1)w_1}\hb_1\frac{F(w_1,q_1)}{I_0(q_1)},
\quad\hbox{where}\quad
w_1=\frac{H_1}{\hb_1}\,,~~q_1e^{\de_{0\nua}J(q_1)}=\frac{Q}{H_1^{\nua}}\,.\EE
The generating function~\e_ref{Zdfn_e} for $N\!=\!2$ is given in 
\cite[Section~2]{PoZ} in terms of certain transforms of~$F$, 
which we describe next.\\

\noindent
Define
\begin{alignat}{1}
\label{bDdfn_e}
\bD\!:\Q(w)\big[\big[q\big]\big]&\lra \Q(w)\big[\big[q\big]\big]
\qquad\hbox{by}\qquad
\bD H(w,q)\equiv \left\{1+\frac{q}{w}\frac{\tnd}{\tnd q}\right\}H(w,q);\\
\label{F0dfn_e}
F_0(w,q)&\equiv\sum_{d=0}^{\i}
q^dw^{\nua d}\frac{\prod\limits_{k=1}^{k=l}\prod\limits_{r=0}^{a_kd-1}(a_kw+r)}
{\prod\limits_{r=1}^d((w\!+\!r)^n-w^n)}\in\cP;\\
\label{Fpdfn_e0}
F_p&\equiv\bD^pF_0=\bM^pF_0\in\cP
\qquad\forall~p=1,2,\ldots,l.
\end{alignat}
In particular, $F_l=F$. 
For $\nua\!>\!0$, we also define $\nc^{(d)}_{p,s},\ntc^{(d)}_{l+p,l+s}\in\!\Q$
with $p,s,d\!\ge\!0$ by 
\BE{ncrec_e}\begin{split}
\sum_{d=0}^{\i}\sum_{s=0}^{\i}\nc^{(d)}_{p,s}w^sq^d&=
\sum_{d=0}^{\i}q^d\frac{(w\!+\!d)^p\!\!\prod\limits_{k=1}^l\prod\limits_{r=1}^{a_kd}(a_kw+r)}
{\prod\limits_{r=1}^d(w+r)^n}
=w^p\bD^pF\big(w,q/w^{\nua}\big),\\
\sum_{\begin{subarray}{c}d_1+d_2=d\\ d_1,d_2\ge0\end{subarray}}
\sum_{r=0}^{p-\nua d_1}\ntc^{(d_1)}_{l+p,l+r}\nc^{(d_2)}_{r,s}&=\de_{0,d}\de_{p,s}
\qquad \forall\,d,s\!\in\!\bar\Z^+,\, s\!\le\!p\!-\!\nua d.
\end{split}\EE
Since $\nc^{(0)}_{p,s}=\de_{p,s}$, the second equation in \e_ref{ncrec_e} 
expresses $\ntc^{(d)}_{l+p,l+s}$
with $s\!\le\!p\!-\!\nua d$ in terms of the numbers $\ntc^{(d_1)}_{l+p,l+r}$
with $d_1\!<\!d$; the numbers $\ntc^{(d)}_{l+p,l+s}$ with $s\!>\!p\!-\!\nua d$
will not be needed.
In particular, $\ntc^{(0)}_{p,s}\!=\!\de_{p,s}$ for all $p,s\!\ge\!l$.
For $p\!>\!l$, set 
\BE{Flpdfn_e} F_p(w,q)=\begin{cases}
\bM^pF(w,q),&\hbox{if}~\nu_{\a}\!=\!0;\\
\sum\limits_{d=0}^{\i}\sum\limits_{s=0}^{p-l-\nua d}
\frac{\ntc^{(d)}_{p,l+s}\,q^d}{w^{p-l-\nu_{\a}d-s}}\bD^sF(w,q),&
\hbox{if}~\nu_{\a}\!>\!0.
\end{cases}\EE
Thus, $F_p\!\in\!\cP$ for all $p\!\in\!\Z^+$ by \e_ref{ncrec_e} and
$F_p=\bD^{p-l}F$ unless $p\!\ge\!l\!+\!\nu_{\a}$.
By \cite[Theorem~3]{PoZ}, the  generating function~\e_ref{Zdfn_e} with $N\!=\!2$
and the degree~0 term defined to~be the image~of
$\frac{\lr{\a}H_1^lH_2^l}{\hb_1+\hb_2}\frac{H_1^{n-l}\!-H_2^{n-l}}{H_1-H_2}$
is given~by 
\BE{Z2pt_e} Z(\hb_1,\hb_2,H_1,H_2,Q)=
\frac{\lr{\a}}{\hb_1\!+\!\hb_2}\,e^{-J(q_1)w_1-J(q_2)w_2}
\sum_{\begin{subarray}{c}p_1+p_2=n-1+l\\ p_1,p_2\ge l\end{subarray}}
\prod_{s=1}^{s=2}H_s^{p_s}\frac{F_{p_s}(w_s,q_s)}{I_{p_s-l}(q)}\,,\EE
where
$$w_s=\frac{H_s}{\hb_s}\,,\qquad q_se^{\de_{0\nua}J(q_s)}=\frac{Q}{H_s^{\nua}}\,.$$

\begin{rmk} The mismatch in the indexing of $I_*$ and $F_*$ is unfortunate for
the purposes of this section. 
However,
the choice of the indexing for the former is intended to simplify 
the explicit formulas for the Calabi-Yau CIs in Section~\ref{CY_subs},
while the choice of the indexing for the latter is intended to simplify 
some of the formulas in the proof of Theorem~\ref{main_thm} in the rest of the paper.
\end{rmk}

\subsection{Multi-pointed formulas}
\label{multipt_subs}

\noindent
Similarly to \e_ref{Z2pt_e}, the generating function~\e_ref{Zdfn_e} with $N\!\ge\!3$
is a linear combination of the $N$-fold products
\BE{Depdfn_e}\De_{\p}(\un\hb,\un{H},Q)
\equiv\prod_{s=1}^{s=N}
\frac{H^{p_s}}{\hb_s}\frac{F_{p_s}(w_s,q_s)}{\prod\limits_{r=p_s-l}^{n-l-1}\!\!\!\!I_r(q_s)},
\qquad\hbox{where}\quad 
w_s=\frac{H_s}{\hb_s}\,,~~ q_se^{\de_{0\nua}J(q_s)}=\frac{Q}{H_s^{\nua}}\,,\EE
with $\p\!=\!(p_1,p_2,\ldots,p_N)\!\in\!\nset^N$, $p_s\!\ge\!l$,
and with coefficients that are polynomials in $\hb_1^{-1},\ldots,\hb_N^{-1}$
of total degree at most $N\!-\!3$.
These coefficients are described below inductively using the coefficients 
$\ntc^{(d)}_{p,s}$ defined above and the asymptotic expansion of $F(w,q)$ 
provided by Proposition~\ref{Fexp_prp}.\\

\noindent
For $r\!<\!0$, we set $I_r(q)\!=\!1$.
By Proposition~\ref{Fexp_prp}, \e_ref{bDdfn_e}-\e_ref{Fpdfn_e0}, and \e_ref{Flpdfn_e},
there are asymptotic expansions
\BE{Fpexp_e} 
\frac{F_p(w,p)}{\prod\limits_{r=p-l}^{n-l-1}\!\!\!\!I_r(q)} 
\sim e^{\xi(q)w}\frac{I_0(q)}{L(q)^{\de_{0\nua}n}}
\sum_{b=0}^{\i}\Phi_{p;b}(q)w^{-b}
\qquad\hbox{as}\quad w\lra\i,\EE
with   $\Phi_{p;0}\!\in\!1\!+\!q\Q[[q]]$ and
$\Phi_{p;1},\Phi_{p;2}\ldots\!\in\!q\Q[[q]]$ described~by
\BE{Psipr_e}\begin{split}
\hat\Phi_{p+1;b}&=L\hat\Phi_{p;b}+\hat\Phi_{p;b-1}'
-\bigg(\sum_{r=0}^{r=p}\frac{I_r'}{I_r}\bigg)\hat\Phi_{p;b-1}
\quad\forall\,p\!\in\!\Z, \qquad \hat\Phi_{0;b}=\Phi_b\,,\\
\Phi_{p;b}(q)&=\begin{cases}
\sum\limits_{d=0}^{\i}\sum\limits_{s=0}^{p-\nua d}
\ntc^{(d)}_{p,s}q^d\hat\Phi_{s-l;b-(p-\nu_{\a}d-s)}(q),
&\hbox{if}~\nua\!>\!0,\\
\hat\Phi_{p-l;b}(q),&\hbox{if}~\nua\!=\!0,
\end{cases}
\end{split}\EE
where $\hat\Phi_{p;b}\equiv0$ if $b\!<\!0$, 
$\ntc^{(d)}_{p,s}\!\equiv\!\de_{0,d}\de_{p,s}$ unless $p,s\!\ge\!l$, and
$'$ denotes $q\frac{\tnd}{\tnd q}$ as before.
In the Calabi-Yau case, $\nu_{\a}\!=\!0$, the recursion~\e_ref{Psipr_e} for the coefficients
$\Phi_{p;b}\!=\!\hat\Phi_{p-l;b}$ in the asymptotic expansion~\e_ref{Fpexp_e} is obtained using
the first two identities in the following lemma.\footnote{The last identity 
in Lemma~\ref{Iprp_lmm} follows
from the first two; it was used in Section~\ref{CY_subs}.}

\begin{lmm}[{\cite[Proposition~4.4]{Po2}}]\label{Iprp_lmm}
If $|\a|\!=\!n$, the power series $I_p$ defined by \e_ref{Fdfn_e} and~\e_ref{Icdfn_e} satisfy
\begin{alignat}{1}
\label{Iprop_e1} I_{n-l-p}&=I_p  \qquad\forall\,p=0,1,\ldots,n\!-\!l;\\
\label{Iprop_e2} I_0I_1\cdots I_{n-l}&=L^n;\\
\label{Iprop_e3} I_0^{n-l}I_1^{n-l-1}\cdots I_{n-l}^0&=L^{\frac{n(n-l)}{2}}.
\end{alignat}
\end{lmm}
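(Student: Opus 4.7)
The third identity \e_ref{Iprop_e3} is a formal consequence of the first two, so I would deduce it at the end by a relabeling argument and focus first on \e_ref{Iprop_e1} and \e_ref{Iprop_e2}. Substituting $p \mapsto n-l-p$ in \e_ref{Iprop_e1} gives
$$\prod_{p=0}^{n-l} I_p^{n-l-p} = \prod_{p=0}^{n-l} I_{n-l-p}^{n-l-p} = \prod_{p=0}^{n-l} I_p^{p},$$
so $\bigl(\prod_{p}I_p^{n-l-p}\bigr)^{2} = \prod_p I_p^{(n-l-p)+p} = \prod_p I_p^{n-l} = L^{n(n-l)}$ by \e_ref{Iprop_e2}; since both sides equal $1$ at $q=0$, taking the unique power-series square root produces~\e_ref{Iprop_e3}.

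To prove the first two identities, I would set $R_p\equiv\bM^p F$, so that $R_0 = F$, $R_{p+1} = \bD(R_p/I_p)$, and $I_p(q) = R_p(0,q)$. Writing $D = q\tfrac{\tnd}{\tnd q}$, the ODE $1 + D\xi = L$ from Proposition~\ref{Fexp_prp} yields $\bD(e^{\xi w}G) = e^{\xi w}(LG + w^{-1}DG)$ for any formal series~$G(w,q)$. A straightforward induction on~$p$ then produces an asymptotic expansion
$$R_p(w,q) \sim e^{\xi(q)w}\bigl(\Xi_{p,0}(q) + \Xi_{p,1}(q)w^{-1}+\cdots\bigr), \qquad \Xi_{p,0}(q) = \frac{L(q)^{p}\,\Phi_0(q)}{I_0(q)\cdots I_{p-1}(q)}.$$
The closing ingredient is the Picard-Fuchs equation satisfied by~$F$ in the Calabi-Yau case $|\a|\!=\!n$: the balance of hypergeometric factors in~\e_ref{Fdfn_e} shows that $F$ is annihilated by a GKZ-type differential operator of order $n-l+1$ whose principal symbol is $1-\a^{\a}q = L^{-n}$. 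Matching this relation against the iterated expansion above---equivalently, reading off the closing recursion on the $\Xi_{p,0}$ as $p$ reaches $n-l$---should force $\prod_{p=0}^{n-l} I_p(q) = L(q)^n$, which is~\e_ref{Iprop_e2}. The palindromy~\e_ref{Iprop_e1} would then follow from the formal self-adjointness of this Picard-Fuchs operator under the gauge transformation $\Phi_0 = L^{(l+1)/2}$ from~\e_ref{F0exp_e2}, which is the algebraic manifestation of Poincar\'e duality on the middle cohomology of~$X_{\a}$; concretely, the pairing $\Xi_{p,0}\Xi_{n-l-p,0}$ turns out to be symmetric in~$p$ and the symmetry of the expansion coefficients propagates to $I_p = I_{n-l-p}$.

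\textbf{Main obstacle.} The main technical hurdle lies in the explicit derivation of this Picard-Fuchs operator in the complete-intersection Calabi-Yau setting and the clean identification of its principal symbol as~$L^{-n}$; this requires careful GKZ-style bookkeeping specific to $|\a|\!=\!n$, continuing the analysis of~\cite[\S4]{Po2} that underlies Proposition~\ref{Fexp_prp}. Once the operator is in place, \e_ref{Iprop_e2} falls out of its principal symbol and \e_ref{Iprop_e1} out of its self-duality; isolating the correct gauge factor~$L^{(l+1)/2}$ so that this self-duality becomes manifest is the subtlest remaining step.
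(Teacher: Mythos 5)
First, a point of reference: the paper does not actually prove this lemma --- it is imported verbatim from \cite[Proposition~4.4]{Po2}, and the only piece the paper supplies itself is the footnote observation that \e_ref{Iprop_e3} follows from \e_ref{Iprop_e1} and \e_ref{Iprop_e2}. Your squaring argument for that deduction is correct and is exactly the paper's implicit argument. So the question is whether your sketch of \e_ref{Iprop_e1} and \e_ref{Iprop_e2} could stand in for the citation.

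It is a reasonable outline of the standard argument, but it has a genuine gap at its center. The coefficients $\Xi_{p,0}=L^p\Phi_0/(I_0\cdots I_{p-1})$ you compute govern the behavior of $\bM^pF$ as $w\!\to\!\infty$, whereas $I_p=\{\bM^pF\}(0,q)$ is a value at $w\!=\!0$; nothing in your plan connects the two regimes, so the $\Xi_{p,0}$ computation cannot by itself ``force'' \e_ref{Iprop_e2}. The missing ingredient is precisely the content of the cited result: either the factorization of the hypergeometric operator annihilating $F$ into first-order pieces with coefficients $1/I_p$ (which requires showing that the iterates $\bM^pF$ generate the solution space), or the periodicity $\bM^nF\!=\!F$ recorded in Remark~\ref{F0exp_rmk}, which together with your formula for $\Xi_{p,0}$ gives $I_0\cdots I_{n-1}=L^n$ --- but this yields \e_ref{Iprop_e2} only after one also proves $I_p=1$ for $n\!-\!l\!<\!p\!<\!n$, a statement absent from your plan and nontrivial when $l\!\ge\!2$. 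Likewise ``formal self-adjointness \ldots the algebraic manifestation of Poincar\'e duality'' is a heuristic, not an argument: to obtain \e_ref{Iprop_e1} one must verify the adjoint identity for the explicit operator and show the factorization is rigid enough that self-adjointness forces $I_p=I_{n-l-p}$. You flag this as the ``main obstacle'' yourself, and rightly so --- it is not a technical hurdle at the end of the proof, it \emph{is} the proof of \cite[Proposition~4.4]{Po2} (building on \cite{ZaZ}); everything preceding it is bookkeeping that you and the sources carry out the same way. (Minor: the reduced Picard--Fuchs operator has order $n\!-\!l$ in $q\frac{\tnd}{\tnd q}$, not $n\!-\!l\!+\!1$; the latter is the number of factors $I_p$.)
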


\noindent
For example, by \e_ref{Psipr_e},
\begin{gather}\label{Psi0_e}
\hat\Phi_{p;0}=L^p\Phi_0, ~~~ 
\hat\Phi_{p;1}=L^p\big(\Phi_1+\bA_p^{(1)}\Phi_0\big); ~~~
\frac{\Phi_{p;0}(q)}{\Phi_0(q)}=L(q)^{p-l}
\begin{cases}
\sum\limits_{d=0}^{\i}\frac{\ntc_{p,p-\nua d}^{(d)}q^d}{L(q)^{\nua d}},&
\hbox{if}~\nua\!>\!0;\\
1,&\hbox{if}~\nua\!=\!0;
\end{cases}\\
\label{Psi1_e}
\frac{\Phi_{p;1}(q)}{\Phi_0(q)}=L(q)^{p-l}
\begin{cases}
\sum\limits_{d=0}^{\i}\frac{\ntc_{p,p-\nua d}^{(d)}q^d}{L(q)^{\nua d}}
\bigg(\!\frac{\Phi_1(q)}{\Phi_0(q)}+\bA_{p-l-\nua d}^{(1)}(q)\!\!\bigg)
+\sum\limits_{d=0}^{\i}\frac{\ntc_{p,p-\nua d-1}^{(d)}q^d}{L(q)^{\nua d+1}},&
\hbox{if}~\nua\!>\!0,\\
\frac{\Phi_1(q)}{\Phi_0(q)}+\bA_{p-l}^{(1)}(q),&\hbox{if}~\nua\!=\!0,
\end{cases}
\end{gather}
where
\BE{bA1dfn_e}\ntc_{p,s}^{(d)}\equiv0~~\hbox{if}~s\!+\!\nua d>p,\qquad
\bA_p^{(1)}=L^{-1}\bigg(p\frac{\Phi_0'}{\Phi_0}
+\frac{p(p\!-\!1)}{2}\frac{L'}{L}-
\sum_{r=0}^{r=p}(p\!-\!r)\frac{I_r'}{I_r}\bigg).\EE
In the two extremal cases, \e_ref{F0exp_e2b} gives
\BE{bAcases_e}
\bA_p^{(1)}=L^{-1}\begin{cases}
-\frac{(n-p)p}{2}\frac{L'}{L},&\hbox{if}~|\a|\!=\!0;\\
\frac{(p+l)p}{2}\frac{L'}{L}
-\sum\limits_{r=0}^{r=p}(p\!-\!r)\frac{I_r'}{I_r},&\hbox{if}~|\a|\!=\!n.
\end{cases}\EE\\

\noindent
If $m\!\in\!\bar\Z^+$, $d,t\!\in\!\Z$, and 
$\c\!\equiv\!(c_r)_{r\in\Z^+}\!\in\!(\bar\Z^+)^{\i}$, let
\BE{Phimcdfn_e}\begin{split}
\cS_m(d,t)&=\big\{(\p,\b)\!\in\!\nset^m\!\times\!\Z^m\!:\,
|\p|\!-\!|\b|=n\!-\!2+(m\!-\!1)(l\!+\!2)+\nua d+nt\big\}\,,\\
\Phi_{m,\c}&=\frac{\Phi_0^2}{I_0^2}
(-1)^{m+|\c|}(m\!+\!|\c|)!
\prod_{r=1}^{\i}
\frac{1}{c_r!}\bigg(\frac{\Phi_r}{(r\!+\!1)!\,\Phi_0}\bigg)^{c_r}\,.
\end{split}\EE
For any $p,p'\!\in\!\nset$ and $b,b',d,t\!\in\!\Z$, let 
\BE{coeff2_e}\nc_{(pp'),(bb')}^{(d,t)}=
\begin{cases}
(-1)^{b}\LRbr{\frac{L(q)^{\de_{0\nua}(1+t)n}}{I_0(q)^2}}_{q;d}\,,
&\hbox{if}~b\!\ge\!0,\,b\!+\!b'\!=\!-1,\,p\!+\!p'\!+\!nt\!=\!n\!-\!1\!+\!l;\\
0,&\hbox{otherwise}.
\end{cases}\EE
For any $N$-tuples $\p\!\in\!\nset^N$, $\b\!\in\!(\bar\Z^+)^N$
with $N\!\ge\!3$ and $d,t\!\in\!\bar\Z^+$, we inductively define
\begin{equation}\label{coeffdfn_e}\begin{split}
\nc_{\p,\b}^{(d,t)}&=\!\!\!
\sum_{\begin{subarray}{c}m,d',t'\in\Z\\ m\ge3\end{subarray}}
\sum_{\begin{subarray}{c}\bfS\in\cP_m([N])\\
\bfd\in\cP_m(d-d')\\ \bft\in\cP_m(t-t')\\   (\p',\b')\in\cS_m(d',t')\end{subarray}}
\sum_{\begin{subarray}{c}
\b''\in(\bar{\Z}^+)^m\\ \c\in(\bar\Z^+)^{\i}\\ |\b''|+\|\c\|=m-3\end{subarray}}
\left(\bigg(\prod_{i=1}^{i=m} \nc_{\p|_{S_i}p_i',\b|_{S_i}b_i'}^{(d_i,t_i)}\bigg)\right.\\
&\hspace{2.2in}
\times\left.\LRbr{\!\Phi_{m-3,\c}(q)\!\prod_{i=1}^{i=m}\!
\frac{I_0(q)^2\Phi_{p_i';b_i'+1+b_i''}(q)}{b_i''!\,L(q)^{\de_{0\nua}n}\Phi_0(q)}\!}_{q;d'}\right),
\end{split}\end{equation}
where $\Phi_{p;b}\!\equiv\!0$ if $b\!<\!0$ and
$\nc_{\p|_{S_i}p_i',\b|_{S_i}b_i'}^{(d_i,t_i)}\!\equiv\!0$ if $b_i'\!<\!0$ and $|S_i|\!\ge\!2$.
By induction, 
\BE{coeffnon0_e}
\nc_{\p,\b}^{(d,t)}\neq0 \quad\Lra\quad
|\b|\!\le\!N\!-\!3, \quad |\p|-|\b|+\nua d+nt=(N\!-\!1)(n\!-\!2)+2+l\,.\EE
Since $\Phi_{m-3,\c},\Phi_{p_i';b_i'+1+b_i''}\!\in\!q\Q[[q]]$ unless $\c\!=\!\0$
and $b_i'\!+\!1\!+\!b_i''\!=\!0$, 
\BE{coeffnon0_e2}
\nc_{\p,\b}^{(0,t)}=\de_{|\p|+nt,(N-1)(n-1)+l}\binom{N\!-\!3}{\b}.\EE

\begin{thmlet}\label{main_thm}
Suppose $n,N\!\in\!\Z^+$, with $N\!\ge\!3$, and $\a\!\in\!(\Z^+)^l$ is such that $\|\a\|\!\le\!n$.
The generating function \e_ref{Zdfn_e} for $N$-pointed genus~0 GW-invariants of
a complete intersection $X_{\a}\!\subset\!\P^{n-1}$ 
is given~by 
\BE{mainthm_e} Z\big(\un\hb,\un{H},Q\big)= \lr{\a}
e^{-\sum\limits_{s=1}^{s=N}\!\!J(q_s)w_s}\!\!\!
\sum_{\p\in\nset_l^N}\sum_{\b\in(\bar\Z^+)^N}
\sum_{d=0}^{\i}\nc_{\p,\b}^{(d,0)}q^d\un\hb^{-\b}\De_{\p}(\un\hb,\un{H},Q),\EE
where $w_s\!=\!H_s/\hb_s$, $q_se^{\de_{0\nua J(q_s)}}=Q/H_s^{\nua}$, and
$qe^{\de_{0\nua J(q)}}=Q$.
\end{thmlet}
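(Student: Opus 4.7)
\textbf{Proof proposal for Theorem~\ref{main_thm}.}
The plan is to adapt the graph-breaking method of \cite{bcov1} (which handled the one-pointed and hypersurface cases) and of \cite{PoZ} (which handled the two-pointed complete-intersection case) to the $N$-pointed setting, using as inputs the closed formulas \eqref{Z1pt_e} and \eqref{Z2pt_e} together with the asymptotic expansion of $F$ in Proposition~\ref{Fexp_prp}. First, I would apply the Atiyah--Bott localization theorem to $\ov\M_{0,N}(\P^{n-1},d)$ equipped with the standard $\T^n$-action, after lifting $e(\V_d)$ and the psi-classes equivariantly. The fixed loci are indexed by decorated trees~$\Ga$ (vertices labeled by $\T$-fixed points of~$\P^{n-1}$, edges by degrees, legs by the marked points of $[N]$); the resulting expression for~$Z$ is a sum of vertex, edge, and flag factors, evaluated in the non-equivariant limit in which the $\T$-weights become the hyperplane classes $H_s$.

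Next, I would \emph{break} each tree $\Ga$ at its \emph{special vertices}, i.e.\ the vertices of valence $\ge3$ or carrying nontrivial $\psi$-insertions, following the combinatorial procedure of \cite[\S1]{bcov1}. Each maximal chain of non-special vertices and edges between two cut-points assembles into a formal sum with generating function of exactly the shape appearing in the 2-pointed formula \eqref{Z2pt_e}, with one of its two legs playing the role of a node on the adjacent special vertex; similarly, each chain terminating at a marked point $s\!\in\![N]$ contributes a 1-pointed strand of the shape~\eqref{Z1pt_e} attached to~$H_s$. The special vertex itself of valence $m$ becomes an integral over $\ov\cM_{0,m}$ against a product of $\psi$-classes, producing the Hodge-integral factor that underlies the combinatorial constant $\Phi_{m-3,\c}$ in \eqref{Phimcdfn_e}. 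The indexing subset $\bfS\!\in\!\cP_m([N])$ records which marked points are carried by the strand flowing into the $i$-th flag of the special vertex; the distinguished singleton $\{N\}$ simply fixes a base point for the unordered partition.

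To convert the chain contributions into the explicit series $\Phi_{p;b}$ used in \eqref{coeffdfn_e}, I would expand each $F_p(w,q)/\prod I_r(q)$ asymptotically in $w\!\to\!\i$ using Proposition~\ref{Fexp_prp}, \eqref{Fpexp_e}, and \eqref{Psipr_e}; the exponential prefactor $e^{\xi(q)w}$ combines across the strands into the overall $e^{-\sum J(q_s)w_s}$ in \eqref{mainthm_e}, since $1+\xi'(q)=L(q)$ and the mirror substitutions $q_se^{\de_{0\nua}J(q_s)}=Q/H_s^{\nua}$ align strand-by-strand. The asymptotic coefficients $\Phi_{p;b}$ at each flag of a special vertex, paired with the $\psi$-power integrated against $\ov\cM_{0,m}$, produce the factor
$\prod_i \Phi_{p'_i;b'_i+1+b''_i}/(b''_i!)$ in the second line of \eqref{coeffdfn_e}; the dimension constraint defining $\cS_m(d',t')$ arises from matching virtual degree at the central vertex. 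Iterating the breaking (each maximal chain may itself further factor through a subordinate special vertex) gives the recursive structure in \eqref{coeffdfn_e}, with the base case $m=2$ of two marked points captured by~\eqref{coeff2_e}.

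The main obstacle is the bookkeeping needed to identify \emph{exactly} the combinatorics of the graph-breaking sum with the recursion \eqref{coeffdfn_e}. Three points require care: (i)~showing that the chain of auxiliary series $\hat\Phi_{p;b}$ produced by repeated $\bD$-action on $F$ is precisely the one encoded in \eqref{Psipr_e}, including the rescaling by $\ntc^{(d)}_{p,s}$ that compensates for the transformation $F\!\leadsto\!F_p$ in the Fano range $\nua\!>\!0$; (ii)~verifying that the twisted insertion $t$ (which never appears in the final statement because the theorem sets $t=0$) correctly tracks the degrees absorbed by the central Hodge integral, so that $t$ and $d$ split consistently under the partitions $\bfd\!\in\!\cP_m(d-d')$ and $\bft\!\in\!\cP_m(t-t')$; (iii)~checking that the overall prefactor $\lr\a/(I_0L^{\de_{0\nua}n})^{\ldots}$ matches after all chains are assembled, using the identities of Lemma~\ref{Iprp_lmm} in the Calabi--Yau case. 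Once these alignments are in place, both sides of \eqref{mainthm_e} become sums over the same combinatorial data with identical summands, and Theorem~\ref{main_thm} follows.
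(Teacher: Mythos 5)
Your outline follows the same broad strategy as the paper---Atiyah--Bott localization, breaking the fixed-point trees at special vertices, feeding in the one- and two-pointed formulas and the asymptotic expansion of Proposition~\ref{Fexp_prp}---but it omits the organizing device that makes the argument close: the paper does \emph{not} work ``in the non-equivariant limit'' during the graph sum. It first proves the equivariant statement, Theorem~\ref{equiv_thm}, in which the structure constants $\cC_{\p,\b}^{(d)}$ lie in $\Q[\al]$ and are pinned down only modulo the ideal $\cI$, as $\sum_t\nc_{\p,\b}^{(d,t)}\hat\si_n^t$. This is not mere bookkeeping: (i) the summation over the label $j\!\in\![n]$ of a special vertex is carried out with the Residue Theorem on $S^2$ in the equivariant variable $\x$, and it is the residues at $\x\!=\!0,\i$ (Lemma~\ref{ressum_lmm}) that produce the powers $\hat\si_n^t$; the constants $\nc_{\p,\b}^{(d,t)}$ with $t\!>\!0$ then feed back into the recursion \e_ref{coeffdfn_e} (already for two-pointed strands with $p,p'\!<\!l$ one needs $t\!=\!1$ in \e_ref{coeff2_e}), even though only $t\!=\!0$ survives at $\al\!=\!0$; (ii) the strands carrying two or more of the original marked points are themselves $N'$-pointed equivariant generating functions with $N'\!<\!N$, so the induction on $N$ must be run on the equivariant statement, not on \e_ref{mainthm_e}. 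If the torus weights are specialized at the outset, as your second sentence suggests, neither the residue summation over vertex labels nor the inductive step makes sense.

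Two further points you pass over. First, the strands ending at an original marked point are two-pointed: their contributions are the two-pointed series, summed via Lemma~\ref{pt2_lmm}; the one-pointed input enters only through the unmarked strands, which are summed at each special vertex by Corollary~\ref{pt1_crl} (your proposal assigns the one-pointed formula to the marked chains, which reverses these roles). Second, after setting $\al\!=\!0$ and invoking \cite[Theorem~3]{PoZ}, one obtains \e_ref{Zform_e}, a sum over all $\p\!\in\!\nset^N$; to reach \e_ref{mainthm_e} one must still prove $\nc_{\p,\b}^{(d,0)}\!=\!0$ whenever some $p_s\!<\!l$, which the paper does by a separate minimality argument resting on $H^{n-1-p_1}|_{X_{\a}}\!=\!0$ for $p_1\!<\!l$. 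Without these steps the identification of the two sides of \e_ref{mainthm_e} is incomplete.
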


\noindent
We show in Section~\ref{equiv_sec} that this theorem follows from 
Theorem~\ref{equiv_thm}.\\ 

\noindent
By \e_ref{genus0_e}, \e_ref{Zdfn_e}, \e_ref{mainthm_e}, \e_ref{coeffnon0_e2}, and
\e_ref{Depdfn_e} 
$$\blr{\tau_{b_1}(H^{c_1}),\ldots,\tau_{b_N}(H^{c_N})}_{0,0}^{X_{\a}}
=\de_{|\c|,n-1-l}\lr\a\binom{N\!-\!3}{\b}$$
whenever $b_i,c_i\ge0$, as expected.\\

\noindent
By~\e_ref{coeff2_e}, for each $p\!\in\!\nset$, there exists 
a unique pair $(\hat{p},t_p)\!\in\!\nset\!\times\!\Z$ such that 
$\nc_{(p\hat{p}),(bb')}^{(d,t_p)}\neq0$ at least for some $b,b',d\!\in\!\Z$:
\BE{pdual_e}(\hat{p},t_p)=\begin{cases}
(n\!-\!1\!+\!l\!-\!p,0),&\hbox{if}~p\!\ge\!l;\\
(l\!-\!1\!-\!p,1),&\hbox{if}~p\!<\!l.
\end{cases}\EE
For any $\p\!\in\!\nset^N$, let 
\BE{tpdfn_e}t_{\p}=\sum_{s=1}^{s=N}t_{p_s}=\big|\big\{s\!\in\![N]\!:\,p_s\!<\!l\big\}\big|.\EE
We note that
\BE{ntceq0_e}
\ntc_{\hat{p},\hat{p}-\nua d}^{(d)}\neq0 \qquad\Lra\qquad
p+\nua d+(n\!-\!l)t_p\le n\!-\!1.\EE\\

\noindent
If $N\!\ge\!3$, $\p\!\in\!\nset^N$, $\b\!\in\!(\bar\Z^+)^N$, $d\!\in\!\bar\Z^+$, and $t\!\in\!\Z$
satisfy the last property in~\e_ref{coeffnon0_e} and $|\b|\!=\!N\!-\!3$,
the only nonzero terms in~\e_ref{coeffdfn_e} arise from $(m,\c)\!=\!(N,\0)$,
$p_i'\!=\!\hat{p}_i$, $b_i'\!=\!-\!1\!-\!b_i$, and $b_i''\!=\!b_i$.
If in addition $\nua\!\neq\!0$, 
by the last statement in~\e_ref{Psi0_e}, \e_ref{F0exp_e2},  and Lemma~\ref{Lbinom_lmm}
\BE{coeff3Fano_e}\begin{split}
\nc_{\p,\b}^{(d,t)}&=\binom{N\!-\!3}{\b}\sum_{d'=0}^{d'=d}
\ntc_{\hat\p}^{(d-d')}
\!\!\LRbr{\frac{nL(q)^{\nua d'+n(1+t-t_{\p})}}{|\a|+\nua L(q)^n}}_{q;d'}\\
&=\binom{N\!-\!3}{\b}\sum_{d'=0}^{d'=d}
\!\big(\a^{\a}\big)^{d'}\binom{d'\!+\!t\!-\!t_{\p}}{d'}\ntc_{\hat\p}^{(d-d')}\,,
\end{split}\EE
with the binomial coefficients defined as in~\e_ref{binomdfn_e} and
$$\ntc_{\hat\p}^{(d)}\equiv 
\sum_{\bfd\in\cP_N(d)}\!\!\!\!\!\ntc_{\hat\p}^{(\bfd)}\,,\qquad
\ntc_{\hat\p}^{(\bfd)}\equiv \prod_{s=1}^{s=N}
\!\!\ntc_{\hat{p}_s,\hat{p}_s-\nua d_s}^{(d_s)}\,.$$
If $\nua\!=\!0$, the last property in \e_ref{coeffnon0_e} imposes no restriction on~$d$.
In this case, we find~that 
\BE{coeff3CY_e}
\sum_{d=0}^{\i}\nc_{\p,\b}^{(d,t)}q^d
=\binom{N\!-\!3}{\b}\frac{L(q)^{n(1+t)}}{I_0(q)^2}\,.\EE
In the $\nua\!=\!0$ case, the last property in \e_ref{coeffnon0_e} forces $t\!\ge\!0$ 
and $t_{\p}\!=\!0$ if $t\!=\!0$, whenever $|\b|\!=\!N\!-\!3$.
The proof of Theorem~\ref{main_thm} implies that the right-hand side of 
\e_ref{coeff3Fano_e} also vanishes if either $t\!<\!0$ or $t\!=\!0$ and $t_{\p}\!>\!0$.
By~\e_ref{ntceq0_e} and the last property in~\e_ref{coeffnon0_e},
$$(n\!-\!l)(d'\!+\!t\!+\!1\!-\!t_{\p})-(|\a|\!-\!l)d'+lt-1\ge0$$
whenever the $d'$-summand in \e_ref{coeff3Fano_e} is nonzero;
this implies that
$$ 1 \le t_{\p}\!-\!t \le d'$$
whenever the triple product in \e_ref{coeff3Fano_e} is nonzero and 
either $t\!<\!0$ or $t\!=\!0$ and $t_{\p}\!>\!0$.
The explicit expression on the right-hand side of~\e_ref{coeff3Fano_e} thus
provides a direct reason for the vanishing of~$\nc_{\p,\b}^{(d,t)}$
in these cases.\\



\noindent
If $N\!=\!3$, the only possibly nonzero coefficients in \e_ref{mainthm_e} 
are $\nc_{\p,\0}^{(d,0)}$; these are described by~\e_ref{coeff3Fano_e} and~\e_ref{coeff3CY_e}. 
If $N\!=\!4$, the only possibly nonzero coefficients in \e_ref{mainthm_e} 
are $\nc_{\p,\0}^{(d,0)}$ and 
$$\nc_{\p,1000}^{(d,0)}=\nc_{\p,0100}^{(d,0)}=\nc_{\p,0010}^{(d,0)}=\nc_{\p,0001}^{(d,0)}\,,$$
with $\p\!\in\!\nset^4$; 
the latter set of coefficients is given by~\e_ref{coeff3Fano_e} and~\e_ref{coeff3CY_e}
whenever $\p$ satisfies the last property in~\e_ref{coeffnon0_e}
with $N\!=\!4$, $|\b|\!=\!1$, and $t\!=\!0$.
We next give a formula for the former set of coefficients.
For $p,d\!\in\!\Z$, define 
\begin{gather}
\lrbr{p}_d,\lrbr{\hat{p}}_d,\tau_d(p),t_d(p)\in\Z \qquad\hbox{by}\notag\\
\label{pmoddfn_e}
0\le \lrbr{p}_d,\lrbr{\hat{p}}_d\le n\!-\!1,\quad 
\lrbr{p}_d+\nua d+n\tau_d(p)=p,\quad \lrbr{p}_d+\lrbr{\hat{p}}_d+nt_d(p)=n\!-\!1\!+\!l.
\end{gather}
If $\p,\bfd\!\in\!\Z^4$, let
$$\Si_2(\p,\bfd)=\big\{p_1\!+\!p_2\!+\!\nua(d_1\!+\!d_2),
p_1\!+\!p_3\!+\!\nua(d_1\!+\!d_3),p_2\!+\!p_3\!+\!\nua(d_2\!+\!d_3)\big\}.$$
If $\nua\!=\!0$, $\lrbr{p}_d$, $\lrbr{\hat{p}}_d$, and $\Si_2(\p,\bfd)$ do not 
depend on~$d$ or~$\bfd$, and so we omit $d$ and~$\bfd$ from the notation in this case.
In the $\nua\!=\!0$ case,
a direct computation from~\e_ref{coeffdfn_e}, \e_ref{coeff3CY_e}, \e_ref{coeff2_e},
\e_ref{Psi0_e}, and~\e_ref{Psi1_e} gives
\BE{coeff4CY_e}\begin{split}
\sum_{d=0}^{\i}\nc_{\p,\0}^{(d,0)}q^d&=\frac{L(q)^{n+1}}{I_0(q)^2}
\Bigg\{\sum_{p'-1\in\Si_2(\p)}\hspace{-.2in}
\bA_{\lrbr{\hat{p}'}-l}^{(1)}(q)
-\sum_{s=1}^{s=4}\bA_{\hat{p}_s-l}^{(1)}(q)\Bigg\},
\end{split}\EE
whenever  
$\p$ satisfies the last property in~\e_ref{coeffnon0_e}
with $N\!=\!4$, $|\b|\!=\!0$, and $t\!=\!0$.\\

\noindent
If $\nua\!\neq\!0$, $d,d',p\!\in\!\bar\Z^+$, and $t\!=\!0,1$, let 
\begin{equation*}\begin{split}
\ntc_{p,d'}^{(d,t)}
&\equiv\LRbr{\frac{nL(q)^{\nua d'+n(1-t)}}{|\a|\!+\!\nua L(q)^n}   
\Big(\ntc_{p,p-\nua d}^{(d)}L(q)\bA^{(1)}_{p-l-\nua d}(q)\!+\!
\ntc_{p,p-\nua d-1}^{(d)}\Big)}_{q;d'}\,\\
&=\ntc_{p,p-\nua d}^{(d)}
\LRbr{\frac{nL(q)^{\nua d'+n(1-t)+1}}{|\a|\!+\!\nua L(q)^n}\bA^{(1)}_{p-l-\nua d}(q)}_{q;d'}
+\binom{d'\!-\!t}{d'}\big(\a^{\a}\big)^{d'}\ntc_{p,p-\nua d-1}^{(d)}\,;
\end{split}\end{equation*}
the equality above holds by Lemma~\ref{Lbinom_lmm}.
On the other hand, by the second equation in~\e_ref{bA1dfn_e},
\e_ref{F0exp_e2},  \e_ref{Ldfn_e0}, and Corollary~\ref{Lbinom_crl}, 
\begin{equation*}\begin{split}
&\LRbr{\frac{nL(q)^{\nua d'+n(1-t)+1}}{|\a|\!+\!\nua L(q)^n}\bA^{(1)}_{p-l}(q)}_{q;d'}\\
&\hspace{1in}=\frac{p\!-\!l}{2}\bigg(\frac{\a^{\a}}{n}\bigg)^{\!\!d'}\!\Bigg(\!
d'|\a|^{d'}-(n\!-\!p)\!\!\!\!\!\!\!\!
\sum_{\begin{subarray}{c}d_1+d_2=d'-1\\ d_1,d_2\ge0\end{subarray}}
\!\!\!\!\!\!\!\!\!|\a|^{d_1}(n\!-\!\nua t)^{d_2}
-(d'\!-\!1\!+\!\de_{0,d'})tp|\a|^{d'-1}\!\Bigg)
\end{split}\end{equation*}
whenever $t\!=\!0,1$.
In particular, $\ntc_{p,0}^{(0,t)}\!=\!0$.
For $d,p\!\in\!\bar\Z^+$ such that $p\!\le\!2n\!-\!1$, let
$$\ntC_p^{(d)}=\sum_{\bfd\in\cP_4(d)}
\big(\a^{\a}\big)^{d_3}\binom{d_3\!+\!\tau_{d_2+d_3}(p)\!-\!t_{d_2+d_3}(p)}{d_3}
\ntc_{\lrbr{\hat{p}}_{d_2+d_3},\lrbr{\hat{p}}_{d_2+d_3}-\nua d_2}^{(d_2)}
\ntc_{\lrbr{p}_{d_2+d_3},d_4}^{(d_1,\tau_{d_2+d_3}(p))}\,.$$
Since $0\!\le\!p\!\le\!2n\!-\!1$,
$$\binom{d_3\!+\!\tau_{d_2+d_3}(p)\!-\!t_{d_2+d_3}(p)}{d_3}
\ntc_{\lrbr{\hat{p}}_{d_2+d_3},\lrbr{\hat{p}}_{d_2+d_3}-\nua d_2}^{(d_2)}\neq0
\qquad\Lra\qquad \tau_{d_2+d_3}(p)\in\{0,1\}$$
by \e_ref{ntceq0_e}, and so $\ntC_p^{(d)}$ is well-defined.
For example, $\ntC_p^{(0)}\!=\!0$.
If $\nua\!\neq\!0$, $t_{\p}\!=\!0$, and $\p$ satisfies the last property in~\e_ref{coeffnon0_e}
with $N\!=\!4$, $|\b|\!=\!0$, and $t\!=\!0$, then
\BE{coeff4Fano_e}\begin{split}
\nc_{\p,\0}^{(d,0)}&=\sum_{d'=0}^{d'=d}
\sum_{\bfd\in\cP_4(d-d')}\left(
\sum_{2n-2+l-p'\in\Si_2(\p,\bfd)}
\hspace{-.4in} \ntC_{p'}^{(d')}\ntc_{\hat\p}^{(\bfd)}
-\sum_{r=1}^{r=4}\bigg(\prod_{s\in[4]-r}
\!\!\!\!\!\ntc_{\hat{p}_s,\hat{p}_s-\nua d_s}^{(d_s)}\bigg)\ntc_{\hat{p}_r,d'}^{(d_r,0)}\right)\,.
\end{split}\EE
This is obtained by a direct computation from \e_ref{coeffdfn_e}, \e_ref{coeff3Fano_e},
\e_ref{coeff2_e}, \e_ref{Psi0_e}, and~\e_ref{Psi1_e}
except the vanishing of the coefficient of $\Phi_1(q)$ follows from
Corollary~\ref{sumsplit_crl}.
If $\ntc_{\hat\p}^{(\bfd)}\!\neq\!0$ in \e_ref{coeff4Fano_e}, then
$$l\le p_s\!+\!\nua d_s \le n\!-\!1 \qquad\forall~s\!\in\![4]$$
by the assumption that $t_{\p}\!=\!0$ and \e_ref{ntceq0_e}, and so
$$l\le p' \le 2n\!-\!2\!-\!l
\qquad\hbox{if}\quad 2n\!-\!2\!+\!l\!-\!p'\in\Si_2(\p,\bfd)\,;$$
thus, the right-hand side of \e_ref{coeff4Fano_e}  is well-defined.
In the case of a projective space, $\a\!=\!\eset$, the above formulas give
\begin{gather}
\ntc_{p,d'}^{(d,t)}=\begin{cases}
-\frac{p(n-p)}{2n}\,,&\hbox{if}~d\!=\!0,\,d'\!>\!0,\,t\!=\!0;\\
-\frac{p(n-p)}{2n}\,,&\hbox{if}~(d,d',t)\!=\!(0,1,1);\\
0,&\hbox{otherwise};\end{cases}  \qquad
\ntC_p^{(d)}=\begin{cases}
-\frac{\lrbr{p}_0(n-\lrbr{p}_0)}{2n}\,,&\hbox{if}~d\!>\!0;\\
0,&\hbox{if}~d\!=\!0;
\end{cases}\notag\\
\label{coeff4proj_e}
\nc_{\p,\0}^{(d,0)}=
\begin{cases}
0,&\hbox{if}~d=0,2;\\
\min\{p_s\!+\!1,n\!-\!1\!-\!p_s\},&\hbox{if}~d\!=\!1;
\end{cases}
\end{gather}
the last statement holds under the assumption that $|\p|+\!nd\!=\!3n\!-\!4$.\\

\noindent
The $N$-pointed formula of Theorem~\ref{main_thm} takes the simplest form in the two extremal
cases, $\nua\!=\!0$ (Calabi-Yau) and $\nua\!=\!n$ (projective space),
as $\ntc_{p,s}^{(d)}\!=\!\de_{0,d}\de_{p,s}$ in these two cases.
However, it is also straightforward to compute all the relevant coefficients 
in the intermediate cases.
For example, for a cubic threefold $X_3\!\subset\!\P^4$, the only non-trivial 
coefficients~$\ntc_{p,s}^{(d)}$ are
$$\ntc_{3,1}^{(1)}=\ntc_{4,1}^{(1)}=-6, \qquad \ntc_{4,2}^{(1)}=-21,$$
as computed in \cite[Section~2]{PoZ}.\footnote{In this paper, the subscripts
on $\ntc$ are shifted up by $l$ from \cite{PoZ}.}
From this, \e_ref{coeff3Fano_e}, and~\e_ref{coeff4Fano_e}, 
we find that the only nonzero coefficients 
in the $N\!=\!3,4$ cases of~\e_ref{mainthm_e} 
with $d\!\in\!\Z^+$ and $\b\!=\!\0$ are 
\begin{alignat*}{6} 
\nc_{133,\0}^{(1,0)}&=6, &\quad \nc_{223,\0}^{(1,0)}&=15,&\quad
\nc_{113,\0}^{(2,0)}&=36, &\quad \nc_{122,\0}^{(2,0)}&=126, &\quad
\nc_{111,\0}^{(3,0)}&=216,\\
\nc_{1333,\0}^{(1,0)}&=6, &\quad \nc_{2233,\0}^{(1,0)}&=15,&\quad
\nc_{1133,\0}^{(2,0)}&=72, &\quad \nc_{1223,\0}^{(2,0)}&=252, &\quad
\nc_{1113,\0}^{(3,0)}&=648,\\
&& && &&
\nc_{2222,\0}^{(2,0)}&=729, &\quad \nc_{1122,\0}^{(3,0)}&=2484, &\quad
\nc_{1111,\0}^{(4,0)}&=5184,
\end{alignat*}
up to the permutations of the first three subscripts.
From~\e_ref{mainthm_e}, we then find that 
\begin{alignat*}{2}
\lr{H^3,H,H}_{0,1}^{X_3}=\lr{H^3,H,H,H}_{0,1}^{X_3}&=18,&\quad  
\lr{H^2,H^2,H}_{0,1}^{X_3}=\lr{H^2,H^2,H,H}_{0,1}^{X_3}&=45, \\
\lr{H^3,H^3,H}_{0,2}^{X_3}=\frac12\lr{H^3,H^3,H,H}_{0,2}^{X_3}&=108, &\quad \lr{H^3,H^2,H^2}_{0,2}^{X_3}=\frac12\lr{H^3,H^2,H^2,H}_{0,2}^{X_3}&=378,\\ 
\lr{H^2,H^2,H^2,H^2}_{0,2}^{X_3}&=2187,&\quad
\lr{H^3,H^3,H^3}_{0,3}^{X_3}=\frac13\lr{H^3,H^3,H^3,H}_{0,3}^{X_3}&=648,\\
\lr{H^3,H^3,H^2,H^2}_{0,3}^{X_3}&=7452,&\quad
\lr{H^3,H^3,H^3,H^3}_{0,4}^{X_3}&=15552.
\end{alignat*}
These conclusions are consistent with the divisor relation.
The above invariants are enumerative at least for $d\!=\!1,2,3$.
The degree~1 and~2 numbers agree with the classical Schubert calculus computations
on~$G(2,5)$ and~$G(3,5)$, respectively.
The approach of~\cite{ES} can be used to test the two degree~3 numbers.\\

\noindent
Based on \e_ref{coeffdfn_e}, the coefficient $\nc_{\p,\b}^{(d,0)}$
in \e_ref{mainthm_e} with $\p\!\in\!\nset^N$ involves the power series
$\Phi_r$ of Proposition~\ref{Fexp_prp} with $r\!=\!0,1,\ldots,N\!-\!3\!-\!|\b|$ only.
By \e_ref{coeff4CY_e} and \e_ref{coeff4Fano_e}, only the power series~$\Phi_0$
enters in the $N\!=\!4$ case.
For $N\!=\!5$, the power series $\Phi_1$ and~$\Phi_2$ do enter in the final expression
for $\nc_{\p,\0}^{(d,0)}$.
However, at least for $\a\!=\!(n)$, i.e.~when $X_{\a}$ is a Calabi-Yau hypersurface,
$\Phi_2$ cancels with $\Phi_1^2/\Phi_0$ (these two power series are equal in this case).\\

\subsection{Alternative description of the structure constants}
\label{graph_subs}

\noindent
We now describe the constants $\nc_{\p,\b}^{(d,0)}$ defined above as sums over $N$-marked 
trivalent trees.\footnote{The constants $\nc_{\p,\b}^{(d,t)}$ with $t\!>\!0$ can be described
in the same way as well, but are not needed in this approach.}
It is fairly straightforward to see that the two descriptions are equivalent;
this also follows from the two variations of the main localization computation
in Section~\ref{equivpf_sec}.\\

\noindent
A \sf{graph} consists of a set~$\Ver$ of \sf{vertices} and 
a collection $\Edg$ of \sf{edges}, i.e.~of two-element subsets of~$\Ver$.
In Figure~\ref{trivalent_fig}, the vertices are represented by dots,
while each edge $\{v_1,v_2\}$ is shown as the line segment between $v_1$ and~$v_2$.
For such a graph~$\Ga$ and $v\!\in\!\Ver$, let
$$\tnE_v(\Ga)=\big\{e\!\in\!\Edg\!:\,v\!\in\!e\big\}$$
be the set of edges leaving~$v$.
A graph $(\Ver,\Edg)$ is a \sf{tree} if it contains no~loops, 
i.e.~the set $\Edg$ contains no $m$-element subset of the~form
$$\big\{\{v_1,v_2\},\{v_2,v_3\},\ldots,\{v_m,v_1\}\big\},
\qquad v_1,\ldots,v_m\!\in\!\Ver,~ m\!\ge\!3;$$
all four graphs in Figure~\ref{trivalent_fig} are trees.
An \sf{$N$-marked graph} is a tuple $\Ga\!=\!(\Ver,\Edg;\eta)$,
where $(\Ver,\Edg)$ is a graph and $\eta\!:[N]\!\lra\!\Ver$ is a map;
in Figure~\ref{trivalent_fig},
the elements of the~set $[N]\!=\![4]$ are shown in bold face and 
are linked by line segments to their images under~$\eta$.
An $n$-marked graph $\Ga\!=\!(\Ver,\Edg;\eta)$ is called \sf{trivalent} if 
$$m_v\equiv\val_{\Ga}(v)-3\equiv \big|\tnE_{\Ga}(v)\big|+\big|\eta^{-1}(v)\big|-3\ge0$$
for every vertex $v\!\in\!\Ver$. 
There is a unique trivalent 3-marked tree; 
the four trivalent 4-marked trees are shown in Figure~\ref{trivalent_fig}.
For any $N$-marked tree,
\BE{mvsum_e}\sum_{v\in\Ver}\!\!m_v+|\Edg|=N-3.\EE\\

\begin{figure}
\begin{pspicture}(6,-1.1)(10,2)
\psset{unit=.3cm}
\pscircle*(25,0){.3}
\psline[linewidth=.04](25,0)(23,2)\rput(22.5,2){\smsize{$\bf 2$}}
\psline[linewidth=.04](25,0)(23,-2)\rput(22.5,-2){\smsize{$\bf 1$}}
\psline[linewidth=.04](25,0)(27,2)\rput(27.5,2){\smsize{$\bf 3$}}
\psline[linewidth=.04](25,0)(27,-2)\rput(27.5,-2){\smsize{$\bf 4$}}
\pscircle*(40,0){.3}
\psline[linewidth=.04](40,0)(36,0)
\pscircle*(36,0){.3}
\psline[linewidth=.04](36,0)(34,2)\rput(33.5,2){\smsize{$\bf 2$}}
\psline[linewidth=.04](36,0)(34,-2)\rput(33.5,-2){\smsize{$\bf 1$}}
\psline[linewidth=.04](40,0)(42,2)\rput(42.5,2){\smsize{$\bf 3$}}
\psline[linewidth=.04](40,0)(42,-2)\rput(42.5,-2){\smsize{$\bf 4$}}
\pscircle*(55,0){.3}
\psline[linewidth=.04](55,0)(51,0)
\pscircle*(51,0){.3}
\psline[linewidth=.04](51,0)(49,2)\rput(48.5,2){\smsize{$\bf 3$}}
\psline[linewidth=.04](51,0)(49,-2)\rput(48.5,-2){\smsize{$\bf 1$}}
\psline[linewidth=.04](55,0)(57,2)\rput(57.5,2){\smsize{$\bf 2$}}
\psline[linewidth=.04](55,0)(57,-2)\rput(57.5,-2){\smsize{$\bf 4$}}
\pscircle*(70,0){.3}
\psline[linewidth=.04](70,0)(66,0)
\pscircle*(66,0){.3}
\psline[linewidth=.04](66,0)(64,2)\rput(63.5,2){\smsize{$\bf 4$}}
\psline[linewidth=.04](66,0)(64,-2)\rput(63.5,-2){\smsize{$\bf 1$}}
\psline[linewidth=.04](70,0)(72,2)\rput(72.5,2){\smsize{$\bf 3$}}
\psline[linewidth=.04](70,0)(72,-2)\rput(72.5,-2){\smsize{$\bf 2$}}
\end{pspicture}
\caption{The trivalent 4-marked trees}
\label{trivalent_fig}
\end{figure}
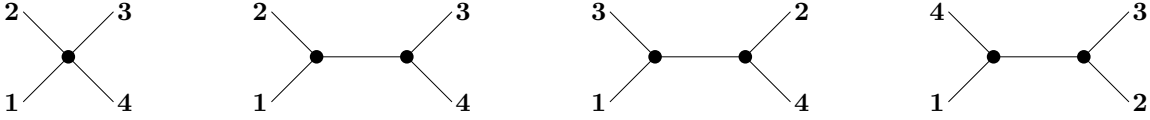

\noindent
We will call a partial ordering $\prec$ on a set $\Ver$ \sf{linear} if
for any pair of distinct incomparable elements $v_1,v_2\!\in\!\Ver$
there exists a third element $v\!\in\!\Ver$ such that $v\!\prec\!v_1,v_2$.
A finite linearly ordered set~$\Ver$ has a unique minimal element $v_0\!\in\!\Ver$.
For each trivalent $N$-marked tree $\Ga\!=\!(\Ver,\Edg;\eta)$, 
we fix a partial ordering $\prec$ on~$\Ver$ so that if $v\!\prec\!v'$,
then there exist
$$v_1,\ldots,v_m\!\in\!\Ver  \quad\hbox{s.t.}\quad
v_{i-1}\!\prec\!v_i,~~
\{v_{i-1},v_i\}\in\Edg~~\forall~i\!\in\![m\!+\!1],
~~\hbox{where}~~v_0\!\equiv\!v,~v_{m+1}\!\equiv\!v'\,.~
\footnote{Such a partial ordering is determined by the minimal vertex $v_0$,
which could be taken to be $\eta(N)$, for example.}$$
For every edge $e\!\in\!\Edg$, let $v_e^-,v_e^+\!\in\!\Ver$ be the elements of $e\!\subset\!\Ver$ 
with $v^-_e\!\prec\!v^+_e$.
For each $v\!\in\!\Ver$, let
$$\tnE_{\Ga}^-(v)=\big\{e\!\in\!\Edg\!:\,v_e^-\!=\!v\big\}$$
be the set of edges descending to $v$.
If $v\!\neq\!v_0$, let $e_v\!\in\!\Edg$ be the unique edge descending from~$v$.\\

\noindent
Let $(\p,\b,d)\!\in\!\nset_l^N\!\times\!(\bar\Z^+)^N\!\times\!\bar\Z^+$
be a tuple satisfying the two properties on the right-hand side of~\e_ref{coeffnon0_e} with $t\!=\!0$,
$\Ga\!=\!(\Ver,\Edg;\eta)$ be a trivalent $N$-marked tree, and
$$\bfd\!\equiv\!(d_v)_{v\in\Ver}\in\cP_{\Ga}(d)\equiv\cP_{\Ver}(d)$$ 
be a partition of $d$ into nonnegative integers.
We denote~by
$$\cS_{\Ga}(\p,\b,\bfd)\subset\nset^{\Edg}\times(\bar\Z^+)^{\Edg}\times\Z^{\Ver}$$
the subset of triples $(\p',\b',\bft)$ such that
\BE{sumcond_e}\sum_{s\in\eta^{-1}(v)}\!\!\!\!\!\big(\hat{p}_s\!+\!b_s\big)
+\sum_{e\in\tnE_{\Ga}^-(v)}\!\!\!\!\big(\hat{p}_e'\!-\!1\!-\!b_e'\big)
+(p_{e_v}'\!+\!b_{e_v}'\big)
=n\!-\!3+(m_v\!+\!2)(l\!+\!1)+\nua d_v+nt_v\EE
for all $v\!\in\!\Ver$, where $\hat{p}$ is as in~\e_ref{pdual_e}
and we set $p_{e_v}'\!+\!b_{e_v}\!\equiv\!0$ if $v\!=\!v_0$.
Each choice of~$\b'$ determines~$\p'$ and~$\bft$ uniquely by solving \e_ref{sumcond_e} 
for $p_v$ and $t_v$ starting with maximal elements of~$\Ver$ and moving down;
the equation for $v\!=\!v_0$ will then be automatically solvable for~$t_v$
because of the last property in~\e_ref{coeffnon0_e}. 
Furthermore, for every $(\p',\b',\bft)\in\cS_{\Ga}(\p,\b,\bfd)$
$$t_{\p'}+\sum_{v\in\Ver}\!\!t_v=0\,,$$
with $t_{\p'}$ as in~\e_ref{tpdfn_e}.\\

\noindent
If $(\p,\b)\!\in\nset_l^N\!\times\!(\bar\Z^+)^N$ and $d\!\in\!\bar\Z^+$ satisfy 
the last property in~\e_ref{coeffnon0_e} with $t\!=\!0$, set
\BE{ncCdfn2_e}\begin{split}
&\nc_{\p,\b}^{(d,0)} =\sum_{\Ga}\!\!\!\!\!
\sum_{\begin{subarray}{c}\bfd\in\cP_{\Ga}(d)\\ 
(\p',\b',\bft)\in\cS_{\Ga}(\p,\b,\bfd)\end{subarray}}
\!\!\!\!\!\!\!\!\!\!\!\!\!\!(-1)^{|\b|+|\b'|}\!\!\!\!\!\!\!\!\!\!\!\!\!\!\!\!
\sum_{\begin{subarray}{c} \b''\in(\bar{\Z}^+)^N;\b^-,\b^+\in(\bar{\Z}^+)^{\Edg}\\ 
(\c_v)_{v\in\Ver}\in((\bar\Z^+)^{\i})^{\Ver}\\ 
|\b''|_{\eta^{-1}(v)}+|\b^-|_{\tnE_{\Ga}^-(v)}+b_{e_v}^++\|\c_v\|=m_v\end{subarray}}
\!\!\!\!\!\!\!\!
\prod_{v\in\Ver}\!\Bigg\llbracket \Phi_{m_v,\c_v}(q)\!\\
&\hspace{.6in}\times\!\!\!\!
\prod_{s\in\eta^{-1}(v)}\!\!\!\!
\frac{\Phi_{\hat{p}_s;b_s''-b_s}(q)}{b_s''!\,\Phi_0(q)}
\times\!\!\!\!\!\!
\prod_{e\in\tnE_{\Ga}^-(v)}\!\!\!\!\!
\frac{L(q)^{\de_{0\nua}nt_{p_e'}}\Phi_{\hat{p}_e';b_e^-+1+b_e'}(q)}{b_e^-!\,\Phi_0(q)}
\times \!
\frac{I_0(q)^2\Phi_{p_{e_v}';b_{e_v}^+-b_{e_v}'}(q)}{b_{e_v}^+!\,L(q)^{\de_{0\nua}n}\Phi_0(q)}
\!\Bigg\rrbracket_{q;d_v},
\end{split}\EE
where $b_{e_{v_0}}^+\!\equiv\!0$,
the last fraction is defined to be 1 for $v\!=\!v_0$, 
and the outer sum is taken over all trivalent $N$-marked trees 
$\Ga\!=\!(\Ver,\Edg;\eta)$.
For example, the contribution of the one-vertex $N$-marked tree is 
\begin{equation*}\begin{split}
&(-1)^{|\b|}
\sum_{\begin{subarray}{c} \b''\in(\bar{\Z}^+)^N,\c\in(\bar\Z^+)^{\i}\\ 
|\b''|+\|\c\|=N-3\end{subarray}}
\Bigg\llbracket \Phi_{N-3,\c}(q)
\prod_{s=1}^{s=N}\!\frac{\Phi_{\hat{p}_s;b_s''-b_s}(q)}{b_s''!\,\Phi_0(q)}
\Bigg\rrbracket_{q;d}.
\end{split}\end{equation*}
If $|\b|\!=\!N\!-\!3$, this gives \e_ref{coeff3Fano_e} and \e_ref{coeff3CY_e}
with $t,t_{\p}\!=\!0$.\\

\noindent
For a nonzero summand in \e_ref{ncCdfn2_e}, 
$$b_s\le b_s'' ~~\forall~s\!\in\![N] \qquad\hbox{and}\qquad
|\b''|\le N\!-\!3-|\Edg|;$$
the latter inequality follows from \e_ref{mvsum_e}.
This implies the bound on~$\b$ in~\e_ref{coeffnon0_e}.
If $d\!\in\!\bar\Z^+$  and $(\p,\b)\!\in\nset_l^N\!\times\!(\bar\Z^+)^N$ do not satisfy
the last condition in~\e_ref{coeffnon0_e} with $t\!=\!0$, set $\nc_{\p,\b}^{(d,0)}\!=\!0$.\\

\noindent
In the Calabi-Yau case, $\nua\!=\!0$, the collection 
$\cS_{\Ga}(\p,\b)\!\equiv\!\cS_{\Ga}(\p,\b,\bfd)$ does not depend on~$\bfd$.
In the projective case, $\nua\!=\!n$, the collection of pairs $(\p',\b')$ 
does not depend on~$\bfd$.
As~$t_v$ in~\e_ref{sumcond_e} is determined by~$\b'$, 
we abbreviate the elements of $\cS_{\Ga}(\p,\b)$ as~$(\p',\b')$ in either case.
In these extremal cases,  \e_ref{Phimcdfn_e} and~\e_ref{F0exp_e2b} 
reduce~\e_ref{ncCdfn2_e}~to
\begin{equation*}\begin{split}
\nc_{\p,\b}^{(d,0)}&=\sum_{\Ga}
\sum_{(\p',\b')\in\cS_{\Ga}(\p,\b)}
\!\!\!\!\!\!\!\!\!\!\!(-1)^{|\b|+|\b'|}\!\!\!\!\!\!\!\!\!\!\!\!\!\!\!\!
\sum_{\begin{subarray}{c} \b''\in(\bar{\Z}^+)^N;\b^-,\b^+\in(\bar{\Z}^+)^{\Edg}\\ 
(\c_v)_{v\in\Ver}\in((\bar\Z^+)^{\i})^{\Ver}\\ 
|\b''|_{\eta^{-1}(v)}+|\b^-|_{\E_{\Ga}^-(v)}+b_{e_v}^++\|\c_v\|=m_v\end{subarray}}
\!\!\!\!\!\!\!\Bigg\llbracket L(q)^{|\a|t_{\p'}}
\Phi_{\Ga,(\c_v)_{v\in\Ver}}(q)\\
&\hspace{2.2in}\times
\prod_{s=1}^{s=N}\frac{\Phi_{\hat{p}_s;b_s''-b_s}(q)}{b_s''!\,\Phi_0(q)}
\times\prod_{e\in\Edg}\!\!\!
\frac{\Phi_{\hat{p}_e';b_e^-+1+b_e'}(q)\Phi_{p_e';b_e^+-b_e'}(q)}{b_e^-!b_e^+!\,\Phi_0(q)^2}
\Bigg\rrbracket_{q;d},
\end{split}\end{equation*}
where 
\begin{equation*}\begin{split}
\Phi_{\Ga,(\c_v)_{v\in\Ver}}
&=\frac{L^{|\a|-(n-1-l)|\Ver|}}{I_0^2}
\!\prod_{v\in\Ver}\!\!\!\Bigg(\!\!  
(-1)^{m_v+|\c_v|}(m_v\!+\!|\c_v|)!
\prod_{r=1}^{\i}
\frac{1}{c_{v;r}!}\bigg(\!\!\frac{\Phi_r}{(r\!+\!1)!\,\Phi_0}\!\!\bigg)^{\!c_{v;r}}
\Bigg).
\end{split}\end{equation*}\\

\noindent
The coefficients $\nc_{\p,\b}^{(d,0)}$ must be invariant under the permutations of
$[N]$ (same permutations in the components of~$\p$ and~$\b$).
For $N\!\ge\!4$, this is not apparent from either of the above two descriptions of these coefficients,
even in the extremal cases;
thus, this is a consequence of the proof of Theorem~\ref{main_thm} below.
In the case of~\e_ref{pt4_e}, this invariance can be seen directly using Lemma~\ref{Iprp_lmm},
as indicated in Section~\ref{CY_subs}.

\section{Equivariant GW-invariants}
\label{equiv_sec}

\noindent
In this section we first review the relevant aspects of equivariant cohomology;
a more detailed discussion can be found in \cite[Section~1.1]{bcov1}.
We then state an equivariant version of Theorem~\ref{main_thm} and use it
to obtain Theorem~\ref{main_thm}.\\

\noindent
We denote by $\T$ the $n$-torus $(\C^*)^n$.
Its group cohomology is the polynomial algebra on $n$~generators:
$$H_{\T}^*\equiv H^*(B\T;\Q)=\Q[\al]\equiv\Q[\al_1,\ldots,\al_n],$$
where $\al\!=\!(\al_1,\ldots,\al_n)$ and $\al_i\!=\!\pi_i^*c_1(\ga^*)$ if
$$\pi_i\!: B\T\lra B\C^*\!=\!\P^{\i} \qquad\hbox{and}\qquad
\ga\lra\P^{\i}$$
are the projection onto the $i$-th component 
and the tautological line bundle, respectively.
Let
$$\H_{\T}^*=\Q_{\al}\equiv \Q(\al_1,\ldots,\al_n)
\qquad\hbox{and}\qquad
\cI\subset\Q[\al_1,\ldots,\al_n]\subset\H_{\T}^*$$
be the field of fractions of $H_{\T}^*$ and
the ideal in $\Q[\un\al]$ generated by the elementary symmetric polynomials
$\si_1,\si_2,\ldots,\si_{n-1}$ in $\al_1,\al_2,\ldots,\al_n$, respectively.
Let
$$\hat\si_r=(-1)^{r-1}\si_r\in\Q_{\al}\quad r=0,1,2,\ldots,\qquad
D_{\al}=\prod_{j\neq k}(\al_j-\al_k).$$ \\

\noindent
If $\T$ is acting on a topological space $M$, let
$$H_{\T}^*(M)\equiv H^*(BM;\Q), \qquad\hbox{where}\qquad BM=E\T\!\times_{\T}\!M,$$
be the \sf{equivariant cohomology} of $M$.
The projection map $BM\!\lra\!B\T$ induces an action of $H_{\T}^*$ on $H_{\T}^*(M)$.
We define
$$\H_{\T}^*(M)=H_{\T}^*(M)\otimes_{H_{\T}^*}\H_{\T}^*.$$
If the $\T$-action on $M$ lifts to an action on a (complex) 
vector bundle $V\!\lra\!M$, let
$$\E(V)\equiv e(BV)\in H_{\T}^*(M)\subset \H_{\T}^*(M)$$
denote the \sf{equivariant euler class of} $V$.\\

\noindent
Throughout the paper we work with the standard action of $\T$ on $\P^{n-1}$:
$$\big(e^{\I\th_1},\ldots,e^{\I\th_n}\big)\cdot [z_1,\ldots,z_n] 
=\big[e^{\I\th_1}z_1,\ldots,e^{\I\th_n}z_n\big];$$
it has $n$~fixed points:
$$P_1=[1,0,\ldots,0], \qquad P_2=[0,1,0,\ldots,0], 
\quad\ldots\quad P_n=[0,\ldots,0,1].$$
The $\T$-equivariant cohomology of $\P^{n-1}_N$ with respect 
to the induced diagonal $\T$-action on~$\P^{n-1}_N$ is given~by 
\begin{equation}\label{pncoh_e}
H_{\T}^*(\P^{n-1}_N) = \Q\big[\al,\un\x\big]\Big/
\big\{(\x_s\!-\!\al_1)\ldots(\x_s\!-\!\al_n)\!:s\!=\!1,\ldots,N\big\},
\end{equation}
where $\un\x\!=\!(\x_1,\ldots,\x_n)$
and $\x_s\!=\!\pi_s^*\x$ if $\pi_s\!:\P^{n-1}_N\!\lra\!\P^{n-1}$ is
the projection onto the $s$-th component and 
$\x\!\in\!H_{\T}^*(\P^{n-1})$ is the equivariant hyperplane class.
For each $\p\!\in\!\nset^N$, let
$$\un\x^{\p}=\prod_{i=1}^{i=N}\!\x_s^{p_s}\in H_{\T}^*(\P^{n-1}_N)\,;$$
these elements form a basis for $H_{\T}^*(\P^{n-1}_N)$ as a module over $H_{\T}^*\!=\!\Q[\al]$.\\

\noindent
The action of $\T$ on $\P^{n-1}$ naturally lifts to
the tautological line bundle~$\ga$, the vector bundle
$$\L\equiv \bigoplus_{k=1}^{k=l}\ga^{*\otimes a_k}
= \bigoplus_{k=1}^{k=l}\O_{\P^{n-1}}(a_k)\lra\P^{n-1},$$
and the tangent bundle~$T\P^{n-1}$ so~that
\begin{equation}\label{tangrestr_e}
\E(\L)\big|_{P_i}=\lr{\a}\al_i^l, \quad
\E\big(T\P^{n-1}\big)\big|_{P_i}=
\prod_{\begin{subarray}{c}1\le k\le n\\k\neq i\end{subarray}}\!\!(\al_i\!-\!\al_k)
\qquad\forall~i=1,2,\ldots,n.
\end{equation}
Via composition of maps, the action of $\T$ on $\P^{n-1}$ and $\L$
induces actions on $\ov\M_{0,N}(\P^{n-1},d)$ and 
$$\V_d=\ov\M_{0,N}(\L,d)\lra \ov\M_{0,N}(\P^{n-1},d)$$
so that the evaluation maps
$$\ev\!\equiv\!\ev_1\!\times\!\ldots\!\times\!\ev_N\!: 
\ov\M_{0,N}(\P^{n-1},d)\lra \P^{n-1}_N\,, \quad
\wt\ev_s\!:\V_d\lra\ev_s^*\L,~~\wt\ev_s\big([\cC,f;\ti{f}]\big)=\big[\ti{f}(x_s(\cC))],$$
where $x_s(\cC)$ is the $s$-th marked point of the curve~$\cC$, are $\T$-equivariant. 
In particular, $\V_d$ has a well-defined equivariant euler class
$$\E(\V_d)\in H_{\T}^*\big(\ov\M_{0,N}(\P^{n-1},d)\big).$$
Since the bundle homomorphisms $\wt\ev_s$ are surjective, their kernels 
are again equivariant vector bundles.
Let 
$$\V_d''=\ker\wt\ev_2\lra\ov\M_{0,2}(\P^{n-1},d).$$\\

\noindent
With $\un\hb$ and $\un\hb^{-1}$ as in~\e_ref{Zdfn_e} and $\un\x$ 
as in~\e_ref{pncoh_e}, let
\BE{cZdfn_e} \cZ\big(\un\hb,\un\x,Q\big)= \sum_{d=0}^{\i}Q^d
\ev_*\bigg\{\frac{\E(\V_d)}{\prod_{s=1}^{s=N}(\hb_s\!-\!\psi_s)}\bigg\}
\in H_{\T}^*(\P^{n-1}_N)\big[\big[\un\hb^{-1},Q\big]\big],\EE
where $\ev\!:\ov\M_{0,N}(\P^{n-1},d)\!\lra\!\P^{n-1}_N$;
for $N\!=\!1,2$, we define the coefficient of $Q^0$ to~be
$$ \lr\a\x_1^l \qquad\hbox{and}\qquad
-\frac{\lr\a\x_1^l}{\hb_1\!+\!\hb_2}
\sum_{\begin{subarray}{c}p_1+p_2+r=n-1\\ p_1,p_2,r\ge0\end{subarray}}
\!\!\!\!\!\!\!\!\!\!\!\hat\si_r\x_1^{p_1}\x_2^{p_2}\,,$$
respectively.
For each $p\!\in\!\nset$, let
\BE{cZpdfn_e}\cZ_p(\hb,\x,Q)=\x^p
+\sum_{d=1}^{\i}Q^d\ev_{1*}\bigg\{\frac{\E(\V_d'')\ev_2^*\x^p}{\hb-\psi}\bigg\}
\in H_{\T}^*(\P^{n-1})\big[\big[\hb^{-1},Q\big]\big],\EE
where $\ev_1,\ev_2\!:\ov\M_{0,2}(\P^{n-1},d)\!\lra\!\P^{n-1}$.
Similarly to \e_ref{Depdfn_e}, let
$$\cZ_{\p}(\un\hb,\un\x,Q)
\equiv\prod_{s=1}^{s=N}
\frac{1}{\hb_s}\frac{\cZ_{p_s}(\hb,\x_s,Q)}{\prod\limits_{r=p_s-l+1}^{n-l-1}\!\!\!\!I_r(q_s)}\,.$$

\begin{thmlet}\label{equiv_thm}
Suppose $n,N\!\in\!\Z^+$, with $N\!\ge\!3$, and $\a\!\in\!(\Z^+)^l$ is such that $\|\a\|\!\le\!n$.
The generating function~\e_ref{cZdfn_e} for equivariant $N$-pointed genus~0 GW-invariants of
a complete intersection $X_{\a}\!\subset\!\P^{n-1}$ 
is given~by 
\BE{equivthm_e}\cZ\big(\un\hb,\un\x,Q\big)= \lr{\a}
\sum_{\p\in\nset^N}\sum_{\b\in(\bar\Z^+)^N}
\sum_{d=0}^{\i}\cC_{\p,\b}^{(d)}q^d\un\hb^{-\b}\cZ_{\p}(\un\hb,\un\x,Q)\EE
for some $\cC_{\p,\b}^{(d)}\!\in\!\Q[\al]$ such that 
\BE{equivthm_e2}\cC_{\p,\b}^{(d)}-\sum_{t=0}^{\i} \nc_{\p,\b}^{(d,t)}\hat\si_n^t\in\cI,\EE
where $\nc_{\p,\b}^{(d,t)}\!\in\!\Q$ are the numbers defined above Theorem~\ref{main_thm}.
\end{thmlet}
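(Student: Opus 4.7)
The plan is to apply $\T$-equivariant Atiyah--Bott localization to the push-forward in~\e_ref{cZdfn_e} and then reorganize the resulting graph sum by breaking at marked vertices, following the method of~\cite{bcov1}. The $\T$-fixed loci of $\ov\M_{0,N}(\P^{n-1},d)$ are indexed by decorated trees whose vertices are labeled by the fixed points $P_{\mu(v)}$, whose edges carry positive integer degrees with adjacent vertices labeled differently, and whose marked points are distributed among the vertices. Localization writes $\cZ(\un\hb,\un\x,Q)$ as a sum over such graphs of explicit rational functions in $\al_1,\ldots,\al_n$ and $\hb_1,\ldots,\hb_N$, built from the usual vertex, edge, and $\psi$-class factors.

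The key step is to cut each decorated tree at its marked vertices, separating it into $N$ one-pointed ``strands'' together with an unmarked trivalent ``core''. I expect that summing the strand contributions, organized by the attachment fixed point~$P_i$ and a degree index $p\!\in\!\nset$, reassembles the one-pointed generating function $\cZ_{p_s}(\hb_s,\x_s,Q)$ of~\e_ref{cZpdfn_e} evaluated at $\x_s\!=\!\al_i$; this is already the mechanism underlying the two-pointed identity~\e_ref{Z2pt_e} of~\cite{PoZ} and ultimately rests on the one-pointed mirror formula of~\cite{Gi2}. The core is a trivalent tree in the sense of Section~\ref{graph_subs}, and its contribution should be an iterated residue in the cut variables at the broken edges. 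The asymptotic expansion $F\!\sim\!e^{\xi w}\!\sum_b\Phi_b w^{-b}$ of Proposition~\ref{Fexp_prp}, together with the operators $\fL_k$ encoding the recursion~\e_ref{PhiODE}, is precisely what is needed to evaluate this residue: the polynomials $\Phi_{m,\c}$ of~\e_ref{Phimcdfn_e} should arise at each core vertex of valence~$m$, and the coefficients $\Phi_{p;b}$ of~\e_ref{Psipr_e} at each attachment to a strand. This yields the expansion~\e_ref{equivthm_e} with $\cC_{\p,\b}^{(d)}\!\in\!\Q_\al$, and polynomiality in~$\al$ should follow from the regularity of $\cZ$ together with the linear independence of the~$\cZ_\p$ in the appropriate module.

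To obtain the congruence~\e_ref{equivthm_e2}, I reduce modulo the ideal~$\cI$, which kills $\si_1,\ldots,\si_{n-1}$ and collapses the fundamental relation $(\x\!-\!\al_1)\cdots(\x\!-\!\al_n)\!=\!0$ of~\e_ref{pncoh_e} to $\x^n\!=\!\hat\si_n$. Each use of this collapsed relation raises the $Q$-degree bookkeeping by~$n$, which is exactly the role of the integer~$t$ in the constraint $|\p|\!-\!|\b|\!+\!\nua d\!+\!nt\!=\!(N\!-\!1)(n\!-\!2)\!+\!2\!+\!l$ in~\e_ref{coeffnon0_e}. The broken-graph sum after this reduction should then match term-by-term against the alternative graph description~\e_ref{ncCdfn2_e} of~$\nc_{\p,\b}^{(d,t)}$ from Section~\ref{graph_subs}, giving~\e_ref{equivthm_e2}.

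The main obstacle is the explicit residue calculus for the core contribution: one must verify that summing over the decorations of the core vertices reproduces the $\Phi_{m,\c}$ at each vertex and the $\Phi_{p;b}$ at each strand attachment with the correct signs and multiplicities, and in just the right combination to match the recursive definition~\e_ref{coeffdfn_e}. This is where the asymptotic ODE~\e_ref{PhiODE} and the residue identities of~\cite{bcov1} enter nontrivially, and it is the natural generalization of the single-edge breaking in~\cite{PoZ} to arbitrary trivalent trees.
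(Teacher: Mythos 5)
Your proposal follows essentially the same route as the paper's own proof: localize over decorated trees, break each tree at the special vertices of its core (this is precisely the second of the two variants of the computation carried out in Section~\ref{maincomp_subs}), sum the unmarked one-point strands at each core vertex to produce the $\Phi_{m,\c}$ factors via the residue combinatorics behind Corollary~\ref{pt1_crl}, feed the marked tails and internal core edges through the one- and two-point formulas of \cite{Gi2} and \cite{PoZ} (Proposition~\ref{res_prp} and Lemma~\ref{pt2_lmm}), and reduce modulo $\cI$ by converting the sum over vertex labels into residues at $\x\!=\!0,\i$, which is what produces the powers $\hat\si_n^t$ and matches the graph-sum constants of Section~\ref{graph_subs}. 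Two small corrections: the cuts are made at the special (valence~$\ge\!3$) vertices of the core rather than literally at the vertices carrying the marked points, and identifying the result with the recursively defined $\nc_{\p,\b}^{(d,t)}$ of \e_ref{coeffdfn_e} cited in the statement is exactly what the paper's other variant (breaking only at $\bar\eta(N)$, yielding the recursion \e_ref{cCrec_e}) provides, so your plan to match against \e_ref{ncCdfn2_e} needs that equivalence, which the paper obtains by running both computations.
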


\noindent
Setting $\al\!=\!0$ in Theorem~\ref{equiv_thm} and using \cite[Theorem~3]{PoZ}, we obtain 
\BE{Zform_e} Z\big(\un\hb,\un{H},Q\big)= \lr{\a}
e^{-\sum\limits_{s=1}^{s=N}\!\!J(q_s)w_s}\!\!\!
\sum_{\p\in\nset^N}\sum_{\b\in(\bar\Z^+)^N}
\sum_{d=0}^{\i}\nc_{\p,\b}^{(d,0)}q^d\un\hb^{-\b}\De_{\p}(\un\hb,\un{H},Q).\EE
This implies Theorem~\ref{main_thm} provided $\nc_{\p,\b}^{(d,0)}\!=\!0$ 
if $p_s\!<\!l$ for some~$s\!\in\![N]$; this is shown in the next paragraph.\\

\noindent
Suppose instead $\nc_{\p,\b}^{(d,0)}\!=\!0$ for some triple $(\p,\b,d)$ with $p_1\!<\!l$.
Choose $(\p,\b,d)$ minimizing~$p_1$, as well as minimizing~$d$ for the smallest possible~$p_1$.
We show that 
\BE{GWc0_e}
\blr{\tau_{b_1}H^{n-1-p_1},\ldots,\tau_{b_N}H^{n-1-p_N}}_{0,d}^{X_{\a}}
=\lr\a\nc_{\p,\b}^{(d,0)}\,.\EE
By \e_ref{genus0_e} and \e_ref{Zdfn_e}, this GW-invariant is the coefficient of 
$Q^d\prod\limits_{s=1}^{s=N}\hb_s^{-(b_s+1)}H_s^{p_s}$
of the right-hand side of~\e_ref{Zform_e}.
Suppose a triple $(\p',\b',d')$, with $\nc_{\p',\b'}^{(d',0)}\!\neq\!0$,
contributes to this coefficient.
Since the lowest power of~$H$ in the coefficient of a product of powers of $q$ and~$\hb^{-1}$
in $H^pF_p(w,q)$ is $\min(p,l)$, $p_1'\!=\!p_1$ by the minimality of~$p_1$
and thus $d'\!=\!d$ by the minimality of~$d$. 
Since the coefficient of~$q^0$ in $H^pF_p(w,q)$ is~$H^p$, $p_s'\!=\!p_s$ for all $s\!\in\![N]$
and thus $b_s'\!=\!b_s$ for all $s\!\in\![N]$; this gives~\e_ref{GWc0_e}.
Since $H^{n-1-p_1}|_{X_{\a}}\!=\!0$ for $p_1\!<\!l$, 
we conclude that $\nc_{\p,\b}^{(d,0)}\!=\!0$.\\

\noindent
The proof of Theorem~\ref{equiv_thm} below provides an algorithm for computing the
structure coefficients~$\cC_{\p,\b}^{(d)}$ completely.
On the other hand, they may be irrelevant in many applications.
For example, the one- and two-point equivariant generating functions~\e_ref{cZdfn_e}
play a key in the localization computation of the genus~1 GW-invariants 
of Calabi-Yau complete intersections in~\cite{bcov1} and in~\cite{Po2},
but the structure coefficients lying in~$\cI$ are ignored.
Similarly, the equivariant generating functions with $N\!\le\!g$ and
the structure coefficients lying in~$\cI$ dropped should play a key role
in computing genus $g\!\ge\!2$ GW-invariants of complete intersections.

\section{Proof of Theorem~\ref{main_thm}}
\label{equivpf_sec}

\subsection{Localization Setup}
\label{outline_subs}

\noindent
If $\T$ acts smoothly on a smooth compact oriented manifold $M$, 
there is a well-defined integration-along-the-fiber homomorphism
$$\int_M\!: H_{\T}^*(M)\lra H_{\T}^*$$
for the fiber bundle $BM\!\lra\!B\T$.
The classical localization theorem of~\cite{ABo} relates it to 
integration along the fixed locus of the $\T$-action.
The latter is a union of smooth compact orientable manifolds~$F$
and $\T$ acts on the normal bundle $\N F$ of each~$F$.
Once an orientation of $F$ is chosen, there is a well-defined 
integration-along-the-fiber homomorphism
$$\int_F\!: H_{\T}^*(F)\lra H_{\T}^*.$$
The localization theorem states that 
\begin{equation}\label{ABothm_e}
\int_M\psi = \sum_F\int_F\frac{\psi|_F}{\E(\N F)} \in \H^*_{\T}
\qquad\forall~\psi\in H_{\T}^*(M),
\end{equation}
where the sum is taken over all components $F$ of the fixed locus of $\T$. 
Part of the statement of~\e_ref{ABothm_e} is that $\E(\N F)$
is invertible in~$\H_{\T}^*(F)$.\\

\noindent
The standard $\T$-action on $\P^{n-1}_N$ has $nN$~fixed points:
$$P_{i_1\ldots i_N}\equiv 
P_{i_1}\!\times\!\ldots\!\times\!P_{i_N}.$$
The restriction maps on the equivariant cohomology induced by the inclusions
$P_{i_1\ldots i_N}\lra \P^{n-1}_N$ are the homomorphisms
\begin{equation}\label{restrmap_e}
H_{\T}^*(\P_N^{n-1})\lra \Q[\al_1,\ldots,\al_n], \qquad 
\x_s\lra\al_{i_s},~s\!=\!1,\ldots,N.
\end{equation}
By~\e_ref{pncoh_e} and \e_ref{restrmap_e},
$$\eta=0 \in H_{\T}^*(\P_N^{n-1}) \quad\Llra\quad
\eta|_{P_{i_1\ldots i_N}}=0\in H_{\T}^*
~~~\forall\, i_s=1,2,\ldots,n,~s\!=\!1,\ldots,N,$$
i.e.~an element of $H_{\T}^*(\P_N^{n-1})$ is determined by its restrictions
to the $nN$ $\T$-fixed points.
For each $i\!=\!1,2,\ldots,n$, the equivariant Poincare dual of $P_i$ 
in $\P^{n-1}$ is given~by
\BE{phidfn_e}
\phi_i= \prod_{k\neq i}(\x\!-\!\al_k) \in H_{\T}^*(\P^{n-1}).\,\footnotemark\EE
Thus,\footnotetext{In other words, if $\eta\!\in\!H_{\T}^*(\P^{n-1})$, then
$$\eta|_{P_i}\equiv\int_{P_i}\eta|_{P_i}=
\int_{\P^{n-1}}\eta\phi_i.$$}
by the defining property of the cohomology pushforward \cite[(1.11)]{bcov1},
the power series $\cZ(\un\hb,\un\x,Q)$ in~\e_ref{cZdfn_e} is  
completely determined by the $nN$ power series
\BE{cZval_e}\cZ\big(\un\hb,\al_{i_1,\ldots,i_N},Q\big)
=\sum_{d=0}^{\i}Q^d\!\!
\int_{\ov\M_{0,N}(\P^{n-1},d)}\E(\V_d)\prod_{s=1}^{s=N}\!\!
\bigg(\frac{\ev_s^*\phi_{i_s}}{\hb_s\!-\!\psi_s}\bigg),\EE
where $\al_{i_1\ldots i_N}\!\equiv\!(\al_{i_1},\ldots,\al_{i_N})$.\\

\noindent
As described in detail in \cite[Section~27.3]{MirSym},
the fixed loci~$\cZ_{\Ga}$ of the $\T$-action on $\ov\M_{0,N}(\P^{n-1},d)$ 
are indexed by \sf{$N$-marked decorated trees}~$\Ga$.
An \sf{$N$-marked decorated tree} is a tuple
\begin{equation}\label{decortgraphdfn_e}
\Ga = \big(\Ver,\Edg;\mu,\d,\eta\big),
\end{equation}
where $(\Ver,\Edg)$ is a tree and
$$\mu\!:\Ver\lra[n]\equiv\!\!\{1,\ldots,n\}, \qquad
\d\!:\Edg\lra\Z^+, \quad\hbox{and}\quad \eta\!:[N]\lra\Ver$$
are maps such that
\begin{equation}\label{decorgraphcond_e}
\mu(v_1)\neq\mu(v_2)  \qquad\hbox{if}\quad \{v_1,v_2\}\in\Edg.
\end{equation}
In the first diagram of Figure~\ref{graphcore_fig}, 
the value of the map $\mu$ on each vertex
is indicated by the number next to the vertex.
Similarly, the value of the map $\d$ on each edge is indicated by 
the number next to the edge.
By~\e_ref{decorgraphcond_e}, no two consecutive vertex labels are the same.
Let
$$|\Ga|=\sum_{e\in\Edg}\d(e).$$
For each $e\!=\!\{v,v'\}\!\in\!\tnE_v(\Ga)$, 
let $\mu_v(e)\!=\!\mu(v')\!\in\![n]$.\\

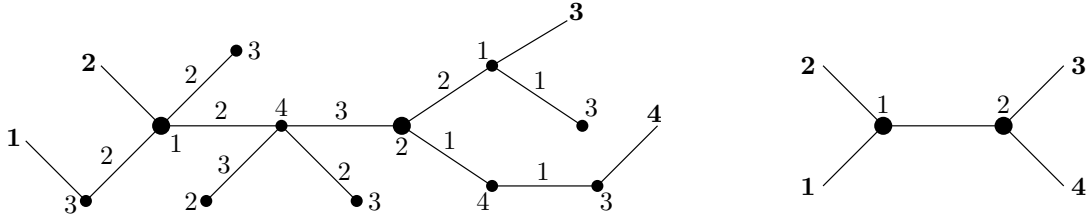
\begin{figure}
\begin{pspicture}(6,-1.1)(10,2)
\psset{unit=.4cm}
\pscircle*(30,0){.3}\rput(30,-.7){\smsize{$2$}}
\psline[linewidth=.04](30,0)(26,0)\pscircle*(26,0){.2}
\rput(28,.5){\smsize{$3$}}\rput(26,.6){\smsize{$4$}}
\psline[linewidth=.04](26,0)(23.5,-2.5)\pscircle*(23.5,-2.5){.2}
\rput(24.1,-1.3){\smsize{$3$}}\rput(23,-2.5){\smsize{$2$}}
\psline[linewidth=.04](26,0)(28.5,-2.5)\pscircle*(28.5,-2.5){.2}
\rput(28.1,-1.5){\smsize{$2$}}\rput(29.1,-2.5){\smsize{$3$}}
\psline[linewidth=.04](26,0)(22,0)\pscircle*(22,0){.3}
\rput(24,.5){\smsize{$2$}}\rput(22.5,-.6){\smsize{$1$}}
\psline[linewidth=.04](22,0)(24.5,2.5)\pscircle*(24.5,2.5){.2}
\rput(23,1.7){\smsize{$2$}}\rput(25.1,2.5){\smsize{$3$}}
\psline[linewidth=.04](22,0)(20,2)\rput(19.6,2.1){\smsize{$\bf 2$}}
\psline[linewidth=.04](22,0)(19.5,-2.5)\pscircle*(19.5,-2.5){.2}
\rput(20.2,-1){\smsize{$2$}}\rput(19,-2.6){\smsize{$3$}}
\psline[linewidth=.04](19.5,-2.5)(17.5,-.5)\rput(17.1,-.4){\smsize{$\bf 1$}}
\psline[linewidth=.04](30,0)(33,2)\pscircle*(33,2){.2}
\rput(31.4,1.5){\smsize{$2$}}\rput(32.7,2.5){\smsize{$1$}}
\psline[linewidth=.04](30,0)(33,-2)\pscircle*(33,-2){.2}
\rput(31.6,-.5){\smsize{$1$}}\rput(32.7,-2.5){\smsize{$4$}}
\psline[linewidth=.04](35.5,3.5)(33,2)\rput(35.8,3.8){\smsize{$\bf 3$}}
\psline[linewidth=.04](36,0)(33,2)\pscircle*(36,0){.2}
\rput(34.6,1.5){\smsize{$1$}}\rput(36.3,.6){\smsize{$3$}}
\psline[linewidth=.04](36.5,-2)(33,-2)\pscircle*(36.5,-2){.2}
\rput(34.7,-1.5){\smsize{$1$}}\rput(36.8,-2.5){\smsize{$3$}}
\psline[linewidth=.04](38.5,0)(36.5,-2)\rput(38.4,.4){\smsize{$\bf 4$}}
\pscircle*(50,0){.3}\rput(50,.7){\smsize{$2$}}
\psline[linewidth=.04](50,0)(46,0)
\pscircle*(46,0){.3}\rput(46,.7){\smsize{$1$}}
\psline[linewidth=.04](46,0)(44,2)\rput(43.5,2){\smsize{$\bf 2$}}
\psline[linewidth=.04](46,0)(44,-2)\rput(43.5,-2){\smsize{$\bf 1$}}
\psline[linewidth=.04](50,0)(52,2)\rput(52.5,2){\smsize{$\bf 3$}}
\psline[linewidth=.04](50,0)(52,-2)\rput(52.5,-2){\smsize{$\bf 4$}}
\end{pspicture}
\caption{A decorated tree, with special vertices indicated by larger dots, 
and its decorated core}
\label{graphcore_fig}
\end{figure}

\noindent
If $\Ga$ is a decorated tree  as in~\e_ref{decortgraphdfn_e} and $v\!\in\!\Ver$, let
$$\val_{\Ga}(v)= \big|\tnE_{\Ga}(v)\big|+\big|\eta^{-1}(v)\big|$$
be the \sf{valence} of $v$ in $\Ga$.
If in addition $N\!\ge\!3$, \sf{the core of~$\Ga$} is the tuple 
$\bar\Ga\!\equiv\!(\ov\Ver,\ov\Edg;\bar\mu,\bar\eta)$ such~that
\begin{enumerate}[label=(R\arabic*),leftmargin=*]
\item $(\ov\Ver,\ov\Edg)$ is a tree,
$\ov\Ver\!=\!\{v\!\in\!\Ver\!:\,\val_{\Ga}(v)\!\ge\!3\big\}$ and
$\bar\mu\!=\!\mu|_{\ov\Ver}$;
\item $\{v,v'\}\!\in\!\ov\Edg$ if and only if
$v,v'\!\in\!\ov\Ver$, $v\!\neq\!v'$, and for some $m\!\ge\!0$ there exist distinct
$$v_1,\ldots,v_m\!\in\!\Ver\!-\!\ov\Ver  \quad\hbox{s.t.}\quad
\{v_{i-1},v_i\}\in\Edg~~\forall~i\!\in\![m\!+\!1],
~~\hbox{where}~~v_0\!\equiv\!v,~v_{m+1}\!\equiv\!v'\,;$$
\item if $s\!\in\!\eta^{-1}(\ov\Ver)\subset[N]$, $\bar\eta(s)\!=\!\eta(s)$;
if  $s\!\in\!\eta^{-1}(\Ver\!-\!\ov\Ver)$, there exist distinct elements
$$v_1,\ldots,v_m\!\in\!\Ver\!-\!\ov\Ver
\quad\hbox{s.t.}\quad \{v_{i-1},v_i\}\in\Edg~~\forall~i\!\in\![m\!+\!1],
~~\hbox{where}~~v_0\!\equiv\!\bar\eta(s),~v_{m+1}\!=\!\eta(s)\,.$$
\end{enumerate}
The core of a graph with $N\!\ge\!3$ is obtained by repeatedly collapsing all vertices 
with valence less than~3 onto their neighbors, until no such vertices are left;
see Figure~\ref{graphcore_fig}.
We will call 
the vertices $\ov\Ver$ of the core $\bar\Ga$ the \sf{special vertices of~$\Ga$}.\\

\noindent
The localization formula~\e_ref{ABothm_e} reduces the restriction of \e_ref{cZdfn_e}
to each fixed point $P_{i_1\ldots i_N}\!\in\!\P^{n-1}_N$ to a sum over decorated trees.
This sum can be computed by breaking each such tree~$\Ga$ at its special vertices 
into~\sf{strands}, with each of the strands keeping a copy of the special vertex, 
with its label, which will have a new marked point attached; see Figure~\ref{strands_fig}.
There are three types of strands:
\begin{enumerate}[label=(S\arabic*),leftmargin=*]
\item one-marked strands;
\item\label{Z2conn_item} strands with two new marked points;
\item\label{Z2main_item} 
strands with one new marked points and one of the original $N$ marked points.\\
\end{enumerate}

\begin{figure}
\begin{pspicture}(6,-1.1)(10,2)
\psset{unit=.4cm}
\pscircle*(23,2){.2}\rput(23.2,2.6){\smsize{$1$}}
\psline[linewidth=.04](23,2)(19,2)\pscircle*(19,2){.2}
\rput(21,2.5){\smsize{$2$}}\rput(18.8,2.6){\smsize{$3$}}
\psline[linewidth=.04](23,2)(25,.5)\rput(25.6,.6){\smsize{$\bf e_1$}}
\pscircle*(23,-2){.2}\rput(23.2,-2.6){\smsize{$1$}}
\psline[linewidth=.04](23,-2)(19,-2)\pscircle*(19,-2){.2}
\rput(21,-2.5){\smsize{$2$}}\rput(18.8,-2.6){\smsize{$3$}}
\psline[linewidth=.04](19,-2)(17,0)\rput(16.5,0){\smsize{$\bf 1$}}
\psline[linewidth=.04](23,-2)(25,-.5)\rput(25.5,-.6){\smsize{$\bf 1'$}}
\pscircle*(38,0){.2}\rput(38,-.7){\smsize{$2$}}
\psline[linewidth=.04](38,0)(34,0)\pscircle*(34,0){.2}
\rput(36,.5){\smsize{$3$}}\rput(34,.6){\smsize{$4$}}
\psline[linewidth=.04](34,0)(31.5,-2.5)\pscircle*(31.5,-2.5){.2}
\rput(32.1,-1.3){\smsize{$3$}}\rput(31,-2.5){\smsize{$2$}}
\psline[linewidth=.04](34,0)(36.5,-2.5)\pscircle*(36.5,-2.5){.2}
\rput(36.1,-1.5){\smsize{$2$}}\rput(37.1,-2.5){\smsize{$3$}}
\psline[linewidth=.04](34,0)(30,0)\pscircle*(30,0){.2}
\rput(32,.5){\smsize{$2$}}\rput(30.5,-.6){\smsize{$1$}}
\psline[linewidth=.04](38,0)(40,1.5)\rput(40.7,1.8){\smsize{$\bf e^-$}}
\psline[linewidth=.04](30,0)(28,1.5)\rput(27.5,1.6){\smsize{$\bf e^+$}}
\pscircle*(44,2){.2}\rput(44,2.7){\smsize{$2$}}
\psline[linewidth=.04](44,2)(48,2)\pscircle*(48,2){.2}
\rput(46,2.5){\smsize{$2$}}\rput(47.7,2.5){\smsize{$1$}}
\psline[linewidth=.04](50.5,3.5)(48,2)\rput(50.8,3.8){\smsize{$\bf 3$}}
\psline[linewidth=.04](51,0)(48,2)\pscircle*(51,0){.2}
\rput(49.6,1.5){\smsize{$1$}}\rput(51.3,.6){\smsize{$3$}}
\psline[linewidth=.04](44,2)(42,.5)\rput(41.6,.5){\smsize{$\bf 3'$}}
\pscircle*(44,-2){.2}\rput(44,-2.7){\smsize{$2$}}
\psline[linewidth=.04](44,-2)(48,-2)\pscircle*(48,-2){.2}
\rput(46,-1.5){\smsize{$1$}}\rput(47.7,-2.5){\smsize{$4$}}
\psline[linewidth=.04](51.5,-2)(48,-2)\pscircle*(51.5,-2){.2}
\rput(49.7,-1.5){\smsize{$1$}}\rput(51.8,-2.5){\smsize{$3$}}
\psline[linewidth=.04](53.5,0)(51.5,-2)\rput(53.4,.4){\smsize{$\bf 4$}}
\psline[linewidth=.04](44,-2)(42,-.5)\rput(41.6,-.5){\smsize{$\bf 4'$}}
\end{pspicture}
\caption{The strands of the graph in the first diagram in Figure~\ref{graphcore_fig}.}
\label{strands_fig}
\end{figure}
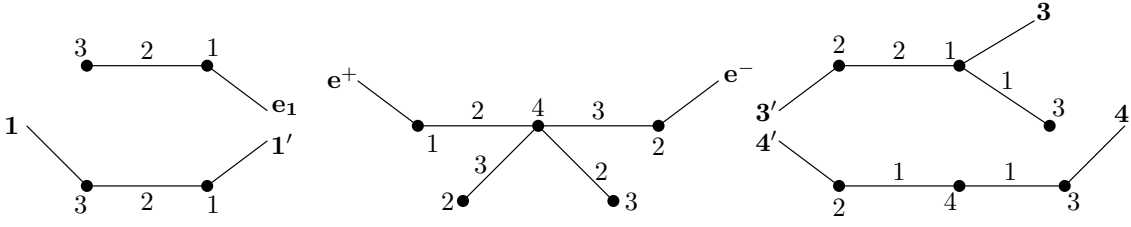

\noindent
By~\e_ref{ABothm_e}, each one-pointed strand at a special vertex 
$v\!\in\!\ov\Ver\!\subset\!\Ver$ contributes to
\BE{cZprdfn_e}\cZ'^*\big(\hb,\al_j,Q\big)
\equiv\sum_{d=1}^{\i}Q^d\!\!
\int_{\ov\M_{0,1}(\P^{n-1},d)}\E(\V_d')
\frac{\ev_1^*\phi_j}{\hb\!-\!\psi_1},\EE
where $j\!=\!\mu(v)\!\in\![n]$ is the label of the vertex $v$ of $\Ga$ and 
$$\V_d'\lra \ov\M_{0,1}(\P^{n-1},d)$$
is the kernel of the surjective vector bundle homomorphism
$\wt\ev_1\!:\V_d\!\lra\!\ev_1^*\L$.
By the dilaton relation \cite[p527]{MirSym},
$$\wt\cZ^*\big(\hb,\al_j,Q\big)\equiv
\sum_{d=1}^{\i}Q^d\!\!
\int_{\ov\M_{0,2}(\P^{n-1},d)}\E(\V_d')
\bigg(\frac{\ev_1^*\phi_j}{\hb\!-\!\psi_1}\bigg)
=\hb^{-1}\cZ'^*\big(\hb,\al_j,Q\big).$$
Each of the two-pointed strands contributes to 
$$\cZ^*\big(\hb_1,\hb_2,\al_{j_1},\al_{j_2},Q\big)
\equiv\sum_{d=1}^{\i}Q^d\!\!
\int_{\ov\M_{0,2}(\P^{n-1},d)}\E(\V_d)
\frac{\ev_1^*\phi_{j_1}}{\hb_1\!-\!\psi_1}\frac{\ev_2^*\phi_{j_2}}{\hb_2\!-\!\psi_2},$$
where $j_1,j_2\!\in\![n]$ are the labels of the vertices to which the marked points
are attached.
Thus, the power series $\cZ(\un\hb,\un\x,Q)$ in~\e_ref{cZdfn_e} is determined
by the previously computed power series for
one- and two-pointed GW-invariants.\\

\noindent
While the number of one-marked strands at each node can be arbitrary large,
as indicated in~\cite[Sections~2.1,2.2]{bcov1} 
it is possible to sum over all possibilities 
for these strands at each special vertex; see Corollary~\ref{pt1_crl} below.
On the other hand, the number of special vertices, 
the number of  two-pointed strands of type~\ref{Z2conn_item},
and the number of two-pointed strands of type~\ref{Z2main_item},
are bounded (by $N\!-\!2$, $N\!-\!3$, and $N$, respectively).
Using the Residue Theorem for~$S^2$, one can then sum up over all 
possibilities of the markings for each of the distinguished nodes.
Thus, the approach of breaking trees at special nodes reduces~\e_ref{cZdfn_e} 
to a finite sum, with one summand for each trivalent $N$-marked tree.\\

\noindent
The description of the structure constants $\nc_{\p,\b}^{(d,t)}$ in Section~\ref{graph_subs}
is obtained by breaking the trees at all special vertices.
On the other hand, the description in Section~\ref{multipt_subs} is obtained by breaking
at the special vertex~$\bar\eta(N)$ only.
In addition to the strands~(S1), we would then obtain strands with marked points indexed
by the sets $S_i\!\sqcup\!\{0\}$, for a partition $\{S_i\}_{i\in[m]}$ of $[N]$ so that 
one of the sets~$S_i$ is~$\{N\}$.
With either approach, the main step is summing over all possibilities for 
the strands~(S1), as done in Corollary~\ref{pt1_crl}.

\subsection{Notation and preliminaries}
\label{prelim_subs}

\noindent
If $f\!=\!f(\hb)$ is a rational function in $\hb$ and $\hb_0\!\in\!S^2$,
let
$$\Res{\hb=\hb_0}\big\{f(\hb)\big\} = \frac{1}{2\pi\I}\oint f(\hb)\tnd\hb\,,$$
where the integral is taken over a positively oriented loop around $\hb\!=\!\hb_0$
containing no other singular points of~$f$, 
denote the residue of $f(\hb)\tnd\hb$ at $\hb\!=\!\hb_0$.
With this definition,
$$\Res{\hb=\i}\big\{f(\hb)\big\}=-\Res{w=0}\big\{w^{-2}f(w^{-1})\big\}.$$
If $f$ involves variables other than $\hb$, 
$\Res{\hb=\hb_0}\big\{f(\hb)\big\}$ will be a function  of such variables.
If $f$ is a power series in $q$ with coefficients that are rational
functions in~$\hb$ and possibly other variables, denote by 
$\Res{\hb=\hb_0}\big\{f(\hb)\big\}$ the  power series in~$q$ obtained by 
replacing each of the coefficients by its residue at $\hb\!=\!\hb_0$.
If $\hb_1,\ldots,\hb_k$ is a collection of distinct points in~$S^2$, let
$$\Res{\hb=\hb_1,\ldots,\hb_k}\big\{f(\hb)\big\} =\sum_{i=1}^{i=k} \Res{\hb=\hb_i}\big\{f(\hb)\big\}$$
be the sum of the residues at the specified values of $\hb$.\\

\noindent
We denote by
$$\Q_{\al}'\equiv \Q\big[\al,\si_n^{-1},D_{\al}^{-1}\big]
\subset\Q_{\al}$$
the subring of rational functions in $\al_1,\ldots,\al_n$
with denominators that are products of~$\si_n$ and~$D_{\al}$.
Let 
$$\Q_{\al;\hb,\x}'\equiv 
\Q_{\al'}[\hb,\x^{\pm1}]_{\left\langle(\x+r\hb)^n-\x^n,
\prod\limits_{k=1}^{k=n}(\x-\al_k+r\hb)-\prod\limits_{k=1}^{k=n}(\x-\al_k)
\big|r\in\Z^+\right\rangle}\subset\Q_{\al}(\hb,\x)$$
be the subring of rational functions in $\al_1,\ldots,\al_n$, $\hb$, and $\x$ 
with numerators that are polynomials in $\al_1,\ldots,\al_n$, $\hb$, and $\x$
and with denominators that are products~of 
$$\si_n\,,\quad D_{\al}\,,\quad \x\,, \quad (\x\!+\!r\hb)^n\!-\!\x^n\,, \quad 
\prod\limits_{k=1}^{k=n}(\x\!-\!\al_k\!+\!r\hb)-\prod\limits_{k=1}^{k=n}(\x\!-\!\al_k)\,,
\qquad\hbox{with}~~r\in\Z^+.$$
If $R$ is one of the rings $\Q_{\al}'$, $\Q_{\al}'[\x^{\pm1}]$, or $\Q_{\al;\hb,\x}'$
and $f_1$ and $f_2$ are elements of~$R$ or~$R[[Q]]$, we will write   $f_1\!\sim\!f_2$
if $f_1\!\sim\!f_2$ lies in $\cI\cdot R$ or  $\cI\cdot R[[Q]]$, respectively.
By the next lemma, certain operations on these rings respect these equivalence relations.

\begin{lmm}\label{ressum_lmm}
\begin{enumerate}[label=(\arabic*),leftmargin=*]
\item If $f\!\in\!\Q_{\al;\hb,\x}'$, there exists 
$g\!\in\!\Q_{\al}'[\x^{\pm1}]$ such that
$$\Res{\hb=0}\big\{f(\hb,\x\!=\!\al_j)\big\}=g(\x\!=\!\al_j)
\qquad\forall\,j\!\in\![n].$$
\item If $g\!\in\!\Q_{\al}'[\x^{\pm1}]$,
$$\Res{\x=0,\i}\left\{\frac{g(\x)}{\prod\limits_{k=1}^{k=n}(\x-\al_k)}\right\}
\in\Q_{\al}'\,.$$
\item For every $p\!\in\!\Z$, 
$$-\Res{\x=0,\i}\left\{\frac{\x^p}{\prod\limits_{k=1}^{k=n}(\x-\al_k)}\right\}
\sim\begin{cases} \hat\si_n^t,&\hbox{if}~p\!=\!n\!-\!1+nt~\hbox{with}~t\in\Z;\\
0,&\hbox{if}~p\!+\!1\not\in n\Z.
\end{cases}$$
\end{enumerate}
\end{lmm}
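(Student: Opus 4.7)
The plan is to prove the three parts in sequence, each providing a building block for the next.

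For part (1), I will first verify that the residue $\Res{\hb=0}\big\{f(\hb,\x\!=\!\al_j)\big\}$ makes sense and lies in $\Q_{\al}'$ for each $j\!\in\![n]$. The key observation is that every factor appearing in the denominator of an element of $\Q_{\al;\hb,\x}'$ either is independent of $\x$ or, after the substitution $\x\!=\!\al_j$, becomes a polynomial in $\hb$ with a finite-order zero at $\hb\!=\!0$ whose leading coefficient is a unit in $\Q_{\al}'$ (this uses $\si_n^{-1}$ to invert each $\al_j$ and $D_{\al}^{-1}$ to invert each $\prod_{k\neq j}(\al_j-\al_k)$). With each of the $n$ substituted residues safely in $\Q_{\al}'$, Lagrange interpolation at the distinct points $\al_1,\ldots,\al_n$ produces the required polynomial $g(\x)\in\Q_{\al}'[\x]\subset\Q_{\al}'[\x^{\pm1}]$, whose interpolation denominators divide $D_{\al}$ and hence are units in $\Q_{\al}'$.

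For part (2), I will apply the sum-of-residues identity on $\P^1$. The rational function $g(\x)/\prod_k(\x-\al_k)$ has poles only at $0,\al_1,\ldots,\al_n,\i$, so
$$\Res{\x=0,\i}\!\left\{\frac{g(\x)}{\prod\limits_{k=1}^{k=n}(\x-\al_k)}\right\}
= -\sum_{j=1}^{j=n}\frac{g(\al_j)}{\prod\limits_{k\neq j}(\al_j-\al_k)} \in \Q_{\al}',$$
the membership holding because each $g(\al_j)\in\Q_{\al}'$ (using invertibility of $\al_j$) and each $\prod_{k\neq j}(\al_j-\al_k)$ divides $D_{\al}$.

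For part (3), I will specialize (2) to $g(\x)\!=\!\x^p$, obtaining $s_p\!\equiv\!-\Res{\x=0,\i}\big\{\x^p/\prod_k(\x-\al_k)\big\}=\sum_j\al_j^p/\prod_{k\neq j}(\al_j-\al_k)$. For $p\!\ge\!0$, a standard residue-at-infinity computation (substitute $\x\!=\!1/w$) identifies $s_p$ with $h_{p-n+1}(\al_1,\ldots,\al_n)$, where $h_m$ is the complete homogeneous symmetric polynomial and $h_m\!\equiv\!0$ for $m\!<\!0$. For $p\!<\!0$, the pole at $\x\!=\!0$ now contributes, and the substitution $\beta_j\!=\!1/\al_j$ rewrites $s_p$ as $h_{-p-1}(\beta)/\hat\si_n$. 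Both cases are then reduced modulo $\cI$ using the generating-function identity $\sum_k h_k t^k = 1/\sum_k(-1)^k e_k t^k$ together with $e_k(\al)\!\equiv\!\de_{k,0}+\de_{k,n}\si_n\pmod{\cI}$ (and the analogous reduction $e_k(\beta)=e_{n-k}(\al)/\si_n$). Each reduction collapses to a geometric series in $t^n$, from which the claimed equivalences follow.

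The main obstacle is the uniform treatment of positive and negative $p$ in part (3); the $p\!<\!0$ case requires the change of variables $\beta\!=\!1/\al$ together with careful sign and $\si_n$-power bookkeeping to realize $h_{m-1}(\beta)/\hat\si_n$ as $\hat\si_n^t$ modulo $\cI$ for the appropriate negative integer $t$.
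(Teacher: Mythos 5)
Your proof is correct, but it takes a genuinely different route from the paper's in all three parts. For (1), the paper keeps $\x$ generic, observes that $\Res{\hb=0}\{f(\hb,\x)\}$ lies in $\Q_{\al}'[\x^{\pm1},\si_{n-1}(\x)^{-1}]$ with $\si_{n-1}(\x)=\sum_i\prod_{k\neq i}(\x\!-\!\al_k)$, and then exhibits an explicit element of $\Q_{\al}'[\x]$ whose value at $\x\!=\!\al_j$ equals $1/\si_{n-1}(\al_j)$; you specialize to $\x\!=\!\al_j$ first and recover $g$ by Lagrange interpolation. Your route avoids the (implicit) commutation of residue-taking with the specialization $\x\!\to\!\al_j$, at the cost of a less canonical $g$ --- harmless, since the lemma only requires agreement at the $n$ points $\al_1,\ldots,\al_n$. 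For (2), the paper declares the claim immediate from the expansions used in (3); your residue-theorem reduction to $-\sum_j g(\al_j)/\prod_{k\neq j}(\al_j\!-\!\al_k)$ is an equally valid and arguably more transparent argument, and it feeds directly into your treatment of (3). For (3), the paper reduces $\prod_k(\x\!-\!\al_k)$ modulo $\cI$ to $\x^n\!-\!\hat\si_n$ and reads the residues off two geometric series at $\x\!=\!0$ and $\x\!=\!\i$; you instead identify the residues with complete homogeneous symmetric polynomials ($h_{p-n+1}(\al)$ for $p\!\ge\!0$ and $h_{-p-1}(1/\al)/\hat\si_n$ for $p\!<\!0$) and reduce the generating function $\sum_k h_kt^k=1/\sum_k(-1)^ke_kt^k$ modulo $\cI$. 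Both computations collapse to the same geometric series in the $n$-th power of the expansion variable; the paper's version is shorter and self-contained, while yours explains the appearance of $\hat\si_n^t$ via standard symmetric-function identities and localizes all the case analysis in the single inversion $\be_j=1/\al_j$, where the sign and $\si_n$-power bookkeeping you flag does work out ($(-1)^{n-1}e_n(\be)=1/\hat\si_n$ and $e_k(\be)=e_{n-k}(\al)/\si_n$ give exactly $\hat\si_n^{(p+1-n)/n}$ when $n\,|\,(p\!+\!1)$).
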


\begin{proof}
If $f\!\in\!\Q_{\al;\hb,\x}'$, then
\begin{gather}
\Res{\hb=0}\big\{f(\hb,\x\!=\!\al_j)\big\}=
\bigg(\Res{\hb=0}\big\{f(\hb,\x)\big\}\bigg)\bigg|_{\x=\al_j}\,, \qquad
\Res{\hb=0}\big\{f(\hb,\x)\big\}\in \Q_{\al}'\big[\x^{\pm1},\si_{n-1}(\x)^{-1}\big],\notag\\
\hbox{where}\qquad \si_{n-1}(\x)=\sum_{i=1}^{i=n}\prod_{k\neq i}(\x-\al_k)\,.
\label{sinmn1dfn_e}
\end{gather}
The first claim of this lemma thus follows from the observation that 
$$\frac{1}{\si_{n-1}(\x)}\bigg|_{\x=\al_j}=\frac{1}{D_{\al}^2}
\Bigg(\sum_{i=1}^{i=n}\bigg(\prod_{\begin{subarray}{c}i'\neq i\\ k\neq i'\end{subarray}}
(\al_{i'}\!-\!\al_k)^2\bigg)\bigg(\prod_{k\neq i}(\x\!-\!\al_k)\bigg)\Bigg)
\Bigg|_{\x=\al_j}
\qquad\forall\,j\in[n].$$
The second claim is immediate from the third.
The third claim of this lemma follows from the power series expansions
$$-\frac{1}{\x^n-\hat\si_n}=\sum_{r=0}^{\i}\hat\si_n^{-r-1}\x^{nr}\,,
\qquad \frac{1}{1-\hat\si_n w^n} =\sum_{r=0}^{\i}\hat\si_n^rw^{nr}\,.$$
around $\x\!=\!0$ and $w\!=\!0$, respectively.
\end{proof}

\noindent
We will also use  the Residue Theorem on $S^2$: 
$$\sum_{\x_0\in S^2}\Res{\x=\x_0}\big\{f(\x)\big\}=0$$
for every  rational function $f\!=\!f(\x)$ on $S^2\!\supset\!\C$.

\subsection{Equivariant one- and two-pointed formulas}
\label{equiv1and2pt_subs}

\noindent
The most fundamental generating function for GW-invariants in the mirror symmetry computations
following~\cite{Gi2} is
\BE{wtcZdfn_e}\begin{split}\wt\cZ(\hb,\x,Q) &\equiv1+\wt\cZ^*(\hb,\x,Q)\\
&\equiv 1+\sum_{d=1}^{\i}Q^d\ev_{1*}\bigg\{\frac{\E(\V_d')}{\hb-\psi_1}\bigg\}
\in H_{\T}^*(\P^{n-1})\big[\big[\hb^{-1},Q\big]\big],
\end{split}\EE
where $\ev_1\!:\ov\M_{0,2}(\P^{n-1},d)\!\lra\!\P^{n-1}$ and 
$\V_d'\!\lra\!\ov\M_{0,2}(\P^{n-1},d)$ is the kernel of the surjective 
vector bundle homomorphism $\wt\ev_1\!:\V_d\!\lra\!\ev_1^*\L$.
By~\cite{Gi2}, $\wt\cZ(\hb,\al_j,Q)\!\in\!\Q_{\al}(\hb)$ for $j\!\in\![n]$.
Thus, we can define
\begin{alignat*}{1}
\ze(\al_j,Q)&=\Res{\hb=0}\big\{\ln\big(1+\wt\cZ^*(\hb,\al_j,Q)\big)\big\}
\in Q\cdot\Q_{\al}\big[\big[Q\big]\big],\\
\wt\cZ_{m,B}(\al_j,Q)&=\!\!\sum_{m'=0}^{\i}\!\frac{(m'\!+\!m)!}{m'!} 
\!\!\!\!\!\sum_{{\b\in\cP_{m'}(m-B+m')}}
\!\!\!\Bigg(\prod_{k=1}^{k=m'}\frac{(-1)^{b_k}}{b_k!}
\Res{\hb=0}\Big\{\hb^{-b_k}\wt\cZ^*(\hb,\al_j,Q)\Big\}\!\Bigg)
\in \Q_{\al}\big[\big[Q\big]\big],\notag
\end{alignat*}
for $m,B\!\in\!\bar\Z^+$.
Since the power series $\wt\cZ^*(\hb,\x,Q)$ has no $Q$-constant term,
the above sum is finite in each $Q$-degree.
It is shown Section~\ref{maincomp_subs} that the power series $\wt\cZ_{m,B}(\x,Q)$ 
describe the contributions of the strands~(S1) at a vertex~$v$ 
of the core of a tree with $m_v\!=\!m$
(with~$m_v$ computed with respect to the core).
Let 
\BE{cZ2dfn_e}\begin{split}
\cZ^*(\hb_1,\hb_2,\x_1,\x_2,Q)&=
\sum_{d=1}^{\i}Q^d\ev_*\bigg\{\frac{\E(\V)}{(\hb_1\!-\!\psi_1)(\hb_2\!-\!\psi_2)}\bigg\}
\in H_{\T}^*(\P_2^{n-1})\big[\big[\hb_1^{-1},\hb_2^{-1},Q\big]\big]\,,\\
\cZ(\hb_1,\hb_2,\x_1,\x_2,Q)&=-
\frac{\lr\a\x_1^l}{\hb_1\!+\!\hb_2}
\sum_{\begin{subarray}{c}p_1+p_2+r=n-1\\ p_1,p_2,r\ge0 \end{subarray}}
\!\!\!\!\!\!\!\!\!\hat\si_r\x_1^{p_1}\x_2^{p_2}+\cZ^*(\hb_1,\hb_2,\x_1,\x_2,Q)\,,
\end{split}\EE
where $\ev\!:\ov\M_{0,2}(\P^{n-1},d)\!\lra\!\P_2^{n-1}$ is 
the total evaluation map.

\begin{prp}\label{res_prp}
The power series~\e_ref{wtcZdfn_e} and~\e_ref{cZpdfn_e} admit expansions 
\begin{alignat}{1}
\label{cZexp_e}
\wt\cZ(\hb,\al_j,Q)&=e^{\ze(\al_j,Q)/\hb}\sum_{b=0}^{\i}\Psi_b(\al_j,Q)\hb^b\,,\\
\label{cZpexp_e}
\frac{\cZ_p(\hb,\al_j,Q)}{\prod\limits_{r=p-l+1}^{n-l-1}\!\!\!\!I_r(q)}
&=e^{\ze(\al_j,Q)/\hb}\sum_{b=0}^{\i}\Psi_{p;b}(\al_j,Q)\hb^b\,,
\end{alignat}
for some $\ze,\Psi_b,\Psi_{p;b}\!\in\!\Q_{\al}'[\x^{\pm1}][[Q]]$ such that 
\BE{PsiPhi_e}
\Psi_b(\x,Q)\sim\frac{\Phi_b(\q)}{I_0(\q)}\x^{-b}\,,\qquad
\Psi_{p;b}(\x,Q)\sim \frac{I_0(\q)\Phi_{p;b}(\q)}{L(\q)^{\de_{0\nua}n}}\x^{p-b}\,,
\EE
where $\q e^{\de_{0\nua}J(\q)}=Q/\x^{\nua}$.
\end{prp}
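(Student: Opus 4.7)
The plan is to derive the expansions~\e_ref{cZexp_e} and~\e_ref{cZpexp_e} from the equivariant one- and two-point mirror formulas, modulo the ideal~$\cI$, and then invoke the asymptotic expansion of~$F$ supplied by Proposition~\ref{Fexp_prp}. The key simplification from working modulo~$\cI$ is that the equivariant $I$-function denominators $\prod_{k=1}^n(\x-\al_k+r\hb)$ collapse, modulo $\si_1,\ldots,\si_{n-1}$, to $(\x+r\hb)^n-\x^n+\hat\si_n$; after the further $\hat\si_n$-correction absorbed into powers of $L(\q)^n$, these match the denominator of~$F$ appearing in~\e_ref{Fdfn_e}.

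First, I would combine Givental's mirror theorem (\cite[Theorems~9.5,\,10.7,\,11.8]{Gi2}) with this reduction modulo~$\cI$ to establish the equivariant counterpart of~\e_ref{Z1pt_e}:
$$\wt\cZ(\hb,\al_j,Q)\;\sim\;\frac{1}{I_0(\q)}\,e^{-J(\q)\al_j/\hb}\,F(\al_j/\hb,\q),\qquad \q\,e^{\de_{0\nua}J(\q)}=Q/\al_j^{\nua}.$$
Substituting $w=\al_j/\hb$ into the asymptotic expansion $F(w,\q)\sim e^{\xi(\q)w}\sum_b\Phi_b(\q)w^{-b}$ from Proposition~\ref{Fexp_prp} and setting $\ze(\al_j,Q)=(\xi(\q)-J(\q))\al_j$ then yields~\e_ref{cZexp_e} together with the first relation in~\e_ref{PsiPhi_e}. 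Membership of $\ze$ and of the $\Psi_b$ in $\Q_\al'[\x^{\pm 1}][[Q]]$ follows from Lemma~\ref{ressum_lmm}(1) together with the rationality properties of $L(\q)$, $I_0(\q)$, and $\Phi_b(\q)$.

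For the two-point expansion~\e_ref{cZpexp_e}, I would adapt the residue computation of~\cite[Section~2]{PoZ}, which treats the analogous two-point generating function in terms of $F_p$ and the $I_r$ factors. After reducing modulo~$\cI$, the insertion of $\x^p$ at the non-integrated marked point combined with the normalization $\prod_{r=p-l+1}^{n-l-1}I_r(\q)^{-1}$ produces exactly the ratio whose asymptotic expansion as $w=\al_j/\hb\to\infty$ is governed by~\e_ref{Fpexp_e}. Reading off the coefficients gives $\Psi_{p;b}(\al_j,Q)\sim I_0(\q)\Phi_{p;b}(\q)/L(\q)^{\de_{0\nua}n}\cdot\al_j^{p-b}$, which is the second relation in~\e_ref{PsiPhi_e}. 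The mismatch between $r=p-l+1$ in~\e_ref{cZpexp_e} and $r=p-l$ in~\e_ref{Fpexp_e} accounts for a single $I_{p-l}(\q)$-factor absorbed into the $\x^p$-prefactor via the relation $F_p=\bD^pF_0$; tracking this is mechanical but requires care.

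The hardest step is carrying out the modulo-$\cI$ identification uniformly in~$\nua$. In the Calabi--Yau case $\nua=0$, the nontrivial mirror map $Q=qe^{J(q)}$ forces the $L^n$-corrections introduced by the $\hat\si_n$-reduction to match precisely the $L^{\de_{0\nua}n}$ factor in~\e_ref{PsiPhi_e}; this is ultimately~\e_ref{Ldfn_e0}. In the Fano cases $\nua>0$, the mirror map is trivial but the reorganization from $I$-function form to $F$-form involves the auxiliary constants $\ntc_{p,s}^{(d)}$ defined in~\e_ref{ncrec_e}, and one must verify that the recursive definition of $F_p$ via~\e_ref{Flpdfn_e} precisely compensates for the modified denominators. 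The parallel bookkeeping in~\cite[Section~2]{PoZ} provides the template for both regimes.
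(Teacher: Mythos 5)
There is a genuine gap: your argument runs entirely modulo the ideal~$\cI$, but the content of \e_ref{cZexp_e} and~\e_ref{cZpexp_e} is an \emph{exact} statement about the equivariant series -- only the identification~\e_ref{PsiPhi_e} of the coefficients is asserted modulo~$\cI$. Knowing $\wt\cZ(\hb,\al_j,Q)\sim I_0(\q)^{-1}e^{-J(\q)\al_j/\hb}F(\al_j/\hb,\q)$ tells you what $\ze$ and $\Psi_b$ must be congruent to, but it does not show that $\wt\cZ(\hb,\al_j,Q)$ itself admits an expansion of the form $e^{\ze/\hb}\sum_b\Psi_b\hb^b$: the discrepancy lies in $\cI\cdot\Q_{\al}'[\x^{\pm1}][[Q]]$ only after you already know such an expansion exists, and a priori the $Q^d$-coefficients of the difference could have poles at $\hb=0$ of unbounded order. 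The exact existence is precisely what the paper supplies, via the exact equivariant mirror identity~\e_ref{MirSym_e0} from \cite[Section~29.1]{MirSym} combined with Lemma~\ref{cY0_lmm} (the ODE argument of Appendix~\ref{HGprp_pf} applied to the equivariant series~$\cY_0$, with $\al$ not set to zero), or alternatively \cite[Lemmas~2.2,\,2.3]{bcov1}; for~\e_ref{cZpexp_e} it then uses \cite[Theorem~4]{PoZ}, which expresses $\cZ_p$ exactly as a linear combination of derivatives of~$\wt\cZ$, before reducing mod~$\cI$ to get the second congruence in~\e_ref{PsiPhi_e} from~\e_ref{Fpexp_e}. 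This exactness is not a technicality: the residues at $\hb=0$ in the localization computation of Section~\ref{maincomp_subs} (e.g.\ Corollary~\ref{pt1_crl} and~\e_ref{Zreg_e8}) are evaluated using these expansions as identities, with passage to~$\cI$ only at the very end. Proposition~\ref{Fexp_prp}, which you invoke, concerns only the non-equivariant~$F$ and cannot substitute for this step.

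A smaller point: modulo~$\cI$ the denominators $\prod_k(\x-\al_k+r\hb)-\prod_k(\x-\al_k)$ reduce exactly to $(\x+r\hb)^n-\x^n$, since the $\hat\si_n$-terms cancel between the two products; there is no residual ``$+\hat\si_n$'' to absorb into powers of $L(\q)^n$. The factor $L(\q)^{\de_{0\nua}n}$ in~\e_ref{PsiPhi_e} instead originates in the normalization~\e_ref{Fpexp_e} of $F_p$ by the $I_r$'s (via Lemma~\ref{Iprp_lmm} in the Calabi--Yau case), and the index shift between $r=p-l+1$ in~\e_ref{cZpexp_e} and $r=p-l$ in~\e_ref{Fpexp_e} is accounted for by the extra factor $I_{p-l}(\q)$ appearing in \cite[Theorem~4]{PoZ}, not by manipulating $F_p=\bD^pF_0$.
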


\begin{proof}
The existence of the expansion~\e_ref{cZexp_e} follows from 
Lemmas~2.2 and~2.3 in~\cite{bcov1},
but a direct argument is provided below and in Appendix~\ref{HGprp_pf}. 
Let
$$\cY(\hb,\x,q)=
\sum_{d=0}^{\i}q^d\frac{\prod\limits_{k=1}^{k=l}\prod\limits_{r=1}^{r=a_kd}(a_k\x+r\hb)}
{\prod\limits_{r=1}^{r=d}\left(\prod\limits_{k=1}^{k=n}\!\!(\x\!-\!\al_k\!+\!r\hb)
-\prod\limits_{k=1}^{k=n}\!\!(\x\!-\!\al_k)\right)}
\in \big(\Q_{\al;\hb,\x}'\!\cap\!\Q_{\al}[\x][[\hb^{-1}]]\big)\big[\big[Q\big]\big].$$
By \cite[Section~29.1]{MirSym},
\BE{MirSym_e0}
\wt\cZ(\hb,\x,Q)
=e^{-J(q)\frac{\x^{\de_{0\nua}}}{\hb}+f(q)\frac{\si_1}{\hb}}\frac{\cY(\hb,\x,q)}{I_0(q)}\EE
for some $f\!\in\!q\Q[[q]]$ (which is 0 unless $\nua\!=\!0$), where $qe^{\de_{0\nua}J(q)}=Q$.
Since 
$$\cY(\hb,\x,q)=\bigg\{1+\frac{\hb}{\x}q\frac{\tnd}{\tnd q}\bigg\}^l\cY_0(\hb,\x,q),$$
with $\cY_0(\hb,\x,q)$ given by \e_ref{cY0dfn_e},
Lemma~\ref{cY0_lmm} implies that $\cY(\hb,\x,q)$ admits an expansion of the form
\BE{cYexp_e}\cY(\hb,\x,q)=e^{\xi(\x,q)/\hb}\sum_{b=0}^{\i}\Phi_b(\x,q)\hb^b\EE
with $\xi(\x,q),\Phi_0(\x,q),\Phi_1(\x,q),\ldots\in\Q_{\al}(\x)[[q]]$.
Since 
\begin{gather*}
\xi(\x,q)=\Res{\hb=0}\big\{\ln\cY(\hb,\x,q)\big\}, \qquad
\Phi_b(\x,q)=\Res{\hb=0}\Big\{\hb^{-b-1}e^{-\xi(\x,q)/\hb}\cY(\hb,\x,q)\Big\},\\
\hbox{and}\qquad
\cY(\hb,\x,q)-F(w,\q)\in q\cdot \cI\Q_{\al;\hb,\x}'\,,
\end{gather*}
where $w\!=\!\x/\hb$, Proposition~\ref{Fexp_prp} and 
the first statement of Lemma~\ref{ressum_lmm} imply that there exist
$$\ti\xi(\x,q),\ti\Phi_0(\x,q),\ti\Phi_1(\x,q),\ldots\in\Q_{\al}'[\x^{\pm1}]\big[\big[q\big]\big]$$
 such that 
\BE{ticY_e}\begin{split}
\cY(\hb,\al_j,q)&=e^{\ti\xi(\al_j,q)/\hb}\sum_{b=0}^{\i}\ti\Phi_b(\al_j,q)\hb^b\, 
\quad \forall\,j\in[n],\\
\ti\xi(\x,q)&\sim\xi(\q)\x,\qquad 
\ti\Phi_b(\x,q)\sim \Phi_b(\q)\x^{-b}\quad\forall\,b\in\Z^+.
\end{split}\EE
By \e_ref{MirSym_e0} and \e_ref{ticY_e}, \e_ref{cZexp_e} and the first statement in~\e_ref{PsiPhi_e}
hold with 
$$ \ze(\x,Q)=\ti\xi(\x,q)-J(\q)\x+f(q)\si_1,\qquad
\Psi_b(\x,Q)=\frac{\ti\Phi_b(\x,\q)}{I_0(q)}=\frac{\ti\Phi_b(\x,\q)}{I_0(\q)}\,.$$
The existence of the expansion~\e_ref{cZpexp_e} follows from the existence of 
the expansion~\e_ref{cZexp_e} 
and the description of $\cZ_p(\hb,\x,Q)$ as a linear combination of the derivatives
of  $\wt\cZ(\hb,\x,Q)$ in \cite[Theorem~4]{PoZ}.
By \cite[Theorem~4]{PoZ},
$$\cZ_p(\hb,\x,Q)\sim e^{-J(\q)w}\x^p\frac{F_p(w,\q)}{I_{p-l}(\q)}\,.$$
Along with the first statement in Lemma~\ref{ressum_lmm} and \e_ref{Fpexp_e}, 
this gives  the second claim in~\e_ref{PsiPhi_e}.
\end{proof}

\begin{crl}\label{pt1_crl}
For all $m\!\in\!\bar\Z^+$ and $\c\!\in\!(\bar\Z^+)^{\i}$, there exists
$\Psi_{m,\c}\!\in\!\Q_{\al}'[\x^{\pm1}]\big[\big[Q\big]\big]$
such that 
\BE{cZmB_e}
\wt\cZ_{m,B}(\al_j,Q)=
\!\!\!\!\!\sum_{\c\in(\bar\Z^+)^{\i}}\!\!\!
\Bigg(\!\!(-1)^{m-\|\c\|}\binom{B}{m\!-\!\|\c\|}\ze(\al_j,Q)^{B-(m-\|\c\|)}
\Psi_{m,\c}(\al_j,Q)\!\!\Bigg)\EE
for all $B\!\in\!\bar\Z^+$ and $j\!\in\![n]$ and
\BE{PsimcPhi_e}
\Psi_{m,\c}(\x,Q)\sim
\bigg(\frac{I_0(\q)}{\Phi_0(\q)}\bigg)^{m+3}\Phi_{m,\c}(\q) \x^{-\|\c\|}\,,\EE
where $\q e^{\de_{0\nua}J(\q)}=Q/\x^{\nua}$.
\end{crl}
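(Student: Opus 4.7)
The plan is to substitute the expansion~\e_ref{cZexp_e} from Proposition~\ref{res_prp} into the definition of $\wt\cZ_{m,B}(\al_j,Q)$ and regroup. Writing $\wt\cZ^*\!=\!e^{\ze/\hb}\sum_{b\ge 0}\Psi_b\hb^b-1$ and reading off the coefficient of $\hb^{-1}$ gives, for each $a\!\ge\!1$,
\begin{equation*}
\Res{\hb=0}\big\{\hb^{-a}\wt\cZ^*(\hb,\al_j,Q)\big\}=\sum_{r=0}^{\infty}\frac{\Psi_{a-1+r}(\al_j,Q)\,\ze(\al_j,Q)^r}{r!}-\de_{a,1}\,,
\end{equation*}
with $\Res{\hb=0}\{\wt\cZ^*\}\!=\!\sum_{r\ge 0}\Psi_r\ze^{r+1}/(r+1)!$ in the $a\!=\!0$ case. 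Substituting these into the product $\prod_k\Res{\hb=0}\{\hb^{-b_k}\wt\cZ^*\}$ expresses $\wt\cZ_{m,B}$ as a sum over $m'\!\ge\!0$, the composition $\b\!\in\!\cP_{m'}(m-B+m')$, and auxiliary indices $(r_k)_{k\in[m']}$, whose summand is a monomial in the $\Psi_b$'s and~$\ze$.

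Next, I regroup this sum by the multiset $\c\!=\!(c_r)_{r\in\Z^+}$ recording, for each $r\!\ge\!1$, how many of the produced $\Psi$-factors equal $\Psi_r$; the remaining factors are powers of $\Psi_0$ and of~$\ze$. Once $\c$ is fixed, the sum over the leftover freedom (the multiplicities of $\Psi_0$-factors, the splitting into $R_0$ versus $R_{\ge 1}$ contributions, and the summation in~$m'$) collapses via the binomial theorem into a single factor $(-1)^{m-\|\c\|}\binom{B}{m-\|\c\|}\ze^{B-(m-\|\c\|)}$ multiplying a closed expression in the $\Psi_r$'s. This yields~\e_ref{cZmB_e} with $\Psi_{m,\c}$ defined by that closed expression, which is an explicit rational expression in $\Psi_0^{\pm1},\Psi_1,\Psi_2,\ldots$; membership in $\Q_{\al}'[\x^{\pm1}][[Q]]$ uses that $\Psi_0|_{Q=0}\!=\!1$ is invertible as a power series in~$Q$. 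The combinatorial core runs parallel to \cite[Sections 2.1--2.2]{bcov1}, with the extra variable~$\x$ passive throughout.

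The asymptotic equivalence~\e_ref{PsimcPhi_e} is then a direct substitution. The first relation in~\e_ref{PsiPhi_e} gives $\Psi_b(\x,Q)\!\sim\!\Phi_b(\q)\x^{-b}/I_0(\q)$, so each ratio $\Psi_r/\Psi_0$ becomes $(\Phi_r/\Phi_0)\x^{-r}$ and the total $\x$-exponent is $-\sum_r rc_r\!=\!-\|\c\|$; the residual powers of $\Psi_0\!\sim\!\Phi_0/I_0$ combine to produce the prefactor $(I_0/\Phi_0)^{m+3}$, and the combinatorial constants $(-1)^{m+|\c|}(m+|\c|)!$ and $\prod_r(c_r!)^{-1}((r+1)!)^{-c_r}$ match the definition~\e_ref{Phimcdfn_e} of $\Phi_{m,\c}$ on the nose.

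The main obstacle is the combinatorial step: one must track the alternating signs $(-1)^{b_k}$, the factorials $1/b_k!$, the weight $(m'\!+\!m)!/m'!$, and the $\de_{a,1}$ subtractions simultaneously, to verify that the $B$-dependence truly reduces to the single factor $\binom{B}{m-\|\c\|}\ze^{B-(m-\|\c\|)}$ independently of all other parameters. Once the bookkeeping is correctly set up, the identity reduces to standard binomial and exponential generating-function manipulations, but carrying it out in full for arbitrary~$\c$ is tedious.
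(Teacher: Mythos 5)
Your proposal is correct and follows essentially the same route as the paper: the paper simply quotes the expansion~\e_ref{cZexp_e} together with Lemma~\ref{res_lmm}, whose proof in Appendix~\ref{reslmm_pf} carries out exactly the residue computation, regrouping by the multiset~$\c$, and binomial collapse (via Lemma~\ref{comb_l0}) that you sketch, arriving at the closed formula~\e_ref{Psimcdfn_e} for $\Psi_{m,\c}$; the passage to~\e_ref{PsimcPhi_e} via~\e_ref{PsiPhi_e} and~\e_ref{Phimcdfn_e} is as you describe. The bookkeeping you defer as ``tedious'' is precisely the content of Lemma~\ref{res_lmm}, so nothing in your outline would fail.
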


\begin{proof}
By Lemma~\ref{res_lmm} and \e_ref{cZexp_e}, \e_ref{cZmB_e} holds with 
\BE{Psimcdfn_e}\Psi_{m,\c}(\x,Q)=(-1)^{m+|\c|}(m\!+\!|\c|)!
\frac{1}{\Psi_0(\x,Q)^{m+1}}
\prod_{r=1}^{\i}
\frac{1}{c_r!}\bigg(\frac{1}{(r\!+\!1)!}\frac{\Psi_r(\x,Q)}{\Psi_0(\x,Q)}\bigg)^{c_r}\,.\EE
Along with the first statement in~\e_ref{PsiPhi_e} and \e_ref{Phimcdfn_e}, 
this implies \e_ref{PsimcPhi_e}.\end{proof}

\begin{lmm}\label{pt2_lmm}
There exists a collection 
$\{\cC_{p_-p_+}\}_{p_{\pm}\in\nset} \subset \Q[\al]\big[\big[Q\big]\big]$
such that 
\BE{pt2Res_e}\begin{split}
&\frac{1}{\lr\a}\Res{\hb_+=0}
\left\{\frac{1}{\hb_+^{1+b_+}}e^{-\frac{\ze(\al_{j_+},Q)}{\hb_+}}
\cZ(\hb_-,\hb_+,\al_{j_-},\al_{j_+},Q)\right\}\\
&\hspace{.5in}
=\sum_{b_-=0}^{b_-=b_+}\left( \frac{(-1)^{b_-}}{\hb_-^{b_-}}
\!\!\!\!\sum_{p_+,p_-\in\nset}\!\!\!\!\!\!\!
\cC_{p_-p_+}(Q)\Psi_{p_+;b_+-b_-}(\al_{j_+},Q)
\frac{\cZ_{p_-}(\hb_-,\al_{j_-},Q)}{\hb_-\!\!\prod\limits_{r=p_--l+1}^{n-l-1}\!\!\!\!\!I_r(q)}\right)
\end{split}\EE
for all $b_+\!\in\!\bar\Z^+$ and $j_-,j_+\!\in\![n]$ and 
\BE{p2cC_e} \cC_{p_-p_+}(Q)\sim
\begin{cases}
\frac{L(q)^{\de_{0\nua}(1+t)n}}{I_0(q)^2}\hat\si_n^t,
&\textnormal{if}~
p_-\!+\!p_+\!+\!nt\!=\!n\!-\!1\!+\!l,\,t\!=\!0,1,\\
0,&\textnormal{otherwise},\end{cases}\EE
where $q e^{\de_{0\nua}J(q)}=Q$.
\end{lmm}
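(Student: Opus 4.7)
The plan is to factor the equivariant two-pointed generating function $\cZ(\hb_-,\hb_+,\al_{j_-},\al_{j_+},Q)$ as a sum of products of one-pointed functions $\cZ_{p_\pm}$ divided by $\hb_-\!+\!\hb_+$, and then compute the residue at $\hb_+\!=\!0$ term by term using the asymptotic expansion in Proposition~\ref{res_prp}.

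The first step is to establish the existence of coefficients $\cC_{p_-p_+}(Q)\!\in\!\Q[\al][[Q]]$ for which
\begin{equation*}
\cZ(\hb_-,\hb_+,\al_{j_-},\al_{j_+},Q)=\frac{\lr{\a}}{\hb_-\!+\!\hb_+}\sum_{p_-,p_+\in\nset}\cC_{p_-p_+}(Q)\frac{\cZ_{p_-}(\hb_-,\al_{j_-},Q)\,\cZ_{p_+}(\hb_+,\al_{j_+},Q)}{\prod\limits_{r=p_--l+1}^{n-l-1}\!\!\!\!I_r(q)\prod\limits_{r=p_+-l+1}^{n-l-1}\!\!\!\!I_r(q)}\,.
\end{equation*}
The nonequivariant specialization of this identity is \eqref{Z2pt_e} from \cite[Theorem~3]{PoZ}, which pins down $\cC_{p_-p_+}|_{\al=0}$ as supported on pairs with $p_-\!+\!p_+\!=\!n\!-\!1\!+\!l$ and $p_\pm\!\ge\!l$. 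The equivariant lift modulo $\cI$ exists because both $\cZ$ and $\cZ_{p_\pm}$ are determined by their $n$ fixed-point restrictions, and matching these using the Laurent structure of $\cZ$ in $\hb_\pm^{-1}$ together with the expansion \eqref{cZpexp_e} reduces to a solvable linear system over $\Q[\al]/\cI$. The $t\!=\!1$ contribution with $p_-\!+\!p_+\!=\!l\!-\!1$ in \eqref{p2cC_e} originates from the $Q^0$ correction $-\lr{\a}\al_{j_-}^l\sum\hat\si_r\al_{j_-}^{p_-}\al_{j_+}^{p_+}/(\hb_-\!+\!\hb_+)$ built into the definition \eqref{cZ2dfn_e}, which modulo $\cI$ is characterized by the class $\hat\si_n$ (since $\hat\si_r\!\sim\!0$ for $r\!<\!n$).

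The second step is the residue extraction, which is formal once the factorization is in hand. Substituting \eqref{cZpexp_e} into the factorization, the exponential $e^{-\ze(\al_{j_+},Q)/\hb_+}$ in the integrand precisely cancels the exponential singularity of $\cZ_{p_+}/\!\prod\!I_r(q)$ at $\hb_+\!=\!0$, leaving the power series $\sum_{b\ge 0}\Psi_{p_+;b}(\al_{j_+},Q)\hb_+^b$. Expanding the remaining factor $\frac{1}{\hb_-+\hb_+}=\sum_{b_-\ge 0}(-1)^{b_-}\hb_+^{b_-}/\hb_-^{b_-+1}$ as a geometric series in $\hb_+$ reduces $\Res{\hb_+=0}$ to extracting the coefficient of $\hb_+^{b_+}$, producing a finite sum over $b_-\!\in\!\{0,\ldots,b_+\}$. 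Collecting the $I_r$ prefactors and redistributing one power of $\hb_-$ yields exactly the right-hand side of \eqref{pt2Res_e}. The asymptotic \eqref{p2cC_e} for $\cC_{p_-p_+}$ then follows by matching the nonequivariant limit via \eqref{Z2pt_e} in combination with \eqref{PsiPhi_e}: the $L^{\de_{0\nua}n}/I_0^2$ prefactor tracks the hypergeometric and $I_0$ normalizations in \eqref{cZpexp_e} and \eqref{PsiPhi_e}.

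The main obstacle is the first step, namely lifting the nonequivariant mirror formula \eqref{Z2pt_e} to an equivariant identity modulo $\cI$ with coefficients in $\Q[\al][[Q]]$. Once this structural factorization is established, the residue computation is purely formal. A secondary technical point is tracking the two-level structure in $t\!\in\!\{0,1\}$: the $t\!=\!0$ piece mirrors \eqref{Z2pt_e} directly, while the $t\!=\!1$ piece arises from the equivariant correction in \eqref{cZ2dfn_e} and is invisible in the nonequivariant limit, surviving modulo $\cI$ only through $\hat\si_n$.
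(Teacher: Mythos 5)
Your overall strategy -- factor the equivariant two-point function as $\frac{\lr\a}{\hb_-+\hb_+}$ times a sum of products $\cZ_{p_-}\cZ_{p_+}$ and then extract the residue at $\hb_+=0$ using the expansion \e_ref{cZpexp_e} and the geometric series for $(\hb_-+\hb_+)^{-1}$ -- is exactly the paper's; the residue step in your second paragraph is correct and formal, as you say. The gap is in your first step. The paper does not \emph{derive} the factorization: it quotes the exact equivariant identity \e_ref{cZ2form_e} from \cite[Theorem~4]{PoZ}, which expresses $\cZ(\hb_-,\hb_+,\x_-,\x_+,Q)$ as $\frac{\lr\a}{\hb_-+\hb_+}$ times an explicit signed sum of $\hat\si_r\,\cZ_{p_-}\cZ_{p_+}$ over $p_-+p_++r=n-1+l$ with $p_\pm$ both $\ge l$ or both $<l$. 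This immediately yields the closed formula \e_ref{pt2cC_e} for $\cC_{p_-p_+}$ in $\Q[\al][[Q]]$ \emph{exactly} (not merely modulo $\cI$), and then \e_ref{p2cC_e} follows from $\hat\si_r\in\cI$ for $1\le r\le n-1$ together with \e_ref{Iprop_e1}--\e_ref{Iprop_e2}.

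Your proposed substitute -- that the equivariant lift ``exists because both $\cZ$ and $\cZ_{p_\pm}$ are determined by their fixed-point restrictions'' and ``reduces to a solvable linear system over $\Q[\al]/\cI$'' -- does not establish this. Being determined by fixed-point restrictions says nothing about whether $\cZ$ lies in the span of the products $\cZ_{p_-}(\hb_-,\cdot)\cZ_{p_+}(\hb_+,\cdot)/(\hb_-+\hb_+)$; that spanning statement is precisely the content of the equivariant two-point mirror theorem and is not a formal consequence of its nonequivariant specialization \e_ref{Z2pt_e}. Moreover, the lemma asserts $\cC_{p_-p_+}\in\Q[\al][[Q]]$ on the nose, with \e_ref{p2cC_e} only describing the coefficients modulo $\cI$; an argument that only produces the coefficients modulo $\cI$ would not suffice. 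Finally, your attribution of the $t=1$ piece solely to the $Q^0$ correction in \e_ref{cZ2dfn_e} is imprecise: in \e_ref{cZ2form_e} the $\hat\si_n$ contribution comes from the entire sum over $p_-,p_+<l$ (with $r=n$), which involves $\cZ_{p_-}\cZ_{p_+}$ in all degrees, not just the degree-zero term. To close the gap, invoke \cite[Theorem~4]{PoZ} directly (or reprove it), after which your residue computation goes through verbatim.
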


\begin{proof}
By \cite[Theorem~4]{PoZ},
\BE{cZ2form_e}\begin{split}
&\cZ(\hb_-,\hb_+,\x_-,\x_+,Q)\\
&\qquad=\frac{\lr\a}{\hb_-\!+\!\hb_+} \Bigg\{
-\!\!\!\!\!\!\sum_{\begin{subarray}{c} p_-+p_++r=n-1+l\\ 
p_-,p_+\in\nset,r\in\bar\Z^+\\  p_-,p_+\ge l \end{subarray}}
~~~~
+\!\!\!\!\!\!\sum_{\begin{subarray}{c}
p_-+p_++r=n-1+l\\ p_-,p_+\in\nset,r\in\bar\Z^+\\ p_-,p_+<l \end{subarray}}
\Bigg\} \hat\si_r\cZ_{p_-}(\hb_-,\x_-,Q)\cZ_{p_+}(\hb_+,\x_+,Q).
\end{split}\EE
Combining this identity with \e_ref{cZpexp_e}, 
we find that \e_ref{pt2Res_e} holds with 
\BE{pt2cC_e} 
\cC_{p_-p_+}(Q)=
\bigg(\prod_{r=p_+-l+1}^{n-l-1}\!\!\!\!\!\!\!\!I_r(q)\bigg)
\bigg(\prod_{r=p_--l+1}^{n-l-1}\!\!\!\!\!\!\!\!I_r(q)\bigg)
\hat\si_{n-1+l-p_--p_+}
\cdot
\begin{cases} 1,&\hbox{if}~p_-,p_+\!<\!l;\\
-1,&\hbox{if}~p_-,p_+\!\ge\!l;\\
0,&\hbox{otherwise}.\end{cases}\EE
Along with the first two statements in Lemma~\ref{Iprp_lmm}, this implies \e_ref{p2cC_e}.
\end{proof}

\subsection{Main localization computation}
\label{maincomp_subs}

\noindent
We now prove Theorem~\ref{equiv_thm}, with each of the two definitions 
of the structure constants~$\nc_{\p,\b}^{(d,t)}$, by summing up the contributions
of the $\T$-fixed loci~$\cZ_{\Ga}$ of $\ov\M_{0,N}(\P^{n-1},d)$, with $d\!\in\!\bar\Z^+$.
As outlined in Section~\ref{outline_subs}, this will be done by breaking each~$\Ga$
(and correspondingly each fixed locus~$\cZ_{\Ga}$) at either one special vertex, $v\!=\!\bar\mu(N)$,
or at every special vertex of~$\Ga$.\\

\noindent
Let $\Ga$ be a decorated tree with $N$ marked points as in~\e_ref{decortgraphdfn_e}.
Let $\bar\Ga\equiv(\ov\Ver,\ov\Edg;\bar\mu,\bar\eta)$ be the core
of~$\Ga$ as in Section~\ref{outline_subs} and $v\!=\!\bar\eta(N)$.
Similarly to Figure~\ref{strands_fig}, we break $\Ga$ at the vertex
$v\in\ov\Ver\subset\Ver$ into strands~$\Ga_e$ indexed by the set~$\tnE_v(\Ga)$ 
of the edges with vertex~$v$ in~$\Ga$;
each strand~$\Ga_e$ keeps a copy of the vertex~$v$ and gains an extra marked point, 
which will be labeled~$e$, attached at~$v$.
For each $e\!\in\!\tnE_v(\Ga)$, denote by $S_e\!\subset\![N]$ the subset of 
the original marked points carried by the strand~$\Ga_e$.
Let 
\begin{equation*}\begin{split}
\tnE_v^*(\Ga)&=\big\{e\!\in\!\tnE_v(\Ga)\!:\,S_e\!\neq\!\eset\big\}\sqcup\eta^{-1}(v),
\quad
\tnE_v'(\Ga)=\big\{e\!\in\!\tnE_v(\Ga)\!:\,S_e\!=\!\eset\big\},\\
\ov\tnE_v(\Ga)&=\tnE_v^*(\Ga)\cup\tnE_v'(\Ga)\subset\tnE_v(\Ga)\sqcup[N].
\end{split}\end{equation*}
Thus, $|\tnE_v'(\Ga)|\!\ge\!0$, $|\tnE_v^*(\Ga)|\!\ge\!3$ 
(because $\bar\Ga$ is a trivalent tree), and 
$\{S_e\}_{e\in\tnE_v^*(\Ga)}\in\cP_{\tnE_v^*(\Ga)}([N])$,
where $S_e\!\equiv\!\{e\}$ if $e\!\in\!\eta^{-1}(v)$.\\

\noindent
The fixed locus $\cZ_{\Ga}$ corresponding to~$\Ga$, 
the restriction of $\E(\V)$ to~$\cZ_{\Ga}$, and 
the euler class of the normal bundle of~$\cZ_{\Ga}$ are given~by
\BE{Zreg_e5}\begin{split}
&\cZ_{\Ga}=\ov\cM_{0,\ov\tnE_v(\Ga)}\times\prod_{e\in\tnE_v(\Ga)}\!\!\!\!\cZ_{\Ga_e},
\qquad 
\frac{\E(\V)}{\E(\L_{\mu(v)})}=\prod_{e\in\tnE_v(\Ga)}\!\!\frac{\pi_e^*\E(\V)}{\E(\L_{\mu(v)})}\,,\\
&\frac{\E(T_{\mu(v)}\P^{n-1})}{\E(\N\cZ_{\Ga})}=
\prod_{e\in\tnE_v(\Ga)}\!\!\frac{\E(T_{\mu(v)}\P^{n-1})}
{\E(\N\cZ_{\Ga_e})\,(\hb_e'\!-\!\pi_e^*\psi_e)},
\end{split}\EE 
where $\ov\cM_{0,\ov\tnE_v(\Ga)}\approx\ov\cM_{0,|\tnE_v(\Ga)|+|\eta^{-1}(v)|}$ 
is the moduli space of 
stable rational $\ov\tnE_v(\Ga)$-marked curves,
$$\hb_e' \equiv c_1(L_e') \in H^*\big(\ov\cM_{0,\ov\tnE_v(\Ga)}\big)$$ 
is the first chern class of the universal tangent line bundle for the marked point 
corresponding to the edge~$e$, and
$$\pi_e\!:\cZ_{\Ga}\lra\cZ_{\Ga_e}
\subset \bigcup_{d_e=1}^{\i} \ov\M_{0,S_e\sqcup\{e\}}(\P^{n-1},d_e)$$ 
is the projection map.
By \cite[Section~27.2]{MirSym},
$$\psi_e|_{\cZ_{\Ga_e}}=\frac{\al_{\mu_v(e)}\!-\!\al_{\mu(v)}}{\d(e)}\,.$$
Thus, by \cite[Exercise~25.2.8]{MirSym},
\BE{cMint_e}\begin{split}
&\int_{\ov\cM_{0,\ov\tnE_v(\Ga)}}\!
\Bigg\{\!\!\Bigg(\prod_{e\in\tnE_v(\Ga)}\!\frac{1}{\hb_e'\!-\!\pi_e^*\psi_e}\Bigg)
\!\!\Bigg(\prod_{e\in\eta^{-1}(v)}\!\!\frac{1}{\hb_e\!-\!\psi_e}\Bigg)\!\!\Bigg\}\\
&\hspace{.5in}
=(-1)^{|\tnE_v(\Ga)|}\!\!\!\sum_{\b\in(\bar\Z^+)^{\ov\tnE_v(\Ga)}}\! 
\int_{\ov\cM_{0,\ov\tnE_v(\Ga)}}\!\!\Bigg\{\!\!
\Bigg(\prod_{e\in\tnE_v(\Ga)}\!\!\!\!\!\psi_e^{-b_e-1}\hb_e'^{\,b_e}\!\!\Bigg)\!\!
\Bigg(\prod_{e\in\eta^{-1}(v)}\!\!\!\!\!\!\hb_e^{-b_e-1}\psi_e^{b_e}\!\!\Bigg)
\!\!\Bigg\}\\
&\hspace{.5in}
=\!\!\!
\sum_{\b\in(\bar\Z^+)^{\ov\tnE_v(\Ga)}}\!\!
\Bigg\{\!\!\binom{|\ov\tnE_v(\Ga)|\!-\!3}{\b}
\!\Bigg(\prod_{e\in\tnE_v(\Ga)}\!\!\! 
\bigg(\frac{\al_{\mu(v)}\!-\!\al_{\mu_v(e)}}{\d(e)}\bigg)^{\!\!-b_e-1}\!\Bigg)\!\!
\Bigg(\prod_{e\in\eta^{-1}(v)}\!\!\!\!\!\!\hb_e^{-b_e-1}\!\!\Bigg)
\!\Bigg\}.
\end{split}\EE
Combining this with \e_ref{Zreg_e5}, \e_ref{tangrestr_e}, and \e_ref{phidfn_e}, we obtain
\BE{Zreg_e6}\begin{split}
&\frac{\prod\limits_{k\neq\mu(v)}\!\!\!\!(\al_{\mu(v)}\!-\!\al_k)}{\lr\a\al_{\mu(v)}^l}
\int_{\cZ_{\Ga}}\!\!
\frac{\E(\V)}{\E(\N\cZ_{\Ga})}
\prod_{s=1}^{s=N}\!\!\bigg(\frac{\ev_s^*\phi_{i_s}}{\hb_s\!-\!\psi_s}\bigg)\\
&\hspace{.3in}
=\!\!\!\sum_{\b\in(\bar\Z^+)^{\ov\tnE_v(\Ga)}}
\!\!\!\Bigg\{\!\!\binom{|\ov\tnE_v(\Ga)|\!-\!3}{\b}
\prod_{s\in\eta^{-1}(v)}\!\!\!
\bigg(\hb_s^{-b_s-1}\!\!\prod_{k\neq i_s}\!(\al_{\mu(v)}\!-\!\al_k)\!\!\bigg)\\
&\hspace{1.3in}
\times
\!\!\prod_{e\in\tnE_v(\Ga)}\!\!\! \Bigg(\!\!\!
\bigg(\frac{\al_{\mu(v)}\!-\!\al_{\mu_v(e)}}{\d(e)}\bigg)^{\!\!-b_e-1}
\!\!\!\int_{\cZ_{\Ga_e}}\!\!\!
\frac{\E(\V)\ev_e^*\phi_{\mu(v)}}{\lr\a\al_{\mu(v)}^l\E(\N\cZ_{\Ga_e})}
\prod_{s\in S_e}\!\!\bigg(\frac{\ev_s^*\phi_{i_s}}{\hb_s\!-\!\psi_s}\bigg)
\!\!\!\Bigg)\!\!\Bigg\}\,.
\end{split}\EE
The equality holds after dividing the right-hand side
by the order of the appropriate group of symmetries; see~\cite[Section 27.3]{MirSym}.
This group is taken into account in the next paragraph.\\

\noindent
We now sum up \e_ref{Zreg_e6} over all possibilities for $\Ga$.
If $e\!\in\!\tnE_v'(\Ga)$,
$$\frac{\E(\V)}{\lr\a \x^l}=\E(\V'),$$
with $\V'\!=\!\V'_{|\Ga_e|}$ as in~\e_ref{cZprdfn_e}.
Thus, in this case, by \cite[Section~2.2]{bcov1}
\BE{Zreg_e8}\begin{split}
&\sum_{\Ga_e}Q^{|\Ga_e|}
\bigg(\frac{\al_{\mu_v(e)}\!-\!\al_{\mu(v)}}{\d(e)}\bigg)^{\!\!-b_e-1}
\!\!\!\int_{\cZ_{\Ga_e}}\!\!\!
\frac{\E(\V)\ev_e^*\phi_{\mu(v)}}{\lr\a\al_{\mu(v)}^l\E(\N\cZ_{\Ga_e})}
\prod_{s\in S_e}\!\!\bigg(\frac{\ev_s^*\phi_{i_s}}{\hb_s\!-\!\psi_s}\bigg)\\
&\hspace{.3in}
=\sum_{\Ga_e}Q^{|\Ga_e|}
\bigg(\frac{\al_{\mu_v(e)}\!-\!\al_{\mu(v)}}{\d(e)}\bigg)^{\!\!-b_e-1}
\!\!\!\int_{\cZ_{\Ga_e}}\!\!\!
\frac{\E(\V')\ev_e^*\phi_{\mu(v)}}{\E(\N\cZ_{\Ga_e})}
=-\Res{\hb_e=0}\Big\{\hb_e^{-b_e}\wt\cZ^*(\hb_e,\al_{\mu(v)},Q)\Big\}\,,
\end{split}\EE
where the sum is taken over all possibilities for the strand $\Ga_e$, 
leaving the vertex~$v$, with $\mu(v)$ fixed.
By a similar reasoning, if $e\!\in\!\tnE_v^*(\Ga)$,
\BE{Zreg_e8b}\begin{split}
&\sum_{\Ga_e}Q^{|\Ga_e|}
\bigg(\frac{\al_{\mu_v(e)}\!-\!\al_{\mu(v)}}{\d(e)}\bigg)^{\!\!-b_e-1}
\!\!\!\int_{\cZ_{\Ga_e}}\!\!\!
\frac{\E(\V)\ev_e^*\phi_{\mu(v)}}{\lr\a\al_{\mu(v)}^l\E(\N\cZ_{\Ga_e})}
\prod_{s\in S_e}\!\!\bigg(\frac{\ev_s^*\phi_{i_s}}{\hb_s\!-\!\psi_s}\bigg)\\
&\hspace{.3in}
=-\frac{1}{\lr\a\al_{\mu(v)}^l}\Res{\hb_e=0}\Big\{\hb_e^{-b_e-1}
\cZ^*\big((\hb_s)_{s\in S_e},\hb_e,(\x_s\!=\!\al_{i_s})_{s\in S_e},\x_e\!=\!\al_{\mu(v)},Q\big)\Big\},
\end{split}\EE
where the sum is taken over all possibilities for the strand $\Ga_e$,  
leaving the vertex~$v$, with $\mu(v)$ fixed, $|\Ga_e|\!>\!0$,
and carrying the marked points $S_e\!\subset\![N]$,
and $\cZ^*$ is the positive-degree part of the power series~\e_ref{cZval_e} 
with $[N]$ replaced by $S_e\!\sqcup\!\{e\}$ if $|S_e|\!\ge\!2$
(for $|S_e|\!=\!1$, $\cZ^*$ is defined in~\e_ref{cZ2dfn_e}).\footnote{By the proof of 
\cite[Chapter 30, (3.21)]{MirSym}, LHS of~\e_ref{Zreg_e8} summed over 
$\Ga_e$ with $\d(e)\!=\!d$ and $\mu_v(e)\!=\!i$ fixed is the residue of 
$\hb^{-b}\wt\cZ^*(\hb,\al_{\mu(v)},Q)$ at $\hb\!=\!(\al_i\!-\!\al_{\mu(v)})/d$; 
see also \cite[Section~2.2]{bcov1}.
Since $\wt\cZ^*(\hb,\al_{\mu(v)},Q)$ vanishes to second order at $\hb\!=\!\i$,
$\hb^{-b}\wt\cZ^*(\hb,\al_{\mu(v)},Q)\tnd\hb$ has no residue at $\hb\!=\!\i$ 
for all $b\!\in\!\bar\Z^+$.
Since $\wt\cZ^*(\hb,\al_{\mu(v)},Q)\tnd\hb$ has poles only at $\hb\!=\!(\al_i\!-\!\al_{\mu(v)})/d$
with $i\!\in\![n]\!-\!\mu(v)$ and $d\!\in\!\Z^+$, and at~$\hb\!=\!0$,
\e_ref{Zreg_e8} follows from the Residue Theorem on~$S^2$.
By \e_ref{cZ2form_e}, the same reasoning applies to $\hb^{-1}\cZ^*(\hb_s,\hb,\al_{i_s},\al_{\mu(v)},Q)$,
giving the $|S_e|\!=\!1$ case of~\e_ref{Zreg_e8b}.
Since $|\tnE_v^*(\Ga)|\!\ge\!3$, $|S_e\!\sqcup\!\{e\}|\!<\!N$; 
by Theorem~\ref{equiv_thm} and induction on~$N$, the same reasoning is applicable to
\e_ref{Zreg_e8b} for $|S_e|\!\ge\!2$ as well.\label{ressum_ftnt}}
Finally, if $s\!\in\!\eta^{-1}(v)$, 
\BE{cZ2deg0_e}\begin{split}
\hb_s^{-b_s-1}\!\!\prod_{k\neq i_s}\!(\al_{\mu(v)}\!-\!\al_k)
=\frac{(-1)^{b_s}}{\lr\a\al_{\mu(v)}^l}\Res{\hb_e=0}\bigg\{\hb_e^{-b_s-1}
\!\LRbr{\cZ\big(\hb_s,\hb_e,\al_{i_s},\al_{\mu(v)},Q\big)}_{Q;0}\bigg\}\,.
\end{split}\EE
This corresponds to the strand $\Ga_e$ in \e_ref{Zreg_e8b} with $|\Ga_e|\!=\!0$ 
whenever $S_e\!=\!\{s\}$ is a single-element set.
On the other hand, if $|S_e|\!\ge\!2$, 
$$\Res{\hb_e=0}\Big\{\hb_e^{-b_e-1}
\!\LRbr{\cZ\big((\hb_s)_{s\in S_e},\hb_e,(\x_s\!=\!\al_{i_s})_{s\in S_e},
\x_e\!=\!\al_{\mu(v)},Q\big)}_{Q;0}\Big\}=0.$$\\

\noindent
Putting this all together, 
taking into account the group of symmetries (permutations of the one-marked strands), 
and summing over all possibilities for $m'\!\equiv\!|\tnE_v'(\Ga)|$,
while keeping  
$$m\equiv|\tnE_v^*(\Ga)|\ge3, \qquad 
\{S_i\}_{i\in[m]}\equiv\{S_e\}_{e\in\tnE_v^*(\Ga)}\in\cP_m([N]), 
\quad\hbox{and}\quad j\equiv \eta(v)\in[n]\,$$ 
fixed, we find that 
\BE{cZsum_e1}\begin{split}
&\frac{\prod\limits_{k\neq j}\!(\al_j\!-\!\al_k)}{\lr\a\al_j^l}
\sum_{\Ga}Q^{|\Ga|}\!\!\int_{\cZ_{\Ga}}\!\!
\frac{\E(\V)}{\E(\N\cZ_{\Ga})}
\prod_{s=1}^{s=N}\!\!\bigg(\frac{\ev_s^*\phi_{i_s}}{\hb_s\!-\!\psi_s}\bigg)
=\sum_{\b\in(\bar\Z^+)^m}\!\!\Bigg\{\wt\cZ_{m-3,\|\b\|}(\al_j,Q)\\
&\hspace{1.1in}\times
\prod_{i=1}^{i=m}\!\bigg(\frac{1}{\lr\a\al_j^l}
\frac{(-1)^{b_i}}{b_i!}\Res{\hb_i'=0}\Big\{\hb_i'^{\,-b_i-1}
\cZ\big((\hb_s)_{s\in S_i},\hb_i',(\al_{i_s})_{s\in S_i},\al_j,Q\big)\!\Big\}\!\!\bigg)
\!\!\Bigg\}\,.
\end{split}\EE
By \e_ref{cZmB_e} and the first statement of Lemma~\ref{comb_l0}, 
the right-hand side of this expression reduces~to
\begin{equation*}\begin{split}
&\sum_{\b\in(\bar\Z^+)^m}\!\!\!\!\sum_{\begin{subarray}{c}\b''\in(\bar\Z^+)^m\\ 
\c\in(\bar\Z^+)^{\i}\\ |\b''|+\|\c\|=m-3\end{subarray}}\!\!\!\!\!\!\!
\Bigg\{\! \Psi_{m-3,\c}(\al_j,Q)\\
&\hspace{.5in} \times\prod_{i=1}^{i=m}\!\!\Bigg(
\frac{1}{\lr\a\al_j^l}\frac{1}{b_i!}\binom{b_i}{b_i''}
\Res{\hb_i'=0}\bigg\{\hb_i'^{\,-b_i''-1}
\bigg(\!\!-\frac{\ze(\al_j,Q)}{\hb_i'}\bigg)^{b_i-b_i''}\!\!
\cZ\big((\hb_s)_{s\in S_i},\hb_i',(\al_{i_s})_{s\in S_i},\al_j,Q\big)\!\bigg\}
\!\!\Bigg)\!\!\Bigg\}\\
&=\!\!\!\!\!\sum_{\begin{subarray}{c}\b''\in(\bar\Z^+)^m\\ 
\c\in(\bar\Z^+)^{\i}\\ |\b''|+\|\c\|=m-3\end{subarray}}\!\!\!\!\!\!\!
\Bigg\{\! \Psi_{m-3,\c}(\al_j,Q)\!
\prod_{i=1}^{i=m}\!\!\Bigg(
\frac{1}{\lr\a\al_j^l}\frac{1}{b_i''!}
\Res{\hb=0}\bigg\{\frac{e^{-\frac{\ze(\al_j,Q)}{\hb}}}{\hb^{\,b_i''+1}}
\cZ\big((\hb_s)_{s\in S_i},\hb,(\al_{i_s})_{s\in S_i},\al_j,Q\big)\!\bigg\}
\!\!\Bigg)\!\!\Bigg\}.
\end{split}\end{equation*}
Since $m\!\ge\!3$, $|S_i|\!\le\!N\!-\!2$ for every $i\!\in\![m]$.
Thus, each of the power series~$\cZ^*$ appearing in the last expression above is
described either by \e_ref{cZ2form_e} or Theorem~\ref{equiv_thm} with~$N$ replaced by
$|S_i|\!+\!1\!<\!N$ (which we can assume to hold by induction).
By  the last expression for the left-hand side of~\e_ref{cZsum_e1},
Lemma~\ref{pt2_lmm}, \e_ref{equivthm_e} with $N$ replaced by $|S_i|\!+\!1\!<\!N$
whenever $|S_i|\!\ge\!2$, and~\e_ref{cZpexp_e}, 
the sum on the left hand-side side of~\e_ref{cZsum_e1} equals
\begin{equation*}\begin{split}
\lr\a \!\!\!\!\!\!\!
\sum_{\begin{subarray}{c}\p\in\nset^N\\ \b\in(\bar\Z^+)^N\end{subarray}}
\!\!\!\Bigg\{\! 
\frac{\un\hb^{-\b}\!\cZ_{\p}(\un\hb,\al_{i_1\ldots i_N},Q)}
{\al_j^{l(m-1)}\!\!\prod\limits_{k\neq j}\!(\al_j\!-\!\al_k)}
\!\!
\sum_{\begin{subarray}{c}\bfd\in(\bar\Z^+)^m\\ \p'\in\nset^m\\  \b'\in\Z^m\end{subarray}}
\!\!\!\sum_{\begin{subarray}{c}\b''\in(\bar\Z^+)^m\\ 
\c\in(\bar\Z^+)^{\i}\\ |\b''|+\|\c\|=m-3\end{subarray}}\!\!\!\!\!\!\!\!\!\!\!\!
 q^{|\bfd|}\Psi_{m-3,\c}(\al_j,Q)
\prod_{i=1}^{i=m}
\frac{\cC_{\p|_{S_i}p_i',\b|_{S_i}b_i'}^{(d_i)}\Psi_{p_i';b_i'+1+b_i''}(\al_j,Q)}{b_i''!}\Bigg\}&
\end{split}\end{equation*}
with $\Psi_{p;b}\!\equiv\!0$ if $b\!<\!0$.
In the two-pointed case (for $|S_i|\!=\!1$), the above structure constants are given~by 
\BE{cC2base_e}\sum_{d=0}^{\i}q^d\cC_{pp',bb'}^{(d)}
=\de_{b+b',-1}(-1)^b\cC_{pp'}(Q),\EE
with $\cC_{pp'}$ as in \e_ref{pt2cC_e}.
Summing over all 
$$j\in[n]\,, \qquad \bfS\!\equiv\!\{S_i\}_{\in[m]}\in\cP_m([N])\,, \quad\hbox{and}\quad m\ge3\,$$
and using the Residue Theorem on $S^2$,
we obtain a recursion for the coefficients~$\cC_{\p,\b}^{(d)}$ in Theorem~\ref{equiv_thm}:
\BE{cCrec_e}\begin{split}
\cC_{\p,\b}^{(d)}&=-
\sum_{\begin{subarray}{c}m,d'\in\bar\Z^+\\ m\ge3\end{subarray}}
\sum_{\begin{subarray}{c} \bfS\in\cP_m([N])\\ 
\bfd\in\cP_m(d-d')\\ (\p',\b')\in\nset^m\times\Z^m\end{subarray}}
\sum_{\begin{subarray}{c}\b''\in(\bar\Z^+)^m\\ 
\c\in(\bar\Z^+)^{\i}\\ |\b''|+\|\c\|=m-3\end{subarray}}\!\!\!\!\!\!\\
&\hspace{1in}
\Res{\x=0,\i}\!\! 
\LRbr{ \frac{\Psi_{m-3,\c}(\x,Q)}{\x^{l(m-1)}\prod\limits_{k=1}^{k=n}\!(\x\!-\!\al_k)}
\!\prod_{i=1}^{i=m}
\frac{\cC_{\p|_{S_i}p_i',\b|_{S_i}b_i'}^{(d_i)}\Psi_{p_i';b_i'+1+b_i''}(\x,Q)}{b_i''!} 
\!}_{q;d'}\,.
\end{split}\EE\\

\noindent
By~\e_ref{equivthm_e} and~\e_ref{cZpdfn_e}, if $\b\!\in\!(\bar\Z^+)^N$ and $d\!\in\bar\Z^+$,
the coefficient of 
$$q^d \prod_{s=1}^{s=N}\!\big((\hb_s^{-1})^{b_s+1}\big)$$
in the power series $\cZ(\un\hb,\un\x,Q)$ is
\BE{cZcoeff_e}\begin{split}
\LRbr{\cZ(\un\hb,\un\x,Q)}_{\un\hb^{-1},q;\b+\mathbf1,d}
&=\sum_{\p\in\nset^N}\!\!\!\cC_{\p,\b}^{(d)}\un\x_s^{\p}\\
&\quad
+\!\!\!\sum_{\begin{subarray}{c}d'\in\dset\\ \bfd\in\cP_N(d-d')\end{subarray}}
\sum_{\p\in\nset^N}
\sum_{\begin{subarray}{c} \b'\in(\bar\Z^+)^N\\ b_s'\le b_s\end{subarray}} 
\!\!\cC_{\p,\b'}^{(d')}
\!\!\prod_{s=1}^{s=N}\!\! \LRbr{\cZ_{(p_s)}(\hb_s,\x_s,Q) }_{\hb_s^{-1},q;b_s-b_s',d_s}\,,
\end{split}\EE
where $\LRbr{\cZ_{(p)}(\hb,\x,Q) }_{\hb^{-1},q;b,d'}$
is the coefficient of $q^{d'}(\hb^{-1})^b$ in
$$\cZ_{(p)}(\hb,\x,Q)\equiv\frac{\cZ_p(\hb,\x,Q)}{\prod\limits_{r=p_s-l+1}^{n-l-1}\!\!\!\!I_r(q_s)}
\in H_{\T}^*\big(\P^{n-1}\big)\big[\big[\hb^{-1},Q\big]\big]
=H_{\T}^*\big(\P^{n-1}\big)\big[\big[\hb^{-1},Q\big]\big].$$
Since $H_{\T}^*(\P^{n-1})$ and $H_{\T}^*(\P_N^{n-1})$ are free modules over $\Q[\al]$
with bases $\{\x^p\}_{p\in\nset}$ and $\{\un\x^{\p}\}_{\p\in\nset^N}$, respectively,
and
$$\LRbr{\cZ_{(p)}(\hb,\x,Q) }_{\hb^{-1},q;b,d'}\in H_{\T}^*(\P^{n-1}),\qquad
\LRbr{\cZ(\un\hb,\un\x,Q)}_{\un\hb^{-1},q;\b+\mathbf1,d}\in H_{\T}^*(\P_N^{n-1})$$
by \e_ref{cZpdfn_e} and \e_ref{cZdfn_e}, 
\e_ref{cZcoeff_e} and induction on~$d$ imply that 
$\cC_{\p,\b}^{(d)}\in\Q[\al]$ as claimed in Theorem~\ref{equiv_thm}.\\

\noindent 
We now confirm \e_ref{equivthm_e2} by induction on $N$.
For $N\!=\!2$, \e_ref{equivthm_e2} holds by \e_ref{cC2base_e}, \e_ref{p2cC_e}, and \e_ref{coeff2_e}.
On the other hand, by \e_ref{cCrec_e}, \e_ref{PsimcPhi_e}, 
the second statement in~\e_ref{PsiPhi_e}, 
the inductive assumption~\e_ref{equivthm_e2},  
and the last two statements in Lemma~\ref{ressum_lmm}, 
\begin{equation*}\begin{split}
&\cC_{\p,\b}^{(d)}\sim-\!\!\!
\sum_{\begin{subarray}{c}m,d'\in\bar\Z^+\\ m\ge3\end{subarray}}
\sum_{\begin{subarray}{c} \bfS\in\cP_m([N])\\ 
\bfd\in\cP_m(d-d')\\ \bft\in(\bar\Z^+)^m\\
(\p',\b')\in\nset^m\times\Z^m\end{subarray}}
\!\sum_{\begin{subarray}{c}\b''\in(\bar\Z^+)^m\\ 
\c\in(\Z^+)^{\i}\\ |\b''|+\|\c\|=m-3\end{subarray}}\!\!\!\!\hat\si_n^{|\bft|}\\
&\hspace{.1in}
\Res{\x=0,\i}\!\! \LRbr{ 
\frac{\x^{|\p'|-|\b'|-(l+2)(m-1)+1}}{\prod\limits_{k=1}^{k=n}\!(\x\!-\!\al_k)}
\Phi_{m-3,\c}(\q)
\!\prod_{i=1}^{i=m}\!\!\Bigg(\!\! \nc_{\p|_{S_i}p_i',\b|_{S_i}b_i'}^{(d_i,t_i)}
\frac{}{}
\frac{I_0(\q)^2\Phi_{p_i';b_i'+1+b_i''}(\q)}{b_i''!\,L(\q)^{\de_{0\nua}n}\Phi_0(\q)} 
\!\!\Bigg)\!}_{q;d'}\,.
\end{split}\end{equation*}
Since $\q\!=\!q/\x^{\nua}$, by the last statement of Lemma~\ref{ressum_lmm}
the negative of the expression on the last line is equivalent~to 
$$\LRbr{\Phi_{m-3,\c}(q)
\!\prod_{i=1}^{i=m}\!\!\Bigg(\!\! \nc_{\p|_{S_i}p_i',\b|_{S_i}b_i'}^{(d_i,t_i)}
\frac{I_0(q)^2\Phi_{p_i';b_i'+1+b_i''}(q)}{b_i''!\,L(q)^{\de_{0\nua}n}\Phi_0(q)} 
\!\!\Bigg)\!}_{q;d'}\hat\si_n^{t'}$$
with $t'\!\in\!\Z$ defined by 
$$|\p'|-|\b'|-(l+2)(m-1)+1-\nua d'=n-1+nt' \qquad\Llra\qquad 
(\p',\b')\in \cS_m(d',t');$$
if such an integer $t'$ does not exist, the above residue is equivalent to~0.
Since $\cC_{\p,\b}^{(d)}\!\in\!\Q[\al]$ by the previous paragraph, we conclude that 
\begin{equation*}\begin{split}
&\cC_{\p,\b}^{(d)}\sim \sum_{t=0}^{\i}\hat\si_n^t
\sum_{\begin{subarray}{c}m,d',t'\in\Z\\ m\ge3\end{subarray}}
\sum_{\begin{subarray}{c} \bfS\in\cP_m([N])\\ 
\bfd\in\cP_m(d-d')\\ \bft\in\cP_m(t-t')\\
(\p',\b')\in\cS_m(d',t')\end{subarray}}
\!\sum_{\begin{subarray}{c}\b''\in(\bar\Z^+)^m\\ 
\c\in(\Z^+)^{\i}\\ |\b''|+\|\c\|=m-3\end{subarray}}\!\!\!\!
\left(\bigg(\prod_{i=1}^{i=m}\nc_{\p|_{S_i}p_i',\b|_{S_i}b_i'}^{(d_i,t_i)}\bigg)\right.\\
&\hspace{2in}\times
\left.\LRbr{\Phi_{m-3,\c}(q)
\!\prod_{i=1}^{i=m}
\frac{I_0(q)^2\Phi_{p_i';b_i'+1+b_i''}(q)}{b_i''!\,L(q)^{\de_{0\nua}n}\Phi_0(q)}\!\!}_{q;d'}\right).
\end{split}\end{equation*}
Comparing this expression with \e_ref{coeffdfn_e}, 
we conclude that \e_ref{equivthm_e2} holds.\footnote{As can be seen by induction on $n$, $\cI\Q_{\al}'\cap\Q[\al]=\cI$.
Since $\cC_{\p,\b}^{(d)}$ is a symmetric function in $\al_1,\ldots,\al_n$,
it is even sufficient to check that the symmetric polynomials in $\cI\Q_{\al}'\cap\Q[\al]$
are contained in~$\cI$; this is immediate from the algebraic independence of 
the elementary symmetric functions.}\\

\noindent
We next show that \e_ref{equivthm_e2} holds with the coefficients $\nc_{\p,\b}^{(d,t)}$
as defined in~\e_ref{ncCdfn2_e}. 
Let $\Ga$ be an $N$-marked decorated tree and $\bar\Ga$ its core as before, 
with a partial ordering~$\prec$ as in Section~\ref{graph_subs}.
This time, we  break $\Ga$ and $\bar\cZ_{\Ga}$ at all vertices 
$\ov\Ver\!\subset\!\Ver$ of~$\bar\Ga$, adding a marked point to each of the strands;
see Figure~\ref{strands_fig}.
There are now three types of strands, (S1)-(S3), described in Section~\ref{outline_subs}.
Each strand of type~(S3) carries one of the original marked points $s\!\in\![N]$
and an added marked point~$s'$, which we associate with the element of $\tnE_v(\Ga)$
that leaves~$v$ in the direction of~$\eta(s)$.
These strands are thus naturally indexed by the complement of the subset $\eta^{-1}(\ov\Ver)\!\subset\![N]$
of the marked points attached to a vertex of the core in~$\Ga$.
Each strand of type~(S2) runs between vertices in $\ov\Ver\!\subset\!\Ver$ in~$\Ga$
that are joined by an edge $e\!=\!\{v_e^-,v_e^+\}$ in~$\bar\Ga$, with $v_e^-\!\prec\!v_e^+$.
It carries two added marked points, which we label~$e^-$ and~$e^+$, attached 
to the vertices $v_e^-$ and $v_e^+$, respectively, in the strand~$\Ga_e$.
We associate the marked point $e^-$ (resp.~$e^+$) with the element of 
$\tnE_{v_e^-}(\Ga)$ (resp.~$\tnE_{v_e^+}(\Ga)$)
that leaves~$v_e^-$ (resp.~$v_e^+$) in the directions of~$v_e^+$ (resp.~$v_e^-$).
Similarly to the first approach, for each $v\!\in\!\ov\Ver$, denote by
$\tnE_v'(\Ga)\!\subset\!\tnE_v(\Ga)$ the set of one-marked edges at~$v$ and set
\begin{equation*}\begin{split}
\bar\tnE_v(\Ga)&=\tnE_v'(\Ga)\cup\eta^{-1}(v) \cup \tnE_v(\bar\Ga)
\subset \tnE_v(\Ga)\sqcup[N], \qquad
\bar\tnE(\Ga)=\bigsqcup_{v\in\ov\Ver}\!\!\!\bar\tnE_v(\Ga)\,.
\end{split}\end{equation*}
As before, this set indexes the marked points on the contracted component.\\

\noindent
The analogues of the decompositions~\e_ref{Zreg_e5} in this case are 
\begin{equation*}\begin{split}
\cZ_{\Ga}&=\prod_{v\in\ov\Ver}\!\!\bigg(\ov\cM_{0,\ov\tnE_v(\Ga)}
\times\!\prod_{e\in\tnE_v'(\Ga)}\hspace{-.12in}\cZ_{\Ga_e}\bigg)
\times\prod_{e\in\ov\Edg}\!\!\!\cZ_{\Ga_e}\,,\\
\frac{\E(\V)}{\prod\limits_{v\in\ov\Ver}\E(\L_{\bar\mu(v)})}&=
\prod_{v\in\ov\Ver}\prod_{e\in\tnE_v'(\Ga)}\!\!\frac{\pi_e^*\E(\V)}{\E(\L_{\bar\mu(v)})}
\times\prod_{e\in\ov\Edg}\!\frac{\pi_e^*\E(\V)}{\E(\L_{\bar\mu(v_e^-)})\E(\L_{\bar\mu(v_e^+)})}\,,\\
\frac{\prod\limits_{v\in\ov\Ver}\E(T_{\bar\mu(v)}\P^{n-1})}{\E(\N\cZ_{\Ga})}&=
\prod_{v\in\ov\Ver}\!\!\Bigg( \prod_{e\in\tnE_v(\Ga)}\!\!\!\!
 \frac{\E(T_{\bar\mu(v)}\P^{n-1})}{\hb_e'\!-\!\pi_e^*\psi_e}
\times \!\!\prod_{e\in\tnE_v'(\Ga)}\!\!
\frac{1}{\E(\N\cZ_{\Ga_e})}\Bigg)
\times\prod_{e\in\ov\Edg}\frac{1}{\E(\N\cZ_{\Ga_e})},
\end{split}\end{equation*} 
For each $v\!\in\!\ov\Ver$, \e_ref{cMint_e} still applies.
The analogue of~\e_ref{Zreg_e6}, but weighted by the automorphism group, is~then
\BE{decomp2_e2}\begin{split}
&\left(\prod_{v\in\ov\Ver}
\frac{\prod\limits_{k\neq\bar\mu(v)}\!\!\!\!(\al_{\bar\mu(v)}\!-\!\al_k)}{\lr\a\al_{\bar\mu(v)}^l}\right)
\int_{\cZ_{\Ga}}\!\!
\frac{\E(\V)}{\E(\N\cZ_{\Ga})}
\prod_{s=1}^{s=N}\!\!\bigg(\frac{\ev_s^*\phi_{i_s}}{\hb_s\!-\!\psi_s}\bigg)\\
&\hspace{.1in}
=\hspace{-.1in}\sum_{\begin{subarray}{c}\b\in(\bar\Z^+)^{\ov\tnE(\Ga)}\\
|\b|_{\bar\tnE_v(\Ga)}=|\ov\tnE_v(\Ga)|-3\end{subarray}} 
\hspace{-.27in}\left\{\! \prod_{v\in\ov\Ver}\!\left(\!\!
\frac{(|\ov\tnE_v(\Ga)|\!-\!3)!}{|\tnE_v'(\Ga)|!}
\!\!\!\prod_{e\in\tnE_v'(\Ga)}\!\!\! \Bigg(\!\!\frac{1}{b_e!}\!
\bigg(\frac{\al_{\bar\mu(v)}\!-\!\al_{\mu_v(e)}}{\d(e)}\bigg)^{\!\!-b_e-1}
\!\!\!\!\int_{\cZ_{\Ga_e}}\!\!\!
\frac{\E(\V)\ev_e^*\phi_{\bar\mu(v)}}{\lr\a\al_{\bar\mu(v)}^l\E(\N\cZ_{\Ga_e})}\!\Bigg) \right.\right.\\
&\hspace{1.4in}\left.\times
\!\!\prod_{s\in\bar\eta^{-1}(v)}\!\!\!\Bigg(\!\!\frac{1}{b_{s'}!}
\bigg(\frac{\al_{\bar\mu(v)}\!-\!\al_{\mu_v(s')}}{\d(s')}\bigg)^{\!\!-b_{s'}-1}
\!\!\!\!\int_{\cZ_{\Ga_{s'}}}\!\!\!\frac{\E(\V)\ev_{s'}^*\phi_{\bar\mu(v)}\ev_s^*\phi_{i_s}}
{\lr\a\al_{\bar\mu(v)}^l\E(\N\cZ_{\Ga_{s'}})(\hb_s\!-\!\psi_s)}\!\!\Bigg)\!\!\right)\\
&\hspace{.5in}\times\left.
\!\!\prod_{e\in\ov\Edg}\!\! \Bigg(\!\frac{1}{b_{e^-}!b_{e^+}!}\!\!\prod_{*=-,+}\!\!\!
\bigg(\frac{\al_{\bar\mu(v_e^*)}\!-\!\al_{\mu_{v_e^*}(e^*)}}{\d(e^*)}\bigg)^{\!\!-b_{e^*}-1}
\!\!\!\times\!\!\int_{\cZ_{\Ga_e}}\!\!\!
\frac{\E(\V)\ev_{e^-}^*\phi_{\bar\mu(v_e^-)}\ev_{e^+}^*\phi_{\bar\mu(v_e^+)}}
{\lr\a^2\al_{\bar\mu(v_e^-)}^l\al_{\bar\mu(v_e^+)}^l\E(\N\cZ_{\Ga_e})}
\!\!\Bigg)\!\!\right\},
\end{split}\EE
where
$$\frac{1}{b_{s'}!}\bigg(\frac{\al_{\bar\mu(v)}\!-\!\al_{\mu_v(s')}}{\d(s')}\bigg)^{\!\!-b_{s'}-1}
\!\!\!\!\int_{\cZ_{\Ga_{s'}}}\!\!\!
\frac{\E(\V)\ev_{s'}^*\phi_{\bar\mu(v)}
\ev_s^*\phi_{i_s}}{\lr\a\al_{\bar\mu(v)}^l\E(\N\cZ_{\Ga_{s'}})(\hb_s\!-\!\psi_s)}
\equiv\frac{1}{b_s!}
\bigg(\hb_s^{-b_s-1}\!\!\prod_{k\neq i_s}\!(\al_{\bar\mu(v)}\!-\!\al_k)\!\!\bigg)$$
if $s\!\in\!\eta^{-1}(v)$.\\

\noindent
For each $v\!\in\!\ov\Ver$, \e_ref{Zreg_e8} still reduces the summation 
of~the factor on the second line in~\e_ref{decomp2_e2} over all possibilities
for $\Ga_e$ with $e\!\in\!\tnE_v'(\Ga)$ and for $m_v'\!\equiv\!|\tnE_v'(\Ga)|$
to $\wt\cZ_{m_v,\|\b_v\|}(\al_{j_v},Q)$, where
$$m_v\equiv m_v(\bar\Ga)=\big|\bar\eta^{-1}(v)\big|+\big|\tnE_v(\bar\Ga)\big|-3, \qquad
\b_v=\b|_{\bar\eta^{-1}(v)\cup\tnE_v(\bar\Ga)}\,, \qquad j_v=\bar\mu(v).$$
For each $s\!\in\!\bar\eta^{-1}(v)$, \e_ref{Zreg_e8b} and~\e_ref{cZ2deg0_e} 
with $v\!=\!\bar\eta(s)$ and $S_e\!=\!\{s\}$ still compute the sum of the factors on
the third line in~\e_ref{decomp2_e2} over all possibilities for $\Ga_{s'}$ of positive 
and zero degree, respectively.
By a similar reasoning (see Footnote~\ref{ressum_ftnt}), for each $e\!\in\!\ov\Edg$
\begin{equation*}\begin{split}
&\sum_{\Ga_e}\Bigg( 
\bigg(\frac{\al_{\mu_{v_e^-}(e^-)}\!-\!\al_{j_{v_e^-}}}{\d(e^-)}\bigg)^{\!\!-b_e^--1}
\bigg(\frac{\al_{\mu_{v_e^+}(e^+)}\!-\!\al_{j_{v_e^+}}}{\d(e^+)}\bigg)^{\!\!-b_e^+-1}
\!\!\!\int_{\cZ_{\Ga_e}}\!\!\!
\frac{\E(\V)\ev_{e^-}^*\phi_{j_{v_e^-}}\ev_{e^+}^*\phi_{j_{v_e^+}}}
{\E(\N\cZ_{\Ga_e})}
\!\!\Bigg)\\
&\hspace{2.2in}
=\Res{\hb_-=0}\Big\{\Res{\hb_+=0}\Big\{\hb_-^{-b_e^--1}\hb_+^{-b_e^+-1}
\cZ^*\big(\hb_-,\hb_+,\al_{j_{v_e^-}},\al_{j_{v_e^+}},Q\big)\Big\}\Big\},
\end{split}\end{equation*}
where the sum is taken over all possibilities for the strand $\Ga_e$ between the vertices
$v_{e^-}$ and $v_{e^+}$ in~$\Ga$ with $\mu(v_e^-)\!=\!j_{v_e^-}$ and $\mu(v_e^+)\!=\!j_{v_e^+}$
fixed.
Since 
$$\Res{\hb_-=0}\bigg\{\Res{\hb_+=0}\bigg\{\hb_-^{-b_e^--1}\hb_+^{-b_e^+-1}
\frac{\lr\a \al_{j_e^-}^l}{\hb_-\!+\!\hb_+}\!\!
\sum_{\begin{subarray}{c}p_-+p_++r=n-1\\ p_-,p_+,r\ge0 \end{subarray}}
\!\!\!\!\!\!\!\!\!\hat\si_r\al_{j_{v_e^-}}^{p_-}\al_{j_{v_e^+}}^{p_+}\bigg\}\bigg\}=0
\qquad\forall\,b_e^-,b_e^+\in\bar\Z^+,$$
we can replace $\cZ^*$ in the previous expression by $\cZ$.\\ 

\noindent
Putting this all together, we obtain a replacement for \e_ref{cZsum_e1}, 
involving products over $v\!\in\!\ov\Ver$ and $e\!\in\!\ov\Edg$,
which \e_ref{cZmB_e} and the first statement of Lemma~\ref{comb_l0}, 
reduce~to
\begin{equation*}\begin{split}
&\left(\prod_{v\in\ov\Ver}
\frac{\prod\limits_{k\neq j_v}\!\!\!(\al_{j_v}\!-\!\al_k)}{\lr\a\al_{j_v}^l}\right)
\sum_{\Ga}Q^{|\Ga|}\!\!\!\int_{\cZ_{\Ga}}\!\!
\frac{\E(\V)}{\E(\N\cZ_{\Ga})}
\prod_{s=1}^{s=N}\!\!\bigg(\frac{\ev_s^*\phi_{i_s}}{\hb_s\!-\!\psi_s}\bigg)\\
&=\sum_{\begin{subarray}{c} \b''\in(\bar{\Z}^+)^N\\ \b^-,\b^+\in(\bar{\Z}^+)^{\ov\Edg}\\ 
(\c_v)_{v\in\ov\Ver}\in((\bar\Z^+)^{\i})^{\ov\Ver}\\ 
|\b''|_{\eta^{-1}(v)}+|\b^-|_{\tnE_{\bar\Ga}^-(v)}+b_{e_v}^++\|\c_v\|=m_v\end{subarray}}
\hspace{-.75in}
\left\{\! \prod_{v\in\ov\Ver}\!\!\!\!\Psi_{m_v,\c_v}(\al_{j_v},Q)\times
\prod_{s=1}^{s=N}\!\!\bigg(\!
\frac{1}{\lr\a\al_{j_s}^l}\frac{1}{b_s''!}
\Res{\hb=0}\bigg\{\frac{e^{-\frac{\ze(\al_{j_s},Q)}{\hb}}}{\hb^{\,b_s''+1}}
\!\cZ\big(\hb_s,\hb,\al_{i_s},\al_{j_s},Q\big)\!\bigg\}
\!\!\bigg)\!\!\right.\\
&\hspace{.3in}
\times\left.\prod_{e\in\ov\Edg}\!\!\Bigg(\!\frac{1}{\lr\a^2\al_{j_{v_e^-}}^l\al_{j_{v_e^+}}^l}
\frac{1}{b_e^-!b_e^+!}
\Res{\hb_-=0}\bigg\{\Res{\hb_+=0}\bigg\{
\frac{e^{-\frac{\ze(\al_{j_{v_e^-}},Q)}{\hb_-}-\frac{\ze(\al_{j_{v_e^+}},Q)}{\hb_+}}}
{\hb_-^{b_e^-+1}\hb_+^{b_e^++1}}
\!\cZ\big(\hb_-,\hb_+,\al_{j_{v_e^-}},\al_{j_{v_e^+}},Q\big)\!\bigg\}\!\!\bigg\}
\!\!\Bigg)\!\!\right\}\,,
\end{split}\end{equation*}
where $b_{e_{v_0}}\!\equiv\!0$ for the minimal element $v_0\!\in\!\ov\Ver$,
$j_s\!=\!j_{\bar\mu(\bar\eta(s))}$, and
the sum is taken over all possibilities for~$\Ga$ with the core 
$\bar\Ga\!=\!(\ov\Ver,\ov\Edg;\bar\mu,\bar\eta)$ fixed.
Using Lemma~\ref{pt2_lmm} and~\e_ref{cZpexp_e} to compute the residues,
we find that the sum on the left-hand side of the above expression equals
\begin{equation*}\begin{split}
&\lr\a\!\!\!\!\!\sum_{\begin{subarray}{c}\p\in\nset^N\\ \b\in(\bar\Z^+)^N\end{subarray}}
\!\!\Bigg\{\un\hb^{-\b}\!\cZ_{\p}(\un\hb,\al_{i_1\ldots i_N},Q) 
\!\!\!\sum_{\begin{subarray}{c}\ti\p\in\nset^N\\ \p',\ti\p'\in\nset^{\ov\Edg}\\ 
\b'\in(\bar\Z^+)^{\ov\Edg}\end{subarray}} \hspace{-.23in}
(-1)^{|\b|+|\b'|} \hspace{-.5in}
\sum_{\begin{subarray}{c} \b''\in(\bar{\Z}^+)^N\\ \b^-,\b^+\in(\bar{\Z}^+)^{\ov\Edg}\\ 
(\c_v)_{v\in\ov\Ver}\in((\bar\Z^+)^{\i})^{\ov\Ver}\\ 
|\b''|_{\eta^{-1}(v)}+|\b^-|_{\tnE_{\bar\Ga}^-(v)}+b_{e_v}^++\|\c_v\|=m_v\end{subarray}}
\hspace{-.75in}
\prod_{v\in\ov\Ver} \frac{\Psi_{m_v,\c_v}(\al_{j_v},Q)}
{\al_{j_v}^{l(m_v+2)}\!\!\!
\prod\limits_{k\neq j_v}\!\!(\al_{j_v}\!-\!\al_k)}\\
&\hspace{.2in}\times
\prod_{s=1}^{s=N}\frac{\cC_{p_s\ti{p}_s}(Q)\Psi_{\ti{p}_s;b_s''-b_s}(\al_{j_s},Q)}{b_s''!}
\times\!\left.\prod_{e\in\ov\Edg}
\!\!\!\frac{\cC_{p_e'\ti{p}_e'}(Q)\Psi_{p_e';b_e^+-b_e'}(\al_{j_{v_e^+}},Q)
\Psi_{\ti{p}_e';b_e^-+1+b_e'}(\al_{j_{v_e^-}},Q)}{b_e^-!b_e^+!}\right\}.
\end{split}\end{equation*}
For each $v\!\in\!\ov\Ver$, we now sum up the product of the corresponding factors above
over all possibilities for $j_v\!\in\![n]$
(which also determines $j_s$ and  $j_{v_e^{\pm}}$ whenever $\eta(s)\!=\!v$ and 
$v_e^{\pm}\!=\!v$).
Using the Residue Theorem on $S^2$, we now obtain an explicit formula
for the coefficients~$\cC_{\p,\b}^{(d)}$ in Theorem~\ref{equiv_thm}:
\BE{cCformula_e}\begin{split}
&\cC_{\p,\b}^{(d)}=
\sum_{\Ga}\sum_{\bfd\in\cP_{\Ga}(d)}\!\!
\sum_{\begin{subarray}{c}\ti\p\in\nset^N\\ \p',\ti\p'\in\nset^{\Edg}\\ 
\b'\in(\bar\Z^+)^{\Edg}\end{subarray}} \hspace{-.23in}
(-1)^{|\b|+|\b'|} \hspace{-.5in}
\sum_{\begin{subarray}{c} \b''\in(\bar{\Z}^+)^N\\ \b^-,\b^+\in(\bar{\Z}^+)^{\Edg}\\ 
(\c_v)_{v\in\Ver}\in((\bar\Z^+)^{\i})^{\Ver}\\ 
|\b''|_{\eta^{-1}(v)}+|\b^-|_{\tnE_{\Ga}^-(v)}+b_{e_v}^++\|\c_v\|=m_v\end{subarray}}
\hspace{-.6in}
\left.\prod_{v\in\Ver} (-1)\!\!\!\Res{\x=0,\i}\! 
\right\llbracket\frac{\Psi_{m_v,\c_v}(\x,Q)}
{\x^{l(m_v+2)}\!\!\!\prod\limits_{k=1}^{k=n}\!\!(\x\!-\!\al_k)}\\
&\hspace{0.1in}\times\!\!\!\!\!
\prod_{s\in\eta^{-1}(v)}\!\!\!\!\!\!
\frac{\cC_{p_s\ti{p}_s}(Q)\Psi_{\ti{p}_s;b_s''-b_s}(\x,Q)}{b_s''!}
\times\!\!\!\!\!\!\left.\prod_{e\in\tnE_{\Ga}^-(v)}
\!\!\!\!\!\!\frac{\cC_{p_e'\ti{p}_e'}(Q)\Psi_{\ti{p}_e';b_e^-+1+b_e'}(\x,Q)}{b_e^-!}
\times\frac{\Psi_{p_{e_v}';b_{e_v}^+-b_{e_v}'}(\x,Q)}{b_{e_v}^+!}
\right\rrbracket_{q;d_v}
\end{split}\EE
where the outer sum is taken over all $N$-marked trivalent trees $\Ga\!\equiv\!(\Ver,\Edg;\eta)$ and 
$$\frac{\Psi_{p_{e_{v_0}}';b_{e_{v_0}}^+-b_{e_{v_0}}'}(\x,Q)}{b_{e_{v_0}}^+!}\equiv1$$
for the minimal element $v_0\!\in\!\Ver$.
Using \e_ref{PsimcPhi_e}, \e_ref{p2cC_e}, the second statement in~\e_ref{PsiPhi_e},   
and the last two statements in Lemma~\ref{ressum_lmm} as before, we conclude that  
\begin{equation*}\begin{split}
\cC_{\p,\b}^{(d)}&\sim
\sum_{\Ga}\!\!\!
\sum_{\begin{subarray}{c}\bfd\in\cP_{\Ga}(d)\\ \p'\in\nset^{\Edg},\, 
\b'\in(\bar\Z^+)^{\Edg}\end{subarray}} \hspace{-.3in}
(-1)^{|\b|+|\b'|}\hat\si_n^{t_{\p}+t_{\p'}+|\bft|} \hspace{-.5in}
\sum_{\begin{subarray}{c} \b''\in(\bar{\Z}^+)^N,\, \b^-,\b^+\in(\bar{\Z}^+)^{\Edg}\\ 
(\c_v)_{v\in\Ver}\in((\bar\Z^+)^{\i})^{\Ver}\\ 
|\b''|_{\eta^{-1}(v)}+|\b^-|_{\tnE_{\Ga}^-(v)}+b_{e_v}^++\|\c_v\|=m_v\end{subarray}}
\hspace{-.25in}
\left.\prod_{v\in\Ver}\! 
\right\llbracket \!\Phi_{m_v,\c_v}(q)\\
&\hspace{0in}\times\!\!\!\!\!
\prod_{s\in\eta^{-1}(v)}\!\!\!\!\!\! \frac{L(q)^{\de_{0\nua}nt_{p_s}}\Phi_{\hat{p}_s;b_s''-b_s}(q)}{b_s''!\,\Phi_0(q)}
\times\!\!\!\left.\prod_{e\in\tnE_{\Ga}^-(v)}
\!\!\!\!\!\!\frac{L(q)^{\de_{0\nua}nt_{p_e'}}\Phi_{\hat{p}_e';b_e^-+1+b_e'}(q)}{b_e^-!\,\Phi_0(q)}
\times\frac{I_0(q)^2\Phi_{p_{e_v}';b_{e_v}^+-b_{e_v}'}(q)}{b_{e_v}^+!\,L(q)^{\de_{0\nua}n}\Phi_0(q)}
\right\rrbracket_{q;d_v}
\end{split}\end{equation*}
with the last fraction above set to 1 for $v\!=\!v_0$ and
$\bft\!\in\!(\bar\Z^+)^{\Ver}$ defined by \e_ref{sumcond_e};
if an integer $t_v$ satisfying~\e_ref{sumcond_e} does not exist for some $v\!\in\!\Ver$, 
the corresponding summand above is defined to be~0.
This confirms~\e_ref{equivthm_e2} with $\nc_{\p,\b}^{(d,0)}$ as defined in Section~\ref{graph_subs}
(and describes $\nc_{\p,\b}^{(d,t)}$ with $t\!\in\!\Z^+$ as~well).

\begin{rmk}\label{equivcoeff_rmk}
The recursion~\e_ref{cCrec_e} and separately the closed formula~\e_ref{cCformula_e}
compute the coefficients~$\cC_{\p,\b}^{(d)}$ in~\e_ref{equivthm_e} 
and thus provide a straightforward algorithm for computing the equivariant $N$-pointed generating 
function~\e_ref{cZdfn_e}.
Following the proof of the first statement in Lemma~\ref{ressum_lmm}, 
the power series~$\Psi_{m,\c}(\x,Q)$ and~$\Psi_{p;b}(\x,Q)$ can be computed directly from 
the power series~$\Phi_b(\x,q)$ appearing in~\e_ref{cYexp_e}.
The latter can be computed similarly to the power series~$\Phi_b(q)$ 
appearing in Proposition~\ref{Fexp_prp};
see Appendix~\ref{HGprp_pf}.
For example, we first find that the power series~$\xi$ appearing in~\e_ref{cYexp_e} is described~by
$$\xi\in \x q\cdot\Q[\al,\x,\si_{n-1}(\x)^{-1}]\big[\big[q\big]\big], \qquad
\x+\xi'(\x,q)=L(\x,q)\,,$$
where $'$ denotes $q\frac{\tnd}{\tnd q}$ as before $L(\x,q)$ is defined by 
$$L(\x,q)\in \x+\x^{|\a|}q\cdot\Q[\al,\x,\si_{n-1}(\x)^{-1}]\big[\big[\x^{|\a|-1}q\big]\big], 
\quad
\si_n\big(L(\x,q)\big)-q\,\a^{\a}L(\x,q)^{|\a|}=\si_n(\x),$$
with $\si_n(\cdot)$ defined analogously to~\e_ref{sinmn1dfn_e};
setting $\al\!=\!0$ and $\x\!=\!1$ above gives \e_ref{Ldfn_e0}.
We then find~that 
$$\Phi_0(\x,q)=\left(\frac{\x\cdot\si_{n-1}(\x)}
{L(\x,q)\,\si_{n-1}(L(\x,q))-|\a|(\si_n(L(\x,q))-\si_n(\x))}\right)^{1/2}
\bigg(\frac{L(\x,q)}{\x}\bigg)^{(l+1)/2}\,;$$
setting $\al\!=\!0$ and $\x\!=\!1$ above gives \e_ref{F0exp_e2}.
This suffices for the $N\!=\!3$ case of~\e_ref{equivthm_e}.
\end{rmk}

\section{Proof of Theorem~\ref{GWbound_thm}}
\label{GWbound_app}

\noindent
In this section we prove the bound of Theorem~\ref{GWbound_thm} for $d\!\in\!\Z^+$
by considering four separate cases:
$|\a|\!>\!n$ and $|\a|\!\le\!n$ with $N\!=\!1,2,3+$.
The first case is fairly straightforward, since there are only finitely many nonzero 
GW-invariants modulo the string, dilaton, and divisor relations \cite[p527]{MirSym}.
In the $|\a|\!\le\!n$ cases, we use explicit mirror formulas.
For $N\!=\!1,2$, \e_ref{Z1pt_e} and \e_ref{Z2pt_e} reduce Theorem~\ref{GWbound_thm}
to extracting the coefficients of~$w^bq^d$ from the power series~$F(w,q)$
and~$F_p(w,q)$ defined in~\e_ref{Fdfn_e} and~\e_ref{Flpdfn_e};
Corollary~\ref{Fpest_crl} below presents them in a convenient form.
For $N\!\ge\!3$, the coefficients~$\c_{\p,\b}^{(d,0)}$ in Theorem~\ref{main_thm}
must also be suitable bounded.
This is done by Proposition~\ref{coeffbound_prp}; its proof constitutes most of this section.\\

\noindent
We begin by considering the $|\a|\!>\!n$ case. 
Let
$$d_{\max}=\big\{d\!\in\!\Z\!:\,(|\a|\!-\!n)d\le n\!-\!4\!-\!l\big\}.$$
If $d\!>\!d_{\max}$, the virtual dimension of $\ov\M_{0,0}(X_{\a},d)$ is negative,
and so all genus~~0 degree~$d$ GW-invariants vanish.
Thus, we can assume that $d_{\max}\!\in\!\Z^+$.
Let $C\!\in\!\R^+$ be such that 
$$\big|\blr{b_1!\,\tau_{b_1}H^{c_1},\ldots,b_N!\,\tau_{b_N}H^{c_N}}_{0,d}^{X_{\a}}\big|
\le C$$
whenever $b_s\!+\!c_s\!\ge\!2$ for all $s$ or $N\!\le\!d_{\max}$;
the number of nonzero invariants of this form is finite.
Let $b_{\max}$ be the largest of the sums $b_1\!+\!\ldots\!+\!b_N$ for nonzero 
invariants of this form.
It then follows by induction via the dilaton, string, and divisor relations that 
\begin{equation*}\begin{split}
&\big|\blr{b_1!\,\tau_{b_1}H^{c_1},\ldots,b_N!\,\tau_{b_N}H^{c_N},
\underset{k_1}{\underbrace{\tau_0H^1,\ldots,\tau_0H^1}},
\underset{k_2}{\underbrace{\tau_0H^0,\ldots,\tau_0H^0}},
\underset{k_3}{\underbrace{\tau_1H^0,\ldots,\tau_1H^0}}}_{0,d}^{X_{\a}}\big|\\
&\hspace{2.2in}
\le C\big(b_{\max}\!+\!d_{\max}\big)^{k_1}
\cdot\frac{(b_{\max}\!+\!k_2)!}{b_{\max}!}
\cdot\frac{(N\!+\!k_1\!+\!k_2\!+\!k_3)!}{(N\!+\!k_1\!+\!k_2)!}\\
&\hspace{2.2in}
\le C\cdot C'^{k_1}\cdot2^{b_{\max}+k_2}\cdot(N\!+\!k_1\!+\!k_2\!+\!k_3)!\,.
\end{split}\end{equation*}
This implies the bound in Theorem~\ref{GWbound_thm}.\\

\noindent
In the remainder of this section, we treat the $|\a|\!<\!n$ cases.

\subsection{Outline of proof}
\label{GWboundprelim_subs}

\noindent
By \e_ref{genus0_e} and \e_ref{Zdfn_e}, 
the GW-invariant in Theorem~\ref{GWbound_thm} is the coefficient of
$$Q^d\bfH^{\p}\un\hb^{-\b-\1}\equiv
Q^d\prod_{s=1}^{s=N}H_s^{p_s}\hb_s^{-b_s-1}\,,
\qquad\hbox{where}\quad p_s=n\!-\!1\!-\!c_s\,,$$
of the right-hand side of the identity in \e_ref{Z1pt_e} if $N\!=\!1$,
in \e_ref{Z2pt_e} if $N\!=\!2$, and in~\e_ref{mainthm_e} if $N\!\ge\!3$.
In particular, we need to bound the growth of the coefficients~of 
$$e^{-J(q)H/\hb}H^p\frac{F_p(H/\hb,q)}{I_{p-l}(q)}
\in \Q[H][[\hb^{-1},Q]],
\quad\hbox{where}\quad qe^{\de_{0\nua}J(q)}\!=\!Q/H^{\nua}\,.$$
By \e_ref{Fdfn_e}, \e_ref{bDdfn_e}-\e_ref{Flpdfn_e},
for every $p\!\in\!\Z^+$ there exists $\hat{F}_p\!\in\!\Q(w)[[q]]$  such~that 
\BE{hatFdfn_e} e^{-J(q)H/\hb}
H^p\frac{F_p(H/\hb,q)}{I_{p-l}(q)} 
=\hb^p\hat{F}_p\big(H/\hb,Q/\hb^{\nua})\,,\EE
and the coefficient of each power of~$q$ is holomorphic at $w\!=\!0$.\\

\noindent
If $b_1\!+\!c_1\!=\!\nua d\!+\!n\!-\!3\!-\!l$, 
\e_ref{genus0_e}, \e_ref{Zdfn_e}, \e_ref{Z1pt_e}, and~\e_ref{hatFdfn_e} give 
$$\blr{\tau_{b_1}H^{c_1}}_{0,d}^{X_{\a}}
=\LRbr{\LRbr{\LRbr{Z(\hb_1,H_1,Q)}_{Q;d}}_{\hb_1^{-1};b_1+1}}_{H_1;p_1}
=\lr\a \LRbr{\LRbr{\hat{F}_l(w,q)}_{q;d}}_{w;p_1}\,,$$
where $p_1\!=\!n\!-\!1\!-\!c_1$ as before.
Thus, by Corollary~\ref{Fpest_crl} below,
\begin{equation*}\begin{split}
\big|\blr{b_1!\,\tau_{b_1}H^{c_1}}_{0,d}^{X_{\a}}\big|
\le \lr\a C_{\a}^d\frac{b_1!}{(\nua d)!}
&\le \lr\a C_{\a}^d(n\!-\!3\!-\!l)!\binom{\nua d\!+\!n\!-\!3\!-\!l}{\nua d} \\
&\le (n\!-\!3\!-\!l)!\lr\a C_{\a}^d\cdot2^{\nua d+n-3-l}\,;
\end{split}\end{equation*}
this confirms the statement of Theorem~\ref{GWbound_thm} for $N\!=\!1$.\\

\noindent
If $b_1\!+\!c_1\!+\!b_2\!+\!c_2=\!\nua d\!+\!n\!-\!3\!-\!l$, 
\e_ref{genus0_e}, \e_ref{Zdfn_e}, \e_ref{Z2pt_e}, and~\e_ref{hatFdfn_e}  give 
\begin{equation*}\begin{split}
\sum_{\begin{subarray}{c}\de_1+\de_2=1\\ \de_1,\de_2\ge0\end{subarray}}\!\!\!\!
\blr{\tau_{b_1+\de_1}H^{c_1},\tau_{b_2+\de_2}H^{c_2}}_{0,d}^{X_{\a}}
&=\sum_{\begin{subarray}{c}\de_1+\de_2=1\\ \de_1,\de_2\ge0\end{subarray}}\!\!
\LRbr{\LRbr{\LRbr{Z(\un\hb,\bfH,Q)}_{Q;d}}_{\un\hb^{-1};(b_1+1+\de_1,b_2+1+\de_2)}}_{\bfH;(p_1,p_2)}\\
&=\lr\a 
\sum_{\begin{subarray}{c}d_1+d_2=d\\ d_1,d_2\ge 0\\
\nua d_s\ge l+1+b_s-p_s\end{subarray}}\!\!\!\!\!\!
\prod_{s=1}^{s=2}\LRbr{\LRbr{\hat{F}_{\nua d_s+p_s-b_s-1}(w,q)}_{q;d_s}}_{w;p_s}\,,
\end{split}\end{equation*}
with $p_s\!=\!n\!-\!1\!-\!c_s$, $\un\hb\!=\!(\hb_1,\hb_2)$, and $\bfH\!=\!(H_1,H_2)$.
This gives
$$\blr{\tau_{b_1+1}H^{c_1},\tau_{b_2}H^{c_2}}_{0,d}^{X_{\a}}
=\lr\a\hspace{-.1in}
\sum_{\begin{subarray}{c}b_1'+b_2'=b_1+b_2+2\\ 0\le b_2'\le b_2\end{subarray}}
\hspace{-.2in}
(-1)^{b_2-b_2'}
\hspace{-.3in}
\sum_{\begin{subarray}{c}d_1+d_2=d\\ d_1,d_2\ge 0\\
\nua d_s\ge l+b_s'-p_s\end{subarray}}\!\!\!\!\!
\prod_{s=1}^{s=2}\LRbr{\LRbr{\hat{F}_{\nua d_s+p_s-b_s'}(w,q)}_{q;d_s}}_{w;p_s}\,.$$
Thus, by Corollary~\ref{Fpest_crl} below,
\begin{equation*}\begin{split}
\big|\blr{(b_1\!+\!1)!\,\tau_{b_1+1}H^{c_1},b_2!\,\tau_{b_2}H^{c_2}}_{0,d}^{X_{\a}}\big|
&\le \lr\a(b_2\!+\!1)\, C_{\a}^d\frac{(b_1\!+\!1)!b_2!}{(\nua d)!}
\sum_{d_1=0}^{d_1=d}\binom{\nua d}{\nua d_1}\\
&\le \lr\a C_{\a}^d\cdot (n\!-\!1\!-\!l)!\binom{\nua d\!+\!n\!-\!1\!-\!l}{\nua d}
\cdot 2^{\nua d} \\
&\le (n\!-\!1\!-\!l)!\lr\a C_{\a}^d\cdot2^{2\nua d+n-1-l}\,;
\end{split}\end{equation*}
this confirms the statement of Theorem~\ref{GWbound_thm} for $N\!=\!2$.\\

\noindent
Finally, we consider the $N\!\ge\!3$ case. 
For each $p\!\in\!\llfloor{n}\rrfloor_l$, let
\BE{hatFdfn_e2}\hat{F}_{(p)}\big(w,q)
=\frac{\hat{F}_p\big(w,q)}{\prod\limits_{r=p-l+1}^{n-l-1}\!\!\!\!I_r(q)}\,.\EE
It is sufficient to assume that the tuples $\b\!\equiv\!(b_s)_{s\in[N]}$ and
$\c\!\equiv\!(c_s)_{s\in[N]}$ in the statement of Theorem~\ref{GWbound_thm} satisfy
$$|\b|+|\c|=\nua d+n-4-l+N, \qquad b_s,c_s\ge0, \qquad c_s\le n\!-\!1\!-\!l.$$
Let $p_s\!=\!n\!-\!1\!-\!c_s$.
If $\bfd,\b'\!\in\!(\bar\Z^+)^N$, define
$$\p'(\bfd,\b')\in(\bar\Z^+)^N \qquad\hbox{by}\qquad
p_s'(\bfd,\b')=\nua d_s+p_s-b_s+b_s'\,.$$ 
By~\e_ref{genus0_e}, \e_ref{Zdfn_e}, \e_ref{mainthm_e}, \e_ref{Depdfn_e},
 and~\e_ref{hatFdfn_e},
\BE{GWcoeff_e}\blr{\tau_{b_1}H^{c_1},\ldots,\tau_{b_N}H^{c_N}}_{0,d}^{X_{\a}}
=\lr\a\!\!\!\!
\sum_{\begin{subarray}{c} 0\le d'\le d\\ \bfd\in\cP_N(d-d')\\ \b'\in(\bar\Z^+)^N\end{subarray}}
\!\!\!\!\!\!
\nc_{\p'(\bfd,\b'),\b'}^{(d',0)}
\prod_{s=1}^{s=N}
\LRbr{\LRbr{\hat{F}_{(p_s'(\bfd,\b'))}(w,q)}_{q;d_s}}_{w;p_s};\EE
the above summand vanishes unless $l\!\le\!p_s'(\bfd,\b')\!\le\!n\!-\!1$ for all $s\!\in\![N]$.
Since 
$\nc_{\p',\b'}^{(d',0)}\!=\!0$ unless $|\b'|\!\le\!N\!-\!3$,
Corollary~\ref{Fpest_crl} and Proposition~\ref{coeffbound_prp} thus give 
\begin{equation*}\begin{split}
&\big|\blr{b_1!\,\tau_{b_1}H^{c_1},\ldots,b_N!\,\tau_{b_N}H^{c_N}}_{0,d}^{X_{\a}}
\le\lr\a N!C_{\a}^{N+d}
\sum_{\begin{subarray}{c} 0\le d'\le d\\ \bfd\in\cP_N(d-d')\end{subarray}}
\!\sum_{\begin{subarray}{c} \b'\in(\bar\Z^+)^N\\ |\b'|\le N-3\\
b_s'\ge b_s-\nua d_s-p_s\end{subarray}}\hspace{-.1in}
\prod_{s=1}^{s=N}\!\!\bigg(p_s!\frac{b_s!}{b_s'!(\nua d_s)!p_s!}\bigg)\\
&\hspace{.7in}\le \lr\a N!C_{\a}^{N+d}
\sum_{\begin{subarray}{c} 0\le d'\le d\\ \bfd\in\cP_N(d-d')\end{subarray}}
\sum_{\begin{subarray}{c} \b'\in(\bar\Z^+)^N\\ |\b'|\le N-3\\ b_s'\ge b_s-\nua d_s-p_s\end{subarray}}
\hspace{-.1in}
\prod_{s=1}^{s=N}\big(n!3^{b_s}\big)\\
&\hspace{.7in}\le \lr\a N!C_{\a}^{N+d}\cdot (n!)^N3^{\nua d+n+N}\cdot
\binom{d\!+\!N}{N}\binom{N\!-\!3\!+\!N}{N}
\le N!C_{\a}'^{N+d}\cdot 2^{d+N}\cdot 2^{2N-3}\,.
\end{split}\end{equation*}
This confirms the statement of Theorem~\ref{GWbound_thm} for $N\!\ge\!3$.

\begin{rmk}\label{GWeq0_rmk}
For any non-vanishing summand on the right-hand side of \e_ref{GWcoeff_e}, $p_s'(\bfd,\b')\!\le\!n\!-\!1$
and so $b_s\!+\!c_s\!\ge\!\nua d_s$.
Thus, $d_s\!=\!0$ if $b_s\!+\!c_s\!<\!\nua$. 
Since the coefficient of~$q^0$ in $\hat{F}_{(p)}(w,q)$ is~$w^p$, it follows that 
$p_s'(\bfd,\b')\!=\!p_s$ and $b_s'\!=\!b_s$ in such a case.
Since $|\b'|\!\le\!N\!-\!3$, this implies Theorem~\ref{GWeq0_thm}.
\end{rmk}

\subsection{Bounds on the coefficients of generating functions}
\label{Fpest_subs}

\noindent
In this section, we obtain the bounds on the coefficients of 
the power series $F_p,\hat{F}_p\in\Q(w)[[q]]$ defined in~\e_ref{hatFdfn_e}
and~\e_ref{hatFdfn_e2} that are used in the proof of Theorem~\ref{GWbound_thm} 
above.

\begin{lmm}\label{Fpest_lmm}
There exists $C_{\a}\!\in\!\R^+$ such that 
$$\left|\LRbr{\LRbr{F_{p'}(w,q)}_{q;d}}_{w;\nua d-p'+p}\right|
\le \frac{C_{\a}^d}{(\nua d)!}$$
for all $p,p'\!=\!0,1,\ldots,n\!-\!1$ and $d\!\in\!\bar\Z^+$.
\end{lmm}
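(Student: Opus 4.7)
The plan is to first establish the estimate for the basic hypergeometric $F=F_l$ via Cauchy's integral formula on a small circle about $w=0$, and then bootstrap to all $F_{p'}$ using the explicit formulas in~\e_ref{Fpdfn_e0} and~\e_ref{Flpdfn_e}. A key simplification for the second step is that the summation in~\e_ref{Flpdfn_e} is \emph{finite}: the condition $s\le p-l-\nua d$ forces $d\le(p-l)/\nua$ when $\nua>0$, so only finitely many of the coefficients $\ntc^{(d)}_{p',l+s}$ actually intervene, and each of them is bounded by a constant depending only on $n$ and~$\a$.

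For the first step I would write $[q^d]F(w,q)=w^{\nua d}g_d(w)$, where
\[
g_d(w)=\frac{\prod_{k=1}^{l}\prod_{r=1}^{a_kd}(a_kw+r)}{\prod_{r=1}^{d}((w+r)^n-w^n)}
\]
is a rational function of $w$ that is holomorphic at the origin (the denominator evaluates to $(d!)^n$ at $w=0$) and satisfies $g_d(0)=\prod_k(a_kd)!/(d!)^n$. Using the factorization
$(w+r)^n-w^n=r\prod_{j=1}^{n-1}\bigl(r+(1-e^{2\pi ij/n})w\bigr)$
together with $|1-e^{2\pi ij/n}|\le 2$, on the circle $|w|=1/4$ I obtain $|a_kw+r|\le(1+a_k/4)r$ and $|r+(1-e^{2\pi ij/n})w|\ge r/2$ for every $r\ge 1$, whence
\[
\sup_{|w|=1/4}|g_d(w)|\;\le\; K_{\a}^{\,d}\,\prod_k(a_kd)!/(d!)^n
\]
for some $K_{\a}=K(n,\a)$. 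Stirling's formula yields $\prod_k(a_kd)!\,(\nua d)!/(d!)^n\le C_1^{\,d}$ (the leading asymptotic is $\a^{\a d}\nua^{\nua d}$ and the polynomial prefactor is absorbed into the exponential), and Cauchy's estimate $|[w^M]g_d(w)|\le 4^M\sup_{|w|=1/4}|g_d(w)|$ then gives $|[w^{\nua d+M}][q^d]F|\le C_{\a}^{\,d}/(\nua d)!$ uniformly for $M$ in a bounded range; specializing $M=p-p'\in\{-(n-1),\ldots,n-1\}$ settles the lemma for $F=F_l$.

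For the second step, if $p'\le l$ I would combine $F_{p'}=\bD^{p'}F_0$ with $[q^d]F=((w+d)/w)^l[q^d]F_0$ to obtain $[q^d]F_{p'}=w^{l-p'}(w+d)^{-(l-p')}[q^d]F$; expanding $(w+d)^{-(l-p')}$ about $w=0$ turns the desired coefficient into a bounded-coefficient finite sum of $[w^{M'}]g_d(w)$ with $M'$ in a bounded range, all controlled by Step~1. For $p'>l$ with $\nua>0$, the formula~\e_ref{Flpdfn_e} expresses $F_{p'}$ as a finite sum $\sum_{d',s}\ntc^{(d')}_{p',l+s}\,q^{d'}w^{-(p'-l-\nua d'-s)}\bD^sF$ with bounded coefficients; since $[q^{d-d'}]\bD^sF=(1+(d-d')/w)^s w^{\nua(d-d')}g_{d-d'}(w)$ satisfies the bound of Step~1 modulo polynomial-in-$d$ factors, and since $d'$ is bounded so that $(\nua(d-d'))!\ge(\nua d)!/(\nua d)^{\nua d'}$, the estimate $C_{\a}^{\,d}/(\nua d)!$ transfers. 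For $\nua=0$ and $p'>l$ one writes $F_{p'}=\bM^{p'}F$; a parallel Cauchy estimate applied to $I_0$ gives $|[q^d](1/I_0)|\le C'^d$, and an induction on $p'$ shows that each application of $\bM$ preserves the bound. The main obstacle is simply the bookkeeping needed to propagate Step~1 through these cases; crucially, the finite-sum structure of~\e_ref{Flpdfn_e} avoids the need for any asymptotic-in-$d$ bound on the $\ntc^{(d)}_{p',l+s}$.
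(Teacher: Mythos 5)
Your proof is correct and follows the same overall architecture as the paper's: reduce to bounding the coefficients of the basic hypergeometric series via the finite-sum structure of \e_ref{Fpdfn_e0} and \e_ref{Flpdfn_e}, and control the factorial ratio $\prod_k(a_kd)!\,(\nua d)!/(d!)^n$ by Stirling's formula. The one genuine difference is in how the coefficient of $w^M$ in your $g_d$ is extracted: you use Cauchy's integral formula on the circle $|w|=1/4$ together with the root factorization $(w+r)^n-w^n=r\prod_{j=1}^{n-1}\bigl(r+(1-e^{2\pi ij/n})w\bigr)$, whereas the paper bounds the Taylor coefficients directly by majorizing numerator and denominator by $(1+|\a|w)^{(|\a|-l)d}$ and $(1-w)^{-(n-l)d}$ and invoking the Binomial Theorem. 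The two routes are of comparable difficulty and yield the same $C_{\a}^d/(\nua d)!$; your Cauchy estimate has the mild advantage of handling the full denominator $(w+r)^n-w^n$ cleanly, where the paper's displayed majorization implicitly discards the $-w^n$ term. The bookkeeping in your second step --- the boundedness of $d'$ in \e_ref{Flpdfn_e} when $\nua>0$, the polynomial-in-$d$ losses absorbed into the exponential, and the geometric bound on the coefficients of $1/I_0$ in the $\nua=0$ case --- is all sound and makes explicit what the paper compresses into its opening reduction to $F_0$ and $F$.
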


\begin{proof} By \e_ref{Fdfn_e}, \e_ref{bDdfn_e}-\e_ref{Fpdfn_e0}, and~\e_ref{Flpdfn_e}, 
it is sufficient to show that there exists $C\!\in\!\R^+$ such that 
$$\left|\LRbr{\LRbr{F_0(w,q)}_{q;d}}_{w;\nua d+p}\right|,
\left|\LRbr{\LRbr{F(w,q)}_{q;d}}_{w;\nua d-l+p}\right|
\le \frac{C^d}{(\nua d)!}$$
for all $p\!=\!0,1,\ldots,n\!-\!1$ and $d\!\in\!\bar\Z^+$.
Both numbers on the left-hand side vanish for $p\!<\!l$
(unless $d,p\!=\!0$ in the case of the first number).
If $l\!\le\!p\!<\!n$,
\begin{equation*}\begin{split}
&\left|\LRbr{\LRbr{F(w,q)}_{q;d}}_{w;\nua d-l+p}\right|
=\frac{\prod\limits_{k=1}^l(a_kd)!}{(d!)^n}
\left|\LRbr{\frac{\prod\limits_{k=1}^l\prod\limits_{r=1}^{a_kd}(1\!+\!(a_k/r)w)}
{\prod\limits_{r=1}^d(1\!+\!w/r)^n}}_{w;p-l}\right|\\
&\hspace{.3in} \le n^{nd}\frac{(|\a|d)!}{(nd)!}\cdot
\LRbr{\frac{(1\!+\!|\a|w)^{(|\a|-l)d}}{(1\!-\!w)^{(n-l)d}}}_{w;p-l}\\
&\hspace{.3in}
\le \frac{n^{nd}}{(\nua d)!} 
\sum_{\begin{subarray}{c}r+s=p-l\\ r,s\ge0\end{subarray}}\!\!\!
\binom{(n\!-\!l)d\!+\!r\!-\!1}{r}\binom{(|\a|\!-\!l)d}{s}|\a|^s
\le \frac{n^{nd}}{(\nua d)!}2^{(n-l)d+p-l}(|\a|\!+\!1)^{(|\a|-l)d}\,.
\end{split}\end{equation*}
The first inequality above follows from Stirling's formula  \cite[Section~15.22]{A}, 
\BE{Stirling_e} 1<\frac{e^d}{\sqrt{2\pi}d^{d+\frac{1}{2}}}d!<e^{\frac{1}{8d}}~~~
\forall \,d\!\in\!\Z^+;\EE
the following statement uses the Binomial Theorem.  
The desired bound for $F_0(w,q)$ is obtained similarly.
\end{proof}


\begin{crl}\label{Fpest_crl}
There exists $C_{\a}\!\in\!\R^+$ such that 
$$\left|\LRbr{\LRbr{\hat{F}_{p'}(w,q)}_{q;d}}_{w;p}\right|,
\left|\LRbr{\LRbr{\hat{F}_{(p')}(w,q)}_{q;d}}_{w;p}\right|
\le \frac{C_{\a}^d}{(\nua d)!}$$
for all $p,p'\!=\!0,1,\ldots,n\!-\!1$ and $d\!\in\!\bar\Z^+$.
\end{crl}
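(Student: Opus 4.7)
The plan is to unwind the definition \eqref{hatFdfn_e} into the explicit formula
\[
\hat F_{p'}(w,\tilde q) \;=\; w^{p'}\,e^{-J(q)\,w}\,\frac{F_{p'}(w,q)}{I_{p'-l}(q)}, \qquad q\,e^{\de_{0\nu_\a}J(q)} \;=\; \tilde q/w^{\nu_\a},
\]
and $\hat F_{(p')}(w,\tilde q) = \hat F_{p'}(w,\tilde q)/\prod_{r=p'-l+1}^{n-l-1}I_r(q)$; then split into three cases according to $\nu_\a$. Since $|\a|<n$ forces $I_r\equiv 1$ and $J\equiv 0$ unless $|\a|=n-1$, while $|\a|=n-1$ forces $I_r\equiv 1$ and $J(q)=\a!\,q$, two of the cases reduce very cleanly to Lemma~\ref{Fpest_lmm}, and only the Calabi--Yau case $\nu_\a=0$ requires a separate argument.

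In the case $\nu_\a\ge 2$, I would observe that $\hat F_{p'}(w,\tilde q)=\hat F_{(p')}(w,\tilde q)=w^{p'}F_{p'}(w,\tilde q/w^{\nu_\a})$; extracting $[\tilde q^d][w^p]$ collapses to $\LRbr{\LRbr{F_{p'}(w,q)}_{q;d}}_{w;\nu_\a d-p'+p}$, which is exactly the quantity bounded in Lemma~\ref{Fpest_lmm}. In the case $\nu_\a=1$, the identity $qw=\tilde q$ turns $e^{-J(q)w}$ into $e^{-\a!\,\tilde q}$, so $\hat F_{p'}(w,\tilde q)=w^{p'}e^{-\a!\,\tilde q}F_{p'}(w,\tilde q/w)$. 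Expanding the exponential in $\tilde q$ and convolving against $[\tilde q^{d_2}]w^{p'}F_{p'}(w,\tilde q/w)$ gives a sum
\[
\sum_{d_1+d_2=d}\frac{(-\a!)^{d_1}}{d_1!}\LRbr{\LRbr{F_{p'}(w,q)}_{q;d_2}}_{w;d_2-p'+p},
\]
which by Lemma~\ref{Fpest_lmm} and the binomial theorem is bounded by $(C_\a+\a!)^d/d!=(C_\a+\a!)^d/(\nu_\a d)!$. Relabeling $C_\a\leftarrow C_\a+\a!$ yields the claim in this case.

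The remaining Calabi--Yau case $\nu_\a=0$ requires only $|\cdot|\le C_\a^d$ (since $(\nu_\a d)!=1$), and I would obtain it by a soft convergence argument rather than combinatorics. The coefficient of $q^d$ in $F_{p'}(w,q)$ is a rational function in $w$ whose denominator does not vanish at $w=0$, and direct estimation using Stirling's formula (as in the proof of Lemma~\ref{Fpest_lmm}) shows that $F_{p'}(w,q)$, $J(q)$, and the $I_r(q)$ all converge in a common polydisk $\{|w|<r_1,\,|q|<r_2\}$; since $I_r(0)=1$, the quotient $F_{p'}(w,q)/\prod I_r(q)$ remains holomorphic there. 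Because $\tilde q=qe^{J(q)}$ has derivative~$1$ at the origin, the inverse $q(\tilde q)$ is a local biholomorphism, so $\hat F_{(p')}(w,\tilde q)$ is holomorphic in a polydisk around $(0,0)$ in $\C^2$. Cauchy's estimates then yield $|[w^p][\tilde q^d]\hat F_{(p')}|\le M\,r_w^{-p}\,r_{\tilde q}^{-d}$, and absorbing the bounded factor $r_w^{-p}$ (since $p\le n-1$) into a constant gives the desired $C_\a^d$ bound.

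The main obstacle is the CY case, because Lemma~\ref{Fpest_lmm} alone does not control all the needed coefficients of $\hat F_{(p')}$ once $I_r\not\equiv 1$ and one must also invert the variable change $q\leftrightarrow\tilde q$; the saving point is that the required bound degenerates to $C_\a^d$, so a Cauchy-type estimate on a joint domain of convergence is strong enough, and no finer combinatorial analysis is needed.
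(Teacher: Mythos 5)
Your proposal is correct, and for $\nua\!\ge\!2$ and $\nua\!=\!1$ it coincides with the paper's proof verbatim: the same collapse to Lemma~\ref{Fpest_lmm} in the first case and the same convolution identity $\sum_{d_1+d_2=d}\frac{(-\a!)^{d_1}}{d_1!}\LRbr{\LRbr{F_{p'}(w,q)}_{q;d_2}}_{w;d_2-p'+p}$ in the second. The only divergence is in the Calabi--Yau case $\nua\!=\!0$. There the paper stays at the level of coefficients: it writes $\LRbr{\hat{F}_{p'}(w,Q)}_{w;p}$ as the finite sum $\sum_{p_1+p_2=p-p'}\frac{\ti{J}(Q)^{p_1}}{p_1!}\,\LRbr{F_{p'}(w,q)}_{w;p_2}\big/\prod_r I_r(q)$ and then multiplies geometric coefficient bounds, the bound on $\ti{J}$ being extracted from the Inverse Function Theorem. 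You instead go fully soft: joint analyticity of $\hat{F}_{(p')}$ on a polydisk plus Cauchy estimates. These rest on the same analytic input (the local biholomorphism $q\leftrightarrow\ti{q}$ and the geometric growth of all the constituent series), so the two routes are interchangeable; yours avoids writing the explicit convolution but obliges you to verify joint convergence of $F_{p'}(w,q)$ on a polydisk, which needs a sup-norm version of the Stirling estimate over $|w|\!\le\!r_1$ rather than the specific coefficients bounded in Lemma~\ref{Fpest_lmm}, together with the fact that each application of $\bM$ preserves membership in $\cP$ (holomorphy at $w\!=\!0$) so that the $1/w$ in $\bM$ does not destroy the polydisk. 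Both points are routine and available from the paper, so the argument goes through; the paper's version buys slightly more explicit control of the constants, yours buys brevity in the final estimate.
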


\begin{proof} If $\nua\!\ge\!2$,
$$\LRbr{\LRbr{\hat{F}_{p'}(w,q)}_{q;d}}_{w;p}
=\LRbr{\LRbr{F_{p'}(w,q)}_{q;d}}_{w;\nua d-p'+p}\,,$$
and the claim follows immediately from Lemma~\ref{Fpest_lmm}.
If $\nua\!=\!1$, by \e_ref{Icdfn_e}
\begin{gather*}
\LRbr{\LRbr{\hat{F}_{p'}(w,q)}_{q;d}}_{w;p}
=\sum_{\begin{subarray}{c}d_1+d_2=d\\ d_1,d_2\ge0\end{subarray}}
\frac{(-\a!)^{d_1}}{d_1!}\LRbr{\LRbr{F_{p'}(w,q)}_{q;d_2}}_{w;d_2-p'+p}\,\\
\Lra\qquad 
\left|\LRbr{\LRbr{\hat{F}_{p'}(w,q)}_{q;d}}_{w;p}\right|
\le\frac{(\a!\!+\!C_{\a})^d}{d!}\,,
\end{gather*}
where $C_{\a}$ is as in Lemma~\ref{Fpest_lmm}.
Finally, suppose $\nua\!=\!0$.
Define
$$\ti{J}\in Q\cdot\Q[[Q]]\qquad\hbox{by}\qquad q=Qe^{\ti{J}(Q)}\,.$$
By Lemma~\ref{Fpest_lmm},
$$\left|\LRbr{I_0(q)}_{q;d}\right|,
\left|\LRbr{I_1(q)}_{q;d}\right|,\ldots,
\left|\LRbr{I_{n-1}(q)}_{q;d}\right|,
\left|\LRbr{J(q)}_{q;d}\right|\le C^d
\quad\Lra\quad
\left|\LRbr{\ti{J}(q)}_{q;d}\right|\le C'^d\,;$$
the last implication follows from the Inverse Function Theorem.
Since
$$\LRbr{\hat{F}_{p'}(w,Q)}_{w;p}
=\sum_{\begin{subarray}{c}p_1+p_2=p-p'\\ p_1,p_2\ge0\end{subarray}}
\frac{\ti{J}(Q)^{p_1}}{p_1!}
\frac{\LRbr{F_{p'}(w,q)}_{w;p_2}}{\prod\limits_{r=p-l}^{n-l-1}\!\!\!\!I_r(q)}\,,$$
the claim again follows from Lemma~\ref{Fpest_lmm}.
\end{proof}

\subsection{Bounds on the structure constants in Theorem~\ref{main_thm}}
\label{coeffbound_subs}

\noindent
In this section, we obtain an upper bound for the coefficients $\nc_{(\p,\b)}^{(d,0)}$
in Theorem~\ref{main_thm}.
This is one the two key ingredients in the proof of Theorem~\ref{GWbound_thm}.

\begin{prp}\label{coeffbound_prp}
If $n,N\!\in\!\Z^+$ with $N\!\ge\!3$ and $\a\!\in\!(\Z^+)^l$
with $|\a|\!\le\!n$, there exists $C_{\a}\!\in\!\R^+$ such~that 
$$\big| \nc_{\p,\b}^{(d,0)}\big|\le \frac{N!}{\b!}C_{\a}^{N+d}
\qquad\forall\,d\!\in\!\bar\Z^+,\,\p\!\in\!\llfloor{n}\rrfloor^N,
\,\b\!\in\!(\bar\Z^+)^N.$$
\end{prp}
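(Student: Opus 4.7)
The plan is to prove the bound by strong induction on $N$ using the recursive formula \e_ref{coeffdfn_e} that defines $\nc_{\p,\b}^{(d,t)}$. To make the induction close over the sub-coefficients $\nc^{(d_i,t_i)}$ appearing on the right-hand side, I will establish the slightly stronger uniform-in-$t$ estimate
\[
\big|\nc_{\p,\b}^{(d,t)}\big|\,\le\,\frac{N!}{\b!}\,C_{\a}^{N+d}\,K_{\a}^{|t|}
\]
valid for all $N\ge 2$ and all tuples $(\p,\b,d,t)$ satisfying \e_ref{coeffnon0_e}, with the convention $\b!=\prod_{s:b_s\ge 0}b_s!$ in the 2-pointed base case (the only place where an entry of $\b$ may equal $-1$). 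The base cases $N=2$ and $N=3$ follow directly from the explicit formulas \e_ref{coeff2_e}, \e_ref{coeff3Fano_e}, and \e_ref{coeff3CY_e}, using Corollary~\ref{Fpest_crl} to bound $|\lrbr{L^{\pm k}}_{q;d}|$, $|\lrbr{I_0^{\pm 2}}_{q;d}|$, and $|\lrbr{\ntc_{\hat\p}^{(d)}}_{q;d}|$ by $C^d$, together with the elementary estimate $\binom{d+|t|}{d}\le 2^{d+|t|}$ to absorb the $t$-dependence.

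For the inductive step $N\ge 4$, I expand $\nc_{\p,\b}^{(d,0)}$ via \e_ref{coeffdfn_e}. Since one block of $\bfS\in\cP_m([N])$ is forced to equal $\{N\}$ and $m\ge 3$, every block satisfies $|S_i|+1\le N-1$, so the inductive hypothesis applies to each sub-coefficient $\nc_{\p|_{S_i}p_i',\,\b|_{S_i}b_i'}^{(d_i,t_i)}$. The crucial combinatorial identity $\prod_i\prod_{s\in S_i}b_s!=\b!$ (as $\bfS$ partitions $[N]$) consolidates the inductive denominators into the target factor $1/\b!$; the remaining $1/b_i'!$ and $1/b_i''!$ factors are bounded by $1$. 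I also need auxiliary uniform estimates $|\lrbr{\Phi_r}_{q;d}|\le\tilde C_0^{d+r}$ and $|\lrbr{I_0^{\pm 1}}_{q;d}|,\,|\lrbr{L^{\pm 1}}_{q;d}|,\,|\lrbr{\Phi_0^{\pm 1}}_{q;d}|\le\tilde C_0^d$, which follow from Proposition~\ref{Fexp_prp}, the recursion \e_ref{PhiODE}, Corollary~\ref{Fpest_crl}, and the inverse function theorem applied to $L,\,I_0,\,\Phi_0\in 1+q\Q[[q]]$.

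The main obstacle, and the most delicate part of the argument, is showing that the combined combinatorial weight $\prod_i(|S_i|+1)!$ from the inductive numerators together with the factorial $(m-3+|\c|)!/\prod c_r!\prod(r+1)!^{c_r}$ appearing in $\Phi_{m-3,\c}$ (see \e_ref{Phimcdfn_e}), after summing over $\bfS$ and $\c$, contributes only a single $N!$ factor rather than~$(N!)^2$. The key idea is to exploit the valuation $\Phi_r\in q\Q[[q]]$ for $r\ge 1$, which forces $\lrbr{\prod_r(\Phi_r/\Phi_0)^{c_r}}_{q;d_0}=0$ whenever $d_0<|\c|$; this vanishing effectively compresses the range of the $q$-coefficient extraction by $|\c|$. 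Combining this with the integral representation $(m-3+|\c|)!=\int_0^\infty e^{-x}x^{m-3+|\c|}\tnd x$ and the convergence of $\sum_{k\ge 1}1/(k+1)!=e-2<1$ yields an aggregate estimate of the form $\sum_{\c:\|\c\|\le m-3}(m-3+|\c|)!/\prod c_r!(r+1)!^{c_r}\cdot z^{|\c|}\le (m-3)!\cdot C_1^m$ for bounded $z$. The generating-function computation
\[
\sum_{\bfS\in\cP_m([N])}\prod_i(|S_i|+1)!\,=\,2(N-1)!\cdot[x^{N-1}]\Big(\tfrac{x(2-x)}{(1-x)^2}\Big)^{m-1}\,\le\, C_2^N\cdot N!
\]
(obtained after factoring out the forced block $\{N\}$) then combines with the preceding estimate so that, after absorbing the $(m-3)!$ factor into the restricted summation over compositions of $d'$ enforced by the valuation, the net combinatorial cost is exactly $N!\cdot C^{N+d}$. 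The outer sums over $\bfd,\,\p',\,\b',\,\b'',\,\bft$ contribute polynomial-in-$(N,d)$ counts absorbable into $C_\a^{N+d}$, and the $t$-bookkeeping via $\sum_i t_i=-t'$ with $|t'|=O(N+d)$ (from $(\p',\b')\in\cS_m(d',t')$) produces the $K_\a^{|t|}$ factor. Specialization to $t=0$ then recovers the proposition.
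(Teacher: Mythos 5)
Your overall strategy (estimate the structure constants through their combinatorial expansion and control the partition count by a generating function) is viable and is close in spirit to what the paper does, but as written the argument has two genuine gaps. First, the strengthened inductive hypothesis fails at its base case $N\!=\!2$: by \e_ref{coeff2_e}, $\nc_{(pp'),(b,-1-b)}^{(0,0)}=(-1)^b\lrbr{L^{\de_{0\nua}n}/I_0^2}_{q;0}=\pm1$ for every $b\!\ge\!0$, so there is no bound of the form $\tfrac{2!}{b!}C^{2}$ — the two-pointed coefficients do not decay like $1/b_s!$ at all. The factor $1/b_s!$ for a singleton block $S_i=\{s\}$ has to be extracted instead from the adjacent factor $\Phi_{p_i';b_i'+1+b_i''}/b_i''!$ in \e_ref{coeffdfn_e}: the constraint $b_i'=-1-b_s$ forces $b_i'+1+b_i''=b_i''-b_s\ge0$, and the ratio $(b_i''-b_s)!/b_i''!\le1/b_s!$ does the job. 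This is exactly how the paper produces $1/\b!$ (from the factors $\Phi_{\hat p_s;b_s''-b_s}/b_s''!$ in \e_ref{ncCdfn2_e}), and your induction needs to be restructured accordingly rather than pushing a false $1/b!$ bound into the $N\!=\!2$ case.

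Second, the auxiliary estimate $|\lrbr{\Phi_r}_{q;d}|\le\tilde C_0^{d+r}$ is not a consequence of Proposition~\ref{Fexp_prp}, \e_ref{PhiODE}, Corollary~\ref{Fpest_crl}, and the inverse function theorem, and it is almost certainly false: the correct bound, which is the paper's Lemma~\ref{Phibound_lmm} and occupies a substantial three-case ODE analysis, is $|\lrbr{\Phi_b/\Phi_0}_{q;d}|\le b!\,C_\a^b\lrbr{(1-C_\a q)^{-b}}_{q;d}$, with an essential $b!$ reflecting the divergence of the asymptotic expansion \e_ref{F0exp_e}. This changes your bookkeeping in two places. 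The $1/(r+1)!$ in \e_ref{Phimcdfn_e} is precisely calibrated to cancel the $r!$ growth of $\Phi_r$, leaving only $1/(r+1)$ (as in Corollary~\ref{Phibound_crl}); your use of $\sum_k 1/(k+1)!=e-2$ double-counts this cancellation. More seriously, you cannot bound "the remaining $1/b_i'!$ and $1/b_i''!$ factors" by $1$ and discard them: the numerators $(b_i'+1+b_i'')!$ coming from the correct $\Phi$-bounds multiply up to roughly $(2N)!\gg C^NN!$ unless they are played off against exactly those denominators (the paper's estimate $(b_e^-\!+\!1\!+\!b_e')!(b_e^+\!-\!b_e')!/(b_e^-!b_e^+!)\le4^{b_e^\pm}$ is the needed step). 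With these two corrections, and with your partition-sum estimate replaced by something equivalent to the paper's Lemma~\ref{graphnum_lmm} bound $\sum_\Ga\prod_vm_v!\le C^NN!$, the induction can be made to work; but as stated the proof does not go through.
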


\begin{lmm}\label{Lest_lmm}
If $n\!\in\!\Z^+$, $a\!\in\!(0,n)$, and $L\!\in\!1\!+\!q\Q[[q]]$ is defined~by
\BE{Lbs_e} L(q)^n-qL(q)^a=1,\EE
then there exists $C_a\!\in\!\R^+$ such that 
$$\Bigg|\LRbr{\frac{L(q)^{1-n+k}}
{(1\!-\!q)^{\de}\left(a\!+\!(n\!-\!a)L(q)^n\right)^{k'}}}_{q;d}\Bigg|\le C_a
\quad\forall~k,k'\!\in\bar\Z^+,~k\!\le\!2n^2,\,k'\!\le\!2n\!+\!1,\,\de\!=\!0,1.$$
\end{lmm}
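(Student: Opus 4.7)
The plan is to show that $L(q)$ extends holomorphically to an open disk $|q|<\rho_a$ for some $\rho_a>1$, and then apply a Cauchy estimate on a circle $|q|=r\in(1,\rho_a)$. View $L(q)$ as the local inverse at $q=0$ of the map $L\mapsto q(L):=L^{n-a}-L^{-a}$, which is regular at $L=1$ because $q'(1)=n\neq 0$. Its branch points are the critical values of $q(\cdot)$, occurring at points $L_c$ with $(n-a)L_c^n+a=0$; substituting $qL^a=L^n-1$ at such a point yields
\[
|q(L_c)|=\frac{n}{(n-a)|L_c|^{a}}=\frac{n(n-a)^{a/n-1}}{a^{a/n}}=:\rho_a,
\]
independent of which $n$-th root is chosen for $L_c$.

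To verify $\rho_a>1$, set $p:=a/n\in(0,1)$; direct computation gives
\[
\rho_a^{-n}=\frac{a^{a}(n-a)^{n-a}}{n^{n}}=\bigl(p^{p}(1-p)^{1-p}\bigr)^{n}<1,
\]
since $p\log p+(1-p)\log(1-p)<0$ on $(0,1)$. Thus $L(q)$ is holomorphic on $|q|<\rho_a$. On this disk one has $L\neq 0$ (else $L^n-qL^a=0\neq 1$) and $a+(n-a)L^n\neq 0$ (else $q$ would be a branch value with $|q|=\rho_a$). Consequently $L^{1-n+k}$ and $(a+(n-a)L^n)^{-k'}$ are holomorphic on $|q|<\rho_a$.

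Fix any $r\in(1,\rho_a)$. On the compact circle $|q|=r$: $|1-q|\ge r-1>0$, $|L(q)|$ is bounded above, and by continuity $|a+(n-a)L(q)^n|$ is bounded below by a positive constant. Hence the bracketed function in the lemma is bounded on $|q|=r$ by some $M=M(r,k,k',\delta,a,n)$, and Cauchy's inequality gives a $q^d$-coefficient of absolute value at most $M/r^d\le M$ for every $d\ge 0$. Since $k\le 2n^2$, $k'\le 2n+1$, and $\delta\in\{0,1\}$ range over only finitely many values, taking the maximum of such $M$'s produces the uniform constant $C_a$. The only substantive step is the inequality $\rho_a>1$; everything else is standard complex analysis, and in particular the role of the $(1-q)^{-\delta}$ factor disappears as soon as one integrates on a circle with $r>1$.
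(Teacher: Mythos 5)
Your proof is correct and follows essentially the same route as the paper's: both locate the branch points of the algebraic function $L$ (critical values of $q\mapsto L$, equivalently ramification of the $n$-fold cover $z^n-qz^a=1$ over the $q$-line), show they all lie on the circle $|q|=\rho_a>1$, deduce $L$ and $(a+(n-a)L^n)^{-1}$ are holomorphic and nonvanishing on a disk of radius exceeding $1$, and conclude by a Cauchy estimate. Your explicit treatment of the $(1-q)^{-\delta}$ factor by integrating on a circle of radius $r\in(1,\rho_a)$ is a small tidy improvement over the paper, which leaves that factor implicit.
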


\begin{proof}
Let $\nu\!=\!n\!-\!a$.
We show that \e_ref{Lbs_e} defines a  holomorphic map $q\!\lra\!L(q)$ on a neighborhood
of the closed unit disk  $\bar{D}\!\subset\!\C$ such that 
$$L(q),a\!+\!\nu L(q)\neq0 \qquad\forall~q\!\in\!\bar{D}.$$
Thus, the radius of convergence of the Cauchy series around $q\!=\!0$ for the holomorphic function
$$q\lra\frac{L(q)^k}
{\left(a\!+\!\nu L(q)^n\right)^{k'}}$$
is greater than~1.
Let
$$S=\big\{(q,z)\!\in\!\C^2\!:~z^n\!-\!qz^a\!=\!1\big\}.$$
Since the differential of the defining equation is surjective for $z\!\neq\!0$,
$S$ is a smooth curve in~$\C^2$.
The projection map $\pi_1\!: S\!\lra\!\C$ to the first coordinate is an $n$-fold cover 
branched at the points $(q,z)\!\in\!S$ such that 
\begin{equation*}\begin{split}
nz^{n-1}-qaz^{a-1}=0 \quad\Lra\quad q=\frac{n}{a}z^{\nu} \quad&\Lra\quad
z^n=-\frac{a}{\nu}\\ 
\quad&\Lra\quad
|q|=\frac{n}{a}\cdot\bigg(\frac{a}{\nu}\bigg)^{\nu/n}
>\bigg(\frac{n}{a}\bigg)^{a/n}>1.
\end{split}\end{equation*}
Thus, $\pi_1$ is an unramified cover of an open neighborhood $U$ of $\bar{D}$,
and its restriction to the component of $\pi^{-1}(0)$ containing $(0,1)$
induces a holomorphic map 
$$U\lra\C,\qquad q\lra L(q),$$
solving \e_ref{Lbs_e}.
It is immediate from \e_ref{Lbs_e} that $L(q)\!\neq\!0$ for all $q$, if $a\!>\!0$.
On the other hand,
\begin{equation*}\begin{split}
1+\frac{\nu}{n}qL(q)^a=0 \quad\Lra\quad q=-\frac{n}{\nu}L(q)^{-a}
\quad&\Lra\quad L(q)^n=-\frac{a}{\nu}\\
\quad&\Lra\quad |q|=\frac{n}{\nu}\cdot\bigg(\frac{\nu}{a}\bigg)^{a/n}
>\bigg(\frac{n}{\nu}\bigg)^{\nu/n}>1,
\end{split}\end{equation*}
as claimed.
\end{proof}


\begin{lmm}\label{Phibound_lmm}
Let $\Phi_0,\Phi_1,\ldots\in\Q[[q]]$ be as in Proposition~\ref{Fexp_prp}.
There exists $C_{\a}\!\in\!\R^+$ such that 
$$\bigg|\LRbr{\frac{\Phi_b(q)}{\Phi_0(q)}}_{q;d} \bigg|
\le b!C_{\a}^b \LRbr{(1\!-\!C_{\a}q)^{-b}}_{q;d} 
\qquad\forall\,b,d\!\in\!\bar\Z^+\,. $$
\end{lmm}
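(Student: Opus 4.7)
The plan is to prove the bound by strong induction on $b$, combining the recursion \e_ref{PhiODE} with the majorant method. Using \e_ref{L12_e} and \e_ref{F0exp_e2}, one factors $\fL_1 = (|\a|+\nua L^n)\,\Phi_0\,D\,\Phi_0^{-1}$, so dividing \e_ref{PhiODE} by $(|\a|+\nua L^n)\Phi_0$ and expanding $\fL_k(\Phi_0\Psi_{b+1-k})$ via the Leibniz rule and \e_ref{fLkdfn_e} yields a recursion
$$D\Psi_b = \sum_{k=2}^{n}\sum_{i=0}^{k} B_{k,i}(q)\,D^i\Psi_{b+1-k},\qquad b\ge 1,$$
for the ratios $\Psi_b := \Phi_b/\Phi_0$, with $B_{k,i}(q)\in\Q[[q]]$ rational in $L$, its derivative, and $\Phi_0^{\pm 1}$, and with denominators involving only $L$ and $|\a|+\nua L^n$. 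By Lemma~\ref{Lest_lmm}, both of these are non-vanishing on an open neighborhood of the closed unit disk $\bar D\subset\C$, so every $B_{k,i}(q)$ is holomorphic on a disk of some radius $r^{-1}>1$. Cauchy estimates then yield a constant $K\in\R^+$ with $|[q^m]B_{k,i}(q)|\le Kr^m$ for all $m\in\bar\Z^+$ and all admissible $k,i$.

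Next, I would choose $C_\a\in\R^+$ sufficiently large (in particular $C_\a\ge r$) and establish by strong induction on $b$ the coefficient-wise majorization $\Psi_b(q)\ll b!\,C_\a^b\,(1-C_\a q)^{-b}$. The base case $b=0$ is immediate from $\Psi_0=1$. For the inductive step $b\ge 1$, the elementary identity $D(1-C_\a q)^{-m} = m\,C_\a q\,(1-C_\a q)^{-(m+1)}\ll m\,(1-C_\a q)^{-(m+1)}$ iterates to $D^i(1-C_\a q)^{-m}\ll (m)_i\,(1-C_\a q)^{-(m+i)}$, where $(m)_i = m(m+1)\cdots(m+i-1)$ is the rising Pochhammer symbol. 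Applying this to the inductive majorant for $\Psi_{b+1-k}$ and using $B_{k,i}\ll K(1-rq)^{-1}\ll K(1-C_\a q)^{-1}$ yields a majorant for each term $B_{k,i}D^i\Psi_{b+1-k}$ on the right-hand side of the recursion. Since $\Psi_b\in q\Q[[q]]$ for $b\ge 1$ by Proposition~\ref{Fexp_prp}, the formal antiderivative $D^{-1}$ (extracting a factor $1/d$ at each level $[q^d]$) preserves coefficient-wise domination on $q\Q[[q]]$, so inverting $D$ produces the desired majorant for $\Psi_b$.

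The main obstacle will be the combinatorial verification that closes the induction. The majorants produced by the recursion carry exponent $(1-C_\a q)^{-(b+2-k+i)}$, which for $i=k$ exceeds the target exponent $b$ by two, and one must compare $\binom{d+b+1-k+i}{d}$ against $\binom{d+b-1}{d}$ after dividing by $d$; the resulting ratio is a rational function of $(d,b)$ whose polynomial growth is of total degree at most $n$. This is precisely the regime where the genuine geometric decay $|[q^m]B_{k,i}|\le Kr^m$ with $r<1$ must be exploited, rather than the crude $K(1-C_\a q)^{-1}$ majorization: a sharper convolution estimate replaces one factor of $(1-C_\a q)^{-1}$ by an effective constant depending only on $K$ and $1-r$, lowering the effective exponent in the majorant of $B_{k,i}\cdot D^i\Psi_{b+1-k}$. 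After this refinement, the induction reduces to a universal polynomial-in-$(d,b)$ inequality whose coefficients depend only on $K$, $n$, and $r$, which is absorbed into the prefactor $C_\a^{b+d}$ by taking $C_\a$ large; finitely many small-$(b,d)$ cases are then handled by enlarging $C_\a$ further if necessary.
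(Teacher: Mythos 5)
Your setup is the paper's: conjugating \e_ref{PhiODE} by $\Phi_0$ and dividing by the symbol $(|\a|\!+\!\nua L^n)$ of $\fL_1$ is exactly the paper's passage to the operators $\wt\fL_k$, and the majorant induction on $b$ with majorants $b!\,C_{\a}^b(1\!-\!C_{\a}q)^{-b}$ is the paper's induction. Two of your ingredients are sound even where the justification is off. The claim that the $B_{k,i}$ are holomorphic on a disk of radius greater than $1$ is false in general (the relevant singularity of $L$ sits at distance of order $1/\a^{\a}$ from the origin; for $|\a|\!=\!n$ it is at $q\!=\!1/\a^{\a}$), but geometric bounds $|\lrbr{B_{k,i}}_{q;m}|\!\le\!KR^m$ on any smaller disk suffice once $C_{\a}\!\ge\!2R$; and your ``sharper convolution estimate'' is precisely the mechanism of \e_ref{tiHkl_e}, which the paper implements via the $\de\!=\!1$ case of Lemma~\ref{Lest_lmm} together with Abel summation against a nonnegative, nondecreasing majorant. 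This uniform treatment would, if completed, replace the paper's three-case split ($0\!<\!|\a|\!<\!n$, $|\a|\!=\!n$, $|\a|\!=\!0$) and its explicit algebra for the $\ti{h}_{k,i}$.

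The gap is in the step that closes the induction, which is the entire content of the lemma. After your refinement, the $(k,i)$ term contributes to $|\lrbr{\Phi_b/\Phi_0}_{q;d}|$ a quantity of order $Kd^{\,i-1}(b\!+\!1\!-\!k)!\,C_{\a}^{b+1-k}\binom{b-k+d}{d}C_{\a}^d$ (a factor $d^{\,i}$ from $D^i$, a factor $1/d$ from $D^{-1}$), to be compared with the target $b!\,C_{\a}^b\binom{b+d-1}{d}C_{\a}^d$. A residual factor polynomial in $(b,d)$ cannot be ``absorbed into the prefactor $C_{\a}^{b+d}$'': inside an induction on $b$ the only room is the ratio of these two quantities, and enlarging $C_{\a}$ supplies only the constant $C_{\a}^{k-1}$, nothing growing with $b$ or $d$; likewise ``finitely many small-$(b,d)$ cases'' does not address a discrepancy recurring at every level of the induction. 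What actually closes the induction is one of two precise observations absent from your write-up. The paper's route: every term of $\wt\fL_k(\Phi_{b+1-k}/\Phi_0)$ carries an overall factor of $q$ --- the $D^0$-coefficient of $\wt\fL_k$ vanishes at $q\!=\!0$ (the $q^{\de_{i,k}}$ in the paper's formula for $\ti{h}_{k,i}$), while $D^i$ with $i\!\ge\!1$ produces a $q$ --- so the majorant of the right-hand side is $b!\,b$ times $q(1\!-\!C_{\a}q)^{-b-1}$ up to constants, and $D^{-1}\big(q(1\!-\!C_{\a}q)^{-b-1}\big)=\frac{1}{bC_{\a}}\big((1\!-\!C_{\a}q)^{-b}\!-\!1\big)$ cancels the extra $b$ exactly. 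Equivalently, one can verify the binomial inequality $d^{\,i-1}\binom{b-k+d}{d}\le d^{\,k-1}\binom{b-k+d}{d}\le\frac{(b-1)!}{(b-k)!}\binom{b+d-1}{d}$, which combined with $(b\!+\!1\!-\!k)!\,\frac{(b-1)!}{(b-k)!}\le b!$ supplies the needed room for every $i\!\le\!k$. Either computation closes the induction with $C_{\a}$ a fixed multiple of $Kn^2$; without one of them the ``universal polynomial-in-$(d,b)$ inequality'' is asserted rather than proved, and the mechanism you propose for it does not work.
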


\begin{proof}
For $k\!=\!1,2,\ldots,n$, define 
$$\wt\fL_k\!: \Q[[q]]\lra\Q[[q]] \qquad\hbox{by}\quad
\wt\fL_k(\Phi)=
\frac{1}{L(q)^{k-1}\Phi_0(q)(|\a|\!+\!\nua L(q)^n)}\fL_k(\Phi_0\Phi)\,,$$
with $\fL_k$ and $\Phi_0$ given by \e_ref{fLkdfn_e} and \e_ref{F0exp_e2}, respectively.
These differential operators are of the form
\BE{tifLk_e}\wt\fL_k=\sum_{i=0}^{i=k}\ti{h}_{k,k-i}(q)D^i
\qquad\hbox{with}\quad \ti{h}_{k,i}\in\Q[[q]].\EE
Note that by \e_ref{Ldfn_e0} and 
\BE{Lder_e} 
\frac{L'}{L}=\frac{L^n-1}{|\a|+\nua L^n}=\frac{\a^{\a}qL(q)^{|\a|}}{|\a|+\nua L^n}\,.\EE
We now consider three separate cases.\\

\noindent
(1) Suppose $0\!<\!|\a|\!<\!n$. We show that there exists $C_{\a}\!\in\!\R^+$ such that 
\BE{Phibound_e} 
\bigg|\LRbr{\frac{\Phi_b(q)}{\Phi_0(q)}}_{q;d} \bigg|
\le b!C_{\a}^b \LRbr{(1\!-\!\a^{\a}q)^{-b}}_{q;d} 
\qquad\forall\,b,d\!\in\!\bar\Z^+\,. \EE
By \e_ref{F0exp_e2} and \e_ref{Lder_e}, for each $j\!\in\!\Z^+$ 
there exists $p_j\!\in\Q[u]$ such that  
\BE{Phi0der_e}
\frac{D^j\Phi_0}{\Phi_0}=\frac{(L^n\!-\!1)p_j(L^n)}{(|\a|\!+\!\nua L^n)^{2j}}\
=\frac{\a^{\a}qL(q)^{|\a|} p_j(L^n)}{(|\a|\!+\!\nua L^n)^{2j}}\,,
\quad\deg p_j\le 2j\!-\!1.\EE
By~\e_ref{Hdfn_e}, for each $j\!\in\!\Z^+$ there exist $p_{m,j}\!\in\!\Q[u]$ such~that 
$$\H_{m,j}(u)=\frac{(u\!-\!1)p_{m,j}(u)}{(|\a|+\nua u)^{2j-1}}\,,
\quad \deg p_{m,j}\le 2j\!-\!2,$$
where $\H_{m,j}\!\in\!Q(u)$ is the function defined in Section~\ref{HG_subs}.
Thus, by \e_ref{fLkdfn_e} and \e_ref{Phi0der_e}, there exist $\ti{p}_{k,i}\!\in\!\Q[u]$ such~that
$$\ti{h}_{k,i}=\frac{1}{L^{k-1}}\cdot
\frac{(qL^{|\a|})^{\de_{i,k}}\ti{p}_{k,i}(L^n)}{(|\a|\!+\!\nua L^n)^{2i+1}}\,,
\qquad \deg \ti{p}_{k,i}\le 2i\!+\!1-\de_{i,k}.$$
Let $C\!\ge\!1$ be the maximum of the absolute values of the coefficients of the polynomials 
$(2i\!+\!1)\ti{p}_{k,i}$, with $i\!=\!0,1,\ldots,k$ and $k\!=\!2,3,\ldots,n$.
Thus,
\BE{tiHkl_e}
\Big|\LRbr{(1\!-\!\a^{\a}q)^{-b}\ti{h}_{k,i}(q)}_{q;d}\Big|
\le CC_{|\a|} \LRbr{q^{\de_{i,k}}(1\!-\!\a^{\a}q)^{-b}}_{q;d}
\qquad\forall\,k\!=\!2,3,\ldots,n,\,b\!\in\!\Z^+,\EE
where $C_{|\a|}$ is as in Lemma~\ref{Lest_lmm}.
We show that \e_ref{Phibound_e} holds with 
$$C_{\a}=n^2CC_{|\a|}\a^{\a}.$$
This is indeed the case for $b\!=\!0$.
Suppose $b^*\!\ge\!1$ and the bound holds for all $b\!<\!b^*$.
By~\e_ref{PhiODE}, \e_ref{tifLk_e}, \e_ref{tiHkl_e}, and the inductive assumption,
\begin{equation*}\begin{split}
\bigg|\LRbr{D\bigg(\frac{\Phi_{b^*}(q)}{\Phi_0(q)}\bigg)}_{q;d} \bigg|
&\le \sum_{k=2}^{k=n} 
\bigg|\LRbr{\wt\fL_k\bigg(\frac{\Phi_{b^*-k+1}(q)}{\Phi_0(q)}\bigg)}_{q;d}\bigg|\\
&\le n^2CC_{|\a|}\cdot C_{\a}^{b^*-1}b^*!b^*(\a^{\a})^2
\LRbr{q(1\!-\!\a^{\a}q)^{-b^*-1}}_{q;d}\,.
\end{split}\end{equation*}
Integrating this inequality, we find that \e_ref{Phibound_e} holds for $b\!=\!b^*$ as well.\\

\noindent
(2) Suppose next that $|\a|\!=\!n$. We show that \e_ref{Phibound_e} still holds.
Since $nDL/L\!=\!(L^n-1)$ in this case, for each $j\!\in\!\Z^+$ 
there exists $p_j\!\in\Q[u]$ such~that  
\BE{Phi0dern_e}
\frac{D^j\Phi_0}{\Phi_0}=(L^n\!-\!1)p_j(L^n)\
=\a^{\a}qL(q)^n p_j(L^n)\,,\quad\deg p_j\le j\!-\!1\,.\EE
On the other hand, by~\e_ref{Hdfn_e} for each $j\!\in\!\Z^+$ 
there exist $p_{m,j}\!\in\!\Q[u]$ such~that 
$$\H_{m,j}(u)=(u\!-\!1)p_{m,j}(u), \quad \deg p_{m,j}\le j\!-\!1\,.$$
It follows that there exists $\ti{p}_{k,i}\!\in\!\Q[u]$ such that
\BE{tiHkln_e}
\ti{h}_{k,i}=\frac{1}{L^{k-1}}\big(qL(q)^n\big)^{\de_{i,k}}\ti{p}_{k,i}(L^n)\,,
\quad \deg \ti{p}_{k,i}\le i\!-\!\de_{i,k}\,.\EE
Let $C\!\ge\!1$ be the maximum of the absolute values of the coefficients of the polynomials 
$(i\!+\!1)\ti{p}_{k,i}$, with $i\!=\!0,1,\ldots,k$ and $k\!=\!2,3,\ldots,n$.
Thus,
\BE{tiHkl_e2}
\Big|\LRbr{(1\!-\!\a^{\a}q)^{-b}\ti{h}_{k,i}(q)}_{q;d}\Big|
\le C \LRbr{q^{\de_{i,k}}(1\!-\!\a^{\a}q)^{-b-k}}_{q;d}
\quad\forall\,k\!=\!2,3,\ldots,n,\,b\!\in\!\Z^+\,;\EE
see \e_ref{Ldfn_e2}.
The same inductive argument as at the end of~(1) now shows that 
 \e_ref{Phibound_e} holds with $C_{\a}=n^2C\a^{\a}$.\\

\noindent
(3) Finally, suppose $|\a|\!=\!0$, i.e.~$\a\!=\!()$.
We show that there exist $C_{\eset},C_{b,r}\!\in\!\Q$ for $b,r\!\in\!\bar\Z^+$ such~that 
\BE{Phibound_e0}
\frac{\Phi_b}{\Phi_0}=\sum_{r=0}^{(n+1)b}C_{b,r}L^{-r}\,,
\qquad \sum_{r=0}^{(n+1)b}|C_{b,r}|\le b!C_{\eset}^b\,\qquad\forall\,b\!\in\!\Z^+.\EE 
This implies the claim, since 
$$\big|\lrbr{L(q)^{-r}}_{q;d}\big|\le  
\big|\lrbr{L(q)^{-2nb}}_{q;d}\big|
=\bigg|\binom{-2b}{d}\bigg|=\binom{2b\!+\!d\!-\!1}{d}
\le 2^{2b+d} \le 2^{2b}\lrbr{(1\!-\!2q)^b}_{q;d}$$
for all $r\!\le\!2nb$ and $b\!\in\!\Z^+$.\\

\noindent
Since $nDL/L\!=\!(1-L^{-n})$ in this case,  there exist $C_{r;i}^{(j)}\!\in\!\Q$ such that 
$$D^iL^{-r}=L^{-r}\frac{DL}{L}\sum_{j=0}^{i-1}(r\!+\!nj)C_{r;i}^{(j)}L^{-nj}\,,
\quad \sum_{j=0}^{i-1}\big|C_{r;i}^{(j)}\big|\le
2^{i-1}\prod_{j=0}^{i-2}\frac{r\!+\!nj}{n} 
\qquad\forall\,r\!\in\!\bar\R^+\,,i\!\in\!\Z^+.$$
On the other hand, by~\e_ref{Hdfn_e} for each $j\!\in\!\Z^+$ 
there exist $p_{m,j}\!\in\!u\cdot\Q[u]$ 
$$\H_{m,j}(u)=(u\!-\!1)p_{m,j}(1/u), \quad \deg p_{m,j}\le j\,.$$
It follows that there exist $\ti{p}_{k,i}\!\in\!\Q[u]$ such that
\BE{tiHkl_e0}
\ti{h}_{k,i}=\frac{1}{L^{k-1}}\ti{p}_{k,i}(L^{-n})\bigg(\frac{DL}{L}\bigg)^{\de_{i,k}}\,,
\quad \deg \ti{p}_{k,i}\le i\!-\!\de_{i,k} \qquad\forall\,i\in\bar\Z^+\,.\EE
Thus,  there exist $\ti{C}_{r;k}^{(j)}\!\in\!\Q$ such that 
\BE{tifLkr_e}\ti\fL_k L^{-r}=L^{-r-k}DL
\sum_{j=0}^{k-1}(r\!+\!nj\!+\!1)\ti{C}_{r;k}^{(j)}L^{-nj}\,,
\qquad \sum_{j=0}^{k-1}\big|\ti{C}_{r;k}^{(j)}\big|\le
2^kC\prod_{j=1}^{k-1}\frac{r\!+\!nj}{n} \EE
for all $r\!\in\!\bar\R^+$ and $k\!\in\!\Z^+$,
where $C\!\ge\!1$ is the maximum of the absolute values of the coefficients of 
the polynomials $(k\!+\!1)\ti{p}_{k,i}$
with $i\!=\!0,1,\ldots,k$ and $k\!=\!1,2,\ldots,n$.
We show that \e_ref{Phibound_e0} holds with  
$$C_{\eset}=4\bigg(\frac{2n\!+\!2}{n}\bigg)^nC.$$
This is indeed the case for $b\!=\!0$.
Suppose $b^*\!\ge\!1$ and the claim holds for all $b\!<\!b^*$.
By~~\e_ref{PhiODE}, the inductive assumption, and~\e_ref{tifLkr_e},  
there exist $C_{b^*,r}'\!\in\!\Q$ such that 
\begin{alignat}{1}
\label{ODEa0_e}
D\bigg(\frac{\Phi_{b^*}}{\Phi_0}\bigg)
&= -\sum_{k=2}^{k=n}\wt\fL_k\bigg(\frac{\Phi_{b^*-k+1}}{\Phi_0}\bigg)
=-\frac{DL}{L}\sum_{r=1}^{(n+1)b^*}\!\!\!\!rC_{b^*,r}L^{-r}\,,\\
\sum_{r=1}^{(n+1)b^*}\!\!\!\!\big|C_{b^*,r}\big|
&\le C\sum_{k=2}^n\sum_{r=0}^{(n+1)(b^*-1+k)}\sum_{j=0}^{k-1}
\big|\ti{C}_{r;k}^{(j)}\big|\big|C_{b^*-k+1,r}\big|\notag\\
&\le C\sum_{k=2}^n\Bigg(2^k
\prod_{j=1}^{k-1}\bigg(\frac{(n\!+\!1)(b^*\!-\!k\!+\!1)+nj}{n}\bigg)
\cdot(b^*\!-\!k\!+\!1)!C_{\eset}^{b^*-k+1}\Bigg) \notag \\
&\le 2CC_{\eset}^{b^*-1}\sum_{k=2}^n\Bigg(\bigg(2\frac{(n\!+\!1)}{n}\bigg)^{\!k-1}\,
\prod_{j=1}^{k-1}\big(b^*\!-\!k\!+\!1+j\big)\cdot(b^*\!-\!k\!+\!1)!\Bigg)
\le \frac{C_{\eset}^{b^*}}{2}b^*!\,.\notag 
\end{alignat}
Thus, integrating \e_ref{ODEa0_e} and using $\Phi_{b^*}\!\in\!q\cdot\Q[[q]]$,
we find that \e_ref{Phibound_e0} holds for $b\!=\!b^*$ as well.
\end{proof}

\begin{rmk}\label{bound_rmk} 
The above arguments are based on 
the fact that all coefficients of $(1\!-\!q)^{-\al}$ are nonnegative (actually positive)
if $\al\!>\!0$, non-decreasing with~$\al$, are at least as large in the absolute values
as the coefficients of $(1\!\pm\!q)^{\al}$, and non-decreasing with $d$ if $\al\!\ge\!1$.
\end{rmk}

\begin{crl}\label{Phibound_crl}
Let $\Phi_{p;b},\Phi_{m,\c}(q)\!\in\!\Q[[q]]$ be as in~\e_ref{Fpexp_e} 
and~\e_ref{Phimcdfn_e}.
There exists $C_{\a}\!\in\!\R^+$ such~that 
\begin{alignat*}{2}
\bigg|\LRbr{\frac{\Phi_{p;b}(q)}{\Phi_0(q)}}_{q;d} \bigg|
&\le  b!C_{\a}^b \LRbr{(1\!-\!C_{\a}q)^{-b-1}}_{q;d} 
&\quad&\forall\,b,d\!\in\!\bar\Z^+,\,p\!\in\!\llfloor{n}\rrfloor;\\
\bigg|\LRbr{\Phi_{m,\c}(q)}_{q;d} \bigg|&
\le \frac{(m\!+\!|\c|)!}{|\c|!}\binom{|\c|}{\c}
\prod_{r=1}^{\i}\bigg(\frac{1}{r\!+\!1}\bigg)^{c_r}
 C_{\a}^{\|\c\|} \LRbr{(1\!-\!C_{\a}q)^{-\|\c\|-1}}_{q;d}  
&\quad& \forall\,m,d\!\in\!\bar\Z^+,\,\c\!\in\!(\bar\Z^+)^{\i}.
\end{alignat*}
\end{crl}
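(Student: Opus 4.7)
The strategy is to deduce both bounds from Lemma~\ref{Phibound_lmm} via a coefficient-wise comparison principle: if $f,g\in\Q[[q]]$ satisfy $|[f]_{q;d}|\le[\psi_f]_{q;d}$ and $|[g]_{q;d}|\le[\psi_g]_{q;d}$ for some $\psi_f,\psi_g$ with nonnegative coefficients, then $|[fg]_{q;d}|\le[\psi_f\psi_g]_{q;d}$. Since $(1-Cq)^{-\al_1}(1-Cq)^{-\al_2}=(1-Cq)^{-\al_1-\al_2}$ and $D((1-Cq)^{-\al})=\al Cq(1-Cq)^{-\al-1}$, multiplication by any power series whose coefficients are bounded by those of $C'(1-C'q)^{-1}$, and application of $D=q\tnd/\tnd q$, each raise the exponent of $(1-Cq)^{-\al}$ by at most one while changing the prefactor in a controlled manner. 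As a preliminary, I would check that $L,L^{-1},I_0,I_0^{-1},\Phi_0,\Phi_0^{-1},I_r'/I_r$ and the further rational combinations of~$L$ occurring in \e_ref{Psipr_e} and~\e_ref{Phimcdfn_e} all admit such one-pole majorants. This follows from Lemma~\ref{Lest_lmm}, which shows that $L$ extends holomorphically to a neighborhood of the closed unit disk $\bar D\subset\C$ and that $|\a|+\nua L^n$ does not vanish on~$\bar D$, together with Lemma~\ref{Fpest_lmm} for the series~$I_r$.

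For the first estimate I would induct on $p$ using the recursion
$$\hat\Phi_{p+1;b}=L\hat\Phi_{p;b}+D\hat\Phi_{p;b-1}-\Bigg(\sum_{r=0}^{p}\frac{I_r'}{I_r}\Bigg)\hat\Phi_{p;b-1}$$
from \e_ref{Psipr_e}, whose base case $\hat\Phi_{0;b}=\Phi_b$ is Lemma~\ref{Phibound_lmm}. Each step either multiplies by a one-pole majorant or applies $D$, so the exponent of $(1-C_\a q)^{-1}$ grows by $O(1)$ per step and the prefactor by a multiplicative constant times~$b$; since $p\le n-1$ is bounded, the finite dependence on $p$ absorbs into~$C_\a$, yielding the inductive bound $|[\hat\Phi_{p-l;b}/\Phi_0]_{q;d}|\le b!C_\a^{b}\lrbr{(1-C_\a q)^{-b-1}}_{q;d}$. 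The second identity in \e_ref{Psipr_e} expresses $\Phi_{p;b}/\Phi_0$ as a finite $\Q$-linear combination of such $\hat\Phi_{s-l;b'}/\Phi_0$ with coefficients $\ntc_{p,s}^{(d)}$ bounded by Lemma~\ref{Fpest_lmm}, transferring the estimate to $\Phi_{p;b}/\Phi_0$.

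For the second estimate I would apply Lemma~\ref{Phibound_lmm} to each factor $\Phi_r/\Phi_0$ in the product presentation~\e_ref{Phimcdfn_e}, giving $(\Phi_r/\Phi_0)^{c_r}$ the majorant $(r!)^{c_r}C_\a^{rc_r}(1-C_\a q)^{-rc_r}$; multiplying over~$r$ yields $\prod_r(r!)^{c_r}\cdot C_\a^{\|\c\|}(1-C_\a q)^{-\|\c\|}$. Combining with the constant $(m+|\c|)!\prod_r 1/(c_r!((r+1)!)^{c_r})$ from the definition, the products $(r!)^{c_r}/((r+1)!)^{c_r}$ collapse to $1/(r+1)^{c_r}$ and $\prod_r 1/c_r!=\binom{|\c|}{\c}/|\c|!$, reproducing all the combinatorial factors in the statement. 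The prefactor $\Phi_0^2/I_0^2$, majorized by $C_\a(1-C_\a q)^{-1}$ from the preliminary step, contributes the remaining $(1-C_\a q)^{-1}$ accounting for the exponent $-\|\c\|-1$. The principal obstacle is carrying out the one-pole majorization uniformly across the three regimes $|\a|=0$, $0<|\a|<n$, and $|\a|=n$, in which the structure of $L$, $\Phi_0$, and $I_0$ differs; this parallels the case analysis in Lemma~\ref{Phibound_lmm} but must additionally track the inverse factors $\Phi_0^{-1}$ and $I_0^{-1}$ not present there.
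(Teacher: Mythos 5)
Your proposal is correct and follows essentially the same route as the paper: the first bound is obtained by induction on $b$ and $p$ from the recursion~\e_ref{Psipr_e} with Lemma~\ref{Phibound_lmm} as the base case, using Lemma~\ref{Lest_lmm} (resp.\ the bounds on products of the~$I_r$, resp.\ the explicit form~\e_ref{Ldfn_e2}) in the three regimes to majorize the coefficient functions, and then transferring to $\Phi_{p;b}$ via the $\ntc^{(d)}_{p,s}$; the second bound follows directly from Lemma~\ref{Phibound_lmm}, \e_ref{F0exp_e2}, and the product formula~\e_ref{Phimcdfn_e}, exactly as in the paper. The only point worth making explicit is the downward induction in~$p$ (to reach $\hat\Phi_{p;b}$ with $-l\le p<0$, which requires dividing the recursion by~$L$), but your preliminary majorization of $L^{-1}$ already covers this.
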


\begin{proof}
It is sufficient to obtain the first bound for the power series 
$\hat\Phi_{p;b}\in\Q[[q]]$, $-l\!\le\!p\!\le\!n\!-\!1\!-\!l$, defined in~\e_ref{Psipr_e}.
If $0\!<\!|\a|\!<\!n$, it follows by induction on $b\!\in\!\bar\Z^+$ and $p$
(from~0 up to $n\!-\!1\!-\!l$ and down to~$-l$) from Lemma~\ref{Phibound_lmm},
the $j\!=\!1$ case of~\e_ref{Phi0der_e}, and Lemma~\ref{Lest_lmm}.
For $|\a|\!=\!n$, Lemma~\ref{Fpest_lmm} implies that there exists $C\!\in\!\R^+$
such that 
\BE{Ipbound_e}\big|\lrbr{I_0(q)^{k_0}I_1(q)^{k_1}\ldots I_{n-l}(q)^{k_{n-l}}}_{q;d}\big|\le C^d
\qquad\forall\,d\!\in\!\bar\Z^+,\,k_0,k_1,\ldots,k_{n-l}\!\in\!\{0,\pm1\}.\EE
By induction on $b$ and $|p|$ (with the base case being Lemma~\ref{Phibound_lmm})
along with~\e_ref{Ldfn_e2} and the $j\!=\!1$ case of~\e_ref{Phi0dern_e}, 
this implies that 
$$\Bigg|\LRbr{\frac{\hat\Phi_{l+p;b}(q)}{\Phi_0(q)}}_{q;d} \Bigg|
\le  C_{\a;p}^bb! \LRbr{(1\!-\!C_{\a;p}q)^{-b-|p|/n}}_{q;d}
\qquad\forall\,b,d\!\in\!\bar\Z^+\,,$$
for some $C_{\a;p}\!\in\!\R^+$.
The same estimate holds if $|\a|\!=\!0$, by Lemma~\ref{Phibound_lmm}
and~\e_ref{Ldfn_e2}.
The second bound follows directly from Lemma~\ref{Phibound_lmm} and
\e_ref{F0exp_e2}, along with Lemma~\ref{Lest_lmm} if $0\!<\!|\a|\!<\!n$
and~\e_ref{Ipbound_e} if $|\a|\!=\!n$.
\end{proof}

\begin{proof}[Proof of Proposition~\ref{coeffbound_prp}]
By Corollary~\ref{Phibound_crl}, the absolute value of each nonzero factor~$\lrbr{\cdot}$ in 
\e_ref{ncCdfn2_e} is bounded above~by
\begin{gather*}
\frac{(m_v\!+\!|\c_v|)!}{|\c_v|!}\binom{|\c_v|}{\c_v}\!
\prod_{r=1}^{\i}\!\!\bigg(\frac{1}{r\!+\!1}\bigg)^{\!c_{v;r}}
\!\!\!\!\!\!\prod_{s\in\eta^{-1}(v)}\!\!\!\frac{1}{b_s!}\cdot
\prod_{e\in\tnE_{\Ga}^-(v)}\!\!\!\!\!\!\frac{(b_e^-\!+\!1\!+\!b_e')!}{b_e^-!}
\cdot\frac{(b_{e_v}^+\!-\!b_{e_v}')!}{b_{e_v}^+!}
C_{\a}^{\De_v(\b')}\!\!\LRbr{(1\!-\!C_{\a}q)^{-\De_v(\b')}}_{q;d_v}\\
\hbox{where}\qquad \De_v(\b')
=4m_v+8-|\b|_{\eta^{-1}(v)}+|\b'|_{\tnE_{\Ga}^-(v)}-b_{e_v}'\,.
\end{gather*}
Thus, by \e_ref{mvsum_e}, the absolute value of each nonzero summand 
(product of factors over $v\!\in\!\Ver$)
in \e_ref{ncCdfn2_e} is bounded above~by
$$\frac{C_{\a}^{8N}\!\!\LRbr{(1\!-\!C_{\a}q)^{-8N}}_{q;d}}{\b!}
\prod_{v\in\Ver}\!\!\Bigg(\!\frac{(m_v\!+\!|\c_v|)!}{|\c_v|!}\binom{|\c_v|}{\c_v}
\prod_{r=1}^{\i}\!\!\bigg(\frac{1}{r\!+\!1}\bigg)^{\!c_{v;r}}\!\!\Bigg)
\cdot \prod_{e\in\Edg}\!\!\!\frac{(b_e^-\!+\!1\!+\!b_e')!(b_e^+\!-\!b_e')!}{b_e^-!b_e^+!}\,.$$
Note that 
\begin{equation*}\begin{split}
\sum_{b_e^-+b_e^+=b_e^{\pm}}\!\!\!\!\!\!\!
\frac{(b_e^-\!+\!1\!+\!b_e')!(b_e^+\!-\!b_e')!}{b_e^-!b_e^+!}
\le \sum_{b_e^-+b_e^+=b_e^{\pm}}\!\!\!\!\!\!\!
\frac{(b_e^-\!+\!1\!+\!b_e^+)!}{b_e^-!b_e^+!}
&= (b_e^{\pm}\!+\!1)\!\!\!\sum_{b_e^-+b_e^+=b_e^{\pm}}\!\!\!
\binom{b_e^-\!+\!b_e^+}{b_e^-}\\
&= (b_e^{\pm}\!+\!1)2^{b_e^{\pm}}\le 4^{b_e^{\pm}}\,.
\end{split}\end{equation*}
Since each tuple $\b''$ is a partition of $N\!-\!3\!-\!|\Edg|\!-\!|\b^-|\!-\!|\b^+|\!-\!\|\c\|$
into $N$ ordered parts, where
$$\|\c\|=\sum_{v\in\Ver}\!\!\!\|\c_v\|\,,$$
the number of such tuples with $|\b^-|\!+\!|\b^+|$ and $\|\c\|$ fixed
is at~most
$$\binom{N\!-\!3\!-\!|\Edg|\!-\!|\b^-|\!-\!|\b^+|\!-\!\|\c\|+N\!-\!1}{N\!-\!1}
\le 2^{2(N-2)-|\b^-|-|\b^+|-\|\c\|}\,.$$
Thus, the absolute value of the sum in~\e_ref{ncCdfn2_e} with $\Ga$, $(\p',\b',\bft)$,
and~$\c$ fixed is bounded above~by
$$\frac{C_{\a}'^{8N}\LRbr{(1\!-\!C_{\a}q)^{-8N}}_{q;d}}{\b!}
2^{-\|\c\|}\!\!\!
\prod_{v\in\Ver}\!\!\Bigg(\!\frac{(m_v\!+\!|\c_v|)!}{|\c_v|!}\binom{|\c_v|}{\c_v}
\prod_{r=1}^{\i}\!\!\bigg(\frac{1}{r\!+\!1}\bigg)^{\!c_{v;r}}\!\!\Bigg)
.$$
Since $|1\!-\!2\ln2|<1$, by the Binomial Theorem
\begin{equation*}\begin{split}
&\sum_{(\c)_{v\in\Ver}\in((\bar\Z^+)^{\i})^{\Ver}}\!\!\!
2^{-\|\c\|}\!\!\!
\prod_{v\in\Ver}\!\!\Bigg(\!\frac{(m_v\!+\!|\c_v|)!}{m_v!}\binom{|\c_v|}{\c_v}
\prod_{r=1}^{\i}\!\!\bigg(\frac{1}{r\!+\!1}\bigg)^{\!c_{v;r}}\!\!\Bigg)\\
&\hspace{.5in}
=\prod_{v\in\Ver}\bigg(\!1-\sum_{r=1}^{\i}\frac{w^r}{r\!+\!1}\bigg)^{-m_v-1}
\Bigg|_{w=1/2}
=\bigg(\!2+\frac{\ln(1\!-\!w)}{w}\bigg)^{-(N-2)}\Bigg|_{w=1/2}
=\big(2(1\!-\!\ln2)\big)^{N-2}\,.
\end{split}\end{equation*}
Since $b_e'\!\le\!b_e^+$ for $e\!\in\!\Edg$ and nonzero summands in \e_ref{ncCdfn2_e}, 
$|\b'|\!\le\!N\!-\!3\!-\!|\Edg|$.
The number of such tuples~is
$$\binom{N\!-\!3\!-\!|\Edg|+|\Edg|}{|\Edg|}\le 2^{N-3}.$$
Thus, the absolute value of the contribution of each trivalent $N$-marked tree~$\Ga$ 
to $\nc_{\p,\b}^{(d,0)}$ is bounded above~by
$$\frac{\ti{C}_{\a}^N}{\b!}\binom{-8N}{d}C_{\a}^d\cdot
\prod_{v\in\Ver}\!\!\!m_v!
=\frac{\ti{C}_{\a}^N}{\b!}\binom{8N\!+\!d\!-\!1}{d}C_{\a}^d\cdot
\prod_{v\in\Ver}\!\!\!m_v!
\le \frac{\ti{C}_{\a}^N}{\b!}2^{8N+d}C_{\a}^d\cdot
\prod_{v\in\Ver}\!\!\!m_v!\,.$$
Combining this with Lemma~\ref{graphnum_lmm} below,
we obtain the claimed bound for~$\nc_{\p,\b}^{(d,0)}$. 
\end{proof}

\begin{rmk}\label{projbound_rmk}
In the $|\a|\!=\!0$ case (projective space), a bound of the form 
$(N!/\b!)C^{N-3-|\b|}$ can be obtained using the last description of~$c_{\p,\b}^{(d,0)}$
in Section~\ref{graph_subs} and~\e_ref{Phibound_e0}.
\end{rmk}

\begin{lmm}\label{graphnum_lmm}
There exist $C\!\in\!\R^+$ such that 
$$a_{N-1}\equiv \sum_{\Ga}\prod_{v\in\Ver}\!\!\!m_v!\le C^NN! \qquad\forall\,N\ge3,$$
where the sum is taken over all trivalent $N$-marked trees.
\end{lmm}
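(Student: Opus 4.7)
The strategy is to translate the weighted sum $a_{N-1}=\sum_\Ga\prod_v m_v!$ into the coefficients of an exponential generating function that we can pin down by an implicit equation, and then to read off its radius of convergence by the analytic implicit function theorem.

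First, I would interpret each trivalent $N$-marked tree as an augmented tree on the vertex set $\Ver\sqcup[N]$ in which each $s\in[N]$ is a leaf (degree $1$) and each internal vertex $v\in\Ver$ has degree $\val_\Ga(v)=m_v+3\ge3$. Distinguishing the marked point $N$ as a root, deleting it together with its unique incident edge, produces a ``rooted subtree'' whose root $v_0:=\eta(N)$ has $m_{v_0}+2$ unordered downward children, each child being either a marked-point leaf or a further rooted subtree at an internal vertex; the $m$-values of all internal vertices are those inherited from the original unrooted tree. Since the choice of distinguished label is immaterial to the weighted count, this yields the identity $a_{N-1}=t_{N-1}$, where $t_M$ denotes the analogous weighted count of rooted subtrees with $M$ marked points under the same weight $\prod_v m_v!$.

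Next I would write down and solve a functional equation for $T(x):=\sum_{M\ge 2}t_M x^M/M!$. At a root vertex with $m$-value $m$, the $m+2$ unordered children contribute $(x+T(x))^{m+2}/(m+2)!$ by the exponential generating function symbolic method for multisets, so after weighting by the root factor $m!$ and summing over $m\ge0$,
$$T(x)\;=\;\sum_{m\ge 0}\frac{m!}{(m+2)!}\bigl(x+T(x)\bigr)^{m+2}\;=\;\sum_{k\ge 2}\frac{(x+T(x))^k}{k(k-1)}.$$
Setting $y=x+T(x)$, a telescoping using $\frac{1}{k(k-1)}=\frac{1}{k-1}-\frac{1}{k}$ together with $\sum_{j\ge 1}y^j/j=-\ln(1-y)$ sums the right-hand side to $y+(1-y)\ln(1-y)$, and hence
$$x\;=\;-(1-y)\ln(1-y).$$

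Finally, the holomorphic map $\phi(y):=-(1-y)\ln(1-y)$ on $\{|y|<1\}$ satisfies $\phi(0)=0$, $\phi'(0)=1$, and $\phi'(y)=1+\ln(1-y)$ first vanishes at $y=1-1/e$ with $\phi(1-1/e)=1/e$. The analytic implicit function theorem therefore makes $y$, hence $T(x)=y(x)-x$, analytic on $\{|x|<1/e\}$, and Cauchy's coefficient bound yields $t_M\le C'\,M!\,(e+\varepsilon)^M$ for any $\varepsilon>0$. Consequently $a_{N-1}=t_{N-1}\le C^N N!$ after enlarging $C$ to absorb the finitely many initial cases. The main point requiring care will be the bijection step: one must verify that the weight $m_{v_0}!$ at the root is preserved (the $m$-value is read off the full unrooted tree, not the truncation) and that branches at each vertex are genuinely unordered, so that the $1/(m+2)!$ factor from the symbolic method is correctly applied; once this bookkeeping is in place, the rest is direct analysis of the explicit function~$\phi$.
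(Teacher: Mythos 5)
Your proof is correct and follows essentially the same route as the paper: rooting each tree at the vertex carrying the last marked point, deriving the functional equation $(1-y)\ln(1-y)=-x$ for the exponential generating function, and concluding analyticity at $x=0$ (the paper's $f$ is exactly your $y=x+T(x)$). Your explicit location of the singularity at $x=1/e$ is a small refinement consistent with the exact value $a_{N-1}=(N-2)^{N-2}$ noted in Remark~\ref{graphnum_rmk}, but it is not needed for the stated bound.
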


\begin{proof}
Let $a_1\!=\!1$ and
$$f(q)=\sum_{N=1}^{\i}\frac{a_N}{N!}q^N\in \Q[[q]].$$
Considering the vertex of an $(N\!+\!1)$-marked tree $\Ga$ to which the last marked point 
is attached, we observe that 
\begin{equation*}\begin{split}
a_N&=\sum_{k=2}^{k=N}
\frac{1}{k!}\sum_{(N_1,\ldots,N_k)\in\cP_k(N)}
\binom{N}{N_1,\ldots,N_k}(k\!-\!2)!a_{N_1}\ldots a_{N_k}\\
&=N!\sum_{k=2}^{k=N}\bigg(\frac{1}{k\!-\!1}\!-\!\frac{1}{k}\bigg)
\sum_{(N_1,\ldots,N_k)\in\cP_k(N)}\frac{a_{N_1}}{N_1!}\ldots\frac{a_{N_k}}{N_k!}\,.
\end{split}\end{equation*}
This recursion is equivalent to the condition that 
\BE{fcond_e} f(q)=q+f(q)+\big(f(q)\!-\!1\big)\sum_{k=1}^{\i}\frac{f(q)^k}{k}
\quad\Llra\quad 
\big(1-f(q)\big)\ln\big(1-f(q)\big)=-q.\EE
By the Inverse Function Theorem, $f(q)$ is an analytic function on a neighborhood of $q\!=\!0$
and so $a_N/N!\le C^N$ for some $C\!\in\!\R^+$.
\end{proof}

\begin{rmk}\label{graphnum_rmk}
As noticed by the author for small values of~$N$ and confirmed in general by P.~Johnson on 
{\it Math Overflow}, $a_{N-1}\!=\!(N\!-\!2)^{N-2}$.
By~\e_ref{fcond_e},
\BE{fwrel_e} f(q)=1-e^{W(-q)}\,,\EE
where $W\!\in\!\Q[[q]]$ is the Lambert $W$ function, i.e.~the analytic function on 
a neighborhood of $0\!\in\!\C$ defined~by 
$$W(q)e^{W(q)}=q, \qquad W(0)=0.$$
As can be seen from the Lagrange inversion formula,
\BE{Wexp_e} e^{W(q)}=1+q-\sum_{N=2}^{\i} \frac{(N\!-\!1)^{(N-1)}}{N!}(-q)^N\,.\EE
Along with \e_ref{fwrel_e}, this implies the claim.
\end{rmk}

\appendix

\section{Existence of Asymptotic Expansions}
\label{HGprp_pf}

\noindent
In this appendix, we show that power series
\BE{cY0dfn_e}\cY_0(\hb,\x,q)\equiv
\sum_{d=0}^{\i}q^d\frac{\prod\limits_{k=1}^{k=l}\prod\limits_{r=0}^{a_kd-1}(a_k\x+r\hb)}
{\prod\limits_{r=1}^{r=d}\left(\prod\limits_{k=1}^{k=n}\!\!(\x\!-\!\al_k\!+\!r\hb)
-\prod\limits_{k=1}^{k=n}\!\!(\x\!-\!\al_k)\right)}\in\Q_{\al}(\x,\hb)\big[\big[q\big]\big]\EE
admits an expansion of the form~\e_ref{cYexp_e} and then prove Proposition~\ref{Fexp_prp}.
The arguments here are motivated by \cite[Section~2]{ZaZ}.

\begin{lmm}\label{cY0_lmm}
The power series $\cY_0(\hb,\x,q)$ admits an expansion of the form
\BE{cY0exp_e}
\cY_0(\hb,\x,q)=e^{\xi(\x,q)/\hb}\sum_{b=0}^{\i}\Phi_{0;b}(\x,q)\hb^b\,\EE
with $\xi,\Phi_{0;1},\Phi_{0;2},\ldots\in q\Q_{\al}(\x)[[q]]$
and $\Phi_{0;0}\in 1+q\Q_{\al}(\x)[[q]]$.
\end{lmm}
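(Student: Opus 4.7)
My plan is to reduce the lemma to the single claim that $\log\cY_0(\hb,\x,q)$ has only a simple pole in $\hb$ at $\hb=0$ in each $q$-degree. Given this claim, the lemma follows by setting $\xi(\x,q):=\hb\cdot(\text{principal part of }\log\cY_0\text{ at }\hb=0)\in q\Q_\al(\x)[[q]]$ and reading off the $\Phi_{0;b}$ from the formal $\hb$-expansion $e^{-\xi/\hb}\cY_0=\sum_b\Phi_{0;b}\hb^b$, which by construction has no negative powers of~$\hb$ and satisfies $\Phi_{0;0}|_{q=0}=1$. The starting observation, immediate from~\e_ref{cY0dfn_e}, is that $c_d(\hb,\x):=[q^d]\cY_0$ is a rational function of $\hb$ whose Laurent expansion at $\hb=0$ has pole of order at most~$d$: each of the $d$ factors $\prod_i(\x-\al_i+r\hb)-\prod_i(\x-\al_i)$ in the denominator of $c_d$ vanishes to first order at $\hb=0$ with leading term $r\hb\,\si_{n-1}(\x)$ (with $\si_{n-1}(\x)$ as in~\e_ref{sinmn1dfn_e}), while the numerator is regular and nonzero there. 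A short computation yields the top polar coefficient
\begin{equation*}
c_d(\hb,\x)=\frac{1}{d!}\bigg(\frac{\a^{\a}\x^{|\a|}}{\si_{n-1}(\x)\,\hb}\bigg)^d+O(\hb^{-d+1}).
\end{equation*}

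The first check on the claim is the cancellation of the order-$d$ pole in $[q^d]\log\cY_0$. Writing $\xi_1:=\a^{\a}\x^{|\a|}/\si_{n-1}(\x)$ and using $\log\cY_0=\sum_{m\ge1}(-1)^{m-1}(\cY_0-1)^m/m$ together with the top-polar formula above, the $\hb^{-d}$-coefficient of $[q^d]\log\cY_0$ reduces~to
\begin{equation*}
\sum_{m=1}^{d}\frac{(-1)^{m-1}}{m}\sum_{\substack{d_1+\ldots+d_m=d\\ d_i\ge1}}\prod_{i=1}^{m}\frac{\xi_1^{d_i}}{d_i!}=[q^d]\log(e^{\xi_1 q})=\xi_1\de_{d,1},
\end{equation*}
which vanishes for $d\ge 2$, as required.

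The hard part will be the analogous cancellations at polar orders $j=2,3,\ldots,d-1$. I plan to handle these by induction on~$d$, using the hypergeometric recursion
\begin{equation*}
c_d(\hb,\x)=c_{d-1}(\hb,\x)\cdot\frac{\prod_{k=1}^{l}\prod_{r=a_k(d-1)}^{a_kd-1}(a_k\x+r\hb)}{\prod_{i=1}^{n}(\x-\al_i+d\hb)-\prod_{i=1}^{n}(\x-\al_i)}
\end{equation*}
to express the order-$j$ polar coefficient of $[q^d]\log\cY_0$ in terms of the order-$j'$ polar coefficients of $[q^{d'}]\log\cY_0$ with $d'<d$ and of the Taylor data at $\hb=0$ of the above hypergeometric ratio, which itself has only a simple pole. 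The inductive hypothesis (simple-pole property of $\log\cY_0$ in lower $q$-degrees) combined with polynomial identities that reduce to the leading-order cancellation displayed above will then close the argument. Once $\log\cY_0$ is shown to have only a simple pole in each $q$-degree, the memberships $\xi,\Phi_{0;1},\Phi_{0;2},\ldots\in q\Q_\al(\x)[[q]]$ and $\Phi_{0;0}\in 1+q\Q_\al(\x)[[q]]$ follow automatically from the construction, since all the operations involved are rational operations over~$\Q_\al(\x)$.
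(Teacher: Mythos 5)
Your reduction of the lemma to the statement that $[q^d]\log\cY_0$ has at most a simple pole at $\hb=0$ is exactly the paper's first step (the paper writes $\ln\cY_0=\sum_{d,b}C_{d,b}(\x)\hb^bq^d$ and must show $b_{\min}(d)\ge-1$), and your computation of the top polar coefficient of $c_d$ and the resulting cancellation of the $\hb^{-d}$-term for $d\ge2$ is correct. But the proof stops there: the cancellations at the intermediate polar orders $j=2,\ldots,d-1$ — which you yourself identify as ``the hard part'' — are only announced as a plan, not carried out. This is a genuine gap, and not a routine one. The difficulty is that $[q^d]\log\cY_0$ at polar order $j<d$ receives contributions from products $c_{d_1}\cdots c_{d_m}$ in which the \emph{subleading} Laurent coefficients of the $c_{d_i}$ at $\hb=0$ enter; your multiplicative recursion $c_d=c_{d-1}\cdot(\text{ratio})$ controls only the leading behavior cleanly, and the ``polynomial identities that reduce to the leading-order cancellation'' needed to close the induction are precisely the content of the lemma — they are not exhibited and there is no evident reason they reduce to the $j=d$ case you checked.

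The paper closes exactly this gap by a different mechanism: it uses the hypergeometric ODE \e_ref{cY0ode_e} satisfied by $\cY_0$. Assuming a minimal $d^*$ with a pole of order $\ge2$ in $[q^{d^*}]\log\cY_0$ and letting $b^*\le-2$ be the worst exponent, one applies the operator $\prod_k(\x-\al_k+\hb D)$ to $\cY_0$ and divides by $\prod_k(\x-\al_k)\cdot\cY_0$; the worst term that survives is $\sum_k\frac{d^*C_{d^*,b^*}}{\x-\al_k}\hb^{b^*+1}q^{d^*}$, and the ODE forces it to vanish, giving $C_{d^*,b^*}=0$ — a contradiction. In effect the ODE packages all the intermediate-order cancellations at once, which is the ingredient your argument is missing. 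To complete your proof you would either need to import this ODE argument or supply the combinatorial identities for the subleading Laurent data explicitly; as written, the lemma is not established.
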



\begin{proof}
Since $\cY_0\!\in\!1+q\Q_{\al}(\hb,\x)[[q]]$, there is an expansion
\BE{cY0ln_e}
\ln\cY_0(\hb,\x,q)=\sum_{d=1}^{\i}\sum_{b=b_{\min}(d)}^{\i}\!\!\!\!\!C_{d,b}(\x)\hb^bq^d\,\EE
around $\hb\!=\!0$, with $C_{d,b}(\x)\!\in\!\Q_{\al}(\x)$;
we can assume that $C_{d,b_{\min}(d)}\!\neq\!0$ if $b_{\min}(d)\!<\!0$.
The claim of Lemma~\ref{cY0_lmm} is equivalent to the statement 
$b_{\min}(d)\!\ge\!-1$ for all $d\!\in\!\Z^+$; in such a case
$$\xi(\x,q)=\sum_{d=1}^{\i}C_{d,-1}(\x)q^d\,.$$
Suppose instead $b_{\min}(d)\!<\!-1$ for some $d\!\in\!\Z^+$.
Let
\BE{dmin_e} d^*=\min\big\{d\!\in\!\Z^+\!:\,b_{\min}(d)\!<\!-1\big\}\ge1,
\qquad b^*=b_{\min}(d^*)\le-2.\EE
The power series $\cY_0$ satisfies the differential equation
\BE{cY0ode_e}\Bigg\{\prod_{k=1}^{k=n}\big(\x\!-\!\al_k+\hb D\big)
-q\prod_{k=1}^{l=1}\prod_{r=0}^{a_k-1}\big(a_k\x\!+\!a_k\hb D\!+\!r\hb\big)\Bigg\}\cY_0(\hb,\x,q)
=\prod\limits_{k=1}^{k=n}(\x\!-\!\al_k)\cdot\cY_0(\hb,\x,q),\EE
where $D\!=\!q\frac{\tnd}{\tnd q}$.
By \e_ref{cY0ln_e}, \e_ref{dmin_e}, and induction on the number of derivatives taken,
\BE{cY0lnchng_e}\begin{split}
\frac{\left\{\prod\limits_{k=1}^{k=n}\big(\x\!-\!\al_k+\hb D\big)\right\}\cY_0(\hb,\x,q)}
{\prod\limits_{k=1}^{k=n}(\x\!-\!\al_k)\cdot\cY_0(\hb,\x,q)}
&=1+\sum_{k=1}^{k=n}\frac{d^*C_{d^*,b^*}}{\x\!-\!\al_k}\hb^{b^*+1}q^{d^*}+A(\hb,\x,q)\,,\\
q\,\frac{\left\{\prod\limits_{k=1}^{l=1}\prod\limits_{r=0}^{a_k-1}
\big(a_k\x\!+\!a_k\hb D\!+\!r\hb\big)\right\}\cY_0(\hb,\x,q)}
{\prod\limits_{k=1}^{l=1}\prod\limits_{r=0}^{a_k-1}(a_k\x\!+\!r\hb)\cdot\cY_0(\hb,\x,q)}
&= B(\hb,\x,q)\,,
\end{split}\EE
for some
$$A,B\in q\Q_{\al}(\hb,\x)_0\big[\big[q\big]\big]
+q^{d^*}\hb^{b^*+2}\Q_{\al}(\hb,\x)_0\big[\big[q\big]\big]
+q^{d^*+1}\Q_{\al}(\hb,\x)\big[\big[q\big]\big],$$
where $\Q_{\al}(\hb,\x)_0\subset\Q_{\al}(\hb,\x)$ is the subring of rational functions
in $\al$, $\hb$, and $\x$ that are regular at $\hb\!=\!0$.
Combining \e_ref{cY0ode_e} and \e_ref{cY0lnchng_e}, we conclude that $C_{d^*,b^*}\!=\!0$,
contrary to the assumption.
\end{proof}

\begin{crl}\label{F0_crl}
The power series $F_0\!\in\!\Q(w)[[q]]$ defined by \e_ref{F0dfn_e} 
admits an asymptotic expansion of the form~\e_ref{F0exp_e}.
\end{crl}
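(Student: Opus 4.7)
The plan is to deduce Corollary~\ref{F0_crl} from Lemma~\ref{cY0_lmm} via the substitution $\al=0$, $\x=1$, $\hb=1/w$. The first step is to verify the identity
\[
\cY_0(1/w,\,1,\,q)\big|_{\al=0} \;=\; F_0(w,q),
\]
obtained by comparing \e_ref{cY0dfn_e} and \e_ref{F0dfn_e} summand-by-summand: after multiplying the numerator and denominator of the $d$-th term of $\cY_0(1/w,1,q)|_{\al=0}$ by $w^{|\a|d}$ and $w^{nd}$ respectively, one recovers the corresponding summand of~\e_ref{F0dfn_e}, with the prefactor $w^{\nua d}$ arising as the ratio $w^{nd}/w^{|\a|d}$.

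Given this identity, Lemma~\ref{cY0_lmm} supplies an expansion
\[
\cY_0(\hb,\x,q)=e^{\xi(\x,q)/\hb}\sum_{b=0}^{\i}\Phi_{0;b}(\x,q)\hb^b
\]
with $\xi,\Phi_{0;1},\Phi_{0;2},\ldots\in q\Q_{\al}(\x)[[q]]$ and $\Phi_{0;0}\in 1+q\Q_{\al}(\x)[[q]]$, interpreted coefficient-by-coefficient in~$q$ as an asymptotic expansion as $\hb\to 0$. Setting $\xi(q):=\xi(1,q)|_{\al=0}$ and $\Phi_b(q):=\Phi_{0;b}(1,q)|_{\al=0}$ and substituting $\hb=1/w$ then converts this into the asymptotic expansion
\[
F_0(w,q)\;\sim\;e^{\xi(q)w}\sum_{b=0}^{\i}\Phi_b(q)w^{-b} \qquad \text{as}~w\to\i,
\]
which is of the form~\e_ref{F0exp_e}.

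The one point requiring care, and arguably the main obstacle, is ensuring that the rational functions of~$\x$ (with coefficients in~$\Q_{\al}$) appearing as the $q$-coefficients of $\xi$ and $\Phi_{0;b}$ remain regular at $(\al,\x)=(0,1)$. Inspecting the proof of Lemma~\ref{cY0_lmm}, one sees that every denominator that arises is a power of $\sum_{k}\prod_{k'\ne k}(\x-\al_{k'})$, which evaluates to $n\ne 0$ at the specialization; hence the substitution is regular, and the memberships $\xi\in q\Q[[q]]$, $\Phi_0\in 1+q\Q[[q]]$, and $\Phi_b\in q\Q[[q]]$ for $b\ge 1$ are inherited from Lemma~\ref{cY0_lmm}. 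Alternatively, and perhaps more cleanly, one can sidestep this regularity check altogether by rerunning the proof of Lemma~\ref{cY0_lmm} directly on $F_0$, using the specialized ODE obtained from~\e_ref{cY0ode_e} at $\al=0$, $\x=1$, $\hb=1/w$; the same contradiction argument in the invariants $b_{\min}(d)$, $d^*$, $b^*$ then yields the conclusion without any specialization issues.
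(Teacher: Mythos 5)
Your proof is correct and takes essentially the same route as the paper's: the paper likewise observes that $F_0(\hb^{-1},q)\equiv\cY_0(\hb,1,q)\big|_{\al=0}$ and deduces the corollary directly from Lemma~\ref{cY0_lmm}. The regularity check at $(\al,\x)=(0,1)$ that you carry out (the relevant denominators specialize to powers of $\si_{n-1}(1)\big|_{\al=0}=n\neq 0$) is left implicit in the paper but is indeed the point that justifies the specialization.
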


\begin{proof}
The existence of an asymptotic expansion~\e_ref{F0exp_e} is equivalent to
the existence of an expansion of the form~\e_ref{cYexp_e} for
$$F_0(\hb^{-1},q)\equiv\cY_0(\hb,1,q)\big|_{\al=0}\,.$$
Thus, Corollary~\ref{F0_crl} follows from Lemma~\ref{cY0_lmm}.
\end{proof}

\begin{rmk}\label{F0exp_rmk}
It is possible to give a somewhat different proof of Corollary~\ref{F0_crl}, 
without using Lemma~\ref{cY0_lmm}, which is more in line with~\cite{ZaZ}.
By \cite[Lemma~1.3]{ZaZ}, an element $H\!\in\!\cP$ admits 
an asymptotic expansion~\e_ref{F0exp_e} if $\bM^kH\!=\!H$ 
for some $k\!\in\!\Z^+$.
By \cite[Lemma~4.1]{Po2},  $\bM^nF\!=\!F$ if $|\a|\!=\!n$.
In the $\nua\!>\!0$ case, the coefficients $\ti\nc_{p,s}^{(d)}$ in \e_ref{Flpdfn_e} 
with $d\!\ge\!1$ and $\nua d\le p\!-\!s$ are determined by the requirement that 
the resulting function $F_p(w,q)$ is holomorphic at $w\!=\!0$ with value $1\!\in\!Q[[q]]$;
see~\e_ref{ncrec_e}.
On the other hand, $F_n\!=\!F_0$ if these coefficients are given~by
$$\sum_{s=0}^{|\a|-l}\ti\nc_{n,l+s}^{(1)}w^s
=-\lr{\a}\prod_{k=1}^{k=l}\prod_{r=1}^{a_k-1}(a_kw\!+\!r),\qquad
\ti\nc_{n,s}^{(d)}=0~~\forall\,d\!\ge\!2.$$
Since $F_0$ is holomorphic at $w\!=\!0$ with value $1\!\in\!Q[[q]]$, 
it follows that indeed $F_n\!=\!F_0$.
The proof of \cite[Lemma~1.3]{ZaZ} can be adjusted to show that 
this in turn implies that $F_0$ admits an asymptotic expansion of the form~\e_ref{F0exp_e}.
\end{rmk}

\noindent
In the remainder of this appendix, we prove Proposition~\ref{Fexp_prp}.
Since $F\!=\!\bD^lF_0$ and $F_0$ admits an asymptotic expansion of the form~\e_ref{F0exp_e},
so does~$F$.
The function $F(w,q)$ defined by~\e_ref{Fdfn_e} satisfies the ODE 
$$\bigg\{D_w^n-w^n-qw^{\nua}\,\prod_{k=1}^{k=l}\prod_{r=1}^{r=a_k}(a_kD_w+r)\bigg\}F=0\,,$$
where $D_w\!=\!w\!+\!q\frac{\tnd}{\tnd q}$.
Thus, the power series $\xi,\Phi_0,\Phi_1,\ldots$ introduced in Proposition~\ref{Fexp_prp} satisfy
\BE{HGpf_e3}
\bigg\{\ti{D}_w^n-w^n-qw^{\nua}\,\prod_{k=1}^{k=l}\prod_{r=1}^{r=a_k}(a_k\ti{D}_w+r)\bigg\}
\sum_{b=0}^{\i}\Phi_b(q)w^{-b}=0,\EE
where $\ti{D}_w\!=\!(1\!+\!\xi'(q))w\!+\!q\frac{\tnd}{\tnd q}$.
The resulting equation for the coefficient of $w^n$ gives
\BE{HGpf_e5} 
\big\{(1\!+\!\xi'(q))^n-1-{\a}^{\a}q(1\!+\!\xi'(q))^{|\a|}\big\}\Phi_0(q)=0.\EE
Since $\Phi_0(0)\!=\!1$, combining \e_ref{HGpf_e5} with 
the condition $\xi'(0)\!=\!0$ and comparing with \e_ref{Ldfn_e0},
we obtain the first equation in~\e_ref{PhiODE}.\\

\noindent
By the above, $\ti{D}_w\!=L(q)w\!+\!q\frac{\tnd}{\tnd q}$.
Proceeding as in \cite[Section~2.4]{ZaZ}, but using~\e_ref{Lder_e},
we find that
$$\ti{D}_w^s=\sum_{k=0}^{k=s}\sum_{i=0}^{i=k}\binom{s}{i}\H_{s-i,k-i}(L^n)(Lw)^{s-k}D^i\,,$$
where $\H_{m,j}$ are the rational functions defined by \e_ref{Hdfn_e}.
Thus, 
$$L(q)^n\bigg\{\ti{D}_w^n-w^n-qw^{\nua}\,\prod_{k=1}^{k=l}\prod_{r=1}^{r=a_k}(a_k\ti{D}_w+r)\bigg\}
=\sum_{k=1}^n(Lw)^{n-k}\fL_k\,,$$
where $\fL_k$ is the differential operator of order $k$ given by \e_ref{fLkdfn_e}.
It follows that the second equation in~\e_ref{PhiODE} is the coefficient 
of~$(Lw)^{n-1-b}$ in~\e_ref{HGpf_e3} multiplied by~$L(q)^n$.


\section{Some Combinatorics}
\label{reslmm_pf}

\begin{lmm}\label{comb_l0}
The following identities hold:
\begin{alignat*}{2}
\sum_{\b'\in\cP_m(b')}
\prod_{i=1}^{i=m}\binom{b_i}{b_i'} &=\binom{b_1\!+\!\ldots\!+b_m}{b'}
&\qquad&\forall~m\!\in\!\Z^+,\,b_1,\ldots,b_m,b'\!\in\!\bar\Z^+\,,\\
\sum_{b=0}^{\i}(-1)^b\binom{p}{b}\prod_{t=B-s+1}^{t=B}\!\!\!\!\!(t\!+\!b)
&=(-1)^p s!\binom{B}{s\!-\!p}
&\qquad&\forall~B,p,s\!\in\!\bar\Z^+\,,\\ 
\sum_{p=0}^{\i}(-1)^p\binom{m\!+\!p}{p}\Psi^p &=\frac{1}{(1\!+\!\Psi)^{m+1}}
&\qquad&\forall~m\!\in\!\bar\Z^+.
\end{alignat*}
\end{lmm}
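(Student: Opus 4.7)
The plan is to prove the three identities independently by standard generating-function arguments; none requires induction.

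For the first identity, I would compare the coefficient of $x^{b'}$ on both sides of the polynomial identity $(1+x)^{b_1+\ldots+b_m}=\prod_{i=1}^{m}(1+x)^{b_i}$. The left-hand side contributes $\binom{b_1+\ldots+b_m}{b'}$ directly by the binomial theorem. Expanding the product on the right and grouping terms by their total $x$-degree $b'$ yields exactly $\sum_{\b'\in\cP_m(b')}\prod_{i=1}^{m}\binom{b_i}{b_i'}$, giving the claim.

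For the third identity, I would invoke the generalized binomial series $(1+\Psi)^{\alpha}=\sum_{p=0}^{\infty}\binom{\alpha}{p}\Psi^p$ with $\alpha=-m-1$, together with the standard simplification $\binom{-m-1}{p}=(-1)^p\binom{m+p}{p}$ that follows from the definition of the generalized binomial coefficient $\binom{\alpha}{p}=\alpha(\alpha-1)\cdots(\alpha-p+1)/p!$.

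For the second identity, the first step is to recognize the rising factorial
\[
\prod_{t=B-s+1}^{B}(t+b)=\frac{(B+b)!}{(B-s+b)!}=s!\binom{B+b}{s}\,.
\]
Substituting this turns the left-hand side into $s!\sum_{b\ge 0}(-1)^b\binom{p}{b}\binom{B+b}{s}$. I then write $\binom{B+b}{s}=[x^s](1+x)^{B+b}$ and interchange the finite sum with the coefficient extraction; the inner sum evaluates by the binomial theorem as
\[
\sum_{b\ge 0}(-1)^b\binom{p}{b}(1+x)^b=\bigl(1-(1+x)\bigr)^p=(-x)^p\,.
\]
Hence the total equals $s!\,[x^s]\bigl((1+x)^B(-x)^p\bigr)=(-1)^p s!\,[x^{s-p}](1+x)^B=(-1)^p s!\binom{B}{s-p}$, as required.

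There is no serious obstacle here: the sums are finite because $\binom{p}{b}$ vanishes for $b>p$, so no convergence issues arise, and each identity reduces to a one-line generating-function manipulation. The only step worth double-checking is the rising-factorial rewriting in the second identity and the sign convention $\binom{B}{s-p}=0$ when $s<p$, which matches the sign pattern of the right-hand side.
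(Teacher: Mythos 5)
Your three proofs are all correct. I checked the second identity carefully, since it is the least standard: the rewriting $\prod_{t=B-s+1}^{B}(t+b)=s!\binom{B+b}{s}$ is valid for all $B,b,s\in\bar\Z^+$ (including the degenerate case $B+b<s$, where both sides vanish because the product contains the factor $0$), and the coefficient-extraction step $\sum_{b}(-1)^b\binom{p}{b}(1+x)^b=(-x)^p$ correctly collapses the sum, giving $(-1)^p s!\binom{B}{s-p}$ with the right vanishing convention for $s<p$. The paper itself takes a different tack: it does not prove the first two identities at all, deferring them to Appendix~A of an earlier paper of the author, and for the third it gives a direct argument by writing $(-1)^p\binom{m+p}{p}\Psi^p$ as $\frac{1}{m!}(\frac{\tnd}{\tnd\Psi})^m$ applied to $(-1)^p\Psi^{m+p}$ and differentiating the geometric series $m$ times -- which, as the paper acknowledges, is just a hands-on version of the same generalized Binomial Theorem you invoke via $\binom{-m-1}{p}=(-1)^p\binom{m+p}{p}$. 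Your version has the advantage of being fully self-contained: the Vandermonde convolution for the first identity and the $[x^s]$-extraction for the second are exactly the kind of one-line generating-function arguments that the cited appendix carries out, so nothing is lost and the reader is spared a trip to another reference.
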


\noindent
The first two statements of this lemma are proved in \cite[Appendix~A]{bcov1}.
The last statement is a special case of the Binomial Theorem; 
here is a direct argument:
\begin{equation*}\begin{split}
\sum_{p=0}^{\i}(-1)^p\binom{m\!+\!p}{p}\Psi^p 
&=\frac{1}{m!}\bigg\{\frac{\tnd}{\tnd\Psi}\bigg\}^m
\sum_{p=0}^{\i}(-1)^p\Psi^{m+p}
=\frac{(-1)^m}{m!}\bigg\{\frac{\tnd}{\tnd\Psi}\bigg\}^m
\sum_{p=0}^{\i}(-1)^p\Psi^p\\
&=\frac{(-1)^m}{m!}\bigg\{\frac{\tnd}{\tnd\Psi}\bigg\}^m \frac{1}{1\!+\!\Psi}
=\frac{1}{(1\!+\!\Psi)^{m+1}}\,.
\end{split}\end{equation*}

\begin{lmm}\label{res_lmm}
If $\ze,\Psi_0,\Psi_1,\ldots\!\in\!Q\Q_{\al}(\hb)[[Q]]$ and
\BE{reslmm_e0}1+\cZ^*(\hb,Q)=e^{\ze(Q)/\hb}\bigg(1+\sum_{b=0}^{\i}\Psi_b(Q)\hb^b\bigg)\,,\EE
then
\BE{res_e}\begin{split}
&\sum_{m'=0}^{\i}\frac{(m'\!+\!m)!}{m'!} 
\sum_{\b\in\cP_{m'}(m-B+m')}
\!\!\!\Bigg(\prod_{k=1}^{k=m'}\frac{(-1)^{b_k}}{b_k!}
\Res{\hb=0}\Big\{\hb^{-b_k}\cZ^*(\hb,Q)\Big\}\Bigg)\\
&~~
=\!\!\sum_{\c\in(\bar\Z^+)^{\i}}\!\!\!\Bigg(\!\!
(-1)^{|\c|+\|\c\|}  (m\!+\!|\c|)!\frac{\ze(Q)^{B-m+\|\c\|}}{(1\!+\!\Psi_0(Q))^{m+1}}
\!\binom{B}{m\!-\!\|\c\|}\!
\prod_{r=1}^{\i}\frac{1}{c_r!}
\bigg(\!\frac{\Psi_r(Q)}{(r\!+\!1)!\,(1\!+\!\Psi_0(Q))}\!\bigg)^{\!c_r}\!\Bigg)
\end{split}\EE
for all $m,B\!\in\!\bar\Z^+$.
\end{lmm}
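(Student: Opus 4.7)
My approach reduces both sides of \e_ref{res_e} to a single coefficient extraction on a Laurent series in an auxiliary variable $y$ and then evaluates that extraction via a clean factorization of $y-h(y)$. For the duration of the proof I write $A_0:=1+\Psi_0$ and $A_b:=\Psi_b$ for $b\geq 1$, so the hypothesis rewrites as $1+\cZ^*(\hb,Q)=e^{\ze/\hb}\sum_{b\geq 0}A_b\hb^b$. The residues $R_b:=\Res{\hb=0}\{\hb^{-b}\cZ^*\}$ are then just the coefficient of $\hb^{b-1}$ in $e^{\ze/\hb}\sum_{b'\geq 0}A_{b'}\hb^{b'}$ minus $\delta_{b,1}$, and can be read off directly by expanding the exponential. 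Introducing the formal generating series $h(y):=\sum_{b\geq 0}\tfrac{(-1)^bR_b}{b!}y^b$, the inner composition sum on the LHS of \e_ref{res_e} equals $[y^{m-B+m'}]h(y)^{m'}=[y^{m-B}](h(y)/y)^{m'}$, and the last identity of Lemma~\ref{comb_l0} (applicable $Q$-adically since $h(y)/y=O(Q)$) collapses the outer sum over $m'$ to
$$\text{LHS of \e_ref{res_e}}=m!\,[y^{-B-1}]\big(y-h(y)\big)^{-(m+1)}.$$

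The heart of the argument is the factorization
\begin{equation}\label{res_fact}
y-h(y)=(y-\ze)\,\Xi(\ze-y),\qquad \Xi(u):=\sum_{b\geq 0}\frac{A_b\,u^b}{(b+1)!}.
\end{equation}
To derive \e_ref{res_fact} I would compute $h(y)-y$ directly from the formula for $R_b$, split off the $b=0$ contribution $R_0=\sum_{j\geq 1}\ze^jA_{j-1}/j!$, and reindex the remaining $b\geq 1$ double sum by $b':=b+j-1$. The binomial identity $\sum_{b=1}^{b'+1}\binom{b'+1}{b}(-y)^b\ze^{b'+1-b}=(\ze-y)^{b'+1}-\ze^{b'+1}$ (in which the ``missing'' $b=0$ binomial term is $\ze^{b'+1}$) produces the desired $\sum_{b'\geq 0}A_{b'}(\ze-y)^{b'+1}/(b'+1)!=(\ze-y)\Xi(\ze-y)$ together with a correction totaling $-\sum_{b'\geq 0}A_{b'}\ze^{b'+1}/(b'+1)!=-R_0$, which cancels the $b=0$ contribution exactly and yields \e_ref{res_fact}.

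With \e_ref{res_fact} established, $(y-h(y))^{-(m+1)}=(y-\ze)^{-(m+1)}\Xi(\ze-y)^{-(m+1)}$. Setting $s:=y-\ze$ and applying the generalized multinomial theorem to $\Xi(-s)=A_0\big(1+\sum_{b\geq 1}(-1)^bA_bs^b/((b+1)!A_0)\big)$ gives
$$\Xi(-s)^{-(m+1)}=A_0^{-(m+1)}\!\!\sum_{\c}\!(-1)^{|\c|+\|\c\|}\frac{(m+|\c|)!}{m!\,\prod_rc_r!}\prod_{r\geq 1}\!\bigg(\frac{A_r}{(r+1)!\,A_0}\bigg)^{\!c_r}\!s^{\|\c\|},$$
while expanding $(y-\ze)^{-(m+1)}$ at $y=\infty$ produces $\sum_{r\geq 0}\binom{m+r}{r}\ze^r y^{-(m+1+r)}$. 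For each $\c$ with $\|\c\|\leq m$ the product $s^{\|\c\|}(y-\ze)^{-(m+1)}=(y-\ze)^{\|\c\|-m-1}$ contributes $\binom{B}{m-\|\c\|}\ze^{B-m+\|\c\|}$ to $[y^{-B-1}]$; for $\|\c\|\geq m+1$ the factor is a polynomial in $y$ with only nonnegative powers and contributes nothing. Multiplying by $m!$ and resubstituting $A_0=1+\Psi_0$, $A_b=\Psi_b$ for $b\geq 1$, reproduces the RHS of \e_ref{res_e} term-by-term.

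The hard part will be \e_ref{res_fact}: everything before it is a formal repackaging via Lemma~\ref{comb_l0}, and everything after it is a routine generalized-multinomial computation, but \e_ref{res_fact} itself demands simultaneously tracking the $-\delta_{b,1}$ correction in $R_b$ and the separate treatment of $R_0$ (whose $j$-sum begins at $1$ rather than $0$), with these two effects conspiring to absorb the missing $b=0$ binomial term exactly.
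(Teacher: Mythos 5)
Your proof is correct, and it takes a genuinely different route from the paper's. The paper compares coefficients of each monomial $\Psi_0^{c_0}\Psi_1^{c_1}\Psi_2^{c_2}\cdots$ directly: it counts, via a multinomial coefficient, the ways the $m'$ factors on the left-hand side of \e_ref{res_e} can be assigned to the various $\Psi_r$ and to $\ze$, collapses the resulting sum over $\b$ with the first two identities of Lemma~\ref{comb_l0}, and only then invokes the third identity to produce $(1\!+\!\Psi_0)^{-(m+1)}$. You use only the third identity, to rewrite the entire left-hand side as $m!$ times the coefficient of $y^{-B-1}$ in $(y-h(y))^{-(m+1)}$, and then reduce the lemma to the single factorization $y-h(y)=(y-\ze)\,\Xi(\ze-y)$. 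I have checked that factorization: the reindexing $b'=b+j-1$, the cancellation of the $b=0$ contribution $R_0$ against the missing $b=0$ binomial term, and the $+y$ produced by the $-\de_{b,1}$ correction all work out exactly as you describe; the subsequent expansion of $(y-\ze)^{-(m+1)}\Xi(\ze-y)^{-(m+1)}$ then reproduces the right-hand side term by term, with the terms $\|\c\|>m$ vanishing on both sides as they should. Your version is structurally more illuminating --- it isolates the whole content of the lemma in one algebraic identity and explains the shapes $(1\!+\!\Psi_0)^{-(m+1)}$ and $\Psi_r/((r\!+\!1)!(1\!+\!\Psi_0))$ at a glance --- at the cost of some care about where the Laurent series live. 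The one point you should make explicit: all of $y-h(y)$, $y-\ze$, and $\Xi(\ze-y)$ are units in $\Q_{\al}((y))[[Q]]$ (the first two equal $y$ times $1+O(Q)$, the last equals $1+O(Q)$), so the $Q$-adic inverse of $1-h(y)/y$, the expansion of $(y-\ze)^{-1}$ at $y=\i$, and the $Q$-adic inverse of $\Xi(\ze-y)$ are the unique inverses in that single ring; this is what legitimizes equating the product of the three expansions with the expansion of $(y-h(y))^{-(m+1)}$ before extracting the coefficient of $y^{-B-1}$.
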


\begin{proof}
If $\c\!\in\!(\bar\Z^+)^{\i}$, let
$$\Psi^{\c}=\prod_{r=1}^{\i}\Psi_r^{c_r} \,, \qquad
\om(\c)=\prod_{r=1}^{\i}\big((r\!+\!1)!\big)^{c_r}\,.$$
We show that each $\Psi_0^{c_0}\Psi^{\c}$, with $c_0\!\in\!\bar\Z^+$,
enters with the same coefficient on the two sides of~\e_ref{res_e}.\\

\noindent
For $c_0\!\in\!\bar\Z^+$ and $\c\!\in\!(\bar\Z^+)^{\i}$, let
$$S(c_0,\c)=\big\{(r,j)\!\in\!\bar\Z^+\!\times\!\Z^+\!:
(r,j)\!\in\!\{r\}\!\times\![c_r]~\forall\,r\!\in\!\bar\Z^+\big\}.$$
This is a finite set of cardinality $c_0\!+\!|\c|$.
By~\e_ref{reslmm_e0}, for all $b\!\in\!\bar\Z^+$
$$\Res{\hb=0}\Big\{\hb^{-b} \cZ^*(\hb,Q)\Big\}
=\sum_{r=\max(b-1,0)}^{\i}\!\frac{\ze(Q)^{r+1-b}}{(r\!+\!1\!-\!b)!}\Psi_r(Q)
+\begin{cases}
\ze(Q),&\hbox{if}~b\!=\!0;\\
0,&\hbox{if}~b\!>\!0.\end{cases}$$
Thus, for each $\b\!\in\!(\bar\Z^+)^{S(c_0,\c)}$ and every choice of 
disjoint subsets $S_0,S_1,\ldots$ of $[m']$, where
$$m'=B-m+|\b|\,,$$ 
of cardinalities $c_0,c_1,\ldots$, the term $\Psi_0^{c_0}\Psi^{\c}$ appears in the 
$m'$-th summand on the left-hand side of~\e_ref{res_e} with the coefficient
\BE{split_e}\begin{split}
&\frac{(m'\!+\!m)!}{m'!}\,\ze^{m'-c_0-|\c|} \prod_{(r,j)\in S(c_0,\c)}\!\!\!
\bigg(\frac{(-1)^{b_{r,j}}}{b_{r,j}!}\cdot
\frac{\ze^{r+1-b_{r,j}}}{(r\!+\!1\!-\!b_{r,j})!}\bigg)\\
&\qquad\qquad\qquad\qquad\qquad
=\frac{\ze^{B-m+\|\c\|}}{\om(\c)}\,
(-1)^{|\b|}\frac{(m'\!+\!m)!}{m'!}
\prod_{(r,j)\in S(c_0,\c)}\!\!\!\binom{r\!+\!1}{b_{r,j}} 
\,.\,\footnotemark
\end{split}\EE
\footnotetext{The factors in the $m'$-fold product in \e_ref{res_e} 
that contribute $\Psi_r$ are indexed by the elements of~$S_r$;
the $j$-th such factor arises from $\Res{\hb=0}\big\{\hb^{-b_{r,j}}\cZ^*(\hb,Q)\big\}$
with $r\!\ge\!b_{r,j}\!-\!1$.
This leaves $m'\!-\!c_0\!-\!|\c|$ factors that contribute~$\ze(Q)$
from $\Res{\hb=0}\big\{\cZ^*(\hb,Q)\big\}$.
The first expression in \e_ref{split_e} is defined to be $0$ if 
$b_{r,j}\!>\!r\!+\!1$ for some $(r,j)\!\in\!S(c_0,\c)$.}Since the number of above choices is
$$\binom{m'}{c_0,\c,m'\!-\!c_0\!-\!|\c|}
\equiv \frac{m'!}{c_0!\c!(m'\!-\!c_0\!-\!|\c|)!} \,,$$
it follows that the coefficient of $\Psi_0^{c_0}\Psi^{\c}$ 
on the left-hand side of~\e_ref{res_e} is
\begin{equation}\label{combpf_e3}
\frac{\ze^{B-m+\|\c\|}}{\om(\c)c_0!\c!}\,
\sum_{\b\in(\bar\Z^+)^{S(c_0,\c)}}\!\!\Bigg( (-1)^{|\b|}\!\!\!\!\!\!
\prod_{t=B-m-c_0-|\c|+1}^{t=B}\!\!\!\!\!\!\!\!\!\!\!\!\!\!(t\!+\!|\b|)
\prod_{(r,j)\in S(c_0,\c)}\!\!\binom{r\!+\!1}{b_{r,j}}\Bigg).
\end{equation}
If $(c_0,\c)\!=\!(0,\0)$ and thus $(\bar\Z^+)^{S(c_0,\c)}\!\equiv\!\{\0\}$, 
this expression
reduces to $m!\binom{B}{m}\ze^{B-m}$.
Otherwise, \e_ref{combpf_e3} becomes
\begin{equation*}\begin{split}
&\frac{\ze^{B-m+\|\c\|}}{c_0!\c!\om(\c)}\,
\sum_{b=0}^{\i}\Bigg((-1)^b\binom{c_0\!+\!|\c|\!+\!\|\c\|}{b}
\prod_{t=B-m-c_0-|\c|+1}^{t=B}\!\!\!\!\!\!\!\!\!\!\!\!\!\!(t\!+\!b)\Bigg)\\
&\qquad\qquad\qquad\qquad\qquad
=\frac{\ze^{B-m+\|\c\|}}{c_0!\c!\om(\c)}
(-1)^{c_0+|\c|+\|\c\|}(m\!+\!c_0\!+\!|\c|)!\binom{B}{m\!-\!\|\c\|}\,,
\end{split}\end{equation*}
by the first two statements of Lemma~\ref{comb_l0}.
Lemma~\ref{res_lmm} now follows from the last statement of Lemma~\ref{comb_l0}.
\end{proof}

\noindent
For any $d\!\in\!\bar\Z^+$ and $t\!\in\!\Z$, let
\BE{binomdfn_e} \binom{t}{d}=\frac{\prod\limits_{r=0}^{d-1}(t\!-\!r)}{d!}\,.\EE
For $r\!\in\!\bar\Z^+$ and $\p\!\in\!(\bar\Z^+)^n$, 
define $w_r\!\in\!\Q[\al]$ and $C_{r;\p}\!\in\!\Q$ by  
$$ w_r\equiv \sum_{i=1}^{i=n}\al_i^r \equiv 
\sum_{\p\in(\bar\Z^+)^n}\!\!\!\!C_{r;\p}\hat\si_1^{p_1}\hat\si_2^{p_2}\ldots\hat\si_n^{p_n}\,.$$
If $r_1,r_2\!\in\![n]$ with $r_1\!\neq\!r_2$ and $b_1,b_2\!\in\!\bar\Z^+$, let
\BE{symmpol_e}  \p=(p_1,\ldots,p_n),  \quad 
p_r=\begin{cases} b_i,&\hbox{if}~r\!=\!r_i;\\
0,&\hbox{otherwise};\end{cases}
\qquad C^{(b_1,b_2)}_{r_1,r_2}=C_{b_1r_1+b_2r_2;\p}\,.\EE
Thus,  $C^{(b_1,b_2)}_{r_1,r_2}$ is the coefficient of $\hat\si_{r_1}^{b_1}\hat\si_{r_2}^{b_2}$
in the expansion of $w_{b_1r_1+b_2r_2}$ in terms of products of 
the modified (by sign) elementary symmetric polynomials~$\hat\si_r$.
If $b_1\!<\!0$ or $b_2\!<\!0$, set $C^{(b_1,b_2)}_{r_1,r_2}\!=\!0$.

\begin{lmm}\label{Newt_lmm}
If $r_1,r_2\!\in\![n]$ with $r_1\!\neq\!r_2$ and $b_1,b_2\!\in\!\bar\Z^+$ with $b_1\!+\!b_2\!\neq\!0$,
$$C^{(b_1,b_2)}_{r_1,r_2}=\binom{b_1\!+\!b_2\!-\!1}{b_2}r_1+\binom{b_1\!+\!b_2\!-\!1}{b_1}r_2. $$
\end{lmm}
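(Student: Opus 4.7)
The plan is to derive the result from the classical generating-function identity expressing power sums in terms of elementary symmetric polynomials, treating $\hat\si_1,\ldots,\hat\si_n$ as algebraically independent indeterminates (which they are, as polynomials in $\al_1,\ldots,\al_n$). Setting $E(t)=\sum_{r\ge 1}\hat\si_r t^r\in\Q[\hat\si_1,\ldots,\hat\si_n][[t]]$, the factorization $\prod_{i=1}^n(1-\al_i t)=\sum_{r\ge 0}(-1)^r\si_r t^r = 1-E(t)$ and a logarithmic differentiation give
\begin{equation*}
\sum_{k\ge 1}\frac{w_k}{k}t^k \;=\; -\log\bigl(1-E(t)\bigr) \;=\; \sum_{m\ge 1}\frac{E(t)^m}{m}.
\end{equation*}

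First, I expand $E(t)^m=\bigl(\sum_{r\ge 1}\hat\si_r t^r\bigr)^m$ as a sum over ordered $m$-tuples $(s_1,\ldots,s_m)\in(\Z^+)^m$ of $\hat\si_{s_1}\cdots\hat\si_{s_m}t^{s_1+\cdots+s_m}$, and read off the contribution to the coefficient of $\hat\si_{r_1}^{b_1}\hat\si_{r_2}^{b_2}$ in $w_k/k$ for $k=b_1r_1+b_2r_2$. Since each factor in $E(t)^m$ contributes exactly one $\hat\si$-variable, only the term $m=b_1+b_2$ can produce the monomial $\hat\si_{r_1}^{b_1}\hat\si_{r_2}^{b_2}$; the number of ordered tuples with $b_1$ entries equal to $r_1$ and $b_2$ entries equal to $r_2$ is the multinomial coefficient $\binom{b_1+b_2}{b_1}$. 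Therefore
\begin{equation*}
C^{(b_1,b_2)}_{r_1,r_2}\;=\;\frac{b_1r_1+b_2r_2}{b_1+b_2}\binom{b_1+b_2}{b_1}.
\end{equation*}

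Finally, using the elementary identities $\binom{b_1+b_2-1}{b_2}=\frac{b_1}{b_1+b_2}\binom{b_1+b_2}{b_1}$ and $\binom{b_1+b_2-1}{b_1}=\frac{b_2}{b_1+b_2}\binom{b_1+b_2}{b_1}$, the right-hand side rewrites as $\binom{b_1+b_2-1}{b_2}r_1+\binom{b_1+b_2-1}{b_1}r_2$, which is the claimed formula. There is no real obstacle: the only subtlety is making sure that $\hat\si_1,\ldots,\hat\si_n$ are genuinely algebraically independent (so the coefficient $C^{(b_1,b_2)}_{r_1,r_2}$ is well-defined) and that the formal manipulation of $-\log(1-E(t))$ is legitimate in $\Q[\hat\si_1,\ldots,\hat\si_n][[t]]$, both of which are standard.
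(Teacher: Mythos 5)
Your proof is correct, and it takes a genuinely different route from the paper. The paper first establishes the boundary case $b_2\!=\!0$ by specializing $\al_1,\ldots,\al_n$ to the roots of $\al^n\!-\!\al^{n-r_1}$ (so that $\hat\si_{r_1}\!=\!1$ and all other $\hat\si_r$ vanish, forcing $C^{(b_1,0)}_{r_1,r_2}\!=\!w_{b_1r_1}\!=\!r_1$), and then uses Newton's identity $w_k=\sum_r\hat\si_rw_{k-r}+k\hat\si_k$ to derive the Pascal-type recursion $C^{(b_1,b_2)}_{r_1,r_2}=C^{(b_1-1,b_2)}_{r_1,r_2}+C^{(b_1,b_2-1)}_{r_1,r_2}$, concluding by induction. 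You instead read the coefficient directly off the closed generating-function identity $\sum_k w_kt^k/k=-\log(1-E(t))=\sum_m E(t)^m/m$: since $E(t)^m$ is homogeneous of degree $m$ in the $\hat\si$'s, only $m=b_1\!+\!b_2$ contributes, and the multinomial count gives the symmetric closed form $\frac{b_1r_1+b_2r_2}{b_1+b_2}\binom{b_1+b_2}{b_1}$ in one step, which then rewrites as the stated expression via the absorption identities. Your approach buys a shorter, induction-free argument and makes the symmetry of the answer in $(r_1,b_1)\leftrightarrow(r_2,b_2)$ transparent, at the cost of invoking the logarithmic generating function rather than the single Newton identity the paper restricts itself to; the two points you flag (algebraic independence of the $\hat\si_r$, so the coefficients are well defined, and the legitimacy of the formal $\log$ expansion) are indeed the only things to check, and both are standard.
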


\begin{proof}
If $b_1\!\in\!\Z^+$ and $\al_1,\ldots,\al_n$ are the $n$~roots of the polynomial 
$\al^n\!-\!\al^{n-r_1}=\al^{n-r_1}\big(\al^{r_1}\!-\!1\big)$,
$$C^{(b_1,0)}_{r_1,r_2}=\sum_{i=1}^{i=n}\al_i^{b_1r_1}=r_1\cdot1^{b_1}+(n\!-\!r_1)\cdot0^{b_1r_1}
=r_1\,;$$
thus, the claim holds when either $b_1\!=\!0$ or $b_2\!=\!0$.
If $b_1,b_2\!\in\!\Z^+$, 
$$w_{b_1r_1+b_2r_2}
=\sum_{r=1}^{b_1r_1+b_2r_2-1}\!\!\!\!\!\!\hat\si_rw_{b_1r_1+b_2r_2-r}
~+~(b_1r_1\!+\!b_2r_2)\hat\si_{b_1r_1+b_2r_2}$$
by Newton's identity \cite[p577]{Artin}.
This gives
$$C^{(b_1,b_2)}_{r_1,r_2}=C^{(b_1-1,b_2)}_{r_1,r_2}+C^{(b_1,b_2-1)}_{r_1,r_2}
\qquad\forall\,b_1,b_2\!\in\!\Z^+.$$
Along with the $b_1\!=\!0$ or $b_2\!=\!0$ case, this implies the claim by induction.
\end{proof}

\begin{lmm}\label{Lbinom_lmm}
The power series $L\!\in\!1\!+\!q\Q[[q]]$ defined by \e_ref{Ldfn_e0} satisfies
\BE{Lbinom_e}\LRbr{\frac{nL(q)^{\nua d+nt}}{|\a|+\nua L(q)^n}}_{q;d}
=\big(\a^{\a}\big)^d\binom{d\!+\!t\!-\!1}{d}\EE
for all $d\!\in\!\bar\Z^+$ and $t\!\in\!\Z$.
\end{lmm}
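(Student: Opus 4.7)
The plan is to prove~\eqref{Lbinom_e} by a single application of Lagrange inversion in its B\"urmann form, via the substitution $w\!=\!L^n\!-\!1$. Under this substitution, the defining equation $L^n\!-\!\a^\a qL^{|\a|}\!=\!1$ becomes $w\!=\!q\phi(w)$ with $\phi(w)\!=\!\a^\a(1\!+\!w)^{|\a|/n}$, interpreted as a formal binomial series in $\Q[[w]]$ (all manipulations are purely formal; generalized binomial coefficients will match~\eqref{binomdfn_e}). Since $L^n\!=\!1\!+\!w$, the left-hand side of~\eqref{Lbinom_e} is simply
$$\LRbr{\frac{n(1\!+\!w)^{(\nu_\a d+nt)/n}}{|\a|+\nu_\a(1\!+\!w)}}_{q;d}\,.$$

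The key step is to recognize the denominator. Differentiating $w\!=\!q\a^\a(1\!+\!w)^{|\a|/n}$, or equivalently using $q\a^\a(1\!+\!w)^{|\a|/n-1}\!=\!w/(1\!+\!w)$, one obtains $q\phi'(w)\!=\!\tfrac{|\a|}{n}\cdot\tfrac{w}{1+w}$ and hence
$$1-q\phi'(w)\;=\;\frac{n+\nu_\a w}{n(1+w)}\;=\;\frac{|\a|+\nu_\a L^n}{nL^n}\,.$$
Choosing $h(w)\!=\!(1\!+\!w)^{(\nu_\a d+nt-n)/n}$, the target expression equals exactly $h(w)/(1\!-\!q\phi'(w))$. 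The B\"urmann form of Lagrange inversion,
$$\LRbr{\frac{h(w)}{1-q\phi'(w)}}_{q;d}\;=\;\LRbr{h(w)\phi(w)^d}_{w;d}\qquad\forall\,d\in\bar\Z^+,$$
which I would either cite as classical or derive in two lines from $w'(q)\!=\!\phi(w)/(1\!-\!q\phi'(w))$ together with the standard form $\LRbr{h(w)}_{q;d}\!=\!\tfrac{1}{d}\LRbr{h'(w)\phi(w)^d}_{w;d-1}$, now reduces the problem to a single coefficient extraction.

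The final computation is the clean part: the fractional exponents cancel exactly, giving
$$h(w)\phi(w)^d\;=\;(\a^\a)^d(1\!+\!w)^{(\nu_\a d+nt-n+|\a|d)/n}\;=\;(\a^\a)^d(1\!+\!w)^{d+t-1},$$
whose $w^d$-coefficient is $(\a^\a)^d\binom{d+t-1}{d}$, matching the right-hand side of~\eqref{Lbinom_e}. The only ``obstacle'' is bookkeeping: verifying that $\phi\!\in\!\Q[[w]]$ produces a power series $w(q)\!\in\!q\Q[[q]]$ genuinely equal to $L(q)^n\!-\!1$, and that $\binom{d+t-1}{d}$ extracted from the binomial series coincides with the convention~\eqref{binomdfn_e} for all $t\!\in\!\Z$ (including negative~$t$, where the integer $d\!+\!t\!-\!1$ may be negative but the product definition remains unambiguous). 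Both points are routine and do not require handling a separate degenerate case of~$\nu_\a d\!+\!nt\!=\!|\a|$ that would arise in alternative approaches based on writing the integrand as a $q$-derivative.
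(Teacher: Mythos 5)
Your proof is correct, and it takes a genuinely different route from the paper's. The paper first disposes of the extremal cases $|\a|=0,n$ via \e_ref{Ldfn_e2}, reduces to $\a^{\a}=1$, passes to the conjugates $L_\ze(q)=\ze L(\ze^{|\a|}q)$ over the $n$-th roots of unity, rewrites the coefficient as a power sum of the roots of $\ell^n-q\ell^{|\a|}-1=0$, and evaluates that power sum with Newton's identities (Lemma~\ref{Newt_lmm}), splitting into three cases according to the sign of $\nua(d\!+\!1)+n(t\!-\!1)$. Your substitution $w=L^n-1$ turns \e_ref{Ldfn_e0} into $w=q\phi(w)$ with $\phi(w)=\a^{\a}(1\!+\!w)^{|\a|/n}$, identifies $|\a|+\nua L^n$ with $nL^n(1-q\phi'(w))$, and a single application of Lagrange--B\"urmann collapses everything to the coefficient of $w^d$ in $(\a^{\a})^d(1\!+\!w)^{d+t-1}$; the arithmetic $\nua d+|\a|d=nd$ is exactly what cancels the fractional exponents. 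I checked the key computations: $q\phi'(w)=\tfrac{|\a|}{n}\tfrac{w}{1+w}$ follows from the defining relation, the identification of $w(q)$ with $L(q)^n-1$ holds by uniqueness of the solution of $w=q\phi(w)$ in $q\Q[[q]]$, and the generalized binomial coefficient agrees with the convention~\e_ref{binomdfn_e} for all $t\in\Z$. What your approach buys: it is uniform in $|\a|$ and in the sign of the exponent, so there is no case analysis, and Lemma~\ref{Newt_lmm} becomes unnecessary for this purpose; you also correctly identify that the degenerate case you avoid, $\nua d+nt=|\a|$, is precisely $\nua(d\!+\!1)+n(t\!-\!1)=0$, the case the paper must treat separately via the logarithmic derivative of the product of the roots. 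What the paper's argument buys is self-containedness at the level of elementary symmetric-function identities rather than an appeal to Lagrange inversion --- though, as you note, the B\"urmann form follows in two lines from the standard form, so this is a minor cost.
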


\begin{proof}
In the two extremal cases, by \e_ref{Ldfn_e2}
$$\frac{nL(q)^{\nua d+nt}}{|\a|+\nua L(q)^n}=
\begin{cases}(1+q)^{d+t-1},&\hbox{if}~|\a|\!=\!0;\\
(1-\a^{\a} q)^{-t},&\hbox{if}~|\a|\!=\!n.\end{cases}$$
Thus, the claim in these two cases follows from the binomial theorem;
so, we can assume that $0\!<\!|\a|\!<\!n$.
Replacing $\a^{\a}q$ by~$q$ in~\e_ref{Ldfn_e0}, we observe that it is sufficient
to prove~\e_ref{Lbinom_lmm} with $L$ defined by~\e_ref{Ldfn_e0} with $\a^{\a}$
replaced by~1 and $|\a|$ by some $a\!\in\!\Z^+$ with $a\!<\!n$;
thus, $\nua\!=\!\nu\!\equiv\!n\!-\!a$.\\

\noindent
With these reductions, for each $n$-th root of unity $\ze\!\in\!\C$ let
$$L_{\ze}(q)=\ze L(\ze^a q)\in \Q[[q]].$$ 
Then,
\begin{alignat*}{1}
L_{\ze}(q)^n-qL_{\ze}(q)^a =1 \quad&\Lra\quad
\frac{1}{a+\nu L_{\ze}(q)^n}=
\frac{L_{\ze}'(q)}{q L_{\ze}(q)^{a+1}}\\
\ze^{\nu d+nt}\cdot\big(\ze^a q\big)^d=q^d \quad&\Lra\quad
\LRbr{\frac{nL(q)^{\nu d+nt}}{a+\nu L(q)^n}}_{q;d}
=\sum_{\ze^n=1}\LRbr{\frac{L_{\ze}(q)^{\nu d+nt}}{a+\nu L_{\ze}(q)^n}}_{q;d}\,,
\end{alignat*}
where $'$ denotes $q\frac{\tnd}{\tnd q}$ as before.
Combining these two conclusions, we find that 
\BE{Lsum_e} \LRbr{\frac{nL(q)^{\nu d+nt}}{a+\nu L(q)^n}}_{q;d}
=\sum_{\ze^n=1}
\LRbr{L_{\ze}(q)^{\nu(d+1)+n(t-1)}\frac{L_{\ze}'(q)}{L_{\ze}(q)}}_{q;d+1}\,.\EE
If $\nu(d\!+\!1)\!+\!n(t\!-\!1)\!=\!0$, this gives
\begin{equation*}\begin{split}
\LRbr{\frac{nL(q)^{\nu d+nt}}{a+\nu L(q)^n}}_{q;d}
=(d\!+\!1)\sum_{\ze^n=1}\LRbr{\ln L_{\ze}(q)}_{q;d+1}
&=(d\!+\!1)\LRbr{\ln\bigg(\prod_{\ze^n=1}\!\!L_{\ze}(q)\bigg)}_{q;d+1}\\
&=(d\!+\!1)\LRbr{\ln(-1)^{n-1}}_{q;d+1}=0,
\end{split}\end{equation*}
since $\{L_{\ze}\}_{\ze^n=1}$ is the set of the roots of 
$\ell^n-q\ell^a-1=0$.
Since $\nu\!<\!n$, our assumption on $(d,t)$ implies that $0\!\le\!d\!+\!t\!-\!1\!<\!d$, 
and so the right-hand side of~\e_ref{Lbinom_e} also vanishes.
If $\nu(d\!+\!1)\!+\!n(t\!-\!1)\!>\!0$, \e_ref{Lsum_e} and Lemma~\ref{Newt_lmm} give
\begin{equation*}\begin{split}
\LRbr{\frac{nL(q)^{\nu d+nt}}{a+\nu L(q)^n}}_{q;d}
&=\frac{d\!+\!1}{\nu(d\!+\!1)+n(t\!-\!1)}
\sum_{\ze^n=1}\LRbr{L_{\ze}(q)^{\nu(d+1)+n(t-1)}}_{q;d+1}\\
&=\frac{d\!+\!1}{\nu(d\!+\!1)+n(t\!-\!1)}C_{\nu,n}^{(d+1,t-1)}
=\binom{d\!+\!t\!-\!1}{d}\,,
\end{split}\end{equation*}
as claimed (the last equality holds even if $t\!\le\!0$).
If $\nu(d\!+\!1)\!+\!n(t\!-\!1)\!<\!0$, \e_ref{Lsum_e} and Lemma~\ref{Newt_lmm} give
\begin{equation*}\begin{split}
\LRbr{\frac{nL(q)^{\nu d+nt}}{a+\nu L(q)^n}}_{q;d}
&=\frac{d\!+\!1}{\nu(d\!+\!1)+n(t\!-\!1)}
\sum_{\ze^n=1}\LRbr{\bigg(\frac{1}{L_{\ze}(q)}\bigg)^{a(d+1)-n(d+t)}}_{q;d+1}\\
&=\frac{d\!+\!1}{\nu(d\!+\!1)+n(t\!-\!1)}
C_{a,n}^{(d+1,-(d+t))}(-1)^{d+1} =(-1)^d\binom{-t}{d}\,,
\end{split}\end{equation*}
since $\{1/L_{\ze}\}_{\ze^n=1}$ is the set of the roots of 
$\ell^n+q\ell^{\nu}-1=0$;
the last equality holds even if $d\!+\!t\!>\!0$.
Since 
$$(-1)^d\binom{-t}{d}=\binom{d\!+\!t\!-\!1}{d}\,,$$
\e_ref{Lbinom_e} holds in this last case as well.
\end{proof}

\begin{crl}\label{Lbinom_crl}
The power series $L\!\in\!1\!+\!q\Q[[q]]$ defined by \e_ref{Ldfn_e0} satisfies
\BE{Lbinomder_e}
\LRbr{\frac{nL(q)^{\nua d+nt}}{(|\a|\!+\!\nua L(q)^n)^k}\cdot\frac{L'(q)}{L(q)}}_{q;d}
=\frac{(\a^{\a})^d}{n^k}\sum_{r=0}^{d-1}\binom{k\!-\!1\!+\!r}{r}\binom{d\!-\!1+\!t}{d\!-\!1\!-\!r}
\bigg(\!-\frac{\nua}{n}\bigg)^r\EE
for all $d\!\in\!\bar\Z^+$ and $k,t\!\in\!\Z$.
\end{crl}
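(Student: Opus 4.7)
\textbf{Proof plan for Corollary~\ref{Lbinom_crl}.}

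My approach is to expand the denominator $(|\a|+\nua L^n)^k$ into a geometric-type series and then reduce everything to a single auxiliary coefficient that can be computed from Lemma~\ref{Lbinom_lmm}. The starting point is the identity $|\a|+\nua L^n = n+\nua(L^n-1)$, which is immediate from $n=|\a|+\nua$. Expanding formally in the small quantity $\nua(L^n-1)/n$ gives
\begin{equation*}
\frac{n}{(|\a|+\nua L^n)^k}=\frac{1}{n^{k-1}}\sum_{r=0}^{\infty}\binom{k-1+r}{r}\Bigl(-\frac{\nua}{n}\Bigr)^{\!r}(L^n-1)^r.
\end{equation*}
Since $L^n-1=\a^\a qL^{|\a|}$ by~\e_ref{Ldfn_e0} and $|\a|=n-\nua$, each summand contributes $(\a^\a)^rq^rL^{\nua(d-r)+n(t+r)}$ when multiplied by $L^{\nua d+nt}$. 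Combining with the factor $L'/L$ and extracting $[q^d]$ (which turns $q^r$ into a shift) yields
\begin{equation*}
\LRbr{\frac{nL^{\nua d+nt}(L'/L)}{(|\a|+\nua L^n)^k}}_{q;d}
=\frac{1}{n^{k-1}}\sum_{r=0}^{\infty}\binom{k-1+r}{r}\Bigl(-\frac{\nua}{n}\Bigr)^{\!r}(\a^\a)^r
\LRbr{L^{\nua(d-r)+n(t+r)}\frac{L'}{L}}_{q;d-r}.
\end{equation*}

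The heart of the proof is then an auxiliary identity: for every $d',t'\in\Z$ with $d'\ge0$,
\begin{equation*}
\LRbr{L^{\nua d'+nt'}\,\frac{L'}{L}}_{q;d'}
=\frac{(\a^\a)^{d'}}{n}\binom{d'+t'-1}{d'-1}.
\end{equation*}
To prove this I would use the identity $L'/L=(L^n-1)/(|\a|+\nua L^n)$ from~\e_ref{Lder_e} to write
\begin{equation*}
L^M\frac{L'}{L}=\frac{L^{M+n}-L^M}{|\a|+\nua L^n},
\end{equation*}
and then apply Lemma~\ref{Lbinom_lmm} separately to $L^{M+n}=L^{\nua d'+n(t'+1)}$ and to $L^M=L^{\nua d'+nt'}$. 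The resulting coefficient is $(\a^\a)^{d'}\bigl(\binom{d'+t'}{d'}-\binom{d'+t'-1}{d'}\bigr)$, which collapses via Pascal's identity to $(\a^\a)^{d'}\binom{d'+t'-1}{d'-1}$. Dividing by $n$ gives the claim; the $d'=0$ case holds trivially, since $L'/L\in q\Q[[q]]$ forces the left-hand side to vanish and $\binom{t'-1}{-1}=0$ under the convention in~\e_ref{binomdfn_e}.

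Substituting the auxiliary identity with $d'=d-r$ and $t'=t+r$, and noting the pleasant simplification $d'+t'-1=d+t-1$ and $d'-1=d-r-1$, gives
\begin{equation*}
\LRbr{\frac{nL^{\nua d+nt}(L'/L)}{(|\a|+\nua L^n)^k}}_{q;d}
=\frac{(\a^\a)^d}{n^k}\sum_{r=0}^{\infty}\binom{k-1+r}{r}\binom{d+t-1}{d-r-1}\Bigl(-\frac{\nua}{n}\Bigr)^{\!r},
\end{equation*}
which matches the right-hand side of~\e_ref{Lbinomder_e} after truncating the sum at $r=d-1$ using $\binom{d-1+t}{d-1-r}=0$ for $r\ge d$. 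There is no serious obstacle; the one subtle point is the uniform validity of the auxiliary identity across the sign of $t'$, and this is resolved cleanly by the Pascal cancellation.
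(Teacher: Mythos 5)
Your proof is correct and follows essentially the same route as the paper's: both arguments binomially expand the $k$-th power of $|\a|\!+\!\nua L^n=n\!+\!\nua\a^{\a}qL^{|\a|}$ and reduce each resulting term to Lemma~\ref{Lbinom_lmm}. The only difference is bookkeeping --- the paper first rewrites $L'/L$ as $\a^{\a}qL^{|\a|}/(|\a|\!+\!\nua L^n)$ so that a single application of the lemma handles each term, whereas you keep the factor $L'/L$ and evaluate $\lrbr{L^{M}L'/L}_{q;d'}$ as a difference of two applications of the lemma collapsed by Pascal's rule.
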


\begin{proof} For $d\!=\!0$, both sides of \e_ref{Lbinomder_e} vanish.
By~\e_ref{Lder_e}, the $k\!=\!0$ case of \e_ref{Lbinomder_e} reduces to Lemma~\ref{Lbinom_lmm}.
For $k\!\neq\!0$, by~\e_ref{Lder_e} and the Binomial Theorem 
\begin{equation*}\begin{split}
\LRbr{\frac{nL(q)^{\nua d+nt}}{(|\a|\!+\!\nua L(q)^n)^k}\cdot\frac{L'(q)}{L(q)}}_{q;d}
&=\a^{\a}\LRbr{\frac{nL(q)^{\nua(d-1)+n(t+1)}}{|\a|\!+\!\nua L(q)^n}
\cdot \frac1{(n\!+\!\nua\a^{\a}qL(q)^{|\a|})^k}}_{q;d-1}\\
&=\frac{\a^{\a}}{n^k}\sum_{r=0}^{d-1}\binom{-k}{r}
\LRbr{\frac{nL(q)^{\nua(d-1-r)+n(t+1+r)}}{|\a|\!+\!\nua L(q)^n}}_{q;d-1-r}
\bigg(\frac{\nua}{n}\a^{\a}\bigg)^r\,.
\end{split}\end{equation*}
The claim now follows from Lemma~\ref{Lbinom_lmm}.
\end{proof}

\noindent
For $p,d\!\in\!\Z$, let $\lrbr{p}_d,\lrbr{\hat{p}}_d,\tau_d(p),t_d(p)\in\Z$ be 
as in \e_ref{pmoddfn_e}.
In particular,
\begin{alignat}{1}\label{pairvalues_e}
&(\tau_{d-1}(p)\!-\!\tau_d(p),t_d(p))\in\{(0,0),(1,0),(0,1)\},\\
\label{shift_e}
&1-t_1(p)-\tau_0(p)+\tau_1(p)=
\begin{cases}
1,&\hbox{if}~t_1(p)\!=\!0~\hbox{and}~\tau_0(p)\!=\!\tau_1(p);\\
0,&\hbox{otherwise}.\end{cases}
\end{alignat}
Let $A\!=\!\a^{\a}$ for the remainder this section. 

\begin{lmm}\label{ntcsym_lmm}
For all $d\!\in\!\bar\Z^+$, $p\!\in\!\Z$, and $f\!:\Z^2\lra\R$,
\BE{ntcsym_e}\begin{split}
&\sum_{\begin{subarray}{c}d_1+d_2=d\\ d_1,d_2\ge0 \end{subarray}} 
\ntc^{(d_1)}_{\lrbr{p}_{d_2},\lrbr{p}_{d_2}-\nua d_1}
\ntc^{(d_2)}_{\lrbr{\hat{p}}_{d_2},\lrbr{\hat{p}}_{d_2}-\nua d_2}f\big(\tau_{d_2}(p),t_{d_2}(p)\big)\\
&\hspace{1in}
=\begin{cases}
f\big(\tau_0(p),t_0(p)\big),&\hbox{if}~d\!=\!0;\\
-A(1\!-\!\tau_0(p)\!+\!\tau_1(p)\!-\!t_1(p)) f(\tau_1(p),t_1(d)),&\hbox{if}~d\!=\!1;\\
0,&\hbox{if}~d\!\ge\!2.\end{cases}
\end{split}\EE
\end{lmm}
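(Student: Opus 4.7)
The plan is to prove the identity by induction on $d$, with the orthogonality relation \e_ref{ncrec_e} serving as the main algebraic input and the modular bookkeeping encoded by $\lrbr{p}_d,\lrbr{\hat p}_d,\tau_d(p),t_d(p)$ (together with the trichotomy~\e_ref{pairvalues_e} and the collapse~\e_ref{shift_e}) providing the combinatorial scaffolding.

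The $d=0$ case is immediate: the only summand has $d_1=d_2=0$ and both $\ntc^{(0)}$ factors equal $1$ by the convention $\ntc^{(0)}_{p,s}=\de_{p,s}$. For $d=1$, the sum splits as $(d_1,d_2)=(0,1)+(1,0)$. I would first extract $\ntc^{(1)}_{P,P-\nua}$ in closed form by specialising \e_ref{ncrec_e} at $d=1$ and $s=p-\nua$; up to the boundary convention this yields $\ntc^{(1)}_{P,P-\nua}=-\nc^{(1)}_{P-l,P-l-\nua}$, whose value is tracked by the rational function in~\e_ref{ncrec_e}. I would then run through the three mutually exclusive cases of~\e_ref{pairvalues_e} --- namely $(\tau_0(p)-\tau_1(p),t_1(p))\in\{(0,0),(1,0),(0,1)\}$ --- distinguishing when $\lrbr{p}_{d_2}$ or $\lrbr{\hat p}_{d_2}$ drops below $l$ (so the convention $\ntc^{(d)}_{p,s}=\de_{0,d}\de_{p,s}$ kills the term). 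In two of the three cases one of the two $\ntc^{(1)}$ factors is forced to vanish, leaving only the $(\tau_0-\tau_1,t_1)=(0,0)$ regime, in which the only nonzero contribution indeed yields the claimed multiplier $1-\tau_0(p)+\tau_1(p)-t_1(p)=1$ via~\e_ref{shift_e}.

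For $d\ge 2$ the target is vanishing for arbitrary~$f$, so I would separate the sum by the value of $(\tau_{d_2}(p),t_{d_2}(p))$ and show each piece vanishes. As $d_2$ varies over an interval on which $(\tau_{d_2},t_{d_2})$ is constant, the residues $\lrbr{p}_{d_2}$ and $\lrbr{\hat p}_{d_2}$ shift by $\mp\nua$ in lockstep; the resulting coefficient of $f(\tau,t)$ is then a partial convolution $\sum_{d_2}\ntc^{(d-d_2)}_{\lrbr{p}_{d_2},\lrbr{p}_{d_2}-\nua(d-d_2)}\ntc^{(d_2)}_{\lrbr{\hat p}_{d_2},\lrbr{\hat p}_{d_2}-\nua d_2}$. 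The extremal specialisation of~\e_ref{ncrec_e} at $s=p-\nua d$ exhibits precisely such a convolution as vanishing for $d\ge 1$ (modulo the convention), and iterating this for $d\ge 2$ kills the coefficients one pair $(\tau,t)$ at a time. Jumps across consecutive intervals (where $(\tau_{d_2},t_{d_2})$ changes) are accounted for by the same boundary mechanism that produced the $d=1$ identity.

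The main obstacle is the bookkeeping at the boundaries where an index falls below~$l$: the convention replaces an analytic $\ntc^{(d)}$ value with a Kronecker delta, and these ``degenerate'' terms are precisely those that supply the compensating contributions needed to match the right-hand side at $d=1$ and to produce cancellations at $d\ge 2$. Keeping track of when this boundary transition happens, in each of the three cases of~\e_ref{pairvalues_e} and simultaneously for $\lrbr{p}_{d_2}$ and $\lrbr{\hat p}_{d_2}$, is where the real work lies; the extremal cases $\nua=0$ (Calabi--Yau, with $\lrbr{p}_d$ and $\lrbr{\hat p}_d$ independent of~$d$) and $\nua=n$ (projective, with $\ntc^{(d)}_{p,s}=\de_{0,d}\de_{p,s}$) serve as sanity checks where the identity collapses to a triviality.
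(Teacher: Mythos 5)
Your skeleton for the degenerate cases is essentially the paper's: when $\tau_{d_2}(p)$ jumps somewhere in $[0,d]$, or when $t_d(p)=1$ with $\tau$ constant, one of the second subscripts $\lrbr{p}_{d_2}-\nua d_1$ or $\lrbr{\hat{p}}_{d_2}-\nua d_2$ falls below $l$ (or below $0$) in every summand, so every individual term dies by the convention $\ntc^{(d)}_{p,s}\equiv\de_{0,d}\de_{p,s}$ unless $p,s\ge l$; combined with \e_ref{shift_e} this matches the vanishing of the right-hand side. The $d=0$ case is also fine.

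The gap is in the generic regime, where $\tau_{d_2}(p)\equiv\tau_0(p)$ and $t_{d_2}(p)\equiv0$ on all of $[0,d]$. There $f$ factors out of the sum and the claim becomes the pure convolution identity
$$\sum_{\begin{subarray}{c}d_1+d_2=d\\ d_1,d_2\ge0\end{subarray}}
\ntc^{(d_1)}_{P-\nua d_2,\,P-\nua d}\;\ntc^{(d_2)}_{\hat{P}+\nua d_2,\,\hat{P}}
=\de_{0,d}-\a^{\a}\de_{1,d}\,,\qquad P+\hat{P}=n-1+l,$$
which is what supplies both the factor $-A$ at $d=1$ and the vanishing for $d\ge2$. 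You propose to extract this from ``the extremal specialisation of \e_ref{ncrec_e}'', but \e_ref{ncrec_e} is an orthogonality of $\ntc$ against $\nc$ involving a sum over a middle index~$r$: at $s=p-\nua d$ it reads $\sum_{d_1+d_2=d}\sum_{r\le p-\nua d_1}\ntc^{(d_1)}_{l+p,l+r}\nc^{(d_2)}_{r,p-\nua d}=0$, and since $\nc^{(d_2)}_{r,s}$ is not supported on the extremal diagonal $s=r-\nua d_2$, this does not collapse to a product of two $\ntc$'s. More fundamentally, nothing in \e_ref{ncrec_e} relates $\ntc_{P,\cdot}$ to $\ntc_{\hat{P},\cdot}$ at the dual index $\hat{P}=n-1+l-P$; that duality reflects the quadratic structure behind the two-point formula \e_ref{Z2pt_e}, and the paper imports it wholesale as \cite[(2.9)]{PoZ} rather than deriving it. Without that external identity (or an independent proof of it), your argument establishes neither the $-A$ at $d=1$ nor the vanishing for $d\ge2$ in the generic case. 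Relatedly, your plan to kill the $d\ge2$ sum ``one pair $(\tau,t)$ at a time'' via partial convolutions is only meaningful in the degenerate cases, where the individual summands already vanish; when $(\tau,t)$ is constant there is a single piece, and it is the full dual convolution displayed above.
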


\begin{proof}
The $d\!=\!0$ case of \e_ref{ntcsym_e} is immediate from  $\ntc^{(0)}_{p,s}\!=\!\de_{p,s}$.
If $\tau_0(p)\!=\!\tau_d(p)$ and $t_d(p)\!=\!0$, 
\e_ref{ntcsym_e} reduces to \cite[(2.9)]{PoZ}.
In general, let $d_1^*,\ldots,d_k^*\!\in\!\Z^+$ be such that 
$$\tau_0(p)=\tau_{d_1^*-1}(p)>\tau_{d_1^*}(p)=\tau_{d_2^*-1}(p)>\tau_{d_2^*}(p)=\tau_{d_3^*-1}(p)
\ldots >\tau_{d_k^*}(p)=\tau_d(p);$$
if $\tau_0(p)\!=\!\tau_d(p)$, $k\!\equiv\!0$. Let $d_0^*\!=\!0$ and $d_{k+1}^*\!=\!d\!+\!1$.
If $1\!\le\!i\!\le\!k$, then $\lrbr{p}_{d_i^*-1}\!<\!\nua$, $\lrbr{\hat{p}}_{d_i^*}\!<\!l\!+\!\nua$,
and~so 
\begin{alignat*}{2}
d_{i-1}^*\!\le\!d_2\!<\!d_i^*  \qquad&\Lra\qquad \lrbr{p}_{d_2}-\nua(d\!-\!d_2)<0 
&\qquad&\Lra\qquad \ntc^{(d-d_2)}_{\lrbr{p}_{d_2},\lrbr{p}_{d_2}-\nua(d-d_2)}=0\,;\\
d_i^*\!\le\!d_2\!<\!d_{i+1}^* \qquad&\Lra\qquad \lrbr{\hat{p}}_{d_2}-\nua d_2<l
&\qquad&\Lra\qquad \ntc^{(d_2)}_{\lrbr{\hat{p}}_{d_2},\lrbr{\hat{p}}_{d_2}-\nua d_2}=0\,.
\end{alignat*}
Thus, all summands on the left-hand side of \e_ref{ntcsym_e} vanish if $k\!\neq\!0$.
Finally, if $d\!>\!0$ and $k\!=\!0$, but $t_d(p)\!=\!1$,
then $\lrbr{p}_d,\lrbr{\hat{p}}_d\!<\!l$, and~so
$$\ntc^{(d)}_{\lrbr{\hat{p}}_d,\lrbr{\hat{p}}_d-\nua d}=0;\quad
\lrbr{p}_{d_2}-\nua(d\!-\!d_2)<l ~~~\Lra~~~
\ntc^{(d-d_2)}_{\lrbr{p}_{d_2},\lrbr{p}_{d_2}-\nua(d-d_2)}=0~~\forall\,d_2=0,1,\ldots,d-1.$$
Thus, all summands on the left-hand side of \e_ref{ntcsym_e} vanish in this case as well.
In light of~\e_ref{shift_e}, this confirms~\e_ref{ntcsym_e}.
\end{proof}

\begin{lmm}\label{sumsplit_lmm}
For all $d\!\in\!\bar\Z^+$ and $p\!\in\!\Z$,
\BE{sumsplit_e}\begin{split}
&\sum_{\bfd\in\cP_4(d)}\Bigg\{
\ntc^{(d_1)}_{\lrbr{p}_{d_2+d_3},\lrbr{p}_{d_2+d_3}-\nua d_1}
\ntc^{(d_2)}_{\lrbr{\hat{p}}_{d_2+d_3},\lrbr{\hat{p}}_{d_2+d_3}-\nua d_2}\\
&\hspace{1.2in}
A^{d_3}\binom{d_3\!+\!\tau_{d_2+d_3}(p)\!-\!t_{d_2+d_3}(p)}{d_3}
\LRbr{\frac{nL(q)^{\nua d_4-n\tau_{d_2+d_3}(p)}}{|\a|\!+\!\nua L(q)^n}}_{q;d_4}\Bigg\}
=\de_{d,0}\,.
\end{split}\EE
\end{lmm}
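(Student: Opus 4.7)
The plan is to reduce the four-fold sum to a generating-function identity via Lemma~\ref{Lbinom_lmm}, Cauchy convolution in~$q$, and Lemma~\ref{ntcsym_lmm}.

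First, I would apply Lemma~\ref{Lbinom_lmm} with $t=-\tau_{d_2+d_3}(p)$ to rewrite the bracketed $L$-coefficient in the summand as $A^{d_4}\binom{d_4-\tau_{d_2+d_3}(p)-1}{d_4}$. Using the identity $A^k\binom{k+x-1}{k}=\lrbr{(1-Aq)^{-x}}_{q;k}$, both $A^k\binom{\cdot}{\cdot}$ factors in the summand then become coefficients of powers of~$(1-Aq)$.

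Second, I would re-index by $e:=d_2+d_3$, observing that the invariants $\lrbr{p}_e$, $\lrbr{\hat p}_e$, $\tau_e(p)$, and $t_e(p)$ depend only on~$e$. For each fixed~$e$, the sums over $(d_2,d_3)$ with $d_2+d_3=e$ and over $(d_1,d_4)$ with $d_1+d_4=d-e$ are independent, so Cauchy convolution in~$q$ yields
\begin{equation*}
\sum_{\bfd\in\cP_4(d)}(\cdots)
=\sum_{e=0}^d\LRbr{f_e(q)(1-Aq)^{\tau_e(p)}}_{q;d-e}
\LRbr{g_e(q)(1-Aq)^{t_e(p)-\tau_e(p)-1}}_{q;e},
\end{equation*}
where $f_e(q):=\sum_d\ntc^{(d)}_{\lrbr{p}_e,\lrbr{p}_e-\nua d}q^d$ and $g_e(q):=\sum_d\ntc^{(d)}_{\lrbr{\hat p}_e,\lrbr{\hat p}_e-\nua d}q^d$.

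Third, I would carry out the remaining sum over~$e$ by exploiting~\ref{pairvalues_e}: since $(\tau_{e-1}(p)-\tau_e(p),t_e(p))\in\{(0,0),(1,0),(0,1)\}$, the invariants $(\lrbr{p}_e,\lrbr{\hat p}_e,\tau_e,t_e)$ are locally constant in~$e$ and jump by at most~$1$ at isolated values. On intervals where they are constant, consecutive summands combine via Cauchy convolution into a single $[q^d]$ extraction that telescopes to a boundary contribution at each jump location. At each such jump I would match the resulting discrepancy against an instance of Lemma~\ref{ntcsym_lmm} applied to the shifted parameter: the factor $(1-\tau_0(p)+\tau_1(p)-t_1(p))$ on the right-hand side of~\ref{ntcsym_e} vanishes exactly in the two ``jump'' cases of~\ref{pairvalues_e} and equals~$1$ in the ``no-jump'' case, furnishing precisely the cancellation needed to annihilate all boundary contributions. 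The sole surviving term is $e=d=0$, which evaluates to~$1$ from $\ntc^{(0)}_{\cdot,\cdot}=\de_{\cdot,\cdot}$, $\binom{0}{0}=1$, and $\lrbr{nL(q)^0/(|\a|+\nua L(q)^n)}_{q;0}=n/n=1$.

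The main obstacle is the bookkeeping of Step~3: organizing the telescoping across multiple jumps in $(\tau_e(p),t_e(p))$ and verifying that the Lemma~\ref{ntcsym_lmm} corrections match the boundary discrepancies produced by the $e$-dependence of $f_e$ and~$g_e$ (which change whenever $\lrbr{p}_e$ shifts by~$\nua$ modulo~$n$). In practice this is cleanest as an induction on~$d$, using the defining recursion~\ref{ncrec_e} of~$\ntc$ for the inductive step; direct verification for $d=0,1$ exhibits the pattern explicitly.
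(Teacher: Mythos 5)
Your Steps 1 and 2 are correct as formal identities: converting the $L$-coefficient to $A^{d_4}\binom{d_4-\tau_{d_2+d_3}(p)-1}{d_4}$ via Lemma~\ref{Lbinom_lmm} is a step the paper's argument also performs, and the re-indexing by $e=d_2+d_3$ with the two Cauchy convolutions is a valid rewriting. The gap is in Step 3, and it is fatal to this route. First, the premise that the invariants $\lrbr{p}_e$ and $\lrbr{\hat{p}}_e$ are ``locally constant in $e$'' is false: by \e_ref{pmoddfn_e}, $\lrbr{p}_e=p-\nua e-n\tau_e(p)$ changes by $-\nua$ (or by $n-\nua$ when $\tau$ jumps) at \emph{every} step $e\to e\!+\!1$, and $\nua\ge1$ wherever the coefficients $\ntc$ are defined. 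Hence $f_e$ and $g_e$ change at every $e$, there are no intervals on which the summand is $e$-independent, the partial convolutions never collapse to a single $\lrbr{\,\cdot\,}_{q;d}$ extraction, and the proposed telescoping never starts. Second, even where adjacent terms could be combined, the object produced would be $f_e(q)g_e(q)$, with \emph{both} subscripts of $\lrbr{\,\cdot\,}$ frozen at $e$; Lemma~\ref{ntcsym_lmm} says nothing about this product -- it evaluates the convolution $\sum_{d_1+d_2=d'}\ntc^{(d_1)}_{\lrbr{p}_{d_2},\cdot}\,\ntc^{(d_2)}_{\lrbr{\hat{p}}_{d_2},\cdot}$ in which the subscript tracks $d_2$. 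Your pairing of $(d_2,d_3)$ against $(d_1,d_4)$ separates the two $\ntc$ factors and so destroys exactly the structure that the key lemma handles. The closing fallback (``induction on $d$ via \e_ref{ncrec_e}'') is not carried out and does not constitute an argument.

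The repair is to keep $d_1$ and $d_2$ together. For fixed $(d_3,d_4)$ one has $\lrbr{p}_{d_2+d_3}=\lrbr{p-\nua d_3}_{d_2}$ and likewise for $\hat{p}$, $\tau$, $t$, so the sum over $d_1\!+\!d_2=d'$ is precisely the left-hand side of \e_ref{ntcsym_e} with $p$ replaced by $p-\nua d_3$: it vanishes for $d'\ge2$ and is explicit for $d'=0,1$, and the factor $1-\tau_{d_3}(p)+\tau_{d_3+1}(p)-t_{d_3+1}(p)$ that you correctly identified enters at this stage through the $d'=1$ case. Combining the $d'=0$ and $d'=1$ contributions with \e_ref{pairvalues_e} and then applying Lemma~\ref{Lbinom_lmm} reduces the identity to the purely combinatorial two-index sum
$A^d\binom{d-1-\tau_0(p)}{d}+A^d\sum_{d_3+d_4=d,\,d_3\ge1}\binom{d_3-1+\tau_{d_3-1}(p)}{d_3}\binom{d_4-1-\tau_{d_3}(p)}{d_4}$,
which is shown to vanish by a telescoping induction on the number of terms retained. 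That final cancellation is a genuine piece of the proof that your outline does not reach.
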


\begin{proof}
The $d\!=\!0$ case is clear; so we assume $d\!>\!0$.
Using Lemma~\ref{ntcsym_lmm} to sum over $d_1\!+\!d_2\!=\!d'$ with $d'$ fixed, 
we find that the left-hand side of~\e_ref{sumsplit_lmm} equals
\begin{equation*}\begin{split}
&\LRbr{\frac{nL(q)^{\nua d-n\tau_0(p)}}{|\a|\!+\!\nua L(q)^n}}_{q;d}\\
&~~+\!\!\!\sum_{\begin{subarray}{c}d_3+d_4=d\\ 1\le d_3\le d\end{subarray}}\!\!\!\!\!
A^{d_3}\binom{d_3\!-\!1\!+\!\tau_{d_3}(p)\!-\!t_{d_3}(p)}{d_3\!-\!1}
\frac{d_3\tau_{d_3-1}(p)\!+\!(d_3\!-\!1)(t_{d_3}(p)\!-\!\tau_{d_3}(p))}{d_3}
\LRbr{\frac{nL(q)^{\nua d_4-n\tau_{d_3}(p)}}{|\a|\!+\!\nua L(q)^n}}_{q;d_4}.
\end{split}\end{equation*}
By \e_ref{pairvalues_e},
$$\binom{d_3\!-\!1\!+\!\tau_{d_3}(p)\!-\!t_{d_3}(p)}{d_3\!-\!1}
\frac{d_3\tau_{d_3-1}(p)\!+\!(d_3\!-\!1)(t_{d_3}(p)\!-\!\tau_{d_3}(p))}{d_3}
=\binom{d_3\!-\!1\!+\!\tau_{d_3-1}(p)}{d_3}\,.$$
It follows that  the left-hand side of~\e_ref{sumsplit_e} equals
\BE{sumsplit_e3}\begin{split}
&A^d\binom{d\!-\!1\!-\!\tau_0(p)}{d}
+A^d\!\!\!\sum_{\begin{subarray}{c}d_3+d_4=d\\ 1\le d_3\le d\end{subarray}}\!\!
\binom{d_3\!-\!1\!+\!\tau_{d_3-1}(p)}{d_3}\binom{d_4\!-\!1\!-\!\tau_{d_3}(p)}{d_4}\,;
\end{split}\EE
see also Lemma~\ref{Lbinom_lmm}.
By induction on $s\!=\!0,1,\ldots,d\!-\!1$,
$$\sum_{\begin{subarray}{c}d_3+d_4=d\\ d-s\le d_3\le d\end{subarray}}\!\!
\binom{d_3\!-\!1\!+\!\tau_{d_3-1}(p)}{d_3}\binom{d_4\!-\!1\!-\!\tau_{d_3}(p)}{d_4}
=(-1)^s\binom{d\!-\!1}{s}\binom{d\!-\!1\!-\!s+\!\tau_{d-1-s}(p)}{d}\,.$$
Setting $s\!=\!d\!-\!1$ in the last identity, we conclude that the sum
in~\e_ref{sumsplit_e3} vanishes.
\end{proof}

\begin{crl}\label{sumsplit_crl}
For all $d\!\in\!\bar\Z^+$, $p,t\!\in\!\Z$, and $f\!\in\!\R[[q]]$,
\begin{equation*}\begin{split}
\sum_{\bfd\in\cP_4(d)}\!\!\Bigg\{
\ntc^{(d_1)}_{\lrbr{p}_{d_2+d_3},\lrbr{p}_{d_2+d_3}-\nua d_1}
\ntc^{(d_2)}_{\lrbr{\hat{p}}_{d_2+d_3},\lrbr{\hat{p}}_{d_2+d_3}-\nua d_2}
(\a^{\a})^{d_3}\binom{d_3\!+\!\tau_{d_2+d_3}(p)\!-\!t_{d_2+d_3}(p)\!-\!t}{d_3}\qquad&\\
\times\LRbr{\frac{nL(q)^{\nua d_4+n(t-\tau_{d_2+d_3}(p))}}{|\a|\!+\!\nua L(q)^n}f(q)}_{q;d_4}\Bigg\}
=\LRbr{\frac{nL(q)^{\nua d}f(q)}{|\a|\!+\!\nua L(q)^n}}_{q;d}\,.&
\end{split}\end{equation*}
\end{crl}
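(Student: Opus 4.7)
The plan is to reduce the corollary to Lemma~\ref{sumsplit_lmm} via Lagrange--Bürmann inversion combined with linearity in~$f$ and the shift $p\mapsto p-nt$. The key substitution is $Q\equiv q/L(q)^{\nua}$, a formal power series change of variable whose inverse $q=q(Q)\in Q+Q^2\Q[[Q]]$ is well-defined since $L(0)=1$; combining with~\e_ref{Ldfn_e0} one verifies that $L(q(Q))^n=1/(1-\a^{\a} Q)$.

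First I would establish the Lagrange--Bürmann identity
$$\lrbr{H(q)\,L(q)^{\nua d}}_{q;d}
 \,=\,\lrbr{H(q(Q))\cdot\frac{|\a|+\nua L(q(Q))^n}{n}}_{Q;d}
\qquad\text{for all }H\in\Q[[q]],$$
which follows from the standard Lagrange inversion applied to $\psi(q)\equiv L(q)^{-\nua}$ after the short calculation $\psi+q\psi'=nL(q)^{-\nua}/(|\a|+\nua L^n)$, itself an application of the ODE~\e_ref{Lder_e} for~$L$. Applying this to both sides of the corollary, the Jacobian factor $(|\a|+\nua L^n)/n$ exactly cancels the factor $n/(|\a|+\nua L^n)$ in each integrand: the right-hand side becomes $\lrbr{\tilde f(Q)}_{Q;d}$ with $\tilde f(Q)\equiv f(q(Q))$, and the inner $q$-coefficient extraction on the left becomes $\lrbr{(1-\a^{\a} Q)^{\tau_m(p)-t}\tilde f(Q)}_{Q;d_4}$, since $L^{n(t-\tau_m)}$ transforms to $(1-\a^{\a} Q)^{\tau_m-t}$ under the substitution.

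I would then use linearity in $\tilde f$ to reduce to the case $\tilde f(Q)=Q^e$ for each $e\in\bar\Z^+$. For such $\tilde f$ the inner extraction is $\lrbr{(1-\a^{\a} Q)^{\tau_m(p)-t}}_{Q;d_4-e}$, which vanishes when $d_4<e$; reindexing $d_4\mapsto d_4+e$ then takes the sum over $(d_1,d_2,d_3,d_4)\in\cP_4(d-e)$ with the same integrand as the $\tilde f=1$ case at degree $d-e$. Since the right-hand side $\lrbr{Q^e}_{Q;d}=\de_{d,e}=\de_{d-e,0}$, the full identity reduces to the $\tilde f=1$ case. Translating that case back to the $q$-variable via Lemma~\ref{Lbinom_lmm} recovers precisely Lemma~\ref{sumsplit_lmm} applied with $p$ replaced by $p-nt$: this shift leaves $\lrbr{p}_m$, $\lrbr{\hat p}_m$, and $t_m(p)$ unchanged while sending $\tau_m(p)\mapsto\tau_m(p)-t$, producing exactly the required binomial $\binom{d_3+\tau_m-t_m-t}{d_3}$ and $L$-exponent $\nua d_4+n(t-\tau_m)$.

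The hardest step will be the Lagrange--Bürmann formula together with the accompanying Jacobian calculation; once this is secured, the remaining argument amounts to linearity, reindexing, and exploiting the $p\mapsto p-nt$ shift symmetry of Lemma~\ref{sumsplit_lmm}. As a sanity check, the $f=1$ case collapses both sides to $\de_{d,0}$ -- the right-hand side by Lemma~\ref{Lbinom_lmm} and the left-hand side by the shifted Lemma~\ref{sumsplit_lmm} -- confirming that the reduction is consistent.
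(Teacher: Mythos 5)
Your argument is correct, but it reaches Lemma~\ref{sumsplit_lmm} by a genuinely different mechanism than the paper's proof. The paper argues by induction on $d$: it expands $\lrbr{\,\cdot\,f}_{q;d_4}$ as a convolution so that the left-hand side becomes $\sum_{d'+d''=d}C_{d',d''}\lrbr{f}_{q;d''}$, identifies $C_{d',d''}$ for $d'\!<\!d$ by applying the inductive hypothesis with the special choice $f\!=\!L^{\nua d''}$, and handles the top coefficient by Lemmas~\ref{sumsplit_lmm} and~\ref{Lbinom_lmm}. You instead eliminate the weight $L^{\nua d}$ once and for all via the substitution $Q\!=\!q/L(q)^{\nua}$: by \e_ref{Lder_e} the Jacobian is $1-\nua\,L'/L=n/(|\a|+\nua L^n)$, so the Lagrange--B\"urmann formula cancels the factor $n/(|\a|+\nua L^n)$ exactly, and \e_ref{Ldfn_e0} gives $L(q(Q))^n=(1-\a^{\a}Q)^{-1}$; in the new variable the claim is a plain convolution identity, which linearity in $\tilde f$ and the reindexing $d_4\mapsto d_4+e$ reduce to the $f\!=\!1$ case, i.e.\ to Lemma~\ref{sumsplit_lmm} with $p$ replaced by $p-nt$ (your bookkeeping that this shift fixes $\lrbr{p}_m$, $\lrbr{\hat{p}}_m$, $t_m(p)$ and sends $\tau_m(p)\mapsto\tau_m(p)-t$ is right). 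Both proofs rest on Lemma~\ref{sumsplit_lmm} as the combinatorial core; the paper's induction is shorter and self-contained, while your change of variables is more conceptual---it exhibits the general-$f$ statement as the $f\!=\!1$ statement read in the coordinate where $L^{\nua d}$ trivializes---at the cost of importing the formal Lagrange--B\"urmann formula, which is valid for formal power series and so introduces no gap.
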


\begin{proof} Replacing $p$ with $p\!-\!nt$, we can assume that $t\!=\!0$.
The $d\!=\!0$ case is clear; so we assume that $d\!\ge\!1$ and that 
the above identity holds with~$d$ replaced by any nonnegative 
integer $d'\!<\!d$.
The left-hand side of this identity is given~by
\begin{gather*}
\textnormal{LHS}_d=
\sum_{\begin{subarray}{c}d'+d''=d\\ d',d''\ge0\end{subarray}}
\!\!\!\!\!C_{d',d''}\LRbr{f(q)}_{q;d''}, \qquad\hbox{where}\\
\begin{split}
C_{d',d''}=\sum_{\bfd\in\cP_4(d')}\!\!\Bigg\{
\ntc^{(d_1)}_{\lrbr{p}_{d_2+d_3},\lrbr{p}_{d_2+d_3}-\nua d_1}
\ntc^{(d_2)}_{\lrbr{\hat{p}}_{d_2+d_3},\lrbr{\hat{p}}_{d_2+d_3}-\nua d_2}
(\a^{\a})^{d_3}\binom{d_3\!+\!\tau_{d_2+d_3}(p)\!-\!t_{d_2+d_3}(p)}{d_3}\quad&\\
\times\LRbr{\frac{nL(q)^{\nua (d_4+d'')-n\tau_{d_2+d_3}(p)}}{|\a|\!+\!\nua L(q)^n}}_{q;d_4}
\Bigg\}.&
\end{split}\end{gather*}
So, it is sufficient to show that 
$$C_{d',d''}=\LRbr{\frac{nL(q)^{\nua d}}{|\a|\!+\!\nua L(q)^n}}_{q;d'}$$
for $d'\!=\!0,1,\ldots,d$.
For $d'\!<\!d$, this is the case by the inductive assumption applied with
$f\!=\!L^{\nua d''}$.
For $d'\!=\!d''$, this is the case by Lemmas~\ref{sumsplit_lmm} and~\ref{Lbinom_lmm}.
\end{proof}

\vspace{.2in}

\noindent
{\it Department of Mathematics, SUNY Stony Brook, Stony Brook, NY 11794-3651\\
azinger@math.sunysb.edu}

\end{document}